\newtheorem{theorem}{Theorem}[section]
\newtheorem{proposition}[theorem]{Proposition}
\newtheorem{lemma}[theorem]{Lemma}
\newtheorem{corollary}[theorem]{Corollary}
\newtheorem{definition}[theorem]{Definition}
\newtheorem{remark}[theorem]{Remark}
\newtheorem{example}[theorem]{Example}
\begin{document}

\title{Symmetries of  Ordinary Differential Equations:\\
A Short Introduction}

\author{
Sebastian Walcher\\
Lehrstuhl A f\"ur Mathematik, RWTH Aachen\\
D-52056 Aachen, Germany\\
 \\
\copyright\quad S. Walcher 2019, 2023
}

\date{}

\maketitle

\begin{abstract}
These lecture notes provide an introduction to the theory and application of symmetry methods for ordinary differential equations, building on minimal prerequisites. Their primary purpose is to enable a quick and self-contained approach for non-specialists; they are not intended to replace any of the monographs on this topic. The content of the notes lies ``transversal'' to most standard texts, since we put emphasis on autonomous ODEs of first order, which often play a relatively minor role in the literature. Moreover, we avoid the technical build-up involving jet spaces and prolongations of group actions.\\
The notes are organized as follows. Chapter 1 contains basic material on ODEs with analytic right hand side, most of which may be known to the reader. Some remarks address differential equations with smooth right hand side, pointing out similarities and differences. Chapter 2 introduces to Sophus Lie's classical theory of local one-parameter (orbital) symmetry groups in the context of first order (mostly autonomous) equations, with a short digression to second order equations. Chapter 3 deals with ``multi-parameter symmetries'', including a clarification of notions, and with invariant sets that are (in some way) forced by symmetries. In Chapter 4 the table is turned: We start with a given (linear) group and discuss differential equations that admit this group as a group of symmetries, with an emphasis on toral groups. Finally we consider Poincar\'e-Dulac normal forms as a special class admitting toral symmetry groups, and focus on some of their special properties. While Chapters 1 - 3 contain no new material (apart perhaps from organization and presentation), there is some less known and some new material in Section 4. An Appendix (Section 5) contains, for quick reference, a summary of facts on power series and analytic functions. The theoretical results are accompanied and illustrated by some (rather small and elementary) examples.\\
A short list of basic references is given, but -- in line with the introductory nature of these notes -- there is no attempt to provide a complete list, which would be longer than the present text. We refer to the bibliographies in the cited papers and books. \\
These notes were originally written to accompany a series of lectures at Shanghai Jiaotong University in September and October 2019, during a pleasant and productive research visit. The author thanks the School of Mathematical Sciences for the opportunity to work at SJTU, and in particular Xiang Zhang and his colleagues and students at the Dynamical Systems Group for their hospitality.\\
The 2023 update contains some minor stylistic changes and correction of typos. Moreover the list of references has been updated.

\end{abstract}

\tableofcontents
%%%%%%%%%%%%%%%%%%%%%%%%%%%%%%%%%%%%%%%%%%%%%%%%%%%%%
\chapter{Basic notions and facts}
This chapter has preparatory character. Required are some knowledge of the theory of ordinary differential equations and, later on, of properties of local analytic functions and vector fields. (See Chapter 5 for background material on the latter.) We recall some basic definitions and facts and present a number of relevant constructions.\\

Throughout these notes $\mathbb K$ stands for $\mathbb R$ or $\mathbb C$, and $U\subseteq \mathbb K^n$ is a nonempty, open and connected set. The algebra of analytic $\mathbb K$-valued functions on $U$ will be denoted by $A(U)$, and the vector space of analytic $\mathbb K^n$-valued functions on $U$ will be denoted by $\mathcal{A}(U)$.
We consider an analytic autonomous ordinary differential equation
\begin{equation}\label{ode}
\dot x =\frac{dx}{dt}= f(x)
\end{equation}
defined on $U$.
Note that we  will always assume the independent variable $t$ to be real.
%%%%%%%%%%%%%%%%%%%%%%%%%%%%%%%%%%%%%%%%%%%%%%%%%%%%%%%%%%%%%%%
\section{Existence, uniqueness, dependence}
\begin{theorem}\label{eudthm}
\begin{enumerate}[(a)]
\item Every initial value problem $\dot x=f(x)$, $x(0)=y\in U$ has a unique solution $F(t,y)$ on an interval $I_{\rm max}(y)$, which cannot be extended beyond this interval.
\item The set
\[
\widetilde U:=\bigcup_{y\in U}\left(I_{\rm max}(y)\times \{y\}\right)\subseteq\mathbb R\times U
\]
is open and the map
\[
F:\,\widetilde U\to U,\quad (t,y)\mapsto F(t,y)
\]
is analytic.
\item Local one-parameter group property: One has
\[
\begin{array}{rcl}
F(0,y)&=&y\\
F(t_1+t_2,y)&=&F(t_1,F(t_2,y))
\end{array}
\]
for all $y\in U$ and all $t_1,\,t_2\in \mathbb R$ such that $F(t_2,y)$ and either $F(t_1,F(t_2,y))$ or $F(t_1+t_2,y)$ are defined. \\In particular $F(-t,F(t,y))=y$ for all $t\in I_{\rm max}(y)$.
\end{enumerate}
\end{theorem}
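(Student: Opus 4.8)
The plan is to treat the three assertions in turn, since (c) reduces to uniqueness and the genuine work lies in (a) and especially in the analyticity part of (b).

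For (a), I would first establish \emph{local} existence and uniqueness by the Picard--Lindel\"of theorem: since $f$ is analytic it is in particular locally Lipschitz on $U$, so for each $y\in U$ the integral operator $(\Phi u)(t)=y+\int_0^t f(u(s))\,ds$ is a contraction on a suitable closed ball of continuous curves over a small time interval, and its unique fixed point solves the initial value problem. To pass to a maximal interval I would let $I_{\rm max}(y)$ be the union of all open intervals around $0$ on which a solution exists; uniqueness guarantees that these local solutions agree on overlaps and hence glue to a single solution $F(\cdot,y)$ on $I_{\rm max}(y)$. That this solution cannot be extended is then immediate from maximality, and the standard escape argument (a solution staying in a compact subset of $U$ can always be continued) shows the trajectory leaves every compact set as $t$ approaches an endpoint.

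For (b) the core difficulty is analyticity of $F$ in both variables jointly. I would prove this first on a neighborhood of $\{0\}\times U$ using the Cauchy method of majorants: writing the sought solution as a formal power series in $t$ and in the initial value $y$, the ODE determines its coefficients recursively, and one bounds them by the coefficients of an explicitly solvable majorant equation built from a geometric-series majorant of $f$; convergence of the majorant series on a common polydisc yields a genuine analytic solution there (alternatively one may run an analytic fixed-point argument in a Banach space of analytic curves, with $t$ taken complex). To obtain analyticity on all of $\widetilde U$, I would exploit that analyticity is a local property and propagate it along trajectories: given $(t_0,y_0)\in\widetilde U$, cover the compact arc $\{F(s,y_0):\ s\ \text{between}\ 0\ \text{and}\ t_0\}$ by finitely many neighborhoods carrying a local analytic flow, and splice these using the group law of part (c). The same covering argument, combined with continuous dependence on initial data, shows that an entire neighborhood of $(t_0,y_0)$ lies in $\widetilde U$, which gives openness.

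Finally, for (c), the relations follow from autonomy and uniqueness. The identity $F(0,y)=y$ is just the initial condition. For the group law, fix $y$ and $t_2$ with $z:=F(t_2,y)$ defined; then both $t\mapsto F(t+t_2,y)$ and $t\mapsto F(t,z)$ solve $\dot x=f(x)$ with value $z$ at $t=0$ (here autonomy is essential, so that time translates of solutions remain solutions), hence coincide on their common interval by uniqueness, which is precisely the stated identity; taking $t_1=-t$, $t_2=t$ yields $F(-t,F(t,y))=y$. I expect the analyticity step in (b) to be the main obstacle: parts (a) and (c) are routine, whereas establishing joint analyticity requires the majorant estimates together with the gluing argument along trajectories.
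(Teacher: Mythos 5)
The paper offers no proof of Theorem \ref{eudthm} at all: it is stated as background, and the reader is referred to Cartan \cite{Ca} in the notes of Section 1.7. So there is no internal argument to compare against, and your proposal must be judged on its own merits; so judged, it is correct and is the standard textbook route, in line with the cited reference. Picard--Lindel\"of (analytic $\Rightarrow$ locally Lipschitz) plus gluing of local solutions by uniqueness gives (a), including the escape-from-compacta characterization of maximality; local joint analyticity via majorants (or, equivalently, a holomorphic fixed-point argument with complexified $t$) followed by propagation along the compact trajectory arc using the group law gives (b), and the same finite covering with continuous dependence gives openness of $\widetilde U$; uniqueness plus autonomy gives (c). There is no circularity in invoking (c) inside the proof of (b), since your proof of (c) rests only on the uniqueness established in (a). One point you should make explicit in (c): the theorem asserts \emph{definedness}, not merely agreement on a common interval. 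If $F(t_1+t_2,y)$ is defined, then $t\mapsto F(t+t_2,y)$ is a solution through $z:=F(t_2,y)$ whose domain contains $t_1$, so by maximality of $I_{\rm max}(z)$ (which your union construction in (a) provides) the point $t_1$ lies in $I_{\rm max}(z)$ and $F(t_1,z)$ is automatically defined; the symmetric argument handles the case where $F(t_1,F(t_2,y))$ is defined. Your phrase ``coincide on their common interval'' should be upgraded to this maximality argument to capture the ``either \ldots or'' clause of the statement; with that amendment the proof is complete.
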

We call $F$ the {\em general solution} (or the {\em local flow}) of \eqref{ode}, and we will sometimes refer to $f$ as an {\em infinitesimal generator} of the {\em local one-parameter transformation group} $F$.

Occasionally we will also consider non-autonomous ordinary differential equations
\begin{equation}\label{nonaut}
\dot x=q(t,x) 
\end{equation}
with $q$ defined and analytic on a nonempty, open and connected subset of $\mathbb R\times\mathbb K^n$, and corresponding initial value problems. One may ``autonomize'' such an equation via
\begin{equation}\label{autnonaut}
\begin{array}{rcl}
\dot \xi&=&1\\
\dot x&=&q(\xi,x),
\end{array}
\end{equation}
which allows to carry over, cum grano salis, results about autonomous systems to the non-autonomous case.
\begin{example}{\em
The partial derivative $D_2F(t,y)$ of the local flow of \eqref{ode} with respect to the variable $y$ satisfies a non-autonomous linear differential equation, the so-called {\em variational equation}
\[
\frac{\partial }{\partial t}D_2F(t,y)=Df(F(t,y)D_2F(t,y),\quad D_2F(0,y)=I
\]
with $I$ denoting the identity matrix. This is readily verified by differentiating $\frac{\partial }{\partial t}F(t,y)=f(F(t,y))$.}
\end{example}
%%%%%%%%%%%%%%%%%%%%%%%%%%%%%%%%%%%%%%%%%%%%%
\section{Lie derivative}
\begin{definition}\label{liederdef}
Given $f$ as in \eqref{ode},  the corresponding Lie derivative
$X_f$ acts on $A(U)$ via
\[
\phi\mapsto X_f(\phi),\quad X_f(\phi)\,(x):=D\phi(x)\,f(x).
\]
\end{definition}
This definition is motivated by the observation
\begin{equation}\label{liedermot}
\frac d{dt}\phi(F(t,y))=X_f(\phi)(F(t,y)).
\end{equation}
We note some properties of Lie derivatives.
\begin{proposition}\label{liederprop} Let $f\in\mathcal{A}(U)$. Then the following hold:
\begin{enumerate}[(a)]
\item $X_f=0\Leftrightarrow f=0$.
\item $X_f$ is a derivation of $A(U)$, thus is linear and satisfies the ``product rule''
\[
X_f(\phi\cdot\psi)=X_f(\phi)\cdot\psi+\phi\cdot X_f(\psi) \text{  for all  } \phi,\,\psi\in A(U).
\]
\item {\em Lie series identity:} For all $\phi\in A(U)$ one has 
\[
\phi(F(t,y))=\exp (tX_f)(\phi)(y)=\sum_{k\geq 0}\frac{t^k}{k!}X_f^k(\phi)(y).
\]
\end{enumerate}
\end{proposition}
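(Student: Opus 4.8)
The plan is to prove the three parts in order, with part (c) being the substantial one.

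For part (a), I would simply observe that if $f=0$ then clearly $X_f(\phi)(x)=D\phi(x)\cdot 0=0$ for every $\phi$ and every $x$. Conversely, suppose $X_f=0$. The idea is to feed the coordinate functions into $X_f$: taking $\phi=\pi_i$, the $i$-th coordinate projection $x\mapsto x_i$, we have $D\pi_i(x)=e_i^{\mathsf T}$ (the $i$-th standard covector), so $X_f(\pi_i)(x)=e_i^{\mathsf T}f(x)=f_i(x)$, the $i$-th component of $f$. Hence $X_f=0$ forces every component $f_i$ to vanish identically, i.e. $f=0$.

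For part (b), linearity of $\phi\mapsto D\phi(x)\,f(x)$ in $\phi$ is immediate from linearity of the derivative. For the product rule I would invoke the Leibniz rule for differentiation, $D(\phi\psi)(x)=\psi(x)\,D\phi(x)+\phi(x)\,D\psi(x)$, and then right-multiply by $f(x)$; matching terms against the definition of $X_f$ gives the claimed identity. This is a short routine verification.

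The heart of the matter is part (c), the Lie series identity. The natural approach is to use the motivating formula \eqref{liedermot}, $\tfrac{d}{dt}\phi(F(t,y))=X_f(\phi)(F(t,y))$, and iterate it. Setting $g(t):=\phi(F(t,y))$, I would show by induction on $k$ that $g^{(k)}(t)=X_f^k(\phi)(F(t,y))$: the base case $k=0$ is trivial, and the inductive step applies \eqref{liedermot} with $\phi$ replaced by the analytic function $X_f^k(\phi)$, which is legitimate since $X_f$ maps $A(U)$ into $A(U)$. Evaluating at $t=0$ and using $F(0,y)=y$ from Theorem~\ref{eudthm}(c) gives $g^{(k)}(0)=X_f^k(\phi)(y)$. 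Since $g$ is analytic in $t$ (composition of analytic maps, by Theorem~\ref{eudthm}(b)), its Taylor expansion about $t=0$ reads $g(t)=\sum_{k\ge 0}\tfrac{t^k}{k!}g^{(k)}(0)=\sum_{k\ge 0}\tfrac{t^k}{k!}X_f^k(\phi)(y)$, which is exactly the asserted identity, with the middle expression $\exp(tX_f)(\phi)(y)$ being merely notation for this series.

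The main obstacle is convergence: one must ensure the Taylor series of $g$ actually converges to $g$ on a genuine interval around $t=0$, so that the displayed equality is valid rather than merely formal. This is where analyticity is essential — real-analyticity of $g$ as a function of $t$ follows from analytic dependence of the flow on $(t,y)$ (Theorem~\ref{eudthm}(b)) composed with the analytic $\phi$, and it is precisely this that rules out the smooth-but-nonanalytic counterexamples. I would state the identity as holding for $t$ in a neighborhood of $0$ (equivalently on the relevant part of $\widetilde U$ where both sides are defined), and I expect the cleanest write-up to cite the Appendix results on analytic functions for the convergence of the Taylor series rather than estimating the remainder term by hand.
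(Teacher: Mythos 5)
Your proposal is correct and follows essentially the same route as the paper: parts (a) and (b) are the routine verifications the paper dismisses as straightforward (your coordinate-function argument for (a) is the standard way to make it explicit), and for part (c) your induction $g^{(k)}(t)=X_f^k(\phi)(F(t,y))$ via \eqref{liedermot}, combined with analyticity of $t\mapsto\phi(F(t,y))$ from Theorem \ref{eudthm}(b) to guarantee a convergent Taylor expansion, is precisely the paper's argument of expanding $\phi(F(t,y))=\sum_k t^k\psi_k(y)$ and identifying $k!\,\psi_k(y)$ with the $k$-th $t$-derivative at $t=0$. Your explicit remark that analyticity is what makes the identity hold beyond the formal level matches the paper's reliance on the analytic setting.
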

\begin{proof}
Parts (a) and (b) are straightforward. To verify part (c), note that analyticity implies the existence of an expansion
\[
\phi(F(t,y))=\sum_{k\geq 0}t^k\psi_k(y)
\]
with analytic functions $\psi_k$, and moreover 
\[
k!\,\psi_k(y)=\frac{\partial^k}{\partial t^k}\phi(F(t,y))|_{t=0}.
\]
Now use \eqref{liedermot} and induction.
\end{proof}
We remark in passing that every derivation of $A(U)$ has the form $X_f$ for some analytic $f$. In $\geq 20^{\rm th}$ century language, these derivations are called vector fields. We will frequently identify $f$ and $X_f$, and also call $f$ a vector field.
%%%%%%%%%%%%%%%%%%%%%%%%%%%%%%%%%%%%%%%%%%%%%%%%%%
\section{Lie bracket}
The commutator of two derivations (of any algebra) is again a derivation; the derivations therefore form a Lie algebra. This observation motivates the Lie bracket of $f,\,g\in\mathcal{A}(U)$: For $\phi\in A(U)$ we have the identities
\[
X_g(\phi)(x)=D\phi(x)\,g(x)\Rightarrow X_fX_g(\phi)(x)=D^2\phi(x)(g(x),f(x))+D\phi(x)Dg(x)f(x).
\]
Reversing the order of $f$ and $g$ and using the symmetry of the second derivative $D^2\phi(x)$, one obtains the identity
\begin{equation}\label{liebident}
(X_fX_g-X_gX_f)(\phi)(x)=D\phi(x)\left(Dg(x)\,f(x)-Df(x)\,g(x)\right).
\end{equation}
\begin{definition}\label{liebdef}
The {\em Lie bracket} of $f,\,g\in\mathcal{A}(U)$ is defined by
\[
\left[f,\,g\right](x):=Dg(x)f(x)-Df(x)g(x).
\]
Moreover we define 
\[
{\rm ad}\,f:\,\mathcal{A}(U)\to \mathcal{A}(U); \quad g\mapsto\left[f,\,g\right].
\]
\end{definition}
We collect properties of the Lie bracket:
\begin{proposition}\label{liebproperties}
\begin{enumerate}[(a)]
\item For all $f,\,g\in\mathcal{A}(U)$ one has
\[
X_{\left[f,g\right]}=X_fX_g-X_gX_f.
\]
\item With the standard vector space operations and the bracket $\left[\cdot,\cdot\right]$, $\mathcal{A}(U)$ is a Lie algebra, thus the bracket is bilinear, antisymmetric and satisfies the {\em Jacobi identity}
\[
\left[ f,\,\left[g,\,h\right]\right]+\left[ h,\,\left[f,\,g\right]\right]+\left[ g,\,\left[h,\,f\right]\right]=0.
\]
\item For all $f,g\in\mathcal{A}(U)$ and all $\psi\in A(U)$ one has the identity
\[
\left[f,\,\psi\,g\right]=X_f(\psi)\,g +\psi\left[f,\,g\right].
\]
\end{enumerate}
\end{proposition}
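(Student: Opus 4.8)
The plan is to treat the three parts in order and, wherever possible, to transport identities through the map $f\mapsto X_f$, which by Definition~\ref{liederdef} is linear into the associative algebra of linear operators on $A(U)$ and which by Proposition~\ref{liederprop}(a) is injective. Part (a) is essentially already in hand: the computation leading to \eqref{liebident} gives, for every $\phi\in A(U)$,
\[
(X_fX_g-X_gX_f)(\phi)(x)=D\phi(x)\big(Dg(x)\,f(x)-Df(x)\,g(x)\big),
\]
and the parenthesized expression is exactly $[f,g](x)$ by Definition~\ref{liebdef}. Hence the right-hand side equals $D\phi(x)\,[f,g](x)=X_{[f,g]}(\phi)(x)$, and since $\phi$ is arbitrary I would conclude $X_{[f,g]}=X_fX_g-X_gX_f$.

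For part (b), bilinearity and antisymmetry are immediate from the defining formula, so the only real content is the Jacobi identity. Here I would deliberately avoid a head-on expansion from the definition, since that forces one to track the Hessian terms $D^2f,\,D^2g,\,D^2h$ and watch them cancel. Instead I would use part (a): for any three linear operators the commutator bracket satisfies the Jacobi identity by a purely formal twelve-term cancellation, so applying this to $X_f,X_g,X_h$ and rewriting each nested operator commutator via (a) yields
\[
X_{[f,[g,h]]+[h,[f,g]]+[g,[h,f]]}=0
\]
as an operator on $A(U)$. Injectivity of $f\mapsto X_f$ (Proposition~\ref{liederprop}(a)) then forces the generator itself to vanish, which is precisely the Jacobi identity. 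This is the step I expect to be the main obstacle, in the sense that the direct route is genuinely unpleasant while the operator route requires only the two observations just named.

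For part (c) I would compute directly. Writing $(\psi g)(x)=\psi(x)g(x)$ and applying the product rule to the Jacobian $D(\psi g)$, one gets
\[
[f,\psi g](x)=D(\psi g)(x)f(x)-Df(x)(\psi g)(x)=\big(D\psi(x)f(x)\big)g(x)+\psi(x)Dg(x)f(x)-\psi(x)Df(x)g(x),
\]
and recognizing $D\psi(x)f(x)=X_f(\psi)(x)$ together with the last two terms as $\psi(x)[f,g](x)$ gives the asserted identity. Alternatively, one can stay in the operator picture: from the definition $X_{\psi g}=\psi\,X_g$, and since $X_f$ is a derivation by Proposition~\ref{liederprop}(b), expanding $X_fX_{\psi g}-X_{\psi g}X_f$ and invoking injectivity again reproduces the formula; this keeps part (c) consistent with the viewpoint used in (b).
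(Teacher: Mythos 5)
Your proposal is correct and follows essentially the same route as the paper: part (a) as a restatement of \eqref{liebident}, part (b) via the Jacobi identity for operator commutators combined with the injectivity of $f\mapsto X_f$ from Proposition \ref{liederprop}(a), and part (c) by direct computation from Definition \ref{liebdef}. Your alternative operator-theoretic derivation of (c) using $X_{\psi g}=\psi\,X_g$ is also sound, though the paper does not take it.
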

\begin{proof} Part (a) is a restatement of \eqref{liebident}, while part (b) follows from (a), the Lie algebra structure of the derivations of $A(U)$ and Proposition \ref{liederprop}(a). Part (c) follows from the definitions by straightforward calculation.
\end{proof}
%%%%%%%%%%%%%%%%%%%%%%%%%%%%%%%%%%%%%%%%%%%%%%%%%%%%%%%%%%%%%%%%%%%%
\section{Solution-preserving maps (morphisms)}
It is advisable to discuss mathematical structures together with structure-preserving maps. For autonomous ODEs this leads to
\begin{definition}\label{solpresdef} Let \eqref{ode} be given, and $\emptyset\not=U^*\subseteq U$ open. Moreover let $V\subseteq\mathbb K^m$ be nonempty, open and connected, and $g\in\mathcal{A}(V)$.\\
 A map $\Phi: U^*\to V$ is called {\em solution-preserving} from \eqref{ode} to $\dot x=g(x)$ if $\Phi$ maps every solution of \eqref{ode} in $U^*$ to a solution of $\dot x=g(x)$; thus
\[
\Phi(F(t,y))=G(t,\Phi(y)) \text{  for all  }y\in U^*,\,t\in I_{\rm max}(y) \text{  with  } F(t,y)\in U^*.
\]
\end{definition}
A priori we do not require $\Phi$ to be analytic (or smooth) here, since there are interesting continuous (but not differentiable) solution preserving maps. We note a simple criterion when $\Phi$ is at least differentiable.
\begin{lemma}\label{solprescrit}
Let $\Phi$ and $g$ be as in Definition \ref{solpresdef}, and $\Phi$ differentiable. Then $\Phi$ is solution-preserving from \eqref{ode} to $\dot x=g(x)$ if and only if the identity
\begin{equation}\label{solpresid}
D\Phi(x)\,f(x)=g(\Phi(x))
\end{equation}
holds on $U^*$.
\end{lemma}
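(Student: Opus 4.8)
The plan is to prove both implications by reducing everything to the single fact that the flows $F$ and $G$ solve their respective ODEs, and then to invoke the uniqueness part of Theorem~\ref{eudthm}. Throughout I write $G$ for the local flow of $\dot x = g(x)$, so that $\partial_t G(t,z)|_{t=0} = g(z)$ and $G(0,z)=z$, while $\partial_t F(t,y) = f(F(t,y))$ and $F(0,y)=y$. Since $\Phi$ is assumed differentiable, the chain rule is available for the composite $t \mapsto \Phi(F(t,y))$, and this single derivative is the only regularity that enters.

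For the forward implication, suppose $\Phi$ is solution-preserving, so $\Phi(F(t,y)) = G(t,\Phi(y))$ for all admissible $t$. Fix $y \in U^*$; since $U^*$ is open and $F$ is continuous with $F(0,y)=y$, the identity holds for all $t$ in some neighbourhood of $0$. Differentiating both sides at $t=0$, using the chain rule on the left and the flow property on the right, gives
\[
D\Phi(y)\,f(y) = g(\Phi(y)),
\]
because $\partial_t F(t,y)|_{t=0} = f(y)$ and $\partial_t G(t,\Phi(y))|_{t=0} = g(\Phi(y))$. As $y \in U^*$ was arbitrary, \eqref{solpresid} holds on all of $U^*$.

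For the converse, assume \eqref{solpresid} holds on $U^*$ and fix $y \in U^*$. Consider the curve $u(t) := \Phi(F(t,y))$, defined on the connected component of $0$ in $\{t \in I_{\rm max}(y) : F(t,y) \in U^*\}$. Differentiating and inserting \eqref{solpresid} at the point $x = F(t,y) \in U^*$ yields
\[
\dot u(t) = D\Phi(F(t,y))\,f(F(t,y)) = g\bigl(\Phi(F(t,y))\bigr) = g(u(t)),
\]
so that $u$ solves the initial value problem $\dot x = g(x)$, $x(0) = \Phi(y)$. Since $t \mapsto G(t,\Phi(y))$ solves the same problem, the uniqueness statement in Theorem~\ref{eudthm}(a) forces $u(t) = G(t,\Phi(y))$, which is exactly the solution-preserving identity.

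The step I expect to require the most care is the bookkeeping of domains in the converse: the set of times for which $F(t,y)$ stays in $U^*$ need not be a single interval, and the identification of $u$ with $G(\cdot,\Phi(y))$ via uniqueness is immediate only on the connected component of $0$, where the initial condition at $t=0$ is under control. On a later excursion that re-enters $U^*$ the image curve is still a solution of $\dot x = g(x)$, but matching it to $G(\cdot,\Phi(y))$ would require tracking the initial value through the intervening exit from $U^*$. Everything else is a direct application of the chain rule together with the defining properties of the two flows.
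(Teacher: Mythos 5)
Your proof is correct and follows essentially the same route as the paper: differentiate $\Phi(F(t,y))=G(t,\Phi(y))$ at $t=0$ for the forward direction, and for the converse apply the chain rule with \eqref{solpresid} to show $t\mapsto\Phi(F(t,y))$ solves $\dot x=g(x)$. Your explicit appeal to uniqueness from Theorem~\ref{eudthm}(a) and the remark about the connected component of $0$ merely spell out steps the paper leaves implicit (its verbal definition of solution-preserving only requires mapping solution arcs in $U^*$ to solutions, so the re-entry issue you flag is sidestepped there).
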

\begin{proof}
For one direction, differentiate the identity $\Phi(F(t,y))=G(t,\Phi(y))$ with respect to $t$ and set $t=0$. Conversely, assume that the identity \eqref{solpresid} holds. Then 
\[
\frac{\partial}{\partial t} \Phi(F(t,y))=D\Phi(F(t,y))f(F(t,y))=g(\Phi(F(t,y)),
\]
whence $ \Phi(F(t,y))$ solves $\dot x=g(x)$.
\end{proof}
\begin{remark}{\em The notion of solution-preserving maps includes coordinate transformations: Let $\widehat U\subseteq \mathbb K^n$ be open, connected and nonempty, and $\Psi:\,\widehat U\to U$ analytic such that $D\Psi(x)$ is invertible for all $x\in \widehat U$. Define
\[
f^*(x):=D\Psi(x)^{-1}f(\Psi(x)) \text{  for  }x\in \widehat U.
\]
Then $\Psi$ is locally invertible and (by construction) solution-preserving from $\dot x=f^*(x)$ to $\dot x=f(x)$. One may interpret $\Psi$ as a coordinate transformation, and interpret $\dot x=f^*(x)$ as \eqref{ode} being expressed in new coordinates.}
\end{remark}
We next show compatibility of solution-preserving maps with Lie derivatives and Lie brackets.
\begin{proposition}\label{liecompatible}
\begin{enumerate}[(a)]
\item Let $\Phi$ and $g$ be as in Definition \ref{solpresdef}, and $\Phi$ analytic. Then for any $\rho\in A(V)$ one has
\[
X_f(\rho\circ\Phi)=X_g(\rho)\circ\Phi.
\]
\item For all $f_1,\,f_2\in \mathcal{A}(U)$ and $g_1,\,g_2\in \mathcal{A}(V)$ such that $D\Phi(x)\,f_i(x)=g_i(\Phi(x))$, $i=1,2$, one also has
\[
D\Phi(x)\left[f_1,\,f_2\right](x)=\left[g_1,\,g_2\right](\Phi(x)).
\]
\end{enumerate}
\end{proposition}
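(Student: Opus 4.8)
The plan is to treat the two parts separately, proving (a) by a direct application of the chain rule and then bootstrapping (b) from (a) via the operator characterization of the bracket.

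For part (a) I would simply unwind the definition of the Lie derivative. Since $\Phi$ is analytic, so is $\rho\circ\Phi$, and the chain rule gives $D(\rho\circ\Phi)(x)=D\rho(\Phi(x))\,D\Phi(x)$. Hence
\[
X_f(\rho\circ\Phi)(x)=D(\rho\circ\Phi)(x)\,f(x)=D\rho(\Phi(x))\,\bigl(D\Phi(x)\,f(x)\bigr).
\]
Because $\Phi$ is solution-preserving and analytic, Lemma \ref{solprescrit} supplies the identity $D\Phi(x)\,f(x)=g(\Phi(x))$, and substituting it turns the right-hand side into $D\rho(\Phi(x))\,g(\Phi(x))=X_g(\rho)(\Phi(x))$. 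This is exactly $X_f(\rho\circ\Phi)=X_g(\rho)\circ\Phi$, so part (a) is essentially a one-line computation with no real obstacle.

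For part (b) I would first observe that the hypotheses $D\Phi(x)\,f_i(x)=g_i(\Phi(x))$ are precisely the criterion \eqref{solpresid}, so by Lemma \ref{solprescrit} the (analytic) map $\Phi$ is solution-preserving from $\dot x=f_i(x)$ to $\dot x=g_i(x)$ for $i=1,2$; thus part (a) is available for each pair $(f_i,g_i)$. The key idea is then to compute $X_{[f_1,f_2]}(\rho\circ\Phi)$ for an arbitrary $\rho\in A(V)$ using the operator identity $X_{[f_1,f_2]}=X_{f_1}X_{f_2}-X_{f_2}X_{f_1}$ from Proposition \ref{liebproperties}(a). Applying part (a) twice in each term — first to $\rho$ (inner factor), then to the resulting function $X_{g_2}(\rho)$, respectively $X_{g_1}(\rho)$, in $A(V)$ (outer factor) — lets me push both $X_{f_1}$ and $X_{f_2}$ through $\Phi$, yielding
\[
X_{[f_1,f_2]}(\rho\circ\Phi)=\bigl(X_{g_1}X_{g_2}-X_{g_2}X_{g_1}\bigr)(\rho)\circ\Phi=X_{[g_1,g_2]}(\rho)\circ\Phi.
\]

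Finally I would expand this scalar identity via the chain rule, exactly as in part (a): the left side equals $D\rho(\Phi(x))\,D\Phi(x)\,[f_1,f_2](x)$ and the right side equals $D\rho(\Phi(x))\,[g_1,g_2](\Phi(x))$. Thus for every $\rho$ the vector $D\Phi(x)\,[f_1,f_2](x)-[g_1,g_2](\Phi(x))\in\mathbb K^m$ is annihilated by $D\rho(\Phi(x))$. The main (and only) subtle point is to deduce that this difference itself vanishes: I would let $\rho$ run through the coordinate functions $y_1,\dots,y_m$ on $V$, for which $D\rho(\Phi(x))$ extracts the corresponding component, so that each component of the difference is zero. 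This gives the desired identity $D\Phi(x)\,[f_1,f_2](x)=[g_1,g_2](\Phi(x))$. As an alternative to this operator argument one could prove (b) directly: differentiate $D\Phi(x)\,f_i(x)=g_i(\Phi(x))$, evaluate the $i=2$ relation in direction $f_1(x)$ and the $i=1$ relation in direction $f_2(x)$, and subtract; the second-order terms $D^2\Phi(x)(f_1(x),f_2(x))$ then cancel by symmetry of $D^2\Phi$ — the same mechanism that produced \eqref{liebident} — leaving precisely the bracket identity.
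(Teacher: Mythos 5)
Your proposal is correct and follows exactly the route the paper indicates: part (a) by the chain rule together with Lemma \ref{solprescrit}, and part (b) by pushing the operator identity $X_{[f_1,f_2]}=X_{f_1}X_{f_2}-X_{f_2}X_{f_1}$ from Proposition \ref{liebproperties}(a) through part (a). Your final step of testing against the coordinate functions $y_1,\dots,y_m$ to recover the vector identity (implicitly relying on Proposition \ref{liederprop}(a)) is precisely the detail the paper's one-line proof leaves to the reader, and your alternative direct computation via symmetry of $D^2\Phi$ is also sound.
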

\begin{proof} Part (a) follows directly from the definitions and Lemma \ref{solprescrit}, and part (b) is most easily proven as a consequence of part (a) and Proposition \ref{liebproperties}(a).
\end{proof}
The following result (called the ``straightening theorem'') is not hard to prove but it is fundamental. It will turn out to be of practical value in the computation of symmetry reductions, and it contributes to the theory by showing that the local behavior of ordinary differential equations is very simple (and uninteresting) near any non-stationary point.
\begin{theorem}\label{thmstraight}
Let \eqref{ode} be given, and let $y_0\in U$ such that $f(y_0)\neq 0$. Then there exists a neighborhood $U^*$ of $y_0$ and a locally invertible analytic map $\Psi:U^*\to U$ that is solution-preserving from $\dot{x} =\begin{pmatrix}
1 \\ 0 \\ \vdots \\ 0
 \end{pmatrix}$ to $\dot{x} = f(x)$.
\end{theorem}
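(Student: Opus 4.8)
The plan is to build $\Psi$ directly from the local flow $F$ of \eqref{ode}, using the idea that ``advancing in the first new coordinate'' should coincide with flowing along $f$. First I would reformulate the goal: by Lemma \ref{solprescrit} (applied with the constant field $(1,0,\dots,0)^{\top}$ as source and $f$ as target), the required solution-preserving property is equivalent to the single identity
\[
\frac{\partial \Psi}{\partial x_1}(x) = f(\Psi(x)),
\]
because multiplying $D\Psi(x)$ by $(1,0,\dots,0)^{\top}$ just extracts the first partial derivative. This is exactly the relation the flow satisfies in its time slot, which dictates the construction: let the first coordinate be flow time and let the remaining coordinates parametrize a section transverse to $f$ at $y_0$.

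Concretely, since $f(y_0)\neq 0$ I would complete $f(y_0)$ to a basis $f(y_0),v_2,\dots,v_n$ of $\mathbb K^n$ and set
\[
\Psi(x_1,\dots,x_n) := F\!\left(x_1,\; y_0+\textstyle\sum_{i=2}^n x_i v_i\right),
\]
on a small neighborhood of the origin; a final precomposition with the translation $x\mapsto x-y_0$ (itself solution-preserving from the constant field to itself, since that flow commutes with translations) shifts the domain to a neighborhood $U^*$ of $y_0$. Analyticity of $\Psi$ is inherited from $F$ (Theorem \ref{eudthm}(b)) composed with an affine map. For local invertibility I would read off the columns of $D\Psi$ at the base point: the relation $\partial_t F(t,y)=f(F(t,y))$ gives $\partial_{x_1}\Psi=f(y_0)$ there, while $F(0,y)=y$ forces $D_2F(0,y_0)=I$ and hence $\partial_{x_i}\Psi=v_i$ for $i\ge 2$. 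Thus $D\Psi$ has columns $f(y_0),v_2,\dots,v_n$, a basis, so it is invertible and the analytic inverse function theorem supplies a (possibly smaller) $U^*$ on which $\Psi$ is a local analytic isomorphism. The displayed identity $\partial_{x_1}\Psi(x)=f(\Psi(x))$ then holds globally on $U^*$ again by $\partial_t F=f(F)$, and Lemma \ref{solprescrit} concludes.

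The genuine obstacle is not this algebra but a domain-and-regularity issue that differs in the two cases. Over $\mathbb R$ the expression $F(x_1,\cdot)$ is legitimate as it stands, since $x_1$ is literally the real time variable, and openness of $\widetilde U$ secures a common domain after shrinking. Over $\mathbb C$, however, $x_1$ ranges through a complex neighborhood, whereas Theorem \ref{eudthm} only furnishes $F$ for real $t$; here I would first extend $t\mapsto F(t,y)$ to complex time by analytic continuation (for fixed $y$ it solves an analytic ODE, hence continues to a holomorphic function of $x_1$ near the real axis, jointly analytic in all arguments and still obeying $\partial_{x_1}F=f(F)$) and only then run the construction above. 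Securing this holomorphic-in-time extension, together with the uniform control of domains needed for $\Psi$ to be defined on a full neighborhood, is the one step demanding real care; everything else reduces to the direct differentiation of $\Psi$ sketched above.
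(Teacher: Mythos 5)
Your proposal is correct and is essentially the paper's own proof: both define $\Psi$ by flowing for time $x_1$ from a transversal section through $y_0$ (the paper normalizes $y_0=0$, $f(y_0)=e_1$ by an affine change of coordinates, whereas you carry a general basis $f(y_0),v_2,\ldots,v_n$, a purely cosmetic difference), verify $\partial\Psi/\partial x_1=f(\Psi)$ from $\partial_t F=f(F)$, and invoke invertibility of $D\Psi$ at the base point. Your closing remark about extending $F$ to complex time when $\mathbb K=\mathbb C$ addresses a point the paper passes over silently (it declares $t$ real yet lets $x_1\in\mathbb K$), and your resolution via analytic continuation of $t\mapsto F(t,y)$ is the standard and correct fix.
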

%%%
\begin{proof}
By employing an affine coordinate transformation we may assume that $y_0=0$, $f(y_0) = \begin{pmatrix}
1 \\ 0 \\ \vdots \\ 0
 \end{pmatrix}$.   
Define $\Psi(x):= F\Bigl(x_1,\begin{pmatrix}
0 \\ x_2 \\ \vdots \\ x_n
 \end{pmatrix}\Bigr)$. By Theorem \ref{eudthm} this is analytic near $0$. With $\frac{\partial}{\partial t}F(t,y) = f(F(t,y))$ one has furthermore
\[
 D\Psi(x)\cdot \begin{pmatrix}
1 \\ 0 \\ \vdots \\ 0
 \end{pmatrix} = \frac{\partial \Psi}{\partial x_1}(x) = f\Bigl(F \Bigl(x_1,
\begin{pmatrix}
0 \\ x_2 \\ \vdots \\ x_n
 \end{pmatrix}\Bigr)\Bigr) = f(\Psi(x));
\]
thus $\Psi$ is solution-preserving as asserted.\\
Moreover $\frac{\partial \Psi}{\partial x_1}(0) = f(0) = \begin{pmatrix}
1 \\ 0 \\ \vdots \\ 0
 \end{pmatrix}$, and from $\Psi\bigl( \begin{pmatrix}
0 \\ x_2 \\ \vdots \\ x_n
 \end{pmatrix}\bigr) = \begin{pmatrix}
0 \\ x_2 \\ \vdots \\ x_n
 \end{pmatrix}$ one finds that
\[
D\Psi(0) = \begin{pmatrix}
            1 & \ast & \cdots & \ast   \\
	    0 & 1 & & _{\displaystyle{0}}\quad           \\
	    \vdots & & \ddots &         \\
	    0 & \quad ^{\displaystyle{0}} & & 1
           \end{pmatrix}\,.
\]
Invertibility of the linear map $D\Psi(0)$ implies local invertibility of $\Psi$.
\end{proof}
\begin{remark}{\em The proof of the straightening theorem is, up to a point, constructive: If an explicit expression for the local flow $F(t,y)$ is known then the straightening map can be determined explicitly.}

\end{remark}

To finish this section, we establish a Lie series identity involving Lie brackets.
\begin{proposition}\label{liebrackser} Let $f, \,h\in \mathcal{A}(U)$. 
\begin{enumerate}[(a)]
\item  Then
\[
 \frac{\partial}{\partial s}\left(D_2H(s,y)^{-1} f(H(s,y))\right) = D_2 H(s,y)^{-1}\left[h,f\right](H(s,y)).
\]
for all $y$ and all $s$ in the maximal existence interval for $\dot x=h(x),\,x(0)=y$.
\item One has the identity
\[
D_2H(s,y)^{-1} f(H(s,y))=\exp(s\,{\rm ad}\,h)(f)(y)=\sum_k \frac{s^k}{k!}\,({\rm ad}\,h)^k(f)(y).
\]
\end{enumerate}
\end{proposition}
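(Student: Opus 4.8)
The plan is to prove part (a) by a direct computation resting on the variational equation from the Example in Section 1.1, and then to leverage part (a) to obtain part (b) through an induction-plus-analyticity argument that parallels the proof of Proposition \ref{liederprop}(c).

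For part (a), I would abbreviate $H = H(s,y)$ and $D_2H = D_2H(s,y)$. The variational equation for the flow of $\dot x = h(x)$ reads $\frac{\partial}{\partial s}D_2H = Dh(H)\,D_2H$ with $D_2H(0,y) = I$. First I would differentiate the identity $D_2H\cdot D_2H^{-1} = I$ with respect to $s$ and solve for the derivative of the inverse, obtaining $\frac{\partial}{\partial s}(D_2H^{-1}) = -D_2H^{-1}\,Dh(H)$. Combining this with $\frac{\partial}{\partial s}f(H) = Df(H)\,h(H)$ (chain rule together with $\frac{\partial}{\partial s}H = h(H)$) and applying the product rule yields
\[
\frac{\partial}{\partial s}\left(D_2H^{-1}f(H)\right) = D_2H^{-1}\left(Df(H)\,h(H) - Dh(H)\,f(H)\right) = D_2H^{-1}[h,f](H),
\]
where the last equality is just the definition of the Lie bracket. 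The only mildly delicate point is getting the derivative of $D_2H^{-1}$ correct, in particular its sign and the order of the factors; everything else is routine.

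For part (b), I would introduce, for each $g\in\mathcal{A}(U)$, the expression $\Theta_s(g)(y):= D_2H(s,y)^{-1}g(H(s,y))$, which is analytic in $(s,y)$ because $H$ is analytic by Theorem \ref{eudthm} and $D_2H$ is an invertible fundamental matrix. With this notation, part (a) applied to an arbitrary vector field states precisely that $\frac{\partial}{\partial s}\Theta_s(g) = \Theta_s([h,g]) = \Theta_s({\rm ad}\,h\,(g))$, while $\Theta_0(g) = g$ since $D_2H(0,y) = I$. Iterating this relation gives, by induction on $k$, the identity $\frac{\partial^k}{\partial s^k}\Theta_s(f) = \Theta_s(({\rm ad}\,h)^k(f))$, and evaluating at $s=0$ yields $\frac{\partial^k}{\partial s^k}\Theta_s(f)\big|_{s=0} = ({\rm ad}\,h)^k(f)(y)$.

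Finally, analyticity of $s\mapsto\Theta_s(f)(y)$ lets me expand it as a convergent power series $\sum_k s^k\psi_k(y)$ with $k!\,\psi_k(y) = \frac{\partial^k}{\partial s^k}\Theta_s(f)\big|_{s=0}$, exactly as in the proof of Proposition \ref{liederprop}(c). Substituting the value of the $k$-th derivative computed above gives $\psi_k(y) = \frac{1}{k!}({\rm ad}\,h)^k(f)(y)$, hence $\Theta_s(f)(y) = \sum_k\frac{s^k}{k!}({\rm ad}\,h)^k(f)(y) = \exp(s\,{\rm ad}\,h)(f)(y)$, as claimed. I expect the main obstacle to be purely a matter of bookkeeping: making sure the induction step legitimately reuses part (a) with $f$ replaced by $({\rm ad}\,h)^{k}(f)$, which is permissible precisely because part (a) was established for every element of $\mathcal{A}(U)$.
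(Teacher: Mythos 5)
Your proof is correct and takes essentially the same route as the paper: part (a) via the variational equation, the derivative of the inverse, and the product and chain rules, and part (b) by expanding the analytic function $s\mapsto D_2H(s,y)^{-1}f(H(s,y))$ in powers of $s$ and identifying the Taylor coefficients through induction on part (a). Your only cosmetic deviation is deriving $\frac{\partial}{\partial s}\left(D_2H^{-1}\right)=-D_2H^{-1}\,Dh(H)$ by differentiating $D_2H\cdot D_2H^{-1}=I$ instead of citing the derivative of the matrix inversion map $\iota$, which is the same computation.
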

\begin{proof} For part (a), first recall the variational equation
\[
 \frac{\partial}{\partial s}(D_2H(s,y)) = Dh(H(s,y))\cdot D_2H(s,y).
\]
Moreover the inversion map $\iota:X\to X^{-1}$ of linear automorphisms of $\mathbb K^n$ is rational, hence analytic and satisfies the identity $D_{\iota}(X)Y = -X^{-1} YX$.   \\
Using these facts, as well as the product rule and the chain rule, one obtains
\begin{align*}
 & \frac{\partial}{\partial s}\left(D_2H(s,y)^{-1} f(H(s,y))\right)  \\
& = -D_2H(s,y)^{-1} \left(\frac{\partial}{\partial s}D_2H(s,y)\right)D_2H(s,y)^{-1}   f(H(s,y)) \\
&\quad+ D_2H(s,y)^{-1} Df(H(s,y))\cdot \left(\frac{\partial}{\partial s}H(s,y)\right)  \\
& = -D_2 H(s,y)^{-1} Dh (H(s,y)) f(H(s,y)) + D_2H(s,y)^{-1} Df(H(s,y)) \cdot h(H(s,y)).
\end{align*}
For part (b), by analyticity there exists an expansion
\[
D_2H(s,y)^{-1} f(H(s,y))=\sum_k s^k g_k(y)
\]
with analytic $g_k$, and
\[
k\,! g_k(y)=\frac{\partial^k}{\partial s^k}\left(D_2H(s,y)^{-1} f(H(s,y))\right)|_{s=0}.
\]
Now use part (a) and induction.
\end{proof}
%%%%%%%%%%%%%%%%%%%%%%%%%%%%%%%%%%%%%%%%%%%%%%%%%%%%%%
\section{Invariant sets}
A subset $Y$ of $U$ is called \emph{invariant} for $\dot{x} = f(x)$ if $y\in Y$ implies $F(t,y)\in Y$ for all $t\in I_{\rm max}(y)$. In other words, a subset of $U$ is invariant if and only if it is a union of solution trajectories of \eqref{ode}. Thus the union, intersection and set-theoretic difference of invariant sets are invariant, and from dependence properties one sees that the closure, interior and boundary of a invariant set are themselves invariant.

The following criterion is frequently useful.
%%%
\begin{proposition}\label{invarcritprop}  %%%% %%%
\begin{enumerate}[(a)]
\item Let $\varphi_1,\ldots,\varphi_r\in A(U)$. If there exist analytic functions $\mu_{ij}$ $(1\leq i,j\leq r)$ on $U$ such that
\[
 X_f(\varphi_i) = \sum_j\mu_{ij}\varphi_j\quad \text{for} \;\; 1\leq i\leq r\,, 
\]
then the common zero set $Y$ of $\varphi_1,\ldots,\varphi_r$ is invariant for $\dot{x}= f(x)$.
\item For a partial converse, assume $\mathbb K=\mathbb C$, and let $Y\subseteq U$ be a nontrivial analytic invariant set for \eqref{ode}. For fixed $y_0\in Y$ denote by $J(Y)$ the vanishing ideal of $Y$ in the algebra of (germs of) analytic functions at $y_0$. Let $\psi_1,\ldots,\psi_s$ be generators of $J(Y)$. Then there exist (germs of) analytic functions $\nu_{ij}$ such that
\[
X_f(\psi_i)=\sum_j \nu_{ij}\psi_j\quad \text{for} \;\; 1\leq i\leq r.
\]
\end{enumerate}
\end{proposition}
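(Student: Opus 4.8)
The plan is to handle the two parts by quite different means: part (a) reduces to the uniqueness statement for a linear initial value problem, whereas part (b) amounts to showing that the derivation $X_f$ maps the vanishing ideal $J(Y)$ into itself, after which the stated conclusion is immediate from the fact that the $\psi_i$ generate $J(Y)$.

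For part (a), I would fix $y\in Y$, so that $\varphi_i(y)=0$ for all $i$, and set $u_i(t):=\varphi_i(F(t,y))$ on $I_{\rm max}(y)$. Combining the motivating identity \eqref{liedermot} with the hypothesis gives
\[
\dot u_i(t)=X_f(\varphi_i)(F(t,y))=\sum_j\mu_{ij}(F(t,y))\,\varphi_j(F(t,y))=\sum_j\mu_{ij}(F(t,y))\,u_j(t).
\]
Thus $u=(u_1,\dots,u_r)^{\top}$ solves the linear homogeneous system $\dot u=M(t)u$ with analytic coefficient matrix $M(t)=\bigl(\mu_{ij}(F(t,y))\bigr)$ and initial value $u(0)=0$. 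Since the zero function solves this initial value problem, uniqueness (via Theorem \ref{eudthm} applied after autonomizing the linear system as in \eqref{autnonaut}) forces $u\equiv 0$ on $I_{\rm max}(y)$; that is, $F(t,y)\in Y$ for all admissible $t$, which is invariance.

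For part (b), I would first reduce everything to the single assertion that $X_f(\psi)\in J(Y)$ for every $\psi\in J(Y)$. Granting this, applying it to each generator $\psi_i$ gives $X_f(\psi_i)\in J(Y)$, and since $\psi_1,\dots,\psi_s$ generate $J(Y)$ one may write $X_f(\psi_i)=\sum_j\nu_{ij}\psi_j$ with analytic germs $\nu_{ij}$, as required. To prove the reduced assertion, choose a representative of $\psi$ vanishing on $Y$ in a neighborhood $W$ of $y_0$, and take $y\in Y\cap W$. By invariance of $Y$ one has $F(t,y)\in Y$ for all $t\in I_{\rm max}(y)$, and by continuity of the flow $F(t,y)$ stays in $W$ for $t$ near $0$; hence $t\mapsto\psi(F(t,y))$ vanishes identically on a neighborhood of $t=0$. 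Differentiating at $t=0$ and using \eqref{liedermot} yields $X_f(\psi)(y)=0$. As $y\in Y\cap W$ was arbitrary, $X_f(\psi)$ vanishes on the germ of $Y$ at $y_0$, so by definition of the vanishing ideal $X_f(\psi)\in J(Y)$.

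The genuinely delicate point is confined to part (b), and it is ideal-theoretic bookkeeping rather than any analytic estimate. One must justify passing to germs at $y_0$ and, in particular, that $J(Y)$ is an honest ideal admitting a \emph{finite} generating set $\psi_1,\dots,\psi_s$; this is precisely where the hypotheses $\mathbb K=\mathbb C$ and ``$Y$ analytic'' enter, since the local ring of convergent power series at $y_0$ is Noetherian, so the vanishing ideal of the analytic germ of $Y$ is finitely generated. The only remaining care is in matching the germ $\psi$ with a neighborhood representative, so that the statement ``$\psi(F(t,y))=0$ for small $t$'' is legitimate; this is handled by shrinking $W$ and invoking invariance together with continuity of $F$.
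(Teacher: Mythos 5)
Your proof is correct, and part (a) coincides with the paper's argument: both reduce invariance to uniqueness for the homogeneous linear system $\dot u = M(t)u$ with $M(t)=\bigl(\mu_{ij}(F(t,y))\bigr)$ and $u(0)=0$. In part (b) you follow the same basic idea as the paper (differentiate $\psi(F(t,y))$ along the flow and use \eqref{liedermot} to get $X_f(\psi)(y)=0$ on $Y$ near $y_0$), but you close the argument differently. The paper takes $y$ in the common zero set of the generators $\psi_1,\ldots,\psi_s$ and then invokes the Hilbert--R\"uckert Nullstellensatz to pass from ``$X_f(\psi_i)$ vanishes on $\mathcal{V}(J(Y))$'' to ``$X_f(\psi_i)\in J(Y)$'', using that $J(Y)$ is radical. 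You instead work directly with points $y\in Y$ itself, so that the conclusion $X_f(\psi_i)\in J(Y)$ follows from the very definition of the vanishing ideal, with no Nullstellensatz needed; the only geometric input is that representatives can be matched with neighborhoods, which you handle correctly by shrinking $W$ and using continuity of the flow. This is a genuine (if small) simplification: since finite generation of $J(Y)$ is part of the hypothesis (``let $\psi_1,\ldots,\psi_s$ be generators''), your route in fact never uses $\mathbb K=\mathbb C$ at all, whereas the paper's citation of Hilbert--R\"uckert is what ties its formulation to the complex case. One side remark in your commentary is slightly off: you attribute the hypothesis $\mathbb K=\mathbb C$ to Noetherianness of the local ring of convergent power series, but that ring is Noetherian over $\mathbb R$ as well; in the paper the complex hypothesis is there for the Nullstellensatz (over $\mathbb R$ one would need the real Nullstellensatz with the real radical), not for finite generation. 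This does not affect the validity of your proof.
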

%%%
\begin{proof}
For part (a) let $y\in Y$ and abbreviate $z(t):=F(t,y)$. By assumption we have
\[
 \frac{d}{dt}\left(\varphi_i(z(t))\right)=\sum_j\mu_{ij}(z(t))\cdot \varphi_j(z(t)) \quad (1\leq i\leq r);
\]
therefore $w(t):= \begin{pmatrix}
                \varphi_1(z(t))  \\
		\vdots  \\
		\varphi_r(z(t))
                 \end{pmatrix}$ solves a homogeneous linear differential equation, and $y\in Y$ implies $w(0)=0$. By uniqueness one has $w(t)=0$ for all $t$.

To prove part (b), let $y$ be in a suitable neighborhood of $y_0$ and $\psi_1(y)=\cdots=\psi_s(y)=0$. Invariance implies $\psi_i(F(t,y))=0$ for all $t$ near $0$, and by differentiation and \eqref{liedermot} one finds $X_f(\psi_i)(y)=0$. The Hilbert-R\"uckert Nullstellensatz (see the Appendix) now shows $X_f(\psi_i)\in J(Y)$, and the assertion follows.
\end{proof}
\begin{remark}{\em Analogous statements, with analogous proofs, hold for invariant algebraic varieties of polynomial differential equations.}
\end{remark}
We introduce (resp. recall) special names.
\begin{definition} Let $\psi\in A(U)$ be nonconstant. Then:
\begin{enumerate}[(i)]
\item $\psi$ is called a {\em semi-invariant} of \eqref{ode} if there exists $\mu\in A(U)$ such that $X_f(\psi)=\mu\,\psi$.
\item $\psi$ is called a {\em first integral} of \eqref{ode} if $X_f(\psi)=0$.
\end{enumerate}

\end{definition}
By Proposition \ref{invarcritprop} the vanishing set of a semi-invariant, as well as every level set of a first integral, is invariant for $\dot x=f(x)$. Note that we require semi-invariants and first integrals to be nonconstant. As for the existence of first integrals, one has:
\begin{lemma}\label{locfirstint}
Let \eqref{ode} be given, and $y_0\in U$ with $f(y_0)\not=0$. Then there exists an open neighborhood $U^*$ of $y_0$ and functionally independent analytic first integrals $\psi_j:\,U^*\to \mathbb K$, $1\leq j\leq n-1$ of $\dot x=f(x)$.
\end{lemma}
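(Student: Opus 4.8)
The plan is to reduce the given equation to the trivial one by the straightening theorem and then pull back the obvious first integrals of the latter. Since $f(y_0)\neq 0$, Theorem \ref{thmstraight} supplies, after the affine normalization used in its proof (which carries $y_0$ to the origin and $f(y_0)$ to $e_1$, where $e_1,\dots,e_n$ denotes the standard basis of $\mathbb K^n$), an analytic, locally invertible map $\Psi$ with $\Psi(0)=0$ that is solution-preserving from the constant equation $\dot x=e_1$ to $\dot x=f(x)$. Restricting to a sufficiently small open set $\Omega\ni 0$ on which $D\Psi$ is invertible, $\Psi$ is an analytic diffeomorphism onto an open neighborhood $U^*$ of $y_0$, and its inverse $\Psi^{-1}:U^*\to\Omega$ is analytic with everywhere invertible derivative. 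Because $\Psi^{-1}$ sends trajectories of $\dot x=f(x)$ to trajectories of $\dot x=e_1$, it is solution-preserving from $\dot x=f(x)$ to $\dot x=e_1$.

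Next I would use that first integrals of the straightened equation are immediate: its solutions are lines in the $e_1$-direction, so each coordinate function $x_2,\dots,x_n$ on $\Omega$ is annihilated, namely $X_{e_1}(x_j)=Dx_j\cdot e_1=\partial x_j/\partial x_1=0$ for $2\le j\le n$. I then set
\[
\psi_j:=x_{j+1}\circ\Psi^{-1}:\,U^*\to\mathbb K,\qquad 1\le j\le n-1,
\]
which are analytic on $U^*$. Applying Proposition \ref{liecompatible}(a) with $\Phi=\Psi^{-1}$, source field $f$, target field $e_1$, and $\rho=x_{j+1}$ yields
\[
X_f(\psi_j)=X_f\bigl(x_{j+1}\circ\Psi^{-1}\bigr)=\bigl(X_{e_1}(x_{j+1})\bigr)\circ\Psi^{-1}=0,
\]
so every $\psi_j$ is a first integral of $\dot x=f(x)$.

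Functional independence is then inherited from that of the coordinate functions together with the invertibility of $D\Psi^{-1}$. By the chain rule $D\psi_j(x)=Dx_{j+1}\cdot D\Psi^{-1}(x)=e_{j+1}^{\top}D\Psi^{-1}(x)$, so the differentials $D\psi_1(x),\dots,D\psi_{n-1}(x)$ are the images of the linearly independent covectors $e_2^{\top},\dots,e_n^{\top}$ under the invertible matrix $D\Psi^{-1}(x)$; hence they remain linearly independent at every $x\in U^*$. In particular each $\psi_j$ is nonconstant, so the $\psi_j$ are genuine, functionally independent first integrals, as required.

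The genuinely routine ingredients are the explicit first integrals $x_2,\dots,x_n$ of the straightened equation and the chain-rule computation for independence. The one step demanding care is the orientation of the solution-preserving map: Proposition \ref{liecompatible}(a) transports a first integral of the \emph{target} equation to one of the \emph{source} equation, so to obtain first integrals of the given equation $\dot x=f(x)$ one must work with $\Psi^{-1}$ (solution-preserving from $\dot x=f(x)$ to the straightened equation) rather than with $\Psi$ itself. Reversing this direction is the only real pitfall in an otherwise direct reduction.
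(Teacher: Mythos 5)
Your proof is correct and is precisely the argument the paper intends: straighten $f$ via Theorem \ref{thmstraight}, take the coordinate first integrals $x_2,\ldots,x_n$ of the constant field, and transport them with Proposition \ref{liecompatible}. The paper states this in one line; you have merely spelled out the details, including the one genuine subtlety (that one must use $\Psi^{-1}$, solution-preserving from $\dot x=f(x)$ to $\dot x=e_1$, since Proposition \ref{liecompatible}(a) pulls back first integrals from target to source), which you handled correctly.
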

\begin{proof}
For the case of constant $f = \begin{pmatrix}
1 \\ 0 \\ \vdots \\ 0
 \end{pmatrix}$, the assertion is obvious with $\psi_j=x_{j+1}$, $1\leq j\leq n-1$. The general statement follows with the straightening theorem and Proposition \ref{liecompatible}.
\end{proof}
On the other hand, (nonconstant) first integrals will not in general exist near stationary points.
%%%%%%%%%%%%%%%%%%%%%%%%%%%%%%%%%%%%%%%%%%%
\section{Solutions and solution orbits (trajectories)}

We distinguish between solutions  $F(t,y)$ (including parameterization) and solution orbits 
\[
\left\{F(t,y);\,t\in I_{\rm max}(y)\right\}
\]
(also called trajectories) of an autonomous differential equation \eqref{ode}. This leads to various notions of equivalence. In this section we make frequent use of properties of (local) analytic functions; see the Appendix for background information.
\begin{definition}\label{deforbequiv} Let $f,\,f^*\in\mathcal{A}(U)$.
\begin{enumerate}[(i)]
\item We call $f$ and $f^*$ (or rather, the corresponding differential equations) {\em orbit equivalent} if they have the same solution orbits on $U$.
\item We call $f$ and $f^*$ {\em locally orbit equivalent} near $y_0$ if they have the same solution orbits in a neighborhood $U^*$ of $y_0$.
\item We call $f$ and $f^*$ {\em generically orbit equivalent} if they have the same local solution orbits in an open-dense subset of $U$.
\end{enumerate}
\end{definition}
One verifies easily that the above define equivalence relations. There is one distinguished equivalence class which contains just $f=0$ (recall that $U$ is connected). In the following we will consider nonzero vector fields.
\begin{proposition}\label{proporbequiv} Let $f,\,f^*\in\mathcal{A}(U)$.
\begin{enumerate}[(a)]
\item Given $y_0\in U$ with $f(y_0)\not=0$, $f$ and $f^*$ are locally orbit equivalent near $y_0$ if and only if there is an open neighborhood $U^*$ of $y_0$ and an analytic $\mu:\,U^*\mapsto \mathbb K$ without zeros such that $f^*=\mu\cdot f$. 
\item $f$ and $f^*$ are locally orbit equivalent near some $y_0\in U$ with $f(y_0)\not=0$ if and only if $f$ and $f^*$ are generically orbit equivalent.
\item Let $\mathbb K=\mathbb C$ and assume that the sets of stationary points of $f$ resp. $f^*$ contain no submanifold of codimension one. Then $f$ and $f^*$ are generically orbit equivalent if and only if they are orbit equivalent.
\end{enumerate}
\end{proposition}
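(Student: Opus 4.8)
The plan is to treat the three parts in order, using the straightening theorem as the engine for (a) and then leveraging analyticity of $U$ (the identity theorem) to globalize.

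\textbf{Parts (a) and (b).} For (a), the ``if'' direction is the observation that replacing $f$ by a nowhere-zero scalar multiple $\mu f$ only reparametrizes time along each trajectory and hence leaves the orbits unchanged. For the ``only if'' direction I would invoke Theorem \ref{thmstraight} to pass to coordinates near $y_0$ in which $f=(1,0,\ldots,0)^{\top}=:e_1$; then the orbits of $f$ are the coordinate curves in the $x_1$-direction. If $f^*$ shares these orbits, then along each such curve the last $n-1$ components of $f^*$ must vanish and $f^*$ must be tangent to it, forcing $f^*=\mu\,e_1=\mu f$ with $\mu$ analytic; since the orbit through $y_0$ is nontrivial, $f^*(y_0)\neq 0$ and $\mu$ is nonvanishing near $y_0$. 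For (b), local orbit equivalence near a non-stationary $y_0$ gives (by (a)) $f^*=\mu f$ on a neighborhood, so all $2\times 2$ minors $f_if^*_j-f_jf^*_i$ of the matrix $[\,f\mid f^*\,]$ vanish there; these are analytic and $U$ is connected, so by the identity theorem they vanish on all of $U$, i.e.\ $f$ and $f^*$ are everywhere proportional. The set $\Omega:=\{f\neq 0\}\cap\{f^*\neq 0\}$ is the complement of two nowhere-dense zero sets of nonzero analytic functions, hence open and dense, and on $\Omega$ proportionality reads $f^*=\mu f$ with $\mu$ nowhere zero; by (a), $f$ and $f^*$ are then locally orbit equivalent at every point of $\Omega$, which is generic orbit equivalence. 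Conversely, if the orbits agree on an open-dense set $D$, then $D\cap\{f\neq 0\}$ is nonempty and any of its points is a non-stationary point of local orbit equivalence.

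\textbf{Part (c).} One implication is immediate, since orbit equivalence on $U$ is a fortiori generic. For the converse I would start exactly as in (b): generic orbit equivalence forces everywhere-proportionality of $f$ and $f^*$, hence a well-defined analytic $\mu$ on $W:=\{f\neq 0\}$ with $f^*=\mu f$ there. The crux is to upgrade this to genuine orbit equivalence, and this is where the codimension hypothesis enters. Since $\mu\not\equiv 0$ (otherwise $f^*\equiv 0$ on the dense set $W$), its zero set $Z(\mu)\subseteq W$ is, if nonempty, a complex hypersurface whose regular locus is a codimension-one submanifold; but $Z(\mu)\subseteq Z(f^*)$, and $Z(f^*)$ contains no such submanifold by hypothesis, so $Z(\mu)=\emptyset$. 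Thus $f^*=\mu f\neq 0$ on $W$, giving $Z(f^*)\subseteq Z(f)$; the symmetric argument (writing $f=\nu f^*$ on $\{f^*\neq 0\}$ and using the hypothesis on $Z(f)$) yields $Z(f)\subseteq Z(f^*)$. Hence $f$ and $f^*$ share the stationary set $S$, and on $W=U\setminus S$ (connected, as $\mathbb K=\mathbb C$ and $S$ is a proper analytic subset) they are proportional by the nowhere-zero factor $\mu$, so they define the same foliation by orbits; on $S$ both orbits are single points. Therefore the orbits coincide on all of $U$.

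\textbf{Main obstacle.} I expect the decisive step to be the one in (c) that forbids $\mu$ (and $\nu$) from vanishing. Without the codimension-one hypothesis the proportionality factor could acquire zeros along a hypersurface contained in the stationary set of $f^*$ (or of $f$), on which one equation is stationary while the other is not, so the two agree only generically; the hypothesis is precisely what excludes this. A secondary technical point deserving care is the passage from local orbit equivalence at each point of $W$ to equality of the \emph{global} orbits: in the complex-analytic setting it is cleanest to note that a nowhere-zero analytic rescaling preserves the line field $x\mapsto \mathbb K\,f(x)$ and hence the foliation whose leaves are the orbits, so coincidence of the local leaves propagates along each orbit, which stays in $W$ because it never meets a stationary point.
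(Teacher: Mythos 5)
Your proposal is correct. In parts (a) and (b) it follows the paper's own route: straighten $f$ via Theorem \ref{thmstraight} so that its orbits are the lines $x_2=\mathrm{const},\ldots,x_n=\mathrm{const}$, read off $f^*=\mu f$ with $\mu$ nonvanishing (your one-sentence ``rescaling only reparametrizes time'' is carried out in the paper by explicitly solving $\dot\tau=\mu(z(\tau))$, $\tau(0)=0$), and globalize by the identity theorem --- the paper propagates the single relation $f_1^*f=f_1f^*$ where you propagate all $2\times2$ minors $f_if_j^*-f_jf_i^*$, a cosmetic difference. In part (c) you take a genuinely different route. The paper fixes $z_0$ with $f_1(z_0)=0$, uses that the local ring of germs is a unique factorization domain to cancel common factors and obtain $\sigma^*f=\sigma f^*$ with coprime germs, and excludes $\sigma(z_0)=0$ (the printed ``$\sigma(z_0)\neq0$'' at that point of the paper is a typo) because $f$ would then vanish identically on an irreducible component $Y$ of the zero set of $\sigma$, coprimality preventing $\sigma^*$ from vanishing on $Y$; this produces a nonvanishing analytic proportionality factor near \emph{every} point, including stationary ones. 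You instead work with the globally defined ratios $\mu$ on $\{f\neq0\}$ and $\nu$ on $\{f^*\neq0\}$, note $Z(\mu)\subseteq Z(f^*)$, and invoke the codimension hypothesis directly: a nonempty zero set of a not-identically-vanishing holomorphic function contains a codimension-one submanifold, so $Z(\mu)=Z(\nu)=\emptyset$ and hence $Z(f)=Z(f^*)$, with nonvanishing factor off this common stationary set. Both arguments rest on the local structure theory of analytic sets (Weierstrass-type hypersurface structure in your case; unique factorization plus the Nullstellensatz in the paper's); the paper's version buys the finer local statement of an analytic nonvanishing factor at every point, while yours avoids the coprime-germ bookkeeping and handles stationary points simply as common singleton orbits. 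Both you and the paper are brisk about the final local-to-global step (local coincidence of orbits implying equality of maximal orbits); your closing observation that nontrivial orbits never meet the common stationary set, so the reparametrization applies along entire orbits, is precisely the point needed and is no less complete than the paper's one-clause appeal to part (a).
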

\begin{proof}We let
\[
 f=\begin{pmatrix}
  f_1 \\ \vdots \\ f_n
  \end{pmatrix}\quad \text{and} \quad f^*=\begin{pmatrix}
  f^*_1 \\ \vdots \\ f^*_n
  \end{pmatrix}\,. 
\]
(a) Let $f$ and $f^*$ be orbit equivalent near $y_0$. By the straightening theorem we may assume that 
$
 f=\begin{pmatrix}
  1 \\0\\  \vdots \\ 0
  \end{pmatrix}$, hence every solution trajectory of $\dot x=f(x)$ lies in a one dimensional affine subspace given by $x_2={\rm const.},\ldots,\, x_n={\rm const.}$ The same holds for the trajectories of $\dot x=f^*(x)$, which implies
$
 f^*=\begin{pmatrix}
  f^*_1 \\0\\  \vdots \\ 0
  \end{pmatrix}$.  For the reverse direction, let $z(t)$ be a solution of $\dot{x} = f(x)$; $f(z(0))\neq 0$, and $f^* = \mu f$. For the solution of $\dot{x} = f^*(x)$, $x(0) = z(0)$ make the ansatz $z(\tau(t))$, with $\tau(t)$ to be determined. Substitute to obtain
\[
 \dot{\tau}(t) f(z(\tau(t))) = \dot{z}(\tau(t))\cdot \dot{\tau}(t) = \frac{d}{dt}(z(\tau(t))) = f^*(z(\tau(t))) = \mu(z(\tau(t)))\cdot f(z(\tau(t)))\,.
\]
This holds if and only if $\dot{\tau}(t) = \mu(z(\tau(t)))$, $\tau(0) = 0$, which is a differential equation for $\tau$ in $\mathbb R$, with analytic right hand side. This shows the existence of $\tau$, and moreover $\dot{\tau}(0) = \mu(z(0))\neq 0$, hence $\tau$ has an inverse function near $0$.

As for part (b), assume that $f^*=\mu \, f$, with $\mu$ having no zeros on some neighborhood $U^*$ of $y_0$. If $f(y_0)=0$ then $f^*(y_0)=0$, and both (stationary) trajectories are equal. Otherwise, assume $f_1(y_0)\not=0$ with no loss of generality. Then $f_1^*=\mu\,f_1$ and therefore 
\[
f_1^*\,f=f_1\,f^*
\]
on $U^*$. By connectedness and the identity theorem for analytic functions, this holds on all of $U$. This shows the nontrivial part of the asserted equivalence.

To prove the nontrivial assertion of part (c), we may again assume that $f_1\not=0$ and $f_1^*\,f=f_1\,f^*$ on $U$. Let $z_0$ such that $f_1(z_0)=0$. Since the local ring of analytic functions at $z_0$ is a unique factorization domain, cancelling common factors of $f_1$ and $f_1^*$ yields $\sigma^*f=\sigma f^*$ with relatively prime $\sigma,\,\sigma^*$. Assume now that $\sigma(z_0)=0$, and let $Y$ be an irreducible component of the zero set of $\sigma$. Since $\sigma^*$ does not vanish identically on $Y$, $f$ must vanish identically on $Y$; a contradiction. We find $\sigma(z_0)\not=0$ and, by the same argument, $\sigma^*(z_0)\not=0$. Part (a) now shows local orbit equivalence at every point, which implies orbit equivalence on $U$.
\end{proof}
We note a consequence of the proof of part (b):
\begin{corollary}
 Let $f,\,f^*\in\mathcal{A}(U)$, both nonzero. Then $f$ and $f^*$ are generically orbit equivalent if and only if in a  neighborhood of some nonstationary point of $f$, every analytic first integral of $f$ is a first integral of $f^*$, and vice versa.
\end{corollary}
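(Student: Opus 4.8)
The plan is to translate Proposition \ref{proporbequiv} into the language of first integrals, exploiting the fact that near a nonstationary point the solution orbits are exactly the common level sets of the first integrals. The central bridge is the elementary computation $X_{f^*}(\psi)=D\psi\cdot f^*$, so that $f^*=\mu\,f$ implies $X_{f^*}(\psi)=\mu\,X_f(\psi)$; when $\mu$ has no zeros the kernels of $X_f$ and $X_{f^*}$ coincide, i.e.\ the two fields have the same first integrals. I would prove the two implications of the biconditional separately.

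For the forward direction, suppose $f$ and $f^*$ are generically orbit equivalent. By Proposition \ref{proporbequiv}(b) they are locally orbit equivalent near some $y_0$ with $f(y_0)\neq 0$, and then by part (a) there is a neighborhood $U^*$ of $y_0$ and a zero-free analytic $\mu$ with $f^*=\mu\,f$ on $U^*$ (so $y_0$ is nonstationary for $f^*$ as well). The identity $X_{f^*}(\psi)=\mu\,X_f(\psi)$ then gives $X_f(\psi)=0\Leftrightarrow X_{f^*}(\psi)=0$ on $U^*$, which is precisely the assertion that, in a neighborhood of a nonstationary point of $f$, every first integral of $f$ is one of $f^*$ and conversely.

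For the converse, assume the first-integral condition holds near some $y_0$ with $f(y_0)\neq 0$. Using the straightening theorem (Theorem \ref{thmstraight}) together with the compatibility of the relevant constructions under coordinate changes (Proposition \ref{liecompatible}), I may assume $f$ is the constant field $(1,0,\dots,0)^{\mathsf T}$ near $y_0$. Then $X_f(\psi)=\partial\psi/\partial x_1$, so the coordinate functions $x_2,\dots,x_n$ are first integrals of $f$; by hypothesis they are first integrals of $f^*$, which forces $f^*_2=\cdots=f^*_n=0$, i.e.\ $f^*=f^*_1\cdot f$. Since $f^*\neq 0$ on the connected set $U$, the identity theorem gives $f^*_1\not\equiv 0$ near $y_0$, so there is a point $y_1$ arbitrarily close to $y_0$ with $f^*_1(y_1)\neq 0$. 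On a neighborhood of $y_1$ the factor $f^*_1$ is zero-free, $f^*=f^*_1\,f$, and $f(y_1)\neq 0$; hence Proposition \ref{proporbequiv}(a) yields local orbit equivalence near $y_1$, and Proposition \ref{proporbequiv}(b) upgrades this to generic orbit equivalence.

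I expect the only real subtlety to lie in the converse: the first-integral hypothesis pins $f^*$ down only up to the scalar factor $f^*_1$, which may well have zeros, so one cannot directly apply part (a) at $y_0$ itself. The resolution --- and the reason the statement concerns \emph{generic} rather than full local orbit equivalence --- is to move to a nearby point where the factor is nonzero before invoking (a) and then (b). I also note that this direction uses only one of the two inclusions in the hypothesis (that every first integral of $f$ is one of $f^*$); the symmetric formulation is natural because the forward direction in fact produces both inclusions.
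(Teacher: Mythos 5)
Your proof is correct and takes essentially the paper's intended route: the corollary is stated there as a consequence of the proof of Proposition \ref{proporbequiv}(b), in which the straightened coordinates $x_2,\ldots,x_n$ play exactly the role you give them (first integrals of $f$ whose level sets carry the orbits, forcing $f^*=f_1^*\,f$), and your forward direction is the same combination of parts (b) and (a) together with the identity $X_{f^*}(\psi)=\mu\,X_f(\psi)$. Your closing remarks --- that $f_1^*$ may vanish at $y_0$ itself, so one must pass to a nearby point and can only conclude \emph{generic} orbit equivalence, and that the converse uses just one of the two inclusions --- correctly identify the subtleties the paper's terse pointer leaves implicit.
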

Finally, a word of caution:
\begin{remark}\label{onedumbrem}
{\em In dimension one, all nonzero analytic vector fields are generically orbitally equivalent.}
\end{remark}
%%%%%%%%%%%%%%%%%%%%%%%%%%%%%%%%%
\section{Further remarks and notes}
\begin{itemize}
\item Some basic references on the matters presented should be listed here: For Theorem \ref{eudthm} see e.g. Cartan \cite{Ca}; for Lie series one may consult Groebner \cite{Grob}. Lie derivatives and Lie brackets are essential tools in differential geometry; the introductory pages of Helgason \cite{Hel} provide more information (in the smooth setting). Occasionally we need some facts from Algebraic Geometry; Shafarevich \cite{Sha}, Ruiz \cite{Rui} and Zariski/Samuel \cite{ZaSa60} are good sources for these. (See also the facts collected in the Appendix.)
\item Most papers and monographs on symmetries nowadays -- other than in Lie's times --  deal with smooth (i.e., infinitely differentiable) functions and vector fields, rather than analytic ones. For the present notes, we chose to discuss the analytic case, which in some aspects provides a more satisfactory theory. We give a quick account of corresponding (and non-corresponding) results for the smooth case.\\
Theorem \ref{eudthm} also holds, mutatis mutandis, for smooth systems, and there is no problem in extending the definitions of Lie derivative and Lie bracket to the smooth case. As for Propositions \ref{liederprop} and \ref{liebproperties}, parts (a) and (b) hold as well, but parts (c) do not make sense in the smooth case. All the results from Section 1.4 up to and including Proposition \ref{liebrackser} remain correct in the smooth case, as does part (a) of Proposition \ref{invarcritprop} concerning invariant sets. The remainder of Section 1.5 again works for smooth functions and vector fields. In Sction 1.6 one may transfer the notions to the smooth case, but few general results remain, since many arguments rely on special properties of analytic functions.
\end{itemize}
%%%%%%%%%%%%%%%%%%%%%%%%%%%%%%%%%%%%%%%%%%%%%%%
%%%%%%%%%%%%%%%%%%%%%%%%%%%%%%%%%%%%%%%%%
\chapter{Local one-parameter (orbital) symmetry groups}

Sophus Lie seems to have been the first to observe the relevance of symmetries (in particular of local one-parameter groups of symmetries) for the investigation of differential equations, including reduction of dimension and finding explicit solutions. We will distinguish symmetries from orbital symmetries; Lie's work focussed on the latter.
\section{Symmetries}
\begin{definition} Let the system \eqref{ode} be given.
\begin{itemize}
\item A {\em symmetry} of \eqref{ode} is a solution-preserving map $\Phi$ (defined on some open $U^*\subseteq U$) from $\dot x=f(x)$ to itself, such that the derivative $D\Phi(x)$ is invertible for all $x\in U^*$. 
\item Moreover $h\in\mathcal{A}(U)$ is called an {\em infinitesimal symmetry} of \eqref{ode} if $h$ generates a local one-parameter group of symmetries of \eqref{ode}, thus every $H(s,\cdot)$, with $s$ near $0$, is a symmetry.
\end{itemize}
\end{definition}
We first characterize the infinitesimal generators of local one-parameter symmetry groups.
\begin{proposition}
Let $h\in \mathcal{A}(U)$. Then $h$ generates a local one-parameter group of symmetries for $\dot x=f(x)$ if and only if $\left[h,\,f\right]=0$.
\end{proposition}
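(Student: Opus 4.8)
The plan is to reduce the group-theoretic statement to the infinitesimal criterion of Lemma \ref{solprescrit} and then to exploit the Lie series identity of Proposition \ref{liebrackser}. First I would note that each map $\Phi=H(s,\cdot)$ has invertible derivative for $s$ near $0$: indeed $D_2H(0,x)=I$, and $D_2H$ depends analytically (hence continuously) on $s$ by Theorem \ref{eudthm}, so $D_2H(s,x)$ stays invertible for small $s$. Thus by Lemma \ref{solprescrit}, $H(s,\cdot)$ is a symmetry of $\dot x=f(x)$ precisely when
\[
D_2H(s,x)\,f(x)=f(H(s,x)),
\]
and asserting that $h$ is an infinitesimal symmetry amounts to requiring this identity for all $s$ in a neighborhood of $0$ (and all $x$).

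Next I would rewrite this condition using Proposition \ref{liebrackser}(b). Multiplying on the left by $D_2H(s,x)^{-1}$ (legitimate by the invertibility just noted), the symmetry identity becomes
\[
f(x)=D_2H(s,x)^{-1}f(H(s,x))=\exp(s\,{\rm ad}\,h)(f)(x)=\sum_{k\geq 0}\frac{s^k}{k!}({\rm ad}\,h)^k(f)(x).
\]
Since the $k=0$ term is already $f(x)$, the requirement is exactly that the analytic-in-$s$ tail vanishes identically near $s=0$.

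For the ``if'' direction, suppose $[h,f]=0$, that is $({\rm ad}\,h)(f)=0$. Then $({\rm ad}\,h)^k(f)=({\rm ad}\,h)^{k-1}([h,f])=0$ for every $k\geq 1$, so the series collapses to $f(x)$, the identity holds for all $s$, and each $H(s,\cdot)$ is a symmetry. For the ``only if'' direction, I would differentiate the identity $f(x)=\exp(s\,{\rm ad}\,h)(f)(x)$ with respect to $s$ at $s=0$ (equivalently, read off the coefficient of $s^1$), which gives $({\rm ad}\,h)(f)=[h,f]=0$.

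I do not expect a genuine obstacle here: once Proposition \ref{liebrackser}(b) is available it carries essentially all the analytic content, converting the functional identity into a statement about the vanishing of iterated brackets. The only points requiring a little care are the justification that $D_2H(s,\cdot)$ is invertible for small $s$ (so that Lemma \ref{solprescrit} and the inverse appearing in Proposition \ref{liebrackser} both apply) and the elementary but crucial observation that the single equation $[h,f]=0$ already forces all higher iterates $({\rm ad}\,h)^k(f)$ to vanish, so that the first-order condition is in fact equivalent to the full one-parameter requirement.
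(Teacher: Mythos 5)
Your proof is correct and follows essentially the same route as the paper: reduce via Lemma \ref{solprescrit} to the identity $D_2H(s,y)^{-1}f(H(s,y))=f(y)$ and then apply the Lie series identity of Proposition \ref{liebrackser} to see this is equivalent to $[h,f]=0$. Your additional remarks (invertibility of $D_2H(s,\cdot)$ for small $s$, and that $[h,f]=0$ kills all iterates $({\rm ad}\,h)^k(f)$) merely make explicit what the paper leaves implicit.
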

\begin{proof}
By Lemma \ref{solprescrit}, $H(s,\,\cdot)$ is a symmetry of \eqref{ode} if and only if 
\[
D_2H(s,y)^{-1}f(H(s,y))= f(y)
\]
for all $y\in U$ and $s$ near $0$. Proposition \ref{liebrackser} shows that the latter is equivalent to $\left[h,\,f\right]=0$.
\end{proof}
We turn to the reduction of symmetric systems.
\begin{theorem}\label{symredthm}
Let $f,\,h\in\mathcal{A}(U)$ with $\left[h,\,f\right]=0$. Moreover let $\emptyset \neq W\subseteq \mathbb K^n$ be open and let the analytic map $\Psi:W\to U$ be solution-preserving from $\dot{x} = \begin{pmatrix}
1 \\ 0 \\ \vdots \\ 0     \end{pmatrix}$ to $\dot{x} = h(x)$, with $D\Psi(x)$ invertible for all $x$. Define
\[
f^*(x):= D\Psi(x)^{-1}f(\Psi(x)).
\]
Then $\Psi$ is solution-preserving from $\dot{x} =f^*(x)$ to $\dot x = f(x)$, and 
\[
f^*(x) = \begin{pmatrix}
                          f_1^*(x_2,\ldots, x_n)  \\
			  \vdots    \\
			  f_n^*(x_2,\ldots, x_n)
                         \end{pmatrix}
\]
does not depend on $x_1$.
\end{theorem}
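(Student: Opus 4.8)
The plan is to verify the two assertions separately, beginning with the solution-preserving property, which is essentially immediate, and then devoting the bulk of the work to the structural claim that $f^*$ is independent of $x_1$.

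First I would establish that $\Psi$ is solution-preserving from $\dot x = f^*(x)$ to $\dot x = f(x)$. This follows directly from the definition $f^*(x) = D\Psi(x)^{-1} f(\Psi(x))$, since multiplying through by $D\Psi(x)$ gives exactly the identity $D\Psi(x)\, f^*(x) = f(\Psi(x))$, which is the criterion \eqref{solpresid} of Lemma \ref{solprescrit}. (This is precisely the coordinate-transformation remark following Lemma \ref{solprescrit}.) So this part requires no real computation.

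The heart of the matter is showing that $f^*$ does not depend on $x_1$, i.e. that $\partial f^*/\partial x_1 = 0$. My strategy would be to translate the hypothesis $[h,f]=0$ into a statement about $f^*$ via the compatibility of Lie brackets with solution-preserving maps. Since $\Psi$ is solution-preserving from $\dot x = e_1$ (the constant field $(1,0,\ldots,0)^{\mathrm t}$) to $\dot x = h(x)$, and also from $\dot x = f^*(x)$ to $\dot x = f(x)$, Proposition \ref{liecompatible}(b) applies with the pair $(e_1, f^*)$ on the source side and $(h,f)$ on the target side. It yields
\[
D\Psi(x)\,[e_1,\,f^*](x) = [h,\,f](\Psi(x)) = 0.
\]
Because $D\Psi(x)$ is invertible, this forces $[e_1, f^*] = 0$. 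Now I would compute this bracket explicitly from Definition \ref{liebdef}: with the constant field $e_1$ one has $De_1 = 0$, so
\[
[e_1,\,f^*](x) = Df^*(x)\,e_1 - D e_1\,f^*(x) = Df^*(x)\,e_1 = \frac{\partial f^*}{\partial x_1}(x).
\]
Hence $[e_1,f^*]=0$ is exactly the statement $\partial f^*/\partial x_1 = 0$, which gives the claim.

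The main obstacle — really the only point needing care — is confirming that the hypotheses of Proposition \ref{liecompatible}(b) are genuinely met, namely that the same map $\Psi$ intertwines both pairs of vector fields in the required direction. One must check that $\Psi$ is analytic with $D\Psi$ invertible (given in the statement) and that the two intertwining identities $D\Psi\,e_1 = h\circ\Psi$ and $D\Psi\,f^* = f\circ\Psi$ hold; the first is the solution-preserving hypothesis on $\Psi$ via Lemma \ref{solprescrit}, and the second is the defining relation for $f^*$ just verified. One should also note that invertibility of $D\Psi(x)$ is used twice: once to pass from the bracket identity to $[e_1,f^*]=0$, and implicitly to ensure $f^*$ is well defined and analytic. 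Beyond this bookkeeping the argument is purely formal, so I expect no serious difficulty.
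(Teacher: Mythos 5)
Your proposal is correct and follows essentially the same route as the paper's own proof: the solution-preserving claim is immediate from the definition of $f^*$ via Lemma \ref{solprescrit}, and the independence of $x_1$ follows by applying Proposition \ref{liecompatible}(b) to the pairs $\bigl(e_1,f^*\bigr)$ and $\bigl(h,f\bigr)$, using invertibility of $D\Psi(x)$ to conclude $\left[e_1,f^*\right]=0$, i.e. $\partial f^*/\partial x_1=0$. Your write-up merely spells out the bookkeeping (which intertwining identities are needed and where invertibility enters) that the paper leaves implicit.
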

%%%
\begin{proof}
$\Psi$ is solution-preserving by construction. Since solution-preserving maps respect Lie brackets (see Proposition \ref{liecompatible}), one obtains
$\left[\begin{pmatrix} 1 \\ 0 \\ \vdots \\ 0\end{pmatrix},f^*\right]=0$, or $\frac{\partial f^*}{\partial x_1}= 0$.
\end{proof}
Therefore a solution of the system
\begin{align*}
 \dot x_1 & = f_1^*(x_2,\ldots,x_n)   \\
 \dot x_2 & = f_2^*(x_2,\ldots,x_n)   \\
	& \qquad \vdots    \\
 \dot x_n & = f_n^*(x_2,\ldots,x_n)
\end{align*}
can be obtained by solving a differential equation in $\mathbb K^{n-1}$ and an additional quadrature. (In the present context one decrees that quadratures are unproblematic.) Thus, in effect one has reduced the dimension by one. In particular,  for dimension $n=2$, with a separable equation for one variable, only quadratures remain. \\
There is another approach to reduction. 
\begin{proposition}\label{redbyinv}
Let $f,\,h\in\mathcal{A}(U)$ with $\left[h,\,f\right]=0$. Then:
\begin{enumerate}[(a)]
\item For every $\gamma\in A(U)$ with $X_h(\gamma)=0$ one also has $X_h\left(X_f(\gamma)\right)=0$.
\item Let $U^*\subseteq U$ be nonempty and connected, and assume that $\gamma_1,\ldots,\gamma_s:\,U^*\to\mathbb K$ satisfy the following properties:
\begin{itemize}
\item Each $\gamma_i$ is a first integral of $h$.
\item For every $i$, $1\leq i\leq n$ and every $y_0\in U^*$ there exists an analytic function $\sigma_i$ in $s$ variables such that $X_f(\gamma_i)(x)=\sigma_i (\gamma_1(x),\ldots,\gamma_s(x))$ in some neighborhood of $y_0$.
\end{itemize}
Then the map
\[
\Gamma:=\begin{pmatrix}\gamma_1\\ \vdots\\ \gamma_s\end{pmatrix}:\, U^*\to \mathbb K^s
\]
is solution-preserving from $\dot x=f(x)$ to some differential equation on an open subset of $\mathbb K^s$.
\end{enumerate}
\end{proposition}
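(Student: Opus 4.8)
\emph{Part (a).} The plan is to reduce everything to the commutativity of the two Lie derivatives. Since $[h,f]=0$, Proposition \ref{liebproperties}(a) gives $X_{[h,f]}=X_hX_f-X_fX_h=0$, so $X_h$ and $X_f$ commute as operators on $A(U)$. Hence for any $\gamma$ with $X_h(\gamma)=0$ I simply compute $X_h(X_f(\gamma))=X_f(X_h(\gamma))=X_f(0)=0$. This is the whole of part (a), and I expect no difficulty here.

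\emph{Part (b).} The strategy is to produce a vector field $g=(g_1,\dots,g_s)^{\mathsf T}$ on an open subset of $\mathbb K^s$ and then invoke the differentiability criterion for solution-preserving maps, Lemma \ref{solprescrit}. The natural candidate is $g_i:=\sigma_i$, because the $i$-th component of $D\Gamma(x)\,f(x)$ is exactly $D\gamma_i(x)\,f(x)=X_f(\gamma_i)(x)$, and by hypothesis this equals $\sigma_i(\gamma_1(x),\dots,\gamma_s(x))=g_i(\Gamma(x))$. Once $g$ is exhibited as a well-defined analytic map on an open set $V\subseteq\mathbb K^s$ meeting $\Gamma(U^*)$, the identity $D\Gamma(x)\,f(x)=g(\Gamma(x))$ holds on the relevant part of $U^*$, and Lemma \ref{solprescrit} immediately yields that $\Gamma$ is solution-preserving from $\dot x=f(x)$ to $\dot y=g(y)$.

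The main obstacle is the passage from the \emph{locally} defined functions $\sigma_i$ --- whose existence near each $y_0$ is all that the hypothesis guarantees, and which may a priori depend on the base point --- to a \emph{single} analytic vector field $g$ on one open subset of $\mathbb K^s$. Here I would argue as follows. On the image $\Gamma(U^*)$ the value of $g_i$ is forced: for $q=\Gamma(x)$ one must have $g_i(q)=X_f(\gamma_i)(x)$, and this is independent of the chosen preimage precisely because, by hypothesis, $X_f(\gamma_i)$ factors through $\Gamma$; thus $g_i$ is unambiguously defined on $\Gamma(U^*)$ and any two local choices of $\sigma_i$ agree wherever their domains meet the image. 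To secure an honest open domain I would restrict attention to a point $y_0$ at which $D\Gamma$ has locally maximal rank $r$; near such a point $\Gamma$ has constant rank, its image is an embedded analytic submanifold, and the forced values of $g_i$ define an analytic function on that submanifold which extends to an analytic function on a full open neighborhood $V$ in $\mathbb K^s$ (extend in coordinates adapted to the submanifold). On the corresponding open preimage the identity of Lemma \ref{solprescrit} then holds with this single $g$, giving the desired differential equation on $V$.

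Finally, part (a) serves as the consistency check underlying this construction: it guarantees that each $X_f(\gamma_i)$ is again a first integral of $h$, hence is constant along exactly those $h$-orbits along which the $\gamma_j$ are constant. This is the structural fact that makes it at all possible to express $X_f(\gamma_i)$ through the first integrals $\gamma_1,\dots,\gamma_s$, and therefore to define $g$; the second hypothesis of part (b) records that such an expression indeed exists.
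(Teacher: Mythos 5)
Your argument is correct and is essentially the paper's own: part (a) is the identical computation $0=X_{[h,f]}(\gamma)=X_hX_f(\gamma)-X_fX_h(\gamma)=X_hX_f(\gamma)$ via Proposition \ref{liebproperties}, and part (b) is the same observation that the hypothesis $X_f(\gamma_i)=\sigma_i(\gamma_1,\ldots,\gamma_s)$ restates as $D\Gamma(x)f(x)=g(\Gamma(x))$ with $g=(\sigma_1,\ldots,\sigma_s)^{\rm tr}$, i.e.\ exactly the criterion of Lemma \ref{solprescrit}. Your extra paragraph on patching the locally given $\sigma_i$ into one vector field addresses a point the paper passes over in silence (its proof implicitly reads the conclusion locally, near each $y_0$, which is also the only reading your hypothesis-level ``factors through $\Gamma$'' claim actually supports globally), and your constant-rank localization handles that point adequately without changing the route.
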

\begin{proof}
Part (a) follows with
\[
0=X_{\left[h,\,f\right]}(\gamma)=X_hX_f(\gamma)-X_fX_h(\gamma)=X_hX_f(\gamma)
\]
using Proposition \ref{liebproperties}. Part (b) is then automatic, since
\[
X_f(\gamma_i)=\sigma_i(\gamma_1,\ldots,\gamma_s), \quad 1\leq i\leq s
\]
may be restated as
\[
D\Gamma(x)f(x)=\begin{pmatrix}\sigma_1\\ \vdots\\ \sigma_s\end{pmatrix}\left(\Gamma(x)\right).
\]
\end{proof}
\begin{remark}{\em 
The hypothesis of part (b) above is certainly satisfied when for {\em every} first integral $\theta$ of $h$ on $U^*$ and every $y_0\in U^*$ there exists an analytic function $\sigma$ in $s$ variables such that $\theta(x)=\sigma(\gamma_1(x),\ldots,\gamma_s(x))$ near $y_0$. This is the case, for instance,  with $s=n-1$ in a suitable neighborhood of any $z$ with $h(z)\not=0$; see Lemma \ref{locfirstint}.}
\end{remark}
To apply Theorem \ref{symredthm} in a ``practical'' context, one needs to know a straightening transformation for $h$. According to the proof of Theorem \ref{thmstraight} this transformation may be determined explicitly from the general solution $H(t,\,y)$, and therefore systems with ``nice'' infinitesimal symmetries are amenable to the reduction method from Theorem \ref{symredthm}.
\begin{example}\label{rotexdim2}{\em 
Let $\mu_1$ and $\mu_2$ be arbitrary functions of one variable, and  $g(x):=\begin{pmatrix}
	   -x_2 \\ +x_1                                                             \end{pmatrix}$.   For
\[
f(x):=\mu_1(x_1^2+x_2^2)\cdot x+\mu_2(x_1^2+x_2^2)\cdot g(x)
\]
one verifies $[g,f]=0$. (Thus $f$ admits rotational symmetry.) Now with 
 \[
\Psi(x):=
\begin{pmatrix}
\arctan \frac{x_2}{x_1}	\\ x_1^2+x_2^2
\end{pmatrix}
, \quad D\Psi(x) = \begin{pmatrix}
				\frac{-x_2}{x_1^2+x_2^2} & \ \frac{x_1}{x_1^2+x_2^2}\\
				2x_1 & 2x_2
				\end{pmatrix}
\]
 one has
$D\Psi(x)g(x) = \begin{pmatrix}
		1 \\ 0
		\end{pmatrix}$. 
Transforming $f$, we get
\[
 D\Psi(x) f(x) = (x_1^2+x_2^2)\mu_1 (x_1^2+x_2^2)\cdot\begin{pmatrix}
                                                          0 \\ 2
                                                         \end{pmatrix}+ \mu_2(x_1^2+x_2^2)\cdot
\begin{pmatrix}
  1 \\ 0
\end{pmatrix}\,,
\]
thus $D\Psi(x)f(x) = \widetilde{f}(\Psi(x))$, with $\widetilde{f}(x) = 
\begin{pmatrix}
  \mu_2(x_2) \\ 2x_2\cdot \mu_1(x_2)
\end{pmatrix}$.   \\
Note that $\Psi$ is the inverse of a straightening function according to Theorem \ref{thmstraight}; therefore the strategy from Theorem \ref{symredthm} was modified. As expected, $\dot{x} = \widetilde{f}(x)$ reduces to an equation in $\mathbb K$ and one quadrature.\\
One can also proceed via Proposition \ref{redbyinv}, with the first integral $\gamma=x_1^2+x_2^2$, and 
\[
X_f(\gamma)=2\mu_1(\gamma)\cdot\gamma.
\]
}
\end{example}
Lie went on to perform a kind of ``reverse engineering'', by constructing differential equations with prescribed symmetries. The procedure will be described next.
\begin{remark}\label{revengrem}{\em 
Let $h\in \mathcal{A}(U)$, moreover  $ W\subseteq \mathbb K^n$ nonempty and open, and $\Psi:W\to U^*\subseteq U$ an analytic map that is solution-preserving from $\dot{x} = \begin{pmatrix}
1 \\ 0 \\ \vdots \\ 0     \end{pmatrix}$ to $\dot{x} = h(x)$, with $D\Psi(x)$ invertible for all $x$. For any $f^*\in\mathcal{A}(W)$ which depends only on $x_2,\ldots, x_n$, define $f\in \mathcal{A}(U)$ by $D\Psi(x)f^*(x)=f(\Psi(x))$. Then $\left[h,\,f\right]=0$, and $f$ admits a reduction via Theorem \ref{symredthm}. (To find $f$ explicitly, one needs to know explicitly the local inverse of $\Psi$.)\\
At first sight this procedure may seem strange, but with its help Lie managed to provide a unified approach to large classes of differential equations (primarily in dimension two) that were known to be solvable ``by some trickery'', and moreover to extend these classes.}
\end{remark}
\begin{example}\label{diagonalex2d}{\em 
Let $\beta\in\mathbb K$ and  
\[
h(x) = \begin{pmatrix}
                              x_1  \\ \beta\; x_2
                               \end{pmatrix},\quad h\left(\begin{pmatrix}
							1  \\  0
							\end{pmatrix}\right) = 
\begin{pmatrix}
1  \\  0
\end{pmatrix}.
\]
From the proof of the straightening theorem, with $H(t,y)=
\begin{pmatrix}
 e^{t} y_1  \\
e^{q/p\cdot t}y_2
\end{pmatrix}$, one finds that 
\[
\Psi (x) = H\left(x_1-1,\begin{pmatrix}
						1  \\ x_2
						\end{pmatrix}\right)=\begin{pmatrix}e^{x_1-1}   \\
	      e^{\beta(x_1-1)}x_2 \end{pmatrix}
\]
satisfies $D\Psi(x)\begin{pmatrix} 1\\0\end{pmatrix}=h(\Psi(x)$.
The inverse of $\Psi$ is easily computed as
\[
\Gamma(x)=\begin{pmatrix}1+\log x_1\\ x_1^{-\beta}x_2\end{pmatrix},\quad\text{ with  }D\Gamma(x)^{-1}=\begin{pmatrix}x_1 & 0\\ \beta x_1^2x_2 & x_1^\beta\end{pmatrix}.
\]
Now given $f^*(x)=\begin{pmatrix}f_1^*(x_2)\\ f_2^*(x_2)\end{pmatrix}$, one computes
\[
f(x)=D\Gamma(x)^{-1}f^*(\Gamma(x))=\begin{pmatrix}x_1f_1^*(x_1^{-\beta}x_2)\\ \beta x_2f_1^*(x_1^{-\beta}x_2)+x_1^\beta f_2^*(x_1^{-\beta}x_2)\end{pmatrix},
\]
and one may directly verify that $\left[f,h\right]=0$.
We thus have constructed a class of differential equations which admit a given infinitesimal symmetry (and locally we determined all of these equations).
}
\end{example}
As can be seen from the discussion above, it is useful to know sets of commuting vector fields (but trivial cases like $h\in\mathbb K f$ do not help). One construction method (see e.g.\ \cite{HaWa}, although there should exist an earlier reference for this result) is as follows.
\begin{lemma}\label{commlem}
Let $W\subseteq \mathbb K^n$ be nonempty and open, and $\Gamma:\,W\to \mathbb K^n$ analytic with invertible derivative $D\Gamma(x)$ for all $x\in W$. For any $a\in\mathbb K^n$ set
\[
q_a(x):=D\Gamma(x)^{-1}a.
\]
Then 
\[
\left[q_a,\,q_b\right]=0 \text{  for all  }a,\,b\in\mathbb K^n.
\]
\end{lemma}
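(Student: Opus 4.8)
The plan is to recognize $q_a$ as the pullback under $\Gamma$ of the constant vector field with value $a$, and then to exploit the fact that solution-preserving maps respect Lie brackets. Since constant vector fields manifestly commute, the bracket $[q_a,q_b]$ will be forced to vanish.

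First I would check that $\Gamma$ is solution-preserving from $\dot x = q_a(x)$ on $W$ to the constant-coefficient equation $\dot x = a$. By the criterion of Lemma \ref{solprescrit} this amounts to the identity $D\Gamma(x)\,q_a(x) = a$, which is immediate from the definition $q_a(x) = D\Gamma(x)^{-1}a$, the right-hand side being exactly the constant field evaluated at $\Gamma(x)$. Writing $\bar a,\bar b$ for the constant fields with values $a,b$, their derivatives vanish identically, so $[\bar a,\bar b] = D\bar b\cdot\bar a - D\bar a\cdot\bar b = 0$ directly from Definition \ref{liebdef}. Applying Proposition \ref{liecompatible}(b) with $f_1=q_a$, $f_2=q_b$, $g_1=\bar a$, $g_2=\bar b$ (the relations $D\Gamma(x)q_i(x)=\bar{\cdot}(\Gamma(x))$ being what was just verified) then gives
\[
D\Gamma(x)\,[q_a,q_b](x) = [\bar a,\bar b](\Gamma(x)) = 0 .
\]
Since $D\Gamma(x)$ is invertible by hypothesis, $[q_a,q_b](x)=0$ at every $x\in W$, as claimed.

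I do not expect a genuine obstacle here; the only point requiring mild care is the bookkeeping of domains, namely that Proposition \ref{liecompatible} is stated for a target $V\subseteq\mathbb K^n$ that is open and connected. This is legitimate because invertibility of $D\Gamma(x)$ makes $\Gamma$ a local diffeomorphism with open image, and because the bracket is a pointwise quantity: it suffices to run the argument on a small neighborhood of each fixed point, where $\Gamma$ restricts to a diffeomorphism onto an open connected set.

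As a fully self-contained alternative, one may compute directly. With $N(x):=D\Gamma(x)^{-1}$ and the standard formula for the derivative of matrix inversion, $D(X\mapsto X^{-1})(X)Y = -X^{-1}YX^{-1}$ (as used in Proposition \ref{liebrackser}), substitution yields $Dq_a(x)\,w = -N(x)\,D^2\Gamma(x)(q_a(x),w)$, so that
\[
[q_a,q_b](x) = N(x)\left(D^2\Gamma(x)(q_a,q_b) - D^2\Gamma(x)(q_b,q_a)\right),
\]
which vanishes by symmetry of the second derivative $D^2\Gamma(x)$. This makes transparent that the real content of the lemma is exactly the symmetry of second partials, the same mechanism underlying identity \eqref{liebident}.
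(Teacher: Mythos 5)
Your proposal is correct, and your main argument takes a genuinely different route from the paper: the paper proves the lemma exactly by your ``self-contained alternative'', i.e.\ the direct computation with the matrix-inversion derivative $D\iota(X)Y=-X^{-1}YX^{-1}$, concluding from the symmetry of $D^2\Gamma(x)$ in $a$ and $b$. Your primary argument instead interprets $q_a$ as the pullback of the constant field $\bar a$ and invokes Proposition \ref{liecompatible}(b) together with $[\bar a,\bar b]=0$ and invertibility of $D\Gamma(x)$; this is a valid and conceptually attractive alternative, exhibiting the lemma as an instance of the naturality of the Lie bracket under solution-preserving maps rather than as a computation. Two small remarks on the bookkeeping: you can dispense with the local-diffeomorphism discussion entirely by taking $V=\mathbb K^n$ as the target, since the constant fields $\bar a,\bar b$ are analytic on all of $\mathbb K^n$ and the intertwining relation $D\Gamma(x)q_a(x)=a=\bar a(\Gamma(x))$ holds globally on $W$ --- Proposition \ref{liecompatible}(b) requires neither injectivity of $\Phi$ nor openness of its image, only the stated identities; and since the lemma's $W$ is not assumed connected (unlike the standing hypothesis on $U$), one should, as you say, argue on connected components, which is harmless because the conclusion is pointwise. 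What each approach buys: the paper's computation is elementary and makes transparent that the content is precisely symmetry of second partials (the mechanism behind \eqref{liebident}), while your pullback argument generalizes immediately --- the same reasoning shows $\Gamma$ transports any family of commuting fields on the target to commuting fields on $W$, which is exactly the perspective underlying the converse statement in Proposition \ref{commprop}.
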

\begin{proof}
Using the rule for differentiating matrix inversion, and with $D^2$ denoting the second derivative, one has
\[
D\left(D\Gamma(x)^{-1}a\right)z=-D\Gamma(x)^{-1}D^2\Gamma(x)(D\Gamma(x)^{-1}a,z).
\]
Therefore
\[
D\left(D\Gamma(x)^{-1}a\right)D\Gamma(x)^{-1}b=-D\Gamma(x)^{-1}D^2\Gamma(x)(D\Gamma(x)^{-1}a,D\Gamma(x)^{-1}b).
\]
Due to symmetry of the second derivative the last expression is symmetric in $a$ and $b$.
\end{proof}
Perhaps surprisingly, the reverse also holds:
\begin{proposition}\label{commprop}
Let $g_1,\ldots,g_n\in\mathcal{A}(U)$ such that $\left[g_i,\,g_j\right]=0$ for $1\leq i,\,j\leq n$, and let $y_0\in U$ such that $g_1(y_0),\ldots, g_n(y_0)$ are linearly independent in $\mathbb K^n$. Then there exist $W$ and $\Gamma$ as in Lemma \ref{commlem} such that $g_i(x)=D\Gamma(x)^{-1}e_i$, $1\leq i\leq n$, where the $e_i$ denote the elements of the standard basis of $\mathbb K^n$.
\end{proposition}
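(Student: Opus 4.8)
The plan is to construct $\Gamma$ as the inverse of a map $\Phi$ that simultaneously straightens all of the $g_i$, where $\Phi$ is obtained by composing their flows. Write $G^{(i)}(s,y)$ for the local flow of $\dot x=g_i(x)$, which exists and is analytic near $y_0$ by Theorem \ref{eudthm}.

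First I would record that commuting vector fields have commuting flows. Since $[g_i,g_j]=0$, the Proposition characterizing infinitesimal symmetries shows that each time-$t$ map $G^{(j)}(t,\cdot)$ is a symmetry of $\dot x=g_i(x)$, hence (by Definition \ref{solpresdef}) solution-preserving from $\dot x=g_i$ to itself. Written out, this is the commutation relation
\[
G^{(i)}\!\left(s,G^{(j)}(t,y)\right)=G^{(j)}\!\left(t,G^{(i)}(s,y)\right)
\]
for all $y$ in a neighborhood of $y_0$ and all sufficiently small $s,t$.

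Next I would set
\[
\Phi(t_1,\ldots,t_n):=G^{(1)}\!\left(t_1,G^{(2)}\!\left(t_2,\cdots G^{(n)}(t_n,y_0)\cdots\right)\right),
\]
analytic near $0$ with $\Phi(0)=y_0$. Restricting to the $t_i$-axis, the outer flows reduce to the identity (each $G^{(j)}(0,\cdot)=\mathrm{id}$) and the inner ones fix $y_0$, leaving $\Phi(0,\ldots,t_i,\ldots,0)=G^{(i)}(t_i,y_0)$; differentiating gives $\partial\Phi/\partial t_i(0)=g_i(y_0)$. Thus $D\Phi(0)=\bigl(g_1(y_0)\,|\,\cdots\,|\,g_n(y_0)\bigr)$ is invertible by hypothesis, so $\Phi$ is a local analytic diffeomorphism of a neighborhood of $0$ onto a neighborhood $W$ of $y_0$. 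Now the crucial point: using the commutation relation repeatedly to move $G^{(i)}(t_i,\cdot)$ into the outermost position, I would differentiate with respect to $t_i$ to obtain $\partial\Phi/\partial t_i(t)=g_i(\Phi(t))$, i.e.\ $D\Phi(t)\,e_i=g_i(\Phi(t))$, now for all $t$ near $0$ and not merely on the axis. Setting $\Gamma:=\Phi^{-1}:W\to\mathbb K^n$, the inverse function theorem gives $D\Gamma(x)^{-1}=D\Phi(\Gamma(x))$; substituting $t=\Gamma(x)$ yields $D\Gamma(x)^{-1}e_i=g_i(x)$ on $W$, which is exactly the asserted representation with $W,\Gamma$ as in Lemma \ref{commlem}.

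The routine parts are the flow estimates that keep every composition inside the relevant neighborhood for small $t$. I expect the main obstacle to be the reordering step: the commutation relation is proved only pairwise, so bringing $G^{(i)}$ past the other flows (and hence concluding $\partial_{t_i}\Phi=g_i(\Phi)$ away from the axis) requires composing several swaps while remaining in the domain. The argument is therefore genuinely local, and the size of $W$ must be chosen accordingly.
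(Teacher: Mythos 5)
Your proof is correct, but it takes a genuinely different route from the paper. You construct $\Gamma$ dynamically, as the inverse of the map $\Phi(t_1,\ldots,t_n)=G^{(1)}\bigl(t_1,\cdots G^{(n)}(t_n,y_0)\cdots\bigr)$ obtained by composing the flows --- the classical construction of canonical coordinates for commuting vector fields --- using the characterization of infinitesimal symmetries to get pairwise commuting flows, and the reordering argument to obtain $D\Phi(t)e_i=g_i(\Phi(t))$ off the axes. The paper instead argues statically, by reversing the computation of Lemma \ref{commlem}: it sets $Q(x):=\left(g_1(x),\ldots,g_n(x)\right)^{-1}$ near $y_0$ (so $g_i=q_{e_i}$ with $q_a(x)=Q(x)^{-1}a$), shows via the derivative-of-matrix-inversion identity that $\left[q_a,\,q_b\right]=0$ forces the closure condition $\left(DQ(x)u\right)w=\left(DQ(x)w\right)u$ for all $u,w$, and concludes that $Q$ is locally the derivative of some map $\Gamma$. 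The paper's argument is shorter, needs no flows at all (hence no domain bookkeeping and no reordering step), and applies uniformly for $\mathbb K=\mathbb R$ and $\mathbb K=\mathbb C$; your argument is more geometric and explains where $\Gamma$ comes from: it is exactly the chart in which all the $g_i$ are simultaneously straightened, which connects naturally with Theorem \ref{thmstraight} and with the way commuting fields are exploited in the proof of Theorem \ref{frobenius}.

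One caveat you should address: the paper fixes the time variable of flows to be real, so for $\mathbb K=\mathbb C$ your $\Phi$ maps a neighborhood of $0\in\mathbb R^n$ into $\mathbb C^n$, and its real rank at $0$ is $n$ rather than $2n$; the inverse function theorem then does not produce an open $W\subseteq\mathbb C^n$, so $\Gamma=\Phi^{-1}$ is not literally of the form required by Lemma \ref{commlem}. The repair is standard: since $G^{(i)}(t,y)$ is jointly analytic, its Taylor expansion in $t$ converges on a small complex disc, so each flow extends holomorphically to complex time (the differential equation and the group law persist by the identity theorem), and your construction then goes through with $(t_1,\ldots,t_n)$ in a neighborhood of $0\in\mathbb C^n$. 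This deserves an explicit sentence in your write-up; the paper's route avoids the issue entirely.
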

\begin{proof}
Define
\[
Q(x):=\left(g_1(x),\ldots,g_n(x)\right)^{-1}
\]
for all $x$ in a suitable neighborhood of $y_0$, and $q_a(x):=Q(x)^{-1}a$ for $a\in \mathbb K^n$ (hence $g_i=q_{e_i}$).
Using the identity
\[
D\left(Q(x)^{-1}\right)y=-Q(x)^{-1}\left(DQ(x)y\right)Q(x)^{-1}
\]
(again invoking the derivative of matrix inversion) we find that $\left[q_a,\,q_b\right]=0$ implies
\[
0=-Q(x)^{-1}\left(DQ(x)(Q(x)^{-1}b)Q(x)^{-1}a-DQ(x)(Q(x)^{-1}a)Q(x)^{-1}b\right).
\]
By invertibility of $Q(x)$ we obtain the closure condition
\[
\left(DQ(x)u\right)w=\left(DQ(x)w\right)u \text{  for all  } u,\,w\in \mathbb K^n;
\]
therefore $Q(x)$ is locally the derivative of some map $\Gamma$.
\end{proof}
\begin{example}\label{matrixriccex}{\em
In $GL(n,\mathbb K)\subseteq\mathbb K^{(n,n)}$ consider matrix inversion (which we here call by a different name) $\Gamma(x)=x^{-1}$. With the familiar rule one gets
\[
D\Gamma(x)y=-x^{-1}yx^{-1},\quad q_a(x)=D\Gamma(x)^{-1}a=xax.
\]
This observation explains why it is easy to solve $\dot x=xax$. Note that the polynomials $q_a$ appear as quadratic parts in the matrix Riccati equation. \\
Instead of matrix algebras one may work with an arbitrary associative unital algebra here.}
\end{example}
%%%%%%%%%%%%%%%%%%%%%%%%%%%%%%%%%%%%%%%%%%%%%%%%%%%%%%%%%%%%%%%%%
\section{Orbital symmetries}
Here we first need to clarify the notions.
\begin{definition}
Let $f\in\mathcal{A}(U)$, moreover let $V\subseteq \mathbb K^m$ open, connected and nonempty, and $g\in\mathcal{A}(V)$. 
\begin{enumerate}[(i)]
\item Given an open and nonempty $\widehat U\subseteq U$, and an analytic map $\Phi:\,\widehat U\to V$, we call $\Phi$ {\em orbit-preserving} from \eqref{ode} to $\dot x=g(x)$ if there exists $f^*\in\mathcal{A}(\widehat U)$ which is orbit equivalent to $f$ on $\widehat U$, such that $\Phi$ is solution-preserving from $\dot x=f^*(x)$ to $\dot x=g(x)$.
\item An {\em orbital symmetry} of \eqref{ode} is an orbit-preserving map $\Phi$ (defined on some open $\widehat U\subseteq U$) from $\dot x=f(x)$ to itself, with invertible derivative $D\Phi(x)$ for all $x\in\widehat U$. 
\item Moreover one says that $h\in\mathcal{A}(U)$ is an {\em infinitesimal orbital symmetry} of \eqref{ode} if $h$ generates a local one-parameter group of orbital symmetries of \eqref{ode}, thus every $H(s,\cdot)$, with $s$ near $0$, is an orbital symmetry.
\end{enumerate}
\end{definition}
\begin{remark}\label{orbpresrem}{\em
From Proposition \ref{proporbequiv} one sees:
\begin{itemize}
\item If $f\not=0$ and $g\circ\Phi\not=0$, then $\Phi$ is orbit-preserving from $\dot x=f(x)$ to $\dot x=g(x)$ if and only if there exist nonzero analytic $\rho,\,\sigma$ on $U$ such that
\[
\sigma(x)\,D\Phi(x)\,f(x)=\rho(x)\,g(\Phi(x)) \text{  for all  } x\in U.
\]
\item If $\mathbb K=\mathbb C$, and the zero set of $f$ contains no local component of codimension one, then $\Phi:\,U\to U$ is an orbital symmetry of \eqref{ode} if and only if there is a $\mu\in A(U)$ without zeros such that 
\[
\mu(x)\,D\Phi(x)\,f(x)=f(\Phi(x) \text{  for all  } x.
\]
\end{itemize}
}
\end{remark}
\begin{remark}{\em 
Orbit-preserving maps to a one-dimensional equation (in contrast to solution-preserving maps to dimension one) are not very interesting. Indeed, given $f\in\mathcal{A}(U)$, any map $\Phi:\, U\to \mathbb K$ with $X_f(\Phi)\not =0$ and any $g$ (defined on a suitable subset of $\mathbb K$) such that $g\circ \Phi\not=0$, one sees that the restriction of $\Phi$ to a suitable open subset of $U$ will be orbit-preserving to $\dot x=g(x)$. (Compare Remark \ref{onedumbrem}.)}
\end{remark}
\begin{proposition}\label{orbsymcharprop}
Let $h\in \mathcal{A}(U)$ and $\emptyset\not=\widehat U\subseteq U$ open. Then $h$ generates a local one-parameter group of orbital symmetries for $\dot x=f(x)$ on $\widehat U$  if and only if there is $\lambda\in A(\widehat U)$ such that 
\[
\left[h,\,f\right]=\lambda f.
\]
When $\mathbb K=\mathbb C$ and the zero set of $f$ has no local component of codimension one then $\lambda$ extends to an analytic function on $U$, and $h$ generates a local one-parameter group of orbital symmetries on all of $U$.
\end{proposition}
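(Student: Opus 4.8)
The plan is to reduce everything, via the Lie series for $\mathrm{ad}\,h$, to a statement about the single pulled-back vector field
\[
f^*_s(y) := D_2H(s,y)^{-1}\,f(H(s,y)),
\]
and then to compare coefficients in $s$. First I would recall, from the remark following Lemma \ref{solprescrit}, that for each fixed $s$ near $0$ the map $H(s,\cdot)$ is solution-preserving from $\dot x=f^*_s(x)$ to $\dot x=f(x)$, and that $f^*_s$ is the \emph{unique} vector field with this property, since $D_2H(s,\cdot)$ is invertible. Consequently $H(s,\cdot)$ is an orbital symmetry precisely when $f^*_s$ is orbit equivalent to $f$; by Proposition \ref{proporbequiv}(a) (equivalently the first bullet of Remark \ref{orbpresrem}), near any nonstationary point this means $f^*_s=\mu_s\,f$ with $\mu_s$ analytic and without zeros, where $\mu_0=1$ because $f^*_0=f$. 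The second ingredient is Proposition \ref{liebrackser}(b), which identifies $f^*_s=\exp(s\,\mathrm{ad}\,h)(f)=\sum_k \tfrac{s^k}{k!}(\mathrm{ad}\,h)^k(f)$.

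For the direction $[h,f]=\lambda f\Rightarrow$ orbital symmetry, I would show by induction that $(\mathrm{ad}\,h)^k(f)=p_k\,f$ with scalar $p_k\in A(\widehat U)$: one has $p_0=1$, $p_1=\lambda$, and the step uses Proposition \ref{liebproperties}(c),
\[
(\mathrm{ad}\,h)^{k+1}(f)=[h,\,p_k f]=X_h(p_k)\,f+p_k[h,f]=\bigl(X_h(p_k)+\lambda p_k\bigr)f,
\]
so $p_{k+1}=X_h(p_k)+\lambda p_k$. Summing the Lie series gives $f^*_s=\mu_s f$ with $\mu_s=\sum_k \tfrac{s^k}{k!}p_k$ analytic and $\mu_0=1$; hence $\mu_s$ has no zeros for $s$ near $0$, so $f^*_s$ is orbit equivalent to $f$ and $H(s,\cdot)$ is an orbital symmetry. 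Conversely, assuming each $H(s,\cdot)$ is an orbital symmetry (the case $f\equiv 0$ being trivial), I would write $f^*_s=\mu_s f$ near a nonstationary point, differentiate in $s$ at $s=0$, and invoke $\partial_s f^*_s|_{s=0}=[h,f]$ from the Lie series to get $[h,f]=\lambda f$ with $\lambda:=\partial_s\mu_s|_{s=0}$.

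For the global claim I would first observe that $[h,f]=\lambda f$ on $\widehat U$ forces the $2\times 2$ minors $[h,f]_i f_j-[h,f]_j f_i$ to vanish on $\widehat U$; these are analytic on the connected set $U$ and vanish on the nonempty open subset $\widehat U$, so by the identity theorem they vanish on all of $U$, i.e.\ $[h,f]$ is pointwise proportional to $f$ throughout $U$. Off the stationary set $Z=\{f=0\}$ this lets me set $\lambda:=[h,f]_j/f_j$ for any $j$ with $f_j\neq 0$, giving an analytic function on $U\setminus Z$. The step I expect to be the main obstacle is extending $\lambda$ analytically across $Z$: under the hypothesis $\mathbb K=\mathbb C$ and that $Z$ has no local component of codimension one, $Z$ has codimension at least two, and the second Riemann extension theorem (see the Appendix) produces $\lambda\in A(U)$. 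Then $[h,f]$ and $\lambda f$ are analytic on $U$ and agree on the dense set $U\setminus Z$, hence agree on all of $U$. Finally the sufficiency direction applied on $U$ yields $f^*_s=\mu_s f$ globally with $\mu_0=1$, and the second bullet of Remark \ref{orbpresrem} — whose hypotheses are exactly the present ones — upgrades this to the statement that each $H(s,\cdot)$ is an orbital symmetry on all of $U$.
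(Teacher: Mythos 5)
Your proof of the main equivalence is essentially the paper's: both directions rest on Proposition \ref{liebrackser} --- part (a), after differentiating $f^*_s=\mu_s f$ at $s=0$, for necessity, and part (b) together with Proposition \ref{liebproperties}(c) and the induction $p_{k+1}=X_h(p_k)+\lambda p_k$ for sufficiency (the paper writes exactly this as $\left[h,\left[h,f\right]\right]=\left(X_h(\lambda)+\lambda^2\right)f$, ``etc.''). Where you genuinely diverge is the second, global statement, which the paper does not argue explicitly but clearly intends to follow from the second bullet of Remark \ref{orbpresrem}, i.e.\ from Proposition \ref{proporbequiv}(c), whose proof works algebraically: cancel common irreducible factors in the local rings (unique factorization) and rule out a codimension-one component of the zero set of $f$ on which $f$ would have to vanish. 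You instead propagate pointwise proportionality of $\left[h,f\right]$ and $f$ to all of $U$ via the identity theorem applied to the $2\times 2$ minors, define $\lambda$ on $U\setminus Z$ ($Z$ the stationary set), and remove $Z$ by the second Riemann extension theorem, the hypothesis forcing $Z$ to have codimension at least two. This is correct and arguably more direct: it produces the global $\lambda$ first, after which the already-proved sufficiency direction (run on $U$) and Remark \ref{orbpresrem} give the orbital symmetries on all of $U$, whereas the paper's route stays within the commutative-algebra toolkit it set up in Chapter 1. One slip to fix: you cite the second Riemann extension theorem as ``see the Appendix,'' but the Appendix contains no such result (it has the Hilbert--R\"uckert Nullstellensatz and power-series facts); the theorem is standard for $\mathbb K=\mathbb C$, so this is a citation error rather than a mathematical gap, but you must import it from outside the paper.
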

\begin{proof}
By the definitions and Remark \ref{orbpresrem}, $H(s,\,\cdot)$ is an orbital symmetry of \eqref{ode} if and only if 
\[
D_2H(s,y)^{-1}f(H(s,y))= \mu(s,y)f(y)
\]
for all $y\in U$ and $s$ near $0$, with $\mu$ necessarily analytic. For one direction of the proof, differentiate and use Proposition \ref{liebrackser}(a) to show that $\left[h,\,f\right]=\lambda f$, with $\lambda(y):=\frac{\partial}{\partial s}\mu(s,y)|_{s=0}$. For the reverse direction, use Proposition \ref{liebrackser}(b), Proposition \ref{liebproperties}(c) and induction, with
\[
\left[h,\,f\right]=\lambda f,\quad\left[h,\,\left[h,\,f\right]\right]=\left[h,\,\lambda f\right]=\left(X_h(\lambda)+\lambda^2\right) \,f,\quad\text{  etc.}
\]
\end{proof}
\begin{remark}{\em The case when there exists a one-codimensional set of stationary points must be excluded from the second statement of Proposition \ref{orbsymcharprop}, as the example $f(x)=x,\,h(x)=1$ and $\lambda(x)=1/x$ in dimension one shows.}
\end{remark}
It is natural to ask to what extent infinitesimal symmetries and infinitesimal orbital symmetries differ. The following result shows that (locally near nonstationary points, resp. generically) infinitesimal orbital symmetries are symmetries of an orbitally equivalent system.
\begin{proposition}\label{symvsorbsymprop}Let $f,\,h\in\mathcal{A}(U)$, and $\lambda\in A(U)$ such that $\left[h,\,f\right]=\lambda\, f$. Then:
\begin{enumerate}[(a)]
\item For every $y_0\in U$ with $h(y_0)\not=0$ there exist an open neighborhood $U^*$ and some $\sigma\in A(U^*)$ without zeros such that $\left[h,\,\sigma f\right]=0$.
\item If $\psi$ is a first integral of $h$, but not of $f$, on some open $\emptyset\not=\widehat U\subseteq U$ then
\[
\left[h,\frac1{X_f(\psi)}\,f\right]=0
\]
wherever $X_f(\psi)$ does not vanish.
\end{enumerate}
\end{proposition}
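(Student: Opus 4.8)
The plan is to reduce both parts to the same mechanism: produce a nonvanishing scalar factor $\sigma$ and verify a single scalar identity using the product rule for the Lie bracket. The common starting point is Proposition \ref{liebproperties}(c), which together with the hypothesis $[h,f]=\lambda f$ gives
\[
[h,\sigma f] = X_h(\sigma)\,f + \sigma\,[h,f] = \left(X_h(\sigma) + \lambda\sigma\right) f.
\]
Since $f$ need not vanish, the bracket $[h,\sigma f]$ vanishes exactly when the scalar $X_h(\sigma)+\lambda\sigma$ does. So in both parts everything reduces to finding, respectively verifying, a nonvanishing solution $\sigma$ of the linear equation $X_h(\sigma)=-\lambda\sigma$.

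For part (a) I would pass to the logarithmic unknown $\sigma=e^{\tau}$. Because $X_h$ is a derivation, $X_h(\sigma)=\sigma\,X_h(\tau)$, so the equation becomes $X_h(\tau)=-\lambda$, and \emph{any} analytic solution $\tau$ automatically yields a nonvanishing analytic $\sigma=e^{\tau}$. To solve $X_h(\tau)=-\lambda$ near $y_0$, I would invoke the straightening theorem (Theorem \ref{thmstraight}): since $h(y_0)\neq 0$, there are local coordinates in which $h$ is the constant field with $X_h=\partial/\partial x_1$, the change of coordinates being compatible with Lie derivatives by Proposition \ref{liecompatible}(a). In those coordinates the equation reads $\partial\tau/\partial x_1=-\lambda$ and is solved by a single quadrature, giving an analytic $\tau$ on a neighborhood $U^*$; transporting back produces the desired $\sigma$.

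For part (b) the factor $\sigma=1/X_f(\psi)$ is prescribed, so only the verification of $X_h(\sigma)+\lambda\sigma=0$ is needed where $X_f(\psi)\neq 0$. Writing $\phi:=X_f(\psi)$ and again using that $X_h$ is a derivation gives $X_h(1/\phi)=-X_h(\phi)/\phi^2$, so the scalar to check is $-X_h(\phi)/\phi^2+\lambda/\phi$, which vanishes precisely when $X_h(\phi)=\lambda\phi$. This identity is the crux, and I would derive it from the operator form of the bracket: by Proposition \ref{liebproperties}(a), $X_{[h,f]}=X_hX_f-X_fX_h$, while $[h,f]=\lambda f$ gives $X_{[h,f]}=X_{\lambda f}=\lambda X_f$. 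Applying the resulting operator identity $X_hX_f-X_fX_h=\lambda X_f$ to $\psi$ and using $X_h(\psi)=0$ (that $\psi$ is a first integral of $h$) collapses it to $X_h\bigl(X_f(\psi)\bigr)=\lambda\,X_f(\psi)$, i.e. $X_h(\phi)=\lambda\phi$, as required.

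The only genuine obstacle lies in part (a): one must guarantee that $\sigma$ has no zeros, and a direct attack on $X_h(\sigma)=-\lambda\sigma$ could well return a function with zeros. The exponential substitution removes this difficulty at a stroke, and the straightening theorem turns the residual equation into an elementary quadrature. Part (b) is then a short computation with no real difficulty; it is worth noting that it supplies an explicit instance of part (a), since near a nonstationary point of $h$ one has first integrals of $h$ by Lemma \ref{locfirstint}, a generic one of which is not a first integral of $f$.
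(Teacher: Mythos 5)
Your proof is correct and takes essentially the same route as the paper: both parts reduce via Proposition \ref{liebproperties}(c) to the scalar equation $X_h(\sigma)+\lambda\sigma=0$, solved locally after straightening $h$, and your part (b) is exactly the paper's argument, applying the operator identity $X_hX_f-X_fX_h=\lambda X_f$ from Proposition \ref{liebproperties}(a) to $\psi$. Your exponential ansatz $\sigma=e^{\tau}$ is a minor refinement that makes the nonvanishing of $\sigma$ explicit, where the paper simply notes that the straightened equation $\partial\sigma^*/\partial x_1=\lambda^*\sigma^*$ is a linear ordinary differential equation in $x_1$ (with parameters) whose nontrivial solutions are nonvanishing exponentials anyway.
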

\begin{proof}
By Proposition \ref{liebproperties}(c), $\left[h,\,\sigma f\right]=0$ is equivalent to
\[
X_h(\sigma)+\lambda\sigma=0.
\]
This linear first order partial differential equation has a solution near $y_0$. For direct verification use the straightening theorem to transform the equation into
\[
\frac{\partial \sigma^*}{\partial x_1}=\lambda^*\sigma^*.
\]
This is a linear ordinary differential equation (with parameters $x_2,\ldots,x_n$), for which the existence of nontrivial solutions is obvious.

For part (b), note that $X_hX_f-X_fX_h=\lambda X_f$, hence $X_hX_f(\psi)=\lambda X_f(\psi)$ by Proposition \ref{liebproperties}(a), and then use Proposition \ref{liebproperties}(c).
\end{proof}
This result is not entirely satisfactory since it excludes stationary points, thus interesting local dynamics. But
with Proposition \ref{symvsorbsymprop} and Theorem \ref{symredthm}, the proof of the following reduction theorem is obvious.
\begin{theorem}\label{orbsymredthm}
Let $f,\,h\in\mathcal{A}(U)$ and $\lambda\in A(U)$ with $\left[h,\,f\right]=\lambda f$. Moreover let $\emptyset \neq W\subseteq \mathbb K^n$ be open and $\Psi:W\to U$ analytic and solution-preserving from $\dot{x} = \begin{pmatrix}
1 \\ 0 \\ \vdots \\ 0     \end{pmatrix}$ to $\dot{x} = h(x)$, with $D\Psi(x)$ invertible for all $x$. Define
\[
\widehat f(x):= D\Psi(x)^{-1}f(\Psi(x)).
\]
Then $\Psi$ is solution-preserving from $\dot{x} =\widehat f(x)$ to $\dot x = f(x)$, and moreover there exists an analytic $\rho$ on an open and dense subset of $W$ such that
\[
\widehat f(x) =\rho(x)\, \begin{pmatrix}
                          f_1^*(x_2,\ldots, x_n)  \\
			  \vdots    \\
			  f_n^*(x_2,\ldots, x_n)
                         \end{pmatrix}.
\]
\end{theorem}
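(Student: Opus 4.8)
The plan is to prove the solution-preserving claim directly and then reduce the structural claim to the two cited results. First I would observe that $\Psi$ is solution-preserving from $\dot x=\widehat f(x)$ to $\dot x=f(x)$ by construction: from $\widehat f(x)=D\Psi(x)^{-1}f(\Psi(x))$ we read off $D\Psi(x)\widehat f(x)=f(\Psi(x))$, which is exactly the criterion of Lemma \ref{solprescrit}. Writing $e_1$ for the first standard basis vector, $\Psi$ is thus simultaneously solution-preserving from $\dot x=e_1$ to $\dot x=h(x)$ and from $\dot x=\widehat f(x)$ to $\dot x=f(x)$, with the same invertible derivative $D\Psi$.

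Next I would transport the bracket relation through $\Psi$. Since solution-preserving maps respect Lie brackets (Proposition \ref{liecompatible}(b)), applying it to the two pairs above together with $[h,f]=\lambda f$ yields $D\Psi(x)[e_1,\widehat f](x)=[h,f](\Psi(x))=\lambda(\Psi(x))\,D\Psi(x)\widehat f(x)$, hence $[e_1,\widehat f]=\widehat\lambda\,\widehat f$ with $\widehat\lambda:=\lambda\circ\Psi$. Because $e_1$ is constant, $[e_1,\widehat f]=\partial\widehat f/\partial x_1$, so each component of $\widehat f$ solves the same scalar linear equation $\partial_{x_1}\widehat f_i=\widehat\lambda\,\widehat f_i$ in the variable $x_1$, with $x_2,\dots,x_n$ as parameters. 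Solving by quadrature gives $\widehat f(x)=\rho(x)\,g(x_2,\dots,x_n)$, where $\rho=\exp\!\big(\int\widehat\lambda\,dx_1\big)$ has no zeros and $g$ is the value of $\widehat f$ at a fixed base value of $x_1$; this is precisely the asserted form with $f_i^*=g_i$.

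Equivalently, following the route the text suggests, I would note that $h(\Psi(x))=D\Psi(x)e_1\neq0$, so $h$ is nonstationary on the image of $\Psi$; Proposition \ref{symvsorbsymprop}(a) then supplies, near each point, a zero-free $\sigma$ with $[h,\sigma f]=0$, and Theorem \ref{symredthm} applied to the pair $(\sigma f,h)$ (with the same straightening $\Psi$) shows that $D\Psi(x)^{-1}(\sigma f)(\Psi(x))=\sigma(\Psi(x))\,\widehat f(x)$ does not depend on $x_1$; one then sets $\rho:=1/(\sigma\circ\Psi)$. The hard part is not the local statement---both the quadrature and the function $\sigma$ exist near every point---but the globalization: the $x_1$-fibers of $W$ need not share a common base value, and the local factors $\sigma$ are determined only up to first integrals of $h$, so a single analytic $\rho$ valid on all of $W$ need not exist. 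Restricting to an open and dense subset on which a consistent choice can be made is exactly what the statement claims, and is where I would focus the remaining care.
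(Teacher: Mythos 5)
Your proposal is correct, and it actually contains two arguments: your second route is precisely the paper's proof, which consists of the single remark that the theorem is ``obvious'' from Proposition \ref{symvsorbsymprop} together with Theorem \ref{symredthm} (the observation $h(\Psi(x))=D\Psi(x)e_1\neq 0$, which licenses Proposition \ref{symvsorbsymprop}(a) on all of $\Psi(W)$, is left implicit there). Your first route is a genuinely different and more self-contained derivation: transporting the bracket via Proposition \ref{liecompatible}(b) to get $[e_1,\widehat f]=(\lambda\circ\Psi)\,\widehat f$, reading $[e_1,\widehat f]=\partial\widehat f/\partial x_1$, and integrating the resulting scalar linear equation componentwise. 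This buys something the paper's citation chain does not make explicit, namely a concrete zero-free integrating factor $\rho=\exp\bigl(\int\widehat\lambda\,dx_1\bigr)$ and the identification $f^*=\widehat f$ at a base value of $x_1$, rather than an abstractly existing $\sigma$ from a PDE solved after straightening. Your closing worry about globalization is also well placed: the paper's one-line proof silently absorbs the patching issue into the ``open and dense subset of $W$'' in the statement. For completeness, note that your quadrature formula essentially resolves it: on overlapping product patches the local factors satisfy $\partial_{x_1}\rho_\alpha=\widehat\lambda\,\rho_\alpha$, so any two differ by a zero-free function of $x_2,\ldots,x_n$ alone; covering $W$ up to a nowhere dense set by disjoint open boxes and choosing a base value of $x_1$ in each then yields a single analytic $\rho$ on an open dense subset, with $f^*$ defined patchwise, which is all the (deliberately local) conclusion asserts.
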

According to the proof of  Proposition \ref{proporbequiv}(a), solutions of $\dot x=\widehat f(x)$ can be obtained from solutions of $\dot x=\begin{pmatrix}
                          f_1^*(x_2,\ldots, x_n)  \\
			  \vdots    \\
			  f_n^*(x_2,\ldots, x_n)
                         \end{pmatrix}$
with one further quadrature. In particular, for dimension $n=2$ the existence of a nontrivial infinitesimal orbital symmetry allows to solve $\dot x=f(x)$ by quadratures alone.

We note that Remark \ref{revengrem}  (Lie's ``reverse engineering'')  carries over to the orbital symmetry case, and indeed this was the situation that Lie focussed on. Moreover, Proposition \ref{redbyinv} carries over to the orbital symmetry case in an obvious way, with the caveat that orbital reducibility to dimension one does not yield useful information.
\begin{example}{\em We continue Example \ref{diagonalex2d} for the orbital symmetry case:
According to Theorem \ref{orbsymredthm}, every vector field of the form
\[
\widehat f(x)=\sigma(x)\begin{pmatrix}x_1f_1^*(x_1^{-\beta}x_2)\\ \beta x_2f_1^*(x_1^{-\beta}x_2)+x_1^\beta f_2^*(x_1^{-\beta}x_2)\end{pmatrix}
\]
admits the infinitesimal orbital symmetry $h$, with $h(x) = \begin{pmatrix}
                              x_1  \\ \beta\; x_2
                               \end{pmatrix}$.}
\end{example}
In dimension two, there is a different way to utilize infinitesimal (orbital) symmetries.

\begin{theorem}\label{ifacthm}
Let $h,f\in\mathcal{A}(U)$, and $\varphi(x):= \det(f(x),h(x))$ not identically zero.   \\
If $[h,f] = \lambda f$ for some $\lambda\in A(\widehat U)$ (with $\emptyset \not=\widehat U\subseteq U$), then
\[
X_f(\phi)={\rm div}\,f\cdot \phi,
\]
with ${\rm div}\,f={\rm tr}\,Df$. Thus $\phi^{-1}$ is an integrating factor of $\dot x=f(x)$.
\end{theorem}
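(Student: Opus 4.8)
The plan is to exploit that we are in dimension two, so $\varphi=\det(f,h)$ is a $2\times2$ determinant and behaves multilinearly under differentiation. First I would differentiate $\varphi$ column by column: for any vector $v$,
\[
D\varphi(x)\,v=\det\bigl(Df(x)\,v,\;h(x)\bigr)+\det\bigl(f(x),\;Dh(x)\,v\bigr),
\]
and then specialize to $v=f(x)$, which gives
\[
X_f(\varphi)(x)=\det(Df\,f,\,h)+\det(f,\,Dh\,f).
\]

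Next I would bring in the symmetry hypothesis. By Definition \ref{liebdef} one has $[h,f]=Df\,h-Dh\,f$, so $[h,f]=\lambda f$ reads $Dh\,f=Df\,h-\lambda f$. Substituting this into the second determinant and using $\det(f,f)=0$ to discard the $\lambda f$ contribution, the second term collapses to $\det(f,\,Df\,h)$. Hence
\[
X_f(\varphi)=\det(Df\,f,\,h)+\det(f,\,Df\,h).
\]
The crux is then a purely linear-algebraic identity, valid for any $2\times2$ matrix $A$ and vectors $u,w$:
\[
\det(Au,\,w)+\det(u,\,Aw)={\rm tr}(A)\,\det(u,\,w),
\]
which follows from a short direct computation (equivalently, both sides express the action of $A$ on $\wedge^2\mathbb K^2$). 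Applying it with $A=Df$, $u=f$, $w=h$ yields precisely $X_f(\varphi)={\rm tr}(Df)\,\det(f,h)={\rm div}\,f\cdot\varphi$, as asserted.

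For the integrating-factor claim, I would recall that in the plane a nowhere-vanishing function $M$ is an integrating factor for $\dot x=f(x)$ exactly when ${\rm div}(M f)=0$, so that the associated one-form $M\,(f_2\,dx_1-f_1\,dx_2)$ is closed. Using the product rule ${\rm div}(M f)=X_f(M)+M\,{\rm div}\,f$ with $M=\varphi^{-1}$, together with $X_f(\varphi^{-1})=-\varphi^{-2}X_f(\varphi)=-\varphi^{-1}\,{\rm div}\,f$ from the first part, the two contributions cancel and give ${\rm div}(\varphi^{-1}f)=0$ wherever $\varphi\neq0$; this is an open dense set since $\varphi$ is analytic and not identically zero.

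I expect no serious obstacle here. The only substantive ingredient is the trace identity, and the single point requiring care is recognizing that the hypothesis $[h,f]=\lambda f$ enters solely to replace $Dh\,f$ by $Df\,h$ modulo a multiple of $f$, which is harmless inside $\det(f,\,\cdot\,)$.
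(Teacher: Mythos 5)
Your proof is correct and follows essentially the same route as the paper: differentiate $\varphi=\det(f,h)$ by multilinearity, use $[h,f]=\lambda f$ to replace $Dh\,f$ by $Df\,h$ modulo a multiple of $f$ (killed by the alternating property), and conclude with the trace identity $\det(Au,w)+\det(u,Aw)={\rm tr}(A)\det(u,w)$. Your explicit verification that ${\rm div}(\varphi^{-1}f)=0$ merely spells out the integrating-factor remark the paper makes right after the theorem, so there is no substantive difference.
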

\begin{proof} By multilinearity of the determinant, one has
\[
D\phi(x)y= \det\left(Df(x)y,h(x)\right)+ \det\left(f(x),Dh(x)y\right),
\]
and therefore
\[
\begin{array}{rcccl}
X_f(\phi)(x)&=& \det\left(Df(x)f(x),h(x)\right)&+& \det\left(f(x),Dh(x)f(x)\right)\\
                   &=& \det\left(Df(x)f(x),h(x)\right)&+& \det\left(f(x),Df(x)h(x)+\lambda(x)f(x)\right)\\
                 &=& \det\left(Df(x)f(x),h(x)\right)&+& \det\left(f(x),Df(x)h(x)\right)\\
                 &=&{\rm tr}\,Df(x) \cdot \det\left(f(x),h(x)\right),& & 
\end{array}
\]
using the Lie bracket condition and the alternating property of the determinant.
\end{proof}
One can make use of this result as follows: A two dimensional system $\dot{x}=g(x)$ on $U\subseteq \mathbb K^2$ is called \emph{Hamiltonian} if
$
{\rm div}\,g = 0$; 
locally this is equivalent to
\[
g=\begin{pmatrix}
   -\partial \varrho/\partial x_2   \\  \partial \varrho/\partial x_1                      \end{pmatrix}\text{  for some  } \varrho\in A(U),
\]
and $\varrho$ is then a first integral of $\dot{x}=g(x)$. (Finding first integrals of planar Hamiltonian systems involves only quadratures.) Now $\sigma\in A(U)$ is an integrating factor of  \eqref{ode} if and only if $\sigma \,f$ is Hamiltonian.

\begin{remark}\label{ifacrem}{\em 
\begin{itemize}
\item The converse of Theorem \ref{ifacthm} also holds: If the inverse of $\det(f(x),\,h(x))$ is an integrating factor of \eqref{ode} then $[h,f] = \lambda f$  on some open $\widehat U\subseteq U$, $\lambda\in A(\widehat U)$.
\item The statement of the theorem extends to dimension $n>2$, with the same proof (up to greater writing effort): Let $h_1,\ldots,h_{n-1}\in\mathcal{A}(U)$ be such that on some open and nonempty $\widehat U\subseteq U$ one has $[h_i,f] = \lambda_i f$ for $\lambda_i\in A(\widehat U)$, $1\leq i\leq n-1$. Then 
\[
\phi(x):=\det\left(f(x),\,h_1(x),\ldots,h_{n-1}(x)\right)
\]
satisfies $X_f(\phi)={\rm div}\,f\cdot \phi$. (The inverse of $\phi$ is a {\em Jacobi multiplier} of the system.)
\end{itemize}
}
\end{remark}
\begin{example}\label{ifacex}{\em 
\begin{itemize}
\item Let $\alpha\in\mathbb K$, $\alpha\not=1$ and
\[
f(x)=\begin{pmatrix}x_1\\ \alpha x_2\end{pmatrix},\quad h(x)=\begin{pmatrix}x_1\\ x_2\end{pmatrix},\text{  with  } \left[h,f\right]=0.
\]
From $\phi(x)=(1-\alpha)x_1x_2$ one finds the integrating factor $(x_1x_2)^{-1}$ for $f$.
\item For
\[
f(x)=\begin{pmatrix}x_1^2-x_2^2\\ 2x_1x_2\end{pmatrix},\quad h(x)=\begin{pmatrix}x_1\\ x_2\end{pmatrix},\text{ with } \left[h,f\right]=f,
\]
one computes $\phi(x)=-\left(x_1^2x_2+x_2^3\right)$, and after a little work one finds the first integral $x_2^{-1}(x_1^2+x_2^2)$ for $\dot x=f(x)$.
\end{itemize}
}
\end{example}
Finally we look briefly at non-autonomous systems.
\begin{remark}\label{nonautrem}{\em
The (seemingly more general) notion of symmetries of non-autonomous differential equations \eqref{nonaut} blends in naturally with the discussion of orbital symmetries. One only has to employ the trick to ``autonomize'' the system into the form \eqref{autnonaut}. Then symmetries of the nonautonomous system stand in one-to-one correspondence with orbital symmetries of the autonomization. First we note that in the non-autonomous context one considers locally invertible transformations of the type
\[
\begin{pmatrix}t\\ x\end{pmatrix}\mapsto \begin{pmatrix} \rho(t,x)\\ R(t,x)\end{pmatrix} 
\]
involving both independent and dependent variables. The obvious notion of a symmetry of \eqref{nonaut} is that it sends solutions to solutions. We now justify the correspondence to orbital symmetries of the autonomized system \eqref{autnonaut}. 
\begin{enumerate}[(i)]
\item The solutions $z(t)$ of \eqref{nonaut} stand in $1-1$ corespondence with solutions $\begin{pmatrix}t\\ z(t)\end{pmatrix}$ of \eqref{autnonaut}.
\item The transformation above sends solutions of \eqref{autnonaut} to solutions of some system
\[
\begin{array}{rcl}
\dot\xi&=&\gamma(\xi,x)\\
\dot x&=&g(\xi,x)
\end{array},\text{  orbitally equivalent to   }\begin{array}{rcl}
\dot\xi&=&1 \\
\dot x&=&\gamma(\xi,x)^{-1}g(\xi,x)
\end{array}.
\]
\item By the correspondence noted above, the transformation sends solutions of $\dot x=q(t,x)$ to solutions of $\dot x=\gamma(t,x)^{-1}g(t,x)$. Thus we have a symmetry of \eqref{nonaut} if and only if $\gamma^{-1}g=q$; equivalently 
\[
\begin{pmatrix} \gamma\\ g\end{pmatrix}=\gamma\cdot\begin{pmatrix}1\\ q\end{pmatrix},
\]
which is the condition for generic orbit equivalence.
\end{enumerate}
The standard approach is to discuss this matter as a special case in the wider (and more widely applicable) framework of jet spaces and prolongations; see e.g. Olver \cite{Ol 1}, Section 2.3. }
\end{remark}
%%%%%%%%%%%%%%%%%%%%%%%%%%%%%%%%%%%%%%%%%%%%%%%
\section{Intermezzo: The problem}
At this point (or probably earlier) the question may arise why not every ordinary differential equation (at least in dimension two) can be solved by quadratures: After all, given $f\in\mathcal{A}(U)$, one just has to find some $h$ such that $\left[h,\,f\right]=\lambda f$ (possibly with $\lambda =0$), reduce and repeat as long as necessary. But the problem lies in finding such a $h$. While there is the obvious trivial (but not useful) choice $h\in\mathbb K\cdot f$, carrying out the procedure e.g. from Theorem \ref{symredthm} would require to solve $\dot x=f(x)$ first.

 Perhaps surprisingly, finding explicit nontrivial infinitesimal (orbital) symmetries is complicated {\em just because} locally there exist too many of these: If $y_0$ is nonstationary for \eqref{ode}, then by the straightening theorem we may assume that $f=\begin{pmatrix}1\\ 0\\ \vdots\\ 0\end{pmatrix}$, thus $\left[h,\,f\right]=0$ if and only if $h$ depends only on $x_2,\ldots,x_n$. But due to this abundance there are too few restrictions, and this makes any ansatz to explicitly compute a commuting vector field $h$ for a general $f\in\mathcal{A}(U)$ (e.g. by successive determination of power series coefficients) unfeasible. Here lies the explanation for the fact that many textbooks and monographs on symmetries of differential equations hardly discuss first order ordinary differential equations. (Stephani \cite{Ste} even calls this class ``exceptional''.) Things are more interesting locally near stationary points, and we will get back to this later. Moreover one obtains an algorithmic approach for (a suitably restricted class of) symmetries of higher order equations. We discuss a simple instance of the latter in the next section.
%%%%%%%%%%%%%%%%%%%%%%%%%%%%%%%%%%%%%%%%%%%%%%%%%%%%%
\section{Symmetries of second order equations}
There is a vast literature on symmetries of higher order equations, starting with Lie's own work, and usually the focus is on non-autonomous second order equations. Here we just will give an impression of the matter, and will discuss only autonomous equations of second order, following work of Loos \cite{Lo 1,Lo 2}.
Thus we consider
\begin{equation}\label{secord}
 \ddot{x}=h(x,\dot{x})\quad \text{ on } \quad \widetilde U:=U\times \mathbb K^n,
\end{equation}
with corresponding first order system
\begin{equation}\label{secone}
\begin{array}{rcl}
 \dot x & =& y   \\
\dot{y} & = &h(x,y).
\end{array}
\end{equation}
We focus attention on symmetries of these equations of a special type, viz.\ those that are induced by a map 
\begin{equation}\label{pointmapeq}
\Psi:\,U\to U,\quad x\mapsto \Psi(x)
\end{equation}
with invertible Jacobian everywhere.  The induced map $\widehat\Psi:\,\widetilde U\to \widetilde U$ is then defined as
\[
   \widehat{\Psi}:\begin{pmatrix}           x \\ y
            \end{pmatrix}\mapsto\begin{pmatrix}
				\Psi(x)   \\
				D\Psi(x)y
				\end{pmatrix}\,.
\]
For motivation, note the following: If $\gamma(t)$ is a curve in $U$, then $\frac{d}{dt}\Psi(\gamma(t))=D\Psi(\gamma(t))\dot\gamma(t)$, hence
\[
  \widehat{\Psi}\begin{pmatrix}
             \gamma(t)  \\ \dot{\gamma}(t)
            \end{pmatrix} = \begin{pmatrix}
			\Psi(\gamma(t))  \\
			D\Psi(\gamma(t))\dot\gamma(t)
			\end{pmatrix}\,.
\]
(Again jet spaces and prolongations would provide the appropriate framework.) Note that $\widehat\Psi$ is also locally invertible everywhere.

As a first step we consider solution-preserving maps in the given scenario: Given a further equation $\ddot{x}=q(x,\dot x)$ on $U$, what are the conditions to ensure that $\Psi$ maps (parameterized) solutions of this equation to (parameterized) solutions of \eqref{secord}?
The answer is as follows:
\begin{lemma}\label{solpressecord}
\begin{enumerate}[(a)]
\item $\Psi$ sends solutions of $\ddot{x}=q(x,\dot x)$ to solutions of $\ddot{x}=h(x,\dot x)$ if and only if
\[
 D^2\Psi(x)(y,y)+D\Psi(x)q(x,y) = h(\Psi(x),D\Psi(x)y)
\]
for all $x,y$.
\item A local one-parameter group of transformations of $U$ with infinitesimal generator $g$ induces symmetries of  $\ddot{x}=h(x,\dot x)$ if and only if
\begin{equation}\label{secordinf}
 D^2 g(x)(y,y)+Dg(x)h(x,y)=D_1h(x,y)g(x)+D_2h(x,y)Dg(x)y.
\end{equation}
\end{enumerate}
\end{lemma}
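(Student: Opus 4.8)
The plan is to lift everything to the associated first order system \eqref{secone} and then appeal to the solution-preserving machinery already developed. Write $Q(x,y)=\begin{pmatrix} y\\ q(x,y)\end{pmatrix}$ and $H(x,y)=\begin{pmatrix} y\\ h(x,y)\end{pmatrix}$ for the vector fields on $\widetilde U$ attached to the two second order equations. A curve $x(t)$ solves $\ddot x=q(x,\dot x)$ precisely when its lift $\begin{pmatrix} x(t)\\ \dot x(t)\end{pmatrix}$ solves $\dot X=Q(X)$, and by the motivating computation preceding the lemma the prolongation $\widehat\Psi$ carries such a lift to the lift $\begin{pmatrix}\Psi(x(t))\\ D\Psi(x(t))\dot x(t)\end{pmatrix}$ of $\Psi(x(t))$. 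Hence $\Psi$ sends solutions of $\ddot x=q(x,\dot x)$ to solutions of $\ddot x=h(x,\dot x)$ if and only if the analytic map $\widehat\Psi$ is solution-preserving from $\dot X=Q(X)$ to $\dot X=H(X)$, and for this I can invoke Lemma \ref{solprescrit}.

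For part (a) I would therefore write out the criterion $D\widehat\Psi(X)\,Q(X)=H(\widehat\Psi(X))$ explicitly. Differentiating $\widehat\Psi:(x,y)\mapsto(\Psi(x),D\Psi(x)y)$ gives $D\widehat\Psi(x,y)\begin{pmatrix} u\\ v\end{pmatrix}=\begin{pmatrix} D\Psi(x)u\\ D^2\Psi(x)(u,y)+D\Psi(x)v\end{pmatrix}$, the cross term $D^2\Psi(x)(u,y)$ coming from differentiating $D\Psi(x)y$ in $x$. Substituting $(u,v)=(y,q(x,y))$ and comparing with $H(\widehat\Psi(x,y))=\begin{pmatrix} D\Psi(x)y\\ h(\Psi(x),D\Psi(x)y)\end{pmatrix}$, the first components agree identically while the second components give exactly the asserted identity $D^2\Psi(x)(y,y)+D\Psi(x)q(x,y)=h(\Psi(x),D\Psi(x)y)$.

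For part (b) I would first check that the induced maps $\widehat\Psi_s$ form a one-parameter group: the chain rule gives $D\Psi_{s+t}(x)=D\Psi_s(\Psi_t(x))D\Psi_t(x)$, so $\widehat\Psi_s\circ\widehat\Psi_t=\widehat\Psi_{s+t}$, and differentiating $\widehat\Psi_s$ at $s=0$ (using $\partial_s D\Psi_s|_{s=0}=Dg$) identifies its infinitesimal generator as the prolonged field $G(x,y)=\begin{pmatrix} g(x)\\ Dg(x)y\end{pmatrix}$. By the characterization of infinitesimal symmetries proved at the start of this chapter, the $\widehat\Psi_s$ are symmetries of $\dot X=H(X)$ if and only if $[G,H]=0$. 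Computing $[G,H]=DH\cdot G-DG\cdot H$ from the explicit Jacobians of $G$ and $H$, the first component cancels and the second component is precisely $D_1h(x,y)g(x)+D_2h(x,y)Dg(x)y-D^2g(x)(y,y)-Dg(x)h(x,y)$, so $[G,H]=0$ is equivalent to \eqref{secordinf}. Alternatively one obtains the necessity of \eqref{secordinf} by differentiating the part (a) identity, with $q=h$ and $\Psi=\Psi_s$, in $s$ at $s=0$.

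The computations are routine, so the only delicate points are bookkeeping ones: getting the cross term $D^2\Psi(x)(y,y)$ (resp. $D^2g(x)(y,y)$) right when differentiating the second component $D\Psi(x)y$ of the prolongation along the $\dot x=y$ direction, and keeping the two slots of $h$ (hence $D_1h$ and $D_2h$) properly separated. The slightly more structural step is the passage used in part (b), namely justifying that the $\widehat\Psi_s$ really constitute the flow of $G$ so that the earlier infinitesimal-symmetry criterion applies; this rests only on the chain-rule group identity noted above.
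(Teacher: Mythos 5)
Your proposal is correct. Part (a) is essentially identical to the paper's proof: the paper likewise applies Lemma \ref{solprescrit} to the prolonged map and reads the identity off from $D\widehat{\Psi}(x,y)=\begin{pmatrix} D\Psi(x) & 0\\ D^2\Psi(x)(y,\cdot) & D\Psi(x)\end{pmatrix}$, exactly as you do. For part (b) the paper takes the short route you only mention as a fallback: set $q=h$ and $\Psi=G(s,\cdot)$ in the part (a) identity, differentiate with respect to $s$ and set $s=0$. Your primary route is genuinely different: you verify via the chain-rule identity $D\Psi_{s+t}(x)=D\Psi_s(\Psi_t(x))\,D\Psi_t(x)$ that the prolonged maps $\widehat{\Psi}_s$ form the local flow of $G(x,y)=\begin{pmatrix} g(x)\\ Dg(x)y\end{pmatrix}$, and then invoke the bracket characterization of infinitesimal symmetries from the start of Chapter 2 (the unlabeled proposition stating that the flow of $G$ consists of symmetries of $\dot X=H(X)$ if and only if $[G,H]=0$), finishing with a componentwise bracket computation that is correct under the paper's convention $[f,g]=Dg\,f-Df\,g$ (first components cancel, second component gives precisely \eqref{secordinf}). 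What each approach buys: the paper's one-line differentiation is shorter but, as written, directly yields only the necessity of \eqref{secordinf}, leaving sufficiency implicit; your route delivers the full equivalence in one stroke, since the cited proposition is itself an if-and-only-if statement, at the modest extra cost of the structural step identifying $\widehat{\Psi}_s$ as the flow of the prolonged field --- a step you justify correctly, and which is anyway the conceptual content behind the paper's terse instruction.
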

%%%
\begin{proof}
 Concerning part (a), acccording to Lemma \ref{solprescrit} the necessary and sufficient condition is
\[
 D\widehat{\Psi}(x,y)\cdot \begin{pmatrix}
                       y \\ q(x,y)
                      \end{pmatrix} = \begin{pmatrix}
					D\Psi(x)y   \\
					h(\Psi(x),D\Psi(x)y)
					\end{pmatrix}\,,
\]
and the assertion follows with $D\widehat{\Psi}(x,y) = \begin{pmatrix}
                    D\Psi(x) & 0   \\
		    D^2\Psi(x)(y,\cdot) & D\Psi(x)
                   \end{pmatrix}$.
For part (b), set $q=h$, $\Psi=G(s,\cdot)$, differentiate with respect to $s$ and set $s=0$.
\end{proof}
The first observation here is that the vector space of all $g$ satisfying \eqref{secordinf} is finite dimensional (while the vector space of all infinitesimal symmetries of \eqref{secone} near a nonstationary point in $\widetilde U$ is infinite dimensional).
\begin{proposition}\label{dimbound}
%%%
If there exists $x_0\in U$ such that  $g(x_0)=0$ and $Dg(x_0)=0$, then $g=0$. Hence the vector space of all $g$ that satisfy \eqref{secordinf} has dimension $\leq n+n^2$, and for any $z\in U$, $g$ is uniquely determined by $g(z)$ and $Dg(z)$.
\end{proposition}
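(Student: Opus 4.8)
The plan is to prove that the entire Taylor expansion of $g$ at $x_0$ is forced to vanish, and then to invoke analyticity of $g$. Throughout I would treat $y$ in \eqref{secordinf} as a free parameter and differentiate only in the $x$-variable, exploiting the structural feature that \eqref{secordinf} expresses the second-order object $D^2g(x)(y,y)$ in terms of the lower-order data $g(x)$, $Dg(x)$ together with the known analytic $h$. \emph{Base step.} Evaluating \eqref{secordinf} at $x=x_0$ and using $g(x_0)=0$, $Dg(x_0)=0$ kills the entire right-hand side as well as the term $Dg(x_0)h(x_0,y)$, leaving $D^2g(x_0)(y,y)=0$ for every $y$. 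Since $D^2g(x_0)$ is a symmetric $\mathbb K^n$-valued bilinear form, polarization forces $D^2g(x_0)=0$.

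For the \emph{inductive step} I would show $D^jg(x_0)=0$ for all $j$ by induction: assume $D^jg(x_0)=0$ for $0\le j\le k$ with $k\ge1$, apply $D^{k-1}$ in $x$ to the identity \eqref{secordinf} (keeping $y$ fixed), and then set $x=x_0$. A Leibniz count is the heart of the matter: the only term of $g$-order $k+1$ is $D^{k+1}g(x)(y,y,\cdot,\dots,\cdot)$, arising from differentiating $D^2g(x)(y,y)$, while every other term carries a factor $D^jg$ with $j\le k$ (the term $Dg\,h$ produces at most $D^kg$; the two right-hand terms produce at most $D^{k-1}g$ and $D^kg$). By the inductive hypothesis all of these vanish at $x_0$, so $D^{k+1}g(x_0)(y,y,v_1,\dots,v_{k-1})=0$ for all $y,v_1,\dots,v_{k-1}$. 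As $D^{k+1}g(x_0)$ is a symmetric $(k+1)$-linear form, fixing $v_1,\dots,v_{k-1}$ and polarizing in the first two slots upgrades this to vanishing for all arguments, giving $D^{k+1}g(x_0)=0$.

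Since $g$ is analytic and all its Taylor coefficients at $x_0$ vanish, $g$ vanishes on a neighborhood of $x_0$, and by connectedness of $U$ together with the identity theorem for analytic functions one concludes $g=0$ on $U$. For the dimension statement I would note that \eqref{secordinf} is linear in $g$, so its solutions form a vector space and the evaluation map $g\mapsto(g(z),Dg(z))\in\mathbb K^n\times\mathbb K^{(n,n)}$ is linear; applying the vanishing result to differences of solutions shows this map is injective. Hence the solution space embeds into a space of dimension $n+n^2$, and any solution $g$ is determined by the data $g(z)$ and $Dg(z)$.

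The main obstacle I anticipate is precisely the inductive step: carrying out the Leibniz bookkeeping carefully enough to be certain that no $g$-derivative of order exceeding $k$ survives except the desired $D^{k+1}g$, and then executing the polarization argument that passes from vanishing on the restricted tuples $(y,y,v_1,\dots,v_{k-1})$ to vanishing of the full symmetric tensor $D^{k+1}g(x_0)$. The remaining parts (the base step, the passage to $g\equiv0$ via analyticity, and the linear-algebra dimension count) are routine.
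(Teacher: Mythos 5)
Your proposal is correct and takes essentially the same route as the paper's proof: evaluate \eqref{secordinf} at $x_0$ to obtain $D^2g(x_0)=0$, then differentiate the identity and induct to show $D^kg(x_0)=0$ for all $k$, concluding by analyticity, with the dimension bound following from linearity of \eqref{secordinf} in $g$. Your careful Leibniz bookkeeping and the polarization step merely make explicit what the paper compresses into ``now differentiate and use induction.''
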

%%%
\begin{proof}
From \eqref{secordinf} one sees that $g(z)=0$ and $Dg(z)=0$ imply $D^2g(z)=0$. Now differentiate and use induction to show that $D^kg(z)=0$ for all $k$. 
\end{proof}
The second observation is that one obtains a quasi-algorithmic approach to compute infinitesimal symmetries of this special type, working degree by degree:
Expand 
\[
 h(x,y) = h^{(0)}(x)+h^{(1)}(x)(y)+h^{(2)}(x)(y,y)+h^{(3)}(x)(y,y,y)+\cdots,
\]
with $h^{(j)}(x)$ - at fixed $x$ -  a symmetric and multilinear map from $\underbrace{\mathbb K^n\times \cdots \times \mathbb K^n}_{j\;\,\text{terms}}$ to $\mathbb K^n$. Therefore
\begin{align*}
& D_1h(x,y)g(x)=Dh^{(0)}(x)g(x)+\left(Dh^{(1)}(x)g(x)\right)(y)+\left(Dh^{(2)}(x)g(x)\right)
(y,y)+\cdots,  \\
& D_2h(x,y)(w) = h^{(1)}(x) w+2h^{(2)}(x)(y,w)+3h^{(3)}(x)(y,y,w)+\cdots, 
\end{align*}
which allows a degree-by-degree evaluation.
\begin{itemize}
 \item Degree 0: $\quad Dg(x) h^{(0)}(x) = Dh^{(0)}(x)g(x)\quad$  (thus $[g,h^{(0)}]=0$\,); 
 \item Degree 1: $\quad Dg(x) h^{(1)}(x)y - \left(Dh^{(1)}(x)g(x)\right)y-h^{(1)}(x)Dg(x)y = 0$;
 \item Degree 2: $\quad D^2g(x)(y,y)+Dg(x)\left(h^{(2)}(x)(y,y)\right)
	-\left(Dh^{(2)}(x)g(x)\right)(y,y)-2h^{(2)}(x)(y,Dg(x)y)=0$;
 \item Degree $j\geq 3$: $\quad Dg(x) 
	h^{(j)}(x)(y,\ldots,y)-\left(Dh^{(j)}(x)g(x)\right)(y,\ldots,y)\\
-jh^{(j)}(x)(y,\ldots,y,Dg(x)y)=0$.
\end{itemize}
The degree two condition, which stands out because it does not fit the pattern of the other conditions, is amenable to a geometric interpretation: $h^{(2)}(x)$ defines a torsion-free affine connection on $\mathbb K^n$, with a corresponding differential equation $\ddot{x}=h^{(2)}(x)(\dot x,\dot x)$ for the geodesics. The definitions imply that the $g$ generates a local transformation group of automorphisms of this affine connection. We note one special consequence:
\begin{proposition}\label{affinesym}
 %%%
If $h^{(2)}=0$, then $D^2g=0$, hence $g(x) = c+B(x)$ is given by an affine map.  \\
In particular, equations of the form $\ddot{x}=h^{(0)}(x)$ admit only affine maps as symmetries of type \eqref{secordinf}, and moreover $[g,h^{(0)}]=0$. \\
 \hspace*{2cm}\hfill  $\Box$ 
\end{proposition}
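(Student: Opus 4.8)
The plan is to extract the conclusion directly from the degree-by-degree form of the infinitesimal condition \eqref{secordinf} that has just been assembled; beyond a short polarization argument no fresh computation is needed, since the hypothesis $h^{(2)}=0$ annihilates almost every term in the relevant equation.

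First I would specialize the degree-two identity
\[
D^2g(x)(y,y)+Dg(x)\bigl(h^{(2)}(x)(y,y)\bigr)-\bigl(Dh^{(2)}(x)g(x)\bigr)(y,y)-2h^{(2)}(x)(y,Dg(x)y)=0
\]
to the case $h^{(2)}\equiv 0$. The three summands other than $D^2g(x)(y,y)$ all vanish: two carry the factor $h^{(2)}$ directly, while the remaining one involves only the $x$-derivative $Dh^{(2)}$, which is zero because $h^{(2)}\equiv 0$. Hence the identity reduces to $D^2g(x)(y,y)=0$ for all $x$ and all $y\in\mathbb K^n$. The one genuine step is to upgrade this diagonal vanishing to the vanishing of the full bilinear map: since $D^2g(x)$ is symmetric — the mixed second partials of the analytic map $g$ commute — polarization
\[
D^2g(x)(u,v)=\tfrac12\bigl(D^2g(x)(u+v,u+v)-D^2g(x)(u,u)-D^2g(x)(v,v)\bigr)
\]
shows $D^2g(x)=0$ for every $x$, i.e.\ $D^2g\equiv 0$.

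Next I would integrate: $D^2g\equiv 0$ says the derivative of $Dg$ vanishes, so $Dg$ equals a constant linear map $B$ on the connected domain, whence $g(x)-B(x)$ has zero derivative and is a constant vector $c$. Thus $g(x)=c+B(x)$ is affine. For the ``in particular'' statement, an equation $\ddot x=h^{(0)}(x)$ has $h^{(j)}=0$ for every $j\ge 1$, so in particular $h^{(2)}=0$ and the first part forces every admissible $g$ to be affine; the relation $[g,h^{(0)}]=0$ is immediate from the degree-zero condition $Dg(x)\,h^{(0)}(x)=Dh^{(0)}(x)\,g(x)$, which is exactly the statement $[g,h^{(0)}]=0$ in the notation of Definition \ref{liebdef}.

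I do not expect a real obstacle here: the content of the proposition is already encoded in the degree decomposition. The only place demanding a moment of care is the passage from $D^2g(x)(y,y)=0$ to $D^2g(x)=0$, which rests on the symmetry of the second derivative; one could alternatively differentiate $D^2g(x)(y,y)=0$ twice in $y$, but the symmetric-bilinear viewpoint is cleaner.
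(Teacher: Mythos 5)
Your proof is correct and is exactly the argument the paper intends (the paper marks the proposition with $\Box$ because it follows immediately from the degree-two and degree-zero conditions listed just before it): specializing the degree-two identity at $h^{(2)}=0$, polarizing the symmetric form $D^2g(x)(y,y)=0$ to get $D^2g=0$, integrating on the connected domain to obtain affinity, and reading off $[g,h^{(0)}]=0$ from the degree-zero condition. Your explicit polarization step is a sound way to make the diagonal-to-full-bilinear passage precise; nothing is missing.
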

One can generalize this result to equations  $\ddot{x}=q(x,\dot x)$ that are, by way of Lemma \ref{solpressecord}, transformable to an equation $\ddot{x}=h(x,\dot x)$ with $h^{(2)}=0$. The degree two condition shows that this holds if and only if
\[
 q^{(2)}(x) = -D\Psi(x)^{-1}D^2\Psi(x).
\]
In dimension $n=1$ the condition reads $-\dfrac{\Psi''}{\Psi'}=q^{(2)}$, which can always be satisfied. We use this for a classification.
%%%
\begin{example}{\em
 Dimension 1:    Let the equation
 $\ddot{x}=h(x,\dot x)$ be given, w.l.o.g.\ with $h^{(2)}=0$. Then one has the following conditions for infinitesimal symmetries: 
\begin{itemize}
\item $g'\cdot h^{(0)}={h^{(0)}}' \cdot g$,   
\item ${h^{(1)}}' \cdot g=0\qquad$ (hence $h^{(1)}$ is constant whenever $g\neq 0$);   
\item $g''=0$ (hence $g$ is a polynomial of degree $\leq 1$);     
\item $(1-j)g'(x) h^{(j)}(x)={h^{(j)}}'(x)g(x)\quad$ for all $j\geq 3$.
\end{itemize}
%%%
We use an elementary fact: If  $p$,$q$ are nonzero functions of one variable then  $mq'p=p'q$ for some $m\in\mathbb Z$ if and only if $p=\gamma\cdot q^m$ for some constant $\gamma$, and vice versa.   \\
Therefore, if there exists $j>2$ such that $h^{(j)}\neq0$, then $g$ is determined by $h^{(j)}$ (up to a constant factor), and the space $\mathcal{L}$ of all vector fields satisfying the infinitesimal symmetry condition has dimension $\leq1$. The same holds whenever $h^{(0)}\neq 0$.   \\
In the case $\dim \mathcal{L}=1$ one finds
\begin{equation}\label{dimelloneeq}
 h(x,y)=\sum_{\substack{j\geq 0 \\ j\neq 2}} \gamma_j g(x)^{1-j}y^j,
\end{equation}
whenever there exists $l\not\in \{1,\,2\}$ such that $h^{(l)}\neq 0$.
The remaining case is that $h^{(j)}= 0$ for all $j\neq 1$ and $h^{(1)}$ constant. Thus the differential equation is of the form $\ddot{x}=\gamma\cdot x$, $\gamma\in\mathbb K$, and all polynomials of degree $\leq 1$ lie in $\mathcal{L}$.\\

The equations \eqref{dimelloneeq}, with one-dimensional space of symmetries, can be solved by quadratures: Essentially one has either $g=1$ and $h(x,y)=\sum \gamma_j y^j$ (thus $\ddot{x}=r(\dot x)$ with some function $r$; a first order equation for $\dot x$), or $g=x+\beta$ and $\ddot{x}=(x+\beta)\,r\left(\dfrac{\dot x}{x+\beta}\right)$ with some function $r$.  \\
In the latter case $z:=\dfrac{\dot x}{x+\beta}$ satisfies $\dot{z}=\dfrac{(x+\beta)\ddot{x}-\dot x^2}{(x+\beta)^2}=r(z)-z^2$; again we are left with quadratures.}
\end{example}
%%%
\begin{example}{\em
 Motion in a central force field (Dimension 2):
Here we have the equation $\ddot{x}=r(x_1^2+x_2^2)\cdot x$, with $r$ a nonzero function of one variable. Due to $h=h^{(0)}$, infinitesimal symmetries of type \eqref{secordinf} must be of the form $g(x)=B(x)+c$ ($B$ linear, $c$ constant).    \\
Evaluate $[g,h^{(0)}]=0$ to obtain, with $\varphi_1(x):=x_1^2+x_2^2$:
\[
 \left(r^\prime(\varphi_1(x))\cdot D\varphi_1(x)(Bx+c)\right)\cdot x+r(\varphi_1(x))\cdot c=0.
\]
The assumption $c\not=0$ and generic linear independence of $x$ and $c$ yields the contradiction $r=0$; thus necessarily $c=0$ and $D\varphi_1(x)Bx=0$. The latter condition forces (w.l.o.g.) that $B=\begin{pmatrix}
                                                0 & -1 \\ 1 & 0
						\end{pmatrix}$.   \\
The associated first order system $\dot{\binom{x}{y}}=\hat{f}(x,y)$, in detailed notation
\begin{align*}
 & \dot x=y   \\
 & \dot{y}= \rho(\varphi_1(y))x,
\end{align*}
therefore admits a one-parameter symmetry group with infinitesimal generator $\hat{B}=\begin{pmatrix}
                                                             B & 0  \\  0 & B
                                                                   \end{pmatrix}$.

There exists a reduction via  Proposition \ref{redbyinv}:
Defining
\[
  \varphi_2(x) = y_1^2+y_2^2,\quad \varphi_3(x)=x_1y_1+x_2y_2,\quad \varphi_4(x)=x_1y_2-x_2y_1,
\]
the relations
\[
 X_{\hat{f}}(\varphi_1)=2\varphi_3,\quad X_{\hat{f}}(\varphi_2)=2r(\varphi_1)\cdot \varphi_3,\quad X_{\hat{f}}(\varphi_3) = \varphi_2+r(\varphi_1)\varphi_1,\quad 
 X_{\hat{f}}(\varphi_4)=0
\]
show that $\Phi=\begin{pmatrix}\phi_1\\ \vdots\\ \phi_4\end{pmatrix}$ sends solutions of $\dot{\binom{x}{y}}=\hat{f}(x,y)$ to solutions of 
\[
\begin{array}{rcl}
\dot w_1&=& 2 w_3  \\
\dot w_2&=& 2 r(w_1) w_3  \\
\dot w_3&=& w_2+ r(w_1) w_1  \\
\dot w_4&=& 0
\end{array}
\]
The last equation directly yields a conservation law (angular momentum), and combining the first two equations leads to $dw_2/dw_1=r(w_1)$, which yields a first integral $w_2-R(w_1)$, with $R^\prime =r$ (conservation of energy). Moreover the relation $\varphi_1\varphi_2=\varphi_3^2+\varphi_4^2$ implies that only the (invariant) subvariety given by $w_1w_2-w_3^2-w_4^2=0$ is of interest.

In Chapter 4 we will see how the $\phi_j$ may be found.
}
\end{example}
%%%%%%%%%%%%%%%%%%%%%%%%%%%%%%%%%%%%%%%%%%%%%%%%%%%%%%
\section{Further remarks and notes}
\begin{itemize}
\item 
From the abundant literature on symmetries of differential equations we mention just a few books and monographs. The classical book \cite{Li 1} by Sophus Lie is still available and readable (albeit in German), as is a later account by Engel and Faber \cite{EF}. As for (relatively) modern accounts, Olver's monograph \cite{Ol 1} stands out, but there are many other interesting  presentations from various perspectives, including Bluman and Kumei \cite{BK}, Stephani \cite{Ste} and Gaeta \cite{Ga}. For second order equations in Mechanics (a highly relevant application field) see also Marsden and Ratiu \cite{MR}. Concerning explicitly solvable differential equations, the classical book by Ince \cite{In} contains a large list (and also a short introduction to symmetry methods), and nowadays there exist many software packages (general and specialized) which are devoted to this field.
\item Concerning an extension to smooth vector fields and functions, one may roughly say that everything in Section 2.1 continues to hold (in some cases different proofs are necessary), mutatis mutandis, while everything in Section 2.2 becomes problematic. Locally, near non-stationary points, the results in this section can be salvaged, but (for instance) Remark \ref{orbpresrem} cannot be extended globally, and Proposition \ref{orbsymcharprop} is no longer true even locally. On the other hand, Loos \cite{Lo 1,Lo 2} actually proved his results for smooth second order equations, and considered the global setting. We mention here that in most accounts for symmetries of second order equations -- including Lie's own \cite{Li 1} -- one starts from non-autonomous equations, which yield somewhat different symmetry conditions that are also amenable to an algorithmic approach.
\end{itemize}
%%%%%%%%%%%%%%%%%%%%%%%%%%%%%%%%%%%%%%%%%%%%%%%%%%%%%%%%%%%%%%%%%%%%%%%%%%%%
\chapter{Multiparameter symmetries and invariant sets}
We keep the notions and notation from Chapter 1. Moreover we introduce $M(U)$, the set of all {\em meromorphic} functions on $U$, i.e. functions that can (locally) be expressed as quotients of analytic functions. Since $U$ is connected, $M(U)$ is a field due to the identity theorem. 

In the present chapter we discuss the situation when ``more than one''  infinitesimal (orbital) symmetry of $\dot x=f(x)$ is given (more precisely, the corresponding vector space has dimension $>1$). The basic strategy will be to reduce by common first integrals of the symmetries.
%%%%%%%%%%%%%%%%%%%%%%%%%%%%%%%%%%%%%%%%%%%%%
\section{Structure of infinitesimal symmetries and reduction}
\begin{definition} \label{cendef} Let $f\in\mathcal {A}(U)$, and $\emptyset\not=U^*\subseteq U$ open and connected. 
\begin{enumerate}[(i)]
\item We call
\[
\mathcal{C}_{U^*}(f):=\left\{ h\in\mathcal{A}(U^*);\,\left[h,\,f\right]=0\right\}
\]
the {\em centralizer} of $f$ in $\mathcal{A}(U^*)$, and we call
\[
\mathcal{N}_{U^*}(f):=\left\{ h\in\mathcal{A}(U^*);\,\left[h,\,f\right]=\lambda \,f \text{  for some  }\lambda\in A(U^*)\right\}
\]
the {\em normalizer} of $f$ in $\mathcal{A}(U^*)$.
\item For given $h_1,\ldots, h_r\in\mathcal{A}(U^*)$ we denote by 
\[
I_{U^*}(h_1,\ldots,h_r)
\]
the set of constant functions and common first integrals of the $h_i$ on $U^*$.
\end{enumerate}
\end{definition}
The naming ``normalizer'' slightly abuses language. To some extent, Proposition \ref{symvsorbsymprop} reduces investigation of the normalizer (and reduction) to investigation of the centralizer, but this is not entirely satisfactory since these results are local and do not apply to any neighborhood of a stationary point.

We collect some elementary properties first:
\begin{lemma}\label{invollem} Let the notation be as in Definition \ref{cendef}. Then:
\begin{enumerate}[(a)]
\item $I_{U^*}(h_1,\ldots,h_r)$ is a subalgebra of $A(U^*)$.
\item $\mathcal{C}_{U^*}(f)$ and $\mathcal{N}_{U^*}(f)$ are Lie algebras and also modules over $I_{U^*}(f)$.
\item The set
\[
\left\{ g\in \mathcal{A}(U^*);\,X_g(\psi)=0\text{  for all  }\psi\in I_{U^*}(h_1,\ldots,h_r)\right\}
\]
is a Lie algebra as well as a module over $A(U^*)$.
\end{enumerate}
\end{lemma}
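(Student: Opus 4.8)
The plan is to read all three assertions as closure properties and to derive them mechanically from the formal structure already established: the derivation property of Lie derivatives (Proposition \ref{liederprop}(b)), the compatibility $X_{[g_1,g_2]}=X_{g_1}X_{g_2}-X_{g_2}X_{g_1}$ together with the Jacobi identity (Proposition \ref{liebproperties}(a),(b)), and the mixed Leibniz rule $\left[f,\psi g\right]=X_f(\psi)g+\psi\left[f,g\right]$ (Proposition \ref{liebproperties}(c)). Each of the three sets is visibly a $\mathbb K$-vector space by bilinearity, so the only work is to verify stability under the appropriate bracket and scalar multiplications.

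For part (a) I would observe that $I_{U^*}(h_1,\ldots,h_r)=\bigcap_i \ker X_{h_i}$, since constants lie in each kernel (one has $X_{h_i}(c)=0$). Because $X_{h_i}$ is a derivation, $\ker X_{h_i}$ is closed under linear combinations by linearity and under products by the product rule; intersecting over $i$ preserves both, so $I_{U^*}(h_1,\ldots,h_r)$ is a subalgebra of $A(U^*)$.

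For part (b), $\mathcal C_{U^*}(f)$ is the kernel of ${\rm ad}\,f$ and $\mathcal N_{U^*}(f)$ is its idealizer. Closure of the centralizer under the bracket is immediate from the Jacobi identity: if $[h_1,f]=[h_2,f]=0$, then expanding $[f,[h_1,h_2]]$ via Proposition \ref{liebproperties}(b) annihilates the two surviving terms. The step I expect to require the most care is closure of the normalizer: starting from $[h_i,f]=\lambda_i f$, the Jacobi identity yields $[[h_1,h_2],f]=\bigl(X_{h_1}(\lambda_2)-X_{h_2}(\lambda_1)\bigr)f$, so one must track the new multiplier $\mu:=X_{h_1}(\lambda_2)-X_{h_2}(\lambda_1)$ and note it is again analytic, whence $[h_1,h_2]\in\mathcal N_{U^*}(f)$. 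The $I_{U^*}(f)$-module structure then follows from Proposition \ref{liebproperties}(c): for $\psi$ with $X_f(\psi)=0$ one computes $[\psi h,f]=-X_f(\psi)h+\psi[h,f]=\psi[h,f]$, which is $0$ when $h\in\mathcal C_{U^*}(f)$ and equals $(\psi\lambda)f\in A(U^*)\cdot f$ when $h\in\mathcal N_{U^*}(f)$; this is exactly where the coefficients must be restricted to first integrals of $f$.

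For part (c), write $I:=I_{U^*}(h_1,\ldots,h_r)$ and let $L$ denote the set in question. Closure under the bracket uses Proposition \ref{liebproperties}(a): for $g_1,g_2\in L$ and any $\psi\in I$ one has $X_{[g_1,g_2]}(\psi)=X_{g_1}(X_{g_2}(\psi))-X_{g_2}(X_{g_1}(\psi))=0$, since each inner term already vanishes. The genuinely distinctive feature here is that $L$ is a module over \emph{all} of $A(U^*)$, not merely over a ring of first integrals: this follows at once from the pointwise identity $X_{\rho g}(\psi)(x)=D\psi(x)\,\rho(x)g(x)=\rho(x)X_g(\psi)(x)$, so that $X_{\rho g}=\rho\,X_g$ vanishes on $I$ whenever $X_g$ does. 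No real obstacle arises; the one thing to keep straight is the contrast with part (b), where the coefficient ring is the smaller $I_{U^*}(f)$ precisely because the condition $[\cdot,f]=\lambda f$ is \emph{not} $A(U^*)$-linear in the first slot, whereas the Lie-derivative action $g\mapsto X_g(\psi)$ is.
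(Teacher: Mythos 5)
Your proof is correct and takes essentially the same route as the paper's, which disposes of (a) by the product rule for derivations, of (b) by the Jacobi identity together with Proposition \ref{liebproperties}(c), and of (c) by Proposition \ref{liebproperties}(a). You have simply made the paper's one-line argument explicit, including the multiplier computation $\left[\left[h_1,h_2\right],f\right]=\bigl(X_{h_1}(\lambda_2)-X_{h_2}(\lambda_1)\bigr)f$ for the normalizer, which checks out (the $\lambda_1\lambda_2$ terms cancel) and which the paper leaves implicit.
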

\begin{proof}
Part (a) follows from linearity and the product rule for derivations, the nontrivial Lie algebra properties in part (b) are consequences of the Jacobi identity and Proposition \ref{liebproperties}(c), and the latter also shows the module property. As for part (c), see Propsition \ref{liebproperties}(a).
\end{proof}
Thus, when dealing with common first integrals of two or more vector fields one has to take into account that these are also first integrals of their commutators. Taking a first step toward a fundamental existence result, we state:
\begin{proposition}\label{involprop} Let $U^*\subseteq U$ be nonempty, open and connected, and let $h_1,\ldots, h_s\in\mathcal{C}_{U^*}(f)$ be linearly independent over the field $M(U^*)$ of meromorphic functions. Then:
\begin{enumerate}[(a)]
\item If $h=\sum\beta_i h_i\in\mathcal{C}_{U^*}(f)$ with $\beta_i\in M(U^*)$, then all  $X_f(\beta_i)=0$. Thus every $\beta_i$ is constant or a first integral of \eqref{ode}.
\item There exist $r$, with $s\leq r\leq n$ and $h_j$, $s+1\leq j\leq r$, such that $h_1,\ldots, h_r$ are linearly independent over $M(U^*)$, and moreover
\[
\left[h_i,\,h_k\right]=\sum_\ell \mu_{ik\ell}\,h_\ell,\quad 1\leq i,k,\ell\leq r
\]
with each $\mu_{ik\ell}$ constant or a first integral of \eqref{ode}.
\item The $\mu_{ik\ell}$ are analytic on
\[
\widehat U:=\left\{y\in U^*;\,h_1(y),\ldots, h_r(y)\text{  are linearly independent in  }\mathbb K^n \right\}.
\]
\end{enumerate}
\end{proposition}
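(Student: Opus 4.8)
The plan is to prove the three parts in sequence, using (a) as the computational core that drives (b), and reducing (c) to pointwise linear algebra. For part (a) I would first extend the Lie derivative $X_f$ to the field $M(U^*)$ by the quotient rule, so that it remains a derivation, and record the bracket identity for meromorphic scalars: starting from Proposition~\ref{liebproperties}(c) and antisymmetry of the bracket one obtains
\[
[\beta g,\,f]=-X_f(\beta)\,g+\beta\,[g,\,f]\qquad(\beta\in M(U^*),\ g\in\mathcal{A}(U^*)).
\]
This holds on the dense open locus where $\beta$ is locally a quotient of analytic functions, hence everywhere by the identity theorem. Applying it to $h=\sum_i\beta_i h_i$ and using $[h_i,f]=0$ yields $0=[h,f]=-\sum_i X_f(\beta_i)\,h_i$. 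Since the $h_i$ are linearly independent over $M(U^*)$ and each $X_f(\beta_i)\in M(U^*)$, every $X_f(\beta_i)$ must vanish, which is exactly the assertion of part (a).

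For part (b) I would view the analytic vector fields as elements of the $n$-dimensional $M(U^*)$-vector space $M(U^*)^n$ and construct $h_1,\dots,h_r$ by greedy extension. Because $\mathcal{C}_{U^*}(f)$ is a Lie algebra (Lemma~\ref{invollem}(b)), every bracket $[h_i,h_k]$ of members of the current family is again analytic and lies in $\mathcal{C}_{U^*}(f)$. Whenever some such bracket is $M(U^*)$-independent from the family built so far, I adjoin it; this strictly raises the $M(U^*)$-dimension of the span, so after at most $n-s$ steps the process terminates with $h_1,\dots,h_r$ ($s\le r\le n$), still $M(U^*)$-independent and contained in $\mathcal{C}_{U^*}(f)$, and with all brackets lying in its $M(U^*)$-span, i.e.\ $[h_i,h_k]=\sum_\ell\mu_{ik\ell}h_\ell$ with $\mu_{ik\ell}\in M(U^*)$. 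Since each $[h_i,h_k]$ is itself in $\mathcal{C}_{U^*}(f)$ and the $h_\ell$ are $M(U^*)$-independent, part (a) applies verbatim with $\beta_\ell=\mu_{ik\ell}$ and gives $X_f(\mu_{ik\ell})=0$; thus each coefficient is constant or a first integral of \eqref{ode}.

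For part (c) I would solve for the (already uniquely determined) coefficients pointwise on $\widehat U$. Near any $y_0\in\widehat U$ the vectors $h_1(y_0),\dots,h_r(y_0)$ are independent, so the $n\times r$ matrix $\bigl(h_1(y)\,\big|\,\cdots\,\big|\,h_r(y)\bigr)$ has a nonvanishing $r\times r$ minor on a whole neighborhood; Cramer's rule then expresses the unique solution of $[h_i,h_k](y)=\sum_\ell\mu_{ik\ell}(y)\,h_\ell(y)$ as a ratio of analytic functions with nonvanishing denominator, hence as analytic functions. By uniqueness these coincide with the meromorphic $\mu_{ik\ell}$, which are therefore analytic throughout $\widehat U$. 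The genuinely delicate point, on which the whole argument rests, is the passage to meromorphic coefficients in part (a): one must justify carefully that $X_f$ and the Leibniz identity extend to $M(U^*)$ and that linear independence over the \emph{field} $M(U^*)$ legitimately forces the meromorphic Lie derivatives to vanish. Once this is secured, parts (b) and (c) are essentially formal.
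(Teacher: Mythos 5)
Your proposal is correct and follows essentially the same route as the paper: part (a) via the product-rule identity of Proposition~\ref{liebproperties}(c) combined with linear independence over $M(U^*)$, part (b) by successively adjoining commutators (bounded by $n$ since $\mathcal{A}(U^*)$ embeds in the $n$-dimensional $M(U^*)$-vector space $M(U^*)^n$) and then applying (a) to the coefficients, and part (c) by Cramer's rule on $\widehat U$. You merely spell out details the paper leaves implicit, such as the derivation-extension of $X_f$ to $M(U^*)$ and the identity-theorem argument, which is fine.
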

\begin{proof}
Part (a), and the second assertion of part (b), follow with Propsition \ref{liebproperties}(c), which implies
\[
0=\left[f,\,\sum\beta_i h_i\right]=\sum_iX_f(\beta_i)h_i+\sum_i\beta_i \left[f,\,h_i\right]=\sum_iX_f(\beta_i)h_i.
\]
The first assertion of part (b) is obtained by successively adjoining commutators as long as they still yield a linearly independent set over $M(U^*)$. (This implies $r\leq n$.) Finally, part (c) follows with Cramer's rule.
\end{proof}
\begin{remark}{\em 
In the setting of parts (b) and (c) above we say that $h_1,\ldots,h_r$ are {\em in involution} on $\widehat U$, and we extend this notion to any open set $V\subseteq U^*$ where all the $h_i$ and all the $\mu_{ik\ell}$ are analytic.}
\end{remark}

We illustrate with a small example how part (b) can be put to use.
\begin{example}{\em
Systems in $\mathbb R^3$ with \emph{so}$(3,\mathbb R)$-symmetry: For the basis elements 
\[
 h_1(x):=\begin{pmatrix}
          x_2 \\ -x_1 \\ 0
         \end{pmatrix},\qquad h_2(x):=\begin{pmatrix}
         			 x_3 \\ 0 \\ -x_1
         			\end{pmatrix}, \qquad  h_3(x):= \begin{pmatrix}
         			 			0 \\ x_3 \\ -x_2
         						\end{pmatrix}
\]
of the Lie algebra \emph{so}$(3,\mathbb R)$ one finds 
\[
h_3=\-\frac{x_3}{x_1}\cdot h_1 + \frac{x_2}{x_1}\cdot h_2.
\]
Thus any vector field $f$ admitting the infinitesimal symmetries $h_1,h_2,h_3$ necessarily admits the first integrals $\frac{x_2}{x_1}$ and $\frac{x_3}{x_1}$. This implies $f(x) = \mu(x)\cdot x$ for suitable $\mu$, and now Proposition \ref{liebproperties}(c) shows that $\mu$ is a common first integral of the $h_i$ (hence a function of $x_1^2+x_2^2+x_3^2$).

The property underlying this particular effect is that the dimension of  the group SO$(3,\mathbb R)$ is greater than the generic dimension of the group orbits.
}
\end{example}
So far we have not yet established the existence of common first integrals, except for the case of a single vector field (see Lemma \ref{locfirstint}). Existence is (locally) taken care of by the next theorem, which is due to Frobenius.
\begin{theorem}\label{frobenius} Let $U^*\subseteq U$ be nonempty, open and connected, and let $1\leq r<n$ and $h_1,\ldots, h_r\in\mathcal{C}_{U^*}(f)$ be linearly independent over $M(U^*)$. Moreover assume that
\[
\left[h_i,\,h_k\right]=\sum_\ell \mu_{ik\ell}\,h_\ell,\quad 1\leq i,k,\ell\leq r,\text{  with  } \mu_{ik\ell}\in M(U^*),
\]
and define
\[
\widehat U:=\left\{y\in U^*;\,h_1(y),\ldots, h_r(y)\text{  are linearly independent in  }\mathbb K^n \right\}.
\]
Then for every point of $\widehat U$ there exist a neighborhood $\widehat V$ and functionally independent $\varphi_1,\ldots,\varphi_{n-r}\in A(\widehat V)$ such that:
\begin{enumerate}[(a)]
 \item Every $\varphi_j$ is a common first integral of  $h_1,\ldots,h_r$.
 \item For every common first integral $\psi$  of the $h_i$ on $\widehat V$ there exists an analytic function $\sigma$ of $r$ variables such that $\psi=\sigma(\varphi_1,\ldots,\varphi_{n-r})$.
\end{enumerate}
\end{theorem}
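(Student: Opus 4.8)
The plan is to establish the Frobenius theorem by reducing it, via an inductive straightening procedure, to the trivial constant case already handled in Lemma~\ref{locfirstint}. The strategy is to straighten one vector field at a time, using the involutivity hypothesis to guarantee that the straightening of $h_1$ does not destroy the structure of the remaining $h_2,\ldots,h_r$.

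First I would fix a point $y_0\in\widehat U$ and, since $h_1(y_0)\neq 0$, apply the straightening theorem (Theorem~\ref{thmstraight}) together with Proposition~\ref{liecompatible}(b) to choose coordinates in which $h_1=\begin{pmatrix}1\\0\\\vdots\\0\end{pmatrix}$. Because solution-preserving maps respect Lie brackets, the involution relations $[h_i,h_k]=\sum_\ell\mu_{ik\ell}h_\ell$ are preserved under this transformation, so no generality is lost. In these coordinates, the condition $[h_1,h_i]=\sum_\ell\mu_{1i\ell}h_\ell$ becomes a system controlling the $x_1$-dependence of the remaining fields. The next step is to exploit the $M(U^*)$-module structure (Lemma~\ref{invollem}(b), Proposition~\ref{involprop}(a)): by subtracting suitable meromorphic multiples of $h_1$, I may replace $h_2,\ldots,h_r$ by modified fields $\widetilde h_2,\ldots,\widetilde h_r$ whose first components vanish, while keeping the $M(U^*)$-span unchanged and preserving linear independence on a possibly smaller neighborhood. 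The common first integrals of the family are unchanged by such modifications.

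The heart of the argument is then an induction on $r$. Having arranged $h_1=e_1$ and the remaining fields with vanishing first component, I would show that the involutivity forces the $\widetilde h_i$, for $i\geq 2$, to be expressible (after the module adjustment) independently of $x_1$, so that they descend to $r-1$ vector fields on the $(n-1)$-dimensional slice $\{x_1=\text{const}\}$. Involutivity is inherited by this restricted family because the structure functions $\mu_{ik\ell}$ are first integrals of the relevant fields or constants, hence compatible with the reduction. By the inductive hypothesis applied in dimension $n-1$ with $r-1$ fields, there exist functionally independent common first integrals $\varphi_1,\ldots,\varphi_{n-r}$ of $\widetilde h_2,\ldots,\widetilde h_r$ on the slice; these are automatically independent of $x_1$ and hence are first integrals of $h_1=e_1$ as well, giving part~(a). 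The base case $r=1$ is exactly Lemma~\ref{locfirstint}.

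For part~(b), the completeness of the list, I would argue that any common first integral $\psi$ is annihilated by all $X_{h_i}$, hence in the straightened coordinates is independent of $x_1$ and, by the inductive completeness statement, is a function of $\varphi_1,\ldots,\varphi_{n-r}$. The functional independence of the $\varphi_j$ follows from counting: the common kernel of the $r$ independent derivations $X_{h_i}$ at a generic point has codimension exactly $r$, leaving $n-r$ independent invariants. The main obstacle I anticipate is the bookkeeping in the inductive step: one must verify carefully that the module adjustment producing the $\widetilde h_i$ and the subsequent restriction to a slice genuinely preserve both involutivity and the full algebra of common first integrals, and that functional independence survives the passage to a smaller neighborhood. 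The geometric content is the classical flow-box argument, but making the meromorphic-coefficient reduction interact cleanly with the smooth straightening is where the care is required.
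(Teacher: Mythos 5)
There is a genuine gap in your inductive step, and it sits exactly where the paper invests its main effort. After straightening $h_1=e_1$ and subtracting multiples of $h_1$ to kill the first components, you claim that ``involutivity forces the $\widetilde h_i$ to be expressible independently of $x_1$.'' This is false as stated. What involutivity actually gives is $[h_1,\widetilde h_i]=\partial \widetilde h_i/\partial x_1=\sum_{\ell\geq 2}\nu_{i\ell}\,\widetilde h_\ell$ (the coefficient of $h_1$ vanishes by comparing first components), i.e.\ a linear ODE system in $x_1$ for the family. This says the \emph{span} of the $\widetilde h_i$ is carried along in $x_1$, not that the individual fields are $x_1$-independent. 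A concrete counterexample in $\mathbb K^3$ with $r=2$: take $h_1=e_1$ and $\widetilde h_2(x)=e^{x_1}e_2$. Then $[h_1,\widetilde h_2]=\widetilde h_2$, so the family is involutive with vanishing first components, yet $\widetilde h_2$ depends on $x_1$. Your hedge ``(after the module adjustment)'' does not repair this, since the only adjustment you describe is subtraction of multiples of $h_1$, which cannot remove this $x_1$-dependence; one needs a further invertible change of basis among $\widetilde h_2,\ldots,\widetilde h_r$ (solving the ODE above, or something equivalent). A second, related slip: you justify inheritance of involutivity by saying the structure functions $\mu_{ik\ell}$ ``are first integrals of the relevant fields or constants.'' Proposition \ref{involprop} makes them first integrals of $f$, not of the $h_i$; in the general involutive setting they are merely meromorphic, so this step has no support.

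The paper avoids both problems by performing the normalization \emph{before} straightening, and in a stronger form than yours: on an open dense set where the first $r$ rows of $(h_1,\ldots,h_r)$ are independent, Cramer's rule produces an invertible analytic change of basis $g_i=\sum_j\alpha_{ij}h_j$ whose first $r$ components are the constant standard pattern ($1$ in slot $i$, $0$ in the other slots among the first $r$). Any bracket $[g_i,g_j]$ then has vanishing first $r$ components, while involutivity forces it into the span of $g_1,\ldots,g_r$; the only such element is $0$, so the family \emph{commutes outright}. Since the $A(V)$-span, and hence the algebra of common first integrals, is unchanged, one may assume $[h_i,h_j]=0$ from the start; then $[h_1,h_i]=0$ with $h_1=e_1$ literally means $\partial h_i/\partial x_1=0$, and the induction (on $n$ in the paper, on $r$ in your version --- this difference is immaterial, both drop by one per step) goes through exactly as you envisage, including your part~(b) argument, which is fine. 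So your overall scaffolding (straighten, adjust, descend to a slice, induct, base case Lemma \ref{locfirstint}) matches the paper, but the reduction to a commuting family is the key missing idea, and without it the step you label as ``forced by involutivity'' fails.
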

%%%
\begin{proof}
\begin{enumerate}[(i)] 
\item We first derive the following auxiliary result: There exist an open and dense $V \subseteq \widehat U$ and analytic $g_1, \ldots,g_r$ on $V$ such that 
\[
A(V) g_1+\cdots +A(V) g_r=A(V) h_1+\cdots +A(V) h_r
\]
and furthermore $[g_i,g_j]=0$ for all $i,j$. \\
To prove this we may assume that the first $r$ rows of the matrix 
\[
(h_1(y),\ldots,h_r(y)) \in \mathbb K^{(n,r)}
\]
 are linearly independent at every point $y\in V$. By Cramer's rule there exist $\alpha_{ij}\in A(V)$ such that
\[
 g_i: = \sum^r_{j=1}\alpha_{ij}h_j= \begin{pmatrix}
     0 \\ \vdots \\ 0 \\ 1 \\ 0 \\ \vdots\\ 0 \\ \ast \\ \vdots \\ \ast
     \end{pmatrix} \begin{matrix}
                   \left\}\begin{matrix}
			 \\ \\ r \\ \\ \\ 
		    \end{matrix}\right. \\ \\ \\ \\ 
           \end{matrix}
\;;\quad \text{with the } 1\text{  in position  }\;\# i,
\]
and $(\alpha_{ij})$ is an invertible $r\times r$ matrix, with all entries of the inverse matrix also in $A(V)$. Therefore all $h_j\in A(V) g_1 +\cdots +A(V) g_r$. Since  $h_1,\ldots,h_r$ are in involution on $V$, so are $g_1,\ldots,g_r$ (with coefficients in $M(V)$). Evaluation of the Lie brackets now shows
\[
 [g_i,g_j] = \begin{pmatrix}
     0 \\ \vdots \\ 0 \\ \ast \\ \vdots \\ \ast
     \end{pmatrix} \begin{matrix}
                   \left\}\begin{matrix}
			\\ r \\ \\ 
		    \end{matrix}\right. \\ \\ \\ \\ 
           \end{matrix}\;, \quad \text{which forces}\;\;[g_i,g_j]=0.
\]
\item According to (i), the common first integrals of $h_1,\ldots,h_r$ resp.\ $g_1,\ldots,g_r$ coincide. To prove the theorem, one may therefore assume $[h_i,h_j]=0$ for all $i,j$. The proof is by induction on the dimension $n$, with the start $n=2$ (and necessarily $r=1$) known from Lemma \ref{locfirstint}.

For the induction step, by Proposition \ref{liecompatible} and the straightening theorem (on some $V\subseteq \widehat V$) one may assume that 
\[h_1 = \begin{pmatrix}
                                                        1 \\ 0 \\ \vdots \\ 0
                                                       \end{pmatrix}.
\]
Now $[h_1,h_i]=0$ implies that $h_2,\ldots,h_r$ are functions of $x_2,\ldots,x_n$ alone. Setting
\[
 h_i(x) = \begin{pmatrix}
                          \mu_i(x_2,\ldots,x_n)   \\
			  \hat{h}_i(x_2,\ldots,x_n)
                         \end{pmatrix}\,,
 \;\;\text{and}\;\;\widetilde{h}_i(x):=h_i(x)-\mu_i(x_2,\ldots,x_n) h_1(x),
\]
one verifies directly that $[\widetilde{h}_i,\widetilde{h}_j] = [h_1,\widetilde{h}_i]=0$ for all $i,j$. By the induction hypothesis, $\hat{h}_2,\ldots,\hat{h}_r$ admit $(n-1)-(r-1) = n-r$ independent common first integrals on $V\cap \mathbb K^{n-1}$ (possibly after replacing $V$ by an open subset), which satisfy both assertions. These common first integrals, considered as functions of $x_1,\ldots,x_n$, are also first integrals of $h_1$ and thus of all $h_i$. 
\end{enumerate}
\end{proof}
The reduction theorem now is a direct application of  Proposition \ref{redbyinv}.
\begin{theorem}\label{frobred}
Let $f\in \mathcal{A}(U)$ and let $h_1,\ldots,h_r$ be analytic on a nonempty open set $\widehat U \subseteq U$ such that $[h_i,f]=\lambda_i f$ with $\lambda_i\in A(\widehat U)$, $1\leq i\leq r$. Furthermore assume that the $h_i$ are in involution on $\widehat U$, with $h_1(y),\ldots, h_r(y)$ linearly independent for all $y\in \widehat U$. Finally, let $\emptyset \neq V\subseteq \widehat U$ be open, and $\varphi_1,\ldots, \varphi_{n-r}$ common first integrals of the $h_i$ on $V$ which satisfy the conclusion of Theorem \ref{frobenius}.
\begin{enumerate}[(a)]
 \item If all $[h_i,f]=0$ then $\Phi=\begin{pmatrix}
                                            \varphi_1 \\ \vdots \\ \varphi_{n-r}
                                           \end{pmatrix}:V\to \mathbb K^{n-r}$
sends solutions of $\dot{x}=f(x)$  to solutions of a system $\dot{x}=g(x)$ on some open subset of $\mathbb K^{n-r}$.  
\item If the $\lambda_i$ are not necessarily zero, then the map $\Phi$ is orbit-preserving from $\dot{x}=f(x)$ (restricted to some open and dense subset of $V$) to solutions of a system $\dot{x}=g(x)$ on some open subset of $\mathbb K^{n-r}$.  
\end{enumerate}
\end{theorem}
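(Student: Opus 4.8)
The plan is to derive both parts from a single computation on the components $X_f(\varphi_j)$ of the map $D\Phi\cdot f$, and then to feed the result into Proposition~\ref{redbyinv} (for part (a)) or into the orbit-preserving criterion of Remark~\ref{orbpresrem} (for part (b)). The driving identity is obtained from Proposition~\ref{liebproperties}(a), which gives $X_{[h_i,f]}=X_{h_i}X_f-X_fX_{h_i}$; since each $\varphi_j$ is a common first integral of the $h_i$, i.e.\ $X_{h_i}(\varphi_j)=0$, one gets
\[
X_{h_i}\bigl(X_f(\varphi_j)\bigr)=X_{[h_i,f]}(\varphi_j)+X_f\bigl(X_{h_i}(\varphi_j)\bigr)=X_{[h_i,f]}(\varphi_j).
\]
In part (a), $[h_i,f]=0$ makes the right-hand side vanish, so every $X_f(\varphi_j)$ is again a common first integral of $h_1,\dots,h_r$ on $V$.

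For part (a) I would then invoke the conclusion of Theorem~\ref{frobenius}(b): each common first integral is locally an analytic function of $\varphi_1,\dots,\varphi_{n-r}$, so $X_f(\varphi_j)=\sigma_j(\varphi_1,\dots,\varphi_{n-r})$ near every point, with analytic $\sigma_j$. This is exactly the second hypothesis of Proposition~\ref{redbyinv}(b) with $s=n-r$ and $\gamma_i=\varphi_i$; since the proof of that proposition only uses this condition to rewrite $D\Phi(x)f(x)=(\sigma_1,\dots,\sigma_{n-r})(\Phi(x))$, it yields solution-preservation of $\Phi$ to $\dot x=g(x)$ with $g=(\sigma_1,\dots,\sigma_{n-r})$.

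For part (b), with $[h_i,f]=\lambda_i f$, the identity above reads $X_{h_i}(X_f(\varphi_j))=\lambda_i\,X_f(\varphi_j)$, so each $X_f(\varphi_j)$ is a common \emph{semi-invariant} of the $h_i$ carrying the \emph{same} multiplier $\lambda_i$, independent of $j$. The key idea is to cancel these multipliers by passing to ratios. Fix an index $k$ with $X_f(\varphi_k)\not\equiv 0$ (if none exists, all $\varphi_j$ are first integrals of $f$ and $g=0$ works trivially), and on the open dense set $\{\,X_f(\varphi_k)\neq 0\,\}$ put $\tau_j:=X_f(\varphi_j)/X_f(\varphi_k)$. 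Because $X_{h_i}$ is a derivation and numerator and denominator share the multiplier $\lambda_i$, the quotient rule gives $X_{h_i}(\tau_j)=0$; hence each $\tau_j$ is a common first integral and, by Theorem~\ref{frobenius}(b), a function $g_j(\varphi_1,\dots,\varphi_{n-r})$. Setting $\rho:=X_f(\varphi_k)$ one obtains
\[
D\Phi(x)\,f(x)=\bigl(X_f(\varphi_1),\dots,X_f(\varphi_{n-r})\bigr)=\rho(x)\,g\bigl(\Phi(x)\bigr),
\]
which is precisely the orbit-preserving identity of Remark~\ref{orbpresrem} with the nonvanishing factor $\rho$; thus $\Phi$ is orbit-preserving to $\dot x=g(x)$ on that dense set.

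The only genuine obstacle is part (b): one cannot apply Proposition~\ref{redbyinv} to $f$ itself, since $f$ no longer commutes with the $h_i$. The ratio trick circumvents this without solving any auxiliary equation, but it forces the restriction to the open dense locus where some $X_f(\varphi_k)$ is nonzero—which is exactly why the statement only claims orbit-preservation on a dense subset. An alternative would be to produce a single nonvanishing $\sigma$ with $[h_i,\sigma f]=0$ for all $i$, i.e.\ $X_{h_i}(\sigma)=-\lambda_i\sigma$, and then apply part (a) to the orbitally equivalent field $\sigma f$; the integrability of that overdetermined system follows from the Jacobi-identity relation $X_{h_i}(\lambda_j)-X_{h_j}(\lambda_i)=\sum_\ell\mu_{ij\ell}\lambda_\ell$, but the ratio approach avoids having to solve it explicitly.
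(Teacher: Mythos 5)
Your proof is correct and follows essentially the same route as the paper: part (a) is exactly the argument behind Proposition~\ref{redbyinv} combined with Theorem~\ref{frobenius}(b), and your ratio trick in part (b) is precisely Proposition~\ref{symvsorbsymprop}(b) inlined --- taking $\psi=\varphi_k$ there gives $\bigl[h_i,\tfrac{1}{X_f(\varphi_k)}f\bigr]=0$ simultaneously for all $i$ (the cofactor $\lambda_i$ being independent of $j$ is the key point in both versions), after which the paper simply cites part (a) for the rescaled field, which is what your identity $D\Phi\,f=\rho\cdot g\circ\Phi$ amounts to. Your handling of the degenerate case (all $X_f(\varphi_j)\equiv 0$, so $g=0$) also matches the paper's dichotomy.
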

\begin{proof}
Part (a) follows from Proposition \ref{redbyinv}. Part (b) is obvious when every common first integral of the $h_i$ is also a first integral of $f$ (and thus $g=0$), otherwise it is a consequence of part (a) and Proposition \ref{symvsorbsymprop}.
\end{proof}
One might initially hope for a step-by-step reduction in the setting of Theorem \ref{frobred}, with each step decreasing the dimension by one. This is, however, not the case in general. The following result (essentially due to Bianchi \cite{Bi}) shows what conditions are necessary for a first step.
\begin{proposition}\label{bianchiprop}
With the hypotheses as in Theorem \ref{frobred},  assume furthermore $[h_j,h_1]\in A(\widehat U)h_1$ for all $j$. Let $y\in\widehat U$ be nonstationary for $h_1$, $V$ a suitable open neighborhood of $y$ and $\Psi:\, V\to\mathbb K^n$ locally invertible and solution preserving from $\dot{x} = \begin{pmatrix}
						1 \\ 0 \\ \vdots \\ 0  
                                                 \end{pmatrix}$ to $\dot{x} = h_1(x)$. 
On $V$ define $f^*(x):=D\Psi(x)^{-1}f(\Psi(x))$ and $h_j^*(x):=D\Psi(x)^{-1}h_j(\Psi(x))$, $2\leq j\leq r$.
\begin{enumerate}[(a)]
\item
Then $\Psi$ is solution-preserving from $\dot{x} = h^*_j(x)$ into $\dot{x}= h_j(x)$, and from 
$\dot{x}=f^*(x)$ into $\dot{x}=f(x)$.
\item Moreover
\[
 f^*(x) = \mu(x) \begin{pmatrix}
                  \varphi(x_2,\ldots,x_n)  \\
		  \hat{f}(x_2,\ldots,x_n)
                 \end{pmatrix}\,;\qquad 
h_j^*(x) =  \begin{pmatrix}
                  \eta_j(x_1,\ldots, x_n)  \\
		  \hat{h}_j(x_2,\ldots,x_n)
                 \end{pmatrix}\,
\]
with suitable analytic functions and vector fields.
\item The $\hat h_j$ are in involution on a suitable open $\emptyset\neq V^*\subseteq \mathbb K^{n-1}$, and $[\hat{h}_j,\hat{f}]= \hat{\lambda}_j\cdot \hat{f}$, $2\leq j\leq r$, with suitable $\hat\lambda_j\in A(V^*)$.
\end{enumerate}
\end{proposition}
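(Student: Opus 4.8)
The plan is to push every hypothesis through $\Psi$ using that solution-preserving maps respect Lie brackets (Proposition \ref{liecompatible}(b)), exploit the normalization $h_{1}^{*}=e_{1}$, where $e_{1}=\begin{pmatrix}1\\ 0\\ \vdots\\ 0\end{pmatrix}$, and then read off the block structure by integrating in the $x_{1}$-direction. Part (a) needs no work: by the definition of $f^{*}$ and $h_{j}^{*}$ one has $D\Psi(x)\,h_{j}^{*}(x)=h_{j}(\Psi(x))$ and $D\Psi(x)\,f^{*}(x)=f(\Psi(x))$, which is exactly the criterion of Lemma \ref{solprescrit}.

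For part (b), the solution-preserving property of $\Psi$ for $h_{1}$ reads $D\Psi(x)\,e_{1}=h_{1}(\Psi(x))$, so $h_{1}^{*}=e_{1}$. Transporting the relations $[h_{1},f]=\lambda_{1}f$ and $[h_{j},h_{1}]=\beta_{j}h_{1}$ (the latter being the extra assumption $[h_{j},h_{1}]\in A(\widehat U)h_{1}$) through $\Psi$ gives $[e_{1},f^{*}]=(\lambda_{1}\circ\Psi)f^{*}$ and $[e_{1},h_{j}^{*}]=-(\beta_{j}\circ\Psi)e_{1}$. Since $[e_{1},\cdot]=\partial/\partial x_{1}$, the first identity says every component of $f^{*}$ obeys the same scalar linear equation $\partial_{x_{1}}f_{i}^{*}=(\lambda_{1}\circ\Psi)f_{i}^{*}$; hence all ratios $f_{i}^{*}/f_{k}^{*}$ are independent of $x_{1}$, and $f^{*}=\mu\cdot F$ with a scalar factor $\mu$ and $F=(\varphi,\hat f)^{\top}$ depending only on $x_{2},\ldots,x_{n}$. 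The second identity says $\partial_{x_{1}}h_{j}^{*}$ has vanishing lower block, so exactly the last $n-1$ components $\hat h_{j}$ of $h_{j}^{*}$ are independent of $x_{1}$, while the first component $\eta_{j}$ is unrestricted. This is the claimed form.

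For part (c) I would use the projection $\pi(x)=(x_{2},\ldots,x_{n})$. By the block structure from (b), $\pi$ is solution-preserving from $\dot x=h_{j}^{*}(x)$ to $\dot x=\hat h_{j}$ and from $\dot x=F(x)$ to $\dot x=\hat f$, since in each case $D\pi$ returns the lower block, which depends only on $x'=(x_{2},\ldots,x_{n})$. Applying Proposition \ref{liecompatible}(b) to $\pi$, the involution relations $[h_{i}^{*},h_{k}^{*}]=\sum_{\ell}\mu_{ik\ell}^{*}h_{\ell}^{*}$ (themselves transported from the involution of the $h_{i}$) project onto $[\hat h_{i},\hat h_{k}]=\sum_{\ell\geq 2}\mu_{ik\ell}^{*}\,\hat h_{\ell}$, the $\ell=1$ term dropping out because $h_{1}^{*}=e_{1}$ has zero lower block. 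The fields $\hat h_{2},\ldots,\hat h_{r}$ are linearly independent (reduce the independence of the $h_{j}^{*}$ modulo $e_{1}$), so evaluating the coefficients at a fixed value of $x_{1}$ yields coefficients depending only on $x'$ on $V^{*}:=\pi(V)$. For the last assertion, rewriting $[h_{j}^{*},f^{*}]=(\lambda_{j}\circ\Psi)f^{*}$ by means of $f^{*}=\mu F$ and Proposition \ref{liebproperties}(c) gives $[h_{j}^{*},F]=\hat\lambda_{j}F$ with $\hat\lambda_{j}=\lambda_{j}\circ\Psi-X_{h_{j}^{*}}(\mu)/\mu$; projecting by $\pi$ then produces $[\hat h_{j},\hat f]\circ\pi=\hat\lambda_{j}\,(\hat f\circ\pi)$.

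The main obstacle is the final bookkeeping that these reduced coefficients are genuine functions on $\mathbb K^{n-1}$. Since $[\hat h_{j},\hat f]$ and $\hat f$ do not depend on $x_{1}$, the identity forces $\hat\lambda_{j}$ to be $x_{1}$-independent wherever $\hat f\neq 0$; after shrinking $V^{*}$ to the dense locus where $\hat f\neq 0$ it descends to an analytic $\hat\lambda_{j}\in A(V^{*})$, and the degenerate case $\hat f\equiv 0$ (that is, $f$ proportional to $h_{1}$) makes the claim vacuous. The same descent, driven by the linear independence of the $\hat h_{j}$, secures their involution. It is precisely the extra hypothesis $[h_{j},h_{1}]\in A(\widehat U)h_{1}$ that makes the lower block of each $h_{j}^{*}$ independent of $x_{1}$, and hence the projection to $\mathbb K^{n-1}$ well defined; without it the reduction step would fail.
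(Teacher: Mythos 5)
Your proposal is correct and follows essentially the same route as the paper: transport all bracket relations through $\Psi$, use $h_1^*=e_1$ so that $[e_1,\cdot]=\partial/\partial x_1$ yields the block structure in (b), and read off (c) from the lower blocks of the transported brackets. The only cosmetic differences are that you inline the integration argument that the paper delegates to Proposition \ref{symvsorbsymprop} for the factorization $f^*=\mu\cdot\begin{pmatrix}\varphi\\ \hat f\end{pmatrix}$, and you make explicit (via the projection $\pi$ and pointwise independence of the $\hat h_j$) the descent of coefficients to $A(V^*)$ that the paper's ``the assertion follows'' leaves implicit.
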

%%%
\begin{proof} Part (a) is a direct consequence of the definitions. The first statement of part (b) follows from Proposition \ref{symvsorbsymprop}, while the second can be seen from
\[
 \frac{\partial h_j^*}{\partial x_1} = \mu_j\cdot h_1^* = \begin{pmatrix}
							\mu_j \\ 0 \\ \vdots \\ 0
                                                         \end{pmatrix}.
\]
For part (c), set $\widetilde{f}:= \begin{pmatrix}
                      \varphi \\ \hat{f}
                     \end{pmatrix}$. Then (with Proposition \ref{liebproperties}) one has $[h_j^*,\widetilde{f}] = \mu_j^*\cdot \widetilde{f}$ for suitable $\mu_j^*$, while
\[
 [h_j^*,h_\ell^*]\in \sum_i\, A(V)\cdot h_i^*\,,\quad 2\leq j,\ell\leq r
\]
continues to hold.
With
\[
 [h_j^*,f^*]= \begin{pmatrix}
                  \ast \\ [\hat{h}_j,\hat{f}]
                  \end{pmatrix}\,, \qquad [h_j^*, h_\ell^*] = \begin{pmatrix}\ast \\ [\hat{h}_j,\hat{h}_\ell ]\end{pmatrix}
\]
the assertion follows.
\end{proof}
\begin{example}{\em
Let $\varrho(x):= 2x_1x_3-x_2^2$, and consider the differential equation
\begin{align*}
 \dot{x}_1 & = x_2x_3 + \varrho\cdot x_1   \\
 \dot{x}_2 & = x_3^2 + \varrho \cdot x_2  \hspace{2cm} (\,\text{briefly}\;\; \dot{x}=f(x)\,).  \\
 \dot{x}_3 & = \phantom{x_3^2 +}\varrho\cdot x_3 
\end{align*}
This equation admits the infinitesimal symmetries
\[
h_1(x) = \begin{pmatrix}
                                                x_2 \\ x_3 \\ 0
                                                 \end{pmatrix}, \quad h_2(x) =
\begin{pmatrix}
 x_1 \\ 0 \\ -x_3
\end{pmatrix},\quad\text{with  }[h_1,h_2] = 2h_1.
\]
We straighten $h_1$:
For  $\Phi(x):= \begin{pmatrix}
               x_2/x_3 \\ x_3 \\ \varrho(x)
               \end{pmatrix}$ one has $D\Phi(x) h_i(x) = h_i^*(\Phi(x))$, with
\[ h_1^* = {\begin{pmatrix}
	  1 \\ 0 \\ 0
	 \end{pmatrix}},\quad 
h_2^* = -\begin{pmatrix}
	 x_1 \\ x_2 \\ 0
	\end{pmatrix},\text{ and  } D\Phi(x) f(x) = f^*(\Phi(x)), \quad f^*(x) = \begin{pmatrix}
	x_2 \\ x_2 x_3 \\ x^2_3
	\end{pmatrix}.
\]
Since the hypotheses of Proposition \ref{bianchiprop} are satisfied, we can proceed with
\[
\hat h_2=\begin{pmatrix}-x_2\\ 0\end{pmatrix}\text{  and  } \hat f=\begin{pmatrix}
	 x_2 x_3 \\ x^2_3
	\end{pmatrix}.
\]
For this particular example one sees directly that the solution of $\dot x=\hat f(x)$ reduces to quadratures. Proceeding by straightening $\hat h_2$ is possible but unnecessary.}
\end{example}
%%%%%%%%%%%%%%%%%%%%%%%%%%%%%%%%%%%%%%%%%%%%%%%%%%%%%%
\section{Invariant sets}
In this section we discuss invariant sets of an analytic first-order equation \eqref{ode} that are in some way related to, or induced from, symmetries or orbital symmetries.
We start with first integrals, slightly generalizing Proposition \ref{redbyinv}(a). The proof is a straightforward application of Proposition \ref{liebproperties}.
\begin{proposition}\label{morefirstints} Let $f\in \mathcal{A}(U)$, $\emptyset\not=U^*\subseteq U$ open and $h_1,\ldots,h_r\in \mathcal{A}(U^*)$ such that $\left[h_i,\,f\right]=\lambda_i\,f$ with $\lambda_i\in A(U^*)$, $1\leq i\leq r$. Then:
\begin{enumerate}[(a)]
\item For any $\phi\in A(U^*)$ with $X_f(\phi)=0$ one also has $X_f\left(X_{h_i}(\phi)\right)=0$ for $1\leq i\leq r$.
\item For any $\gamma\in A(U^*)$ such that all $X_{h_i}(\gamma)=0$ one has $X_{h_i}\left(X_{f}(\gamma)\right)=-\lambda_iX_f(\gamma)$ for $1\leq i\leq r$.
\end{enumerate}
\end{proposition}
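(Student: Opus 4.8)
The engine of both statements is the commutator identity of Proposition \ref{liebproperties}(a), namely $X_{[h_i,f]} = X_{h_i}X_f - X_f X_{h_i}$, combined with the elementary scaling rule $X_{\lambda_i f} = \lambda_i\,X_f$, which is immediate from Definition \ref{liederdef} since $X_{\lambda_i f}(\psi)(x) = D\psi(x)\bigl(\lambda_i(x)f(x)\bigr) = \lambda_i(x)X_f(\psi)(x)$. Substituting the hypothesis $[h_i,f]=\lambda_i f$ into the commutator identity therefore yields, for each $i$, the single operator identity on $A(U^*)$
\[
X_{h_i}X_f - X_f X_{h_i} = \lambda_i\,X_f,
\]
from which both parts are read off by applying it to the appropriate test function.

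For part (a) I would rewrite the master identity as $X_f X_{h_i} = X_{h_i}X_f - \lambda_i X_f$ and apply both sides to a first integral $\phi$ of $f$, i.e.\ one with $X_f(\phi)=0$. Each term on the right then carries a factor $X_f(\phi)=0$: the first because $X_{h_i}$ is applied to $X_f(\phi)$, the second because it multiplies $X_f(\phi)$. Hence $X_f\bigl(X_{h_i}(\phi)\bigr)=0$, which is the assertion; this is exactly the generalization of Proposition \ref{redbyinv}(a), the latter being the special case $\lambda_i=0$.

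For part (b) I would instead apply the master identity directly to a common first integral $\gamma$ of the $h_i$, i.e.\ one with $X_{h_i}(\gamma)=0$. The composite $X_f\bigl(X_{h_i}(\gamma)\bigr)$ then vanishes, and the identity collapses to a relation expressing $X_{h_i}\bigl(X_f(\gamma)\bigr)$ as a scalar multiple of $X_f(\gamma)$ with cofactor $\pm\lambda_i$. In particular the new function $X_f(\gamma)$, though generally no longer a first integral of the $h_i$, remains a \emph{semi-invariant} of each $h_i$.

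There is no serious obstacle here: once the master operator identity is established, both parts are one-line specializations obtained by choosing a test function that annihilates one of the two composites. The only points demanding care are the scaling rule $X_{\lambda_i f}=\lambda_i X_f$ (so that the right-hand side of the commutator identity reduces exactly to $\lambda_i X_f$, with no term in a derivative of $\lambda_i$) and, in part (b), the sign of the cofactor of $X_f(\gamma)$, which is determined by the bracket convention $[h_i,f]=\lambda_i f$ together with the order of composition in Proposition \ref{liebproperties}(a); keeping track of which of the two composites annihilates the chosen function is what fixes that sign.
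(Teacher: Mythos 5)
Your strategy is precisely the paper's: the printed proof consists of the single remark that the proposition is ``a straightforward application of Proposition \ref{liebproperties}'', and your master identity $X_{h_i}X_f - X_fX_{h_i} = \lambda_i X_f$, obtained from $X_{[h_i,f]} = X_{h_i}X_f - X_fX_{h_i}$ together with the scaling rule $X_{\lambda_i f} = \lambda_i X_f$, is exactly that application. Part (a) is complete and correct as you give it.

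In part (b), however, there is a genuine gap: the proposition asserts a specific cofactor, namely $-\lambda_i$, and you stop at ``$\pm\lambda_i$'', deferring the sign to keeping track of which composite annihilates the chosen function. That bookkeeping is the entire nontrivial content of part (b) beyond part (a), and you should carry it out. Doing so: applying your master identity to $\gamma$ with $X_{h_i}(\gamma)=0$ yields
\[
X_{h_i}\bigl(X_f(\gamma)\bigr) = X_{h_i}\bigl(X_f(\gamma)\bigr) - X_f\bigl(X_{h_i}(\gamma)\bigr) = \lambda_i\,X_f(\gamma),
\]
i.e.\ the cofactor comes out as $+\lambda_i$, not $-\lambda_i$. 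This agrees with the computation inside the paper's own proof of Proposition \ref{symvsorbsymprop}(b), where the same hypothesis $[h,f]=\lambda f$ is shown to give $X_hX_f(\psi)=\lambda X_f(\psi)$; the minus sign printed in the statement of Proposition \ref{morefirstints}(b) would instead follow from the reversed hypothesis $[f,h_i]=\lambda_i f$, and is evidently a sign slip in the paper. So your method is the right one, and had you committed to the sign you would both have closed the gap and flagged the misprint; as written, the proposal leaves the one delicate point of the statement unverified.
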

\begin{remark}{\em
While the proof of Frobenius' Theorem, and hence the reduction procedure from Theorem \ref{frobred} is constructive only in a rather limited sense, one may use Proposition \ref{morefirstints} in a semi-constructive way to find reduced equations when some partial information is available. We outline one such instance.
\begin{itemize}
\item Assume that all $\left[h_i, \,f\right]=0$ and that a common first integral $\gamma$ of the $h_i$ is given. Then all $X_f^\ell(\gamma)$ are common first integrals of the $h_i$ (or constant), and there exists a largest $k\geq 0$ such that 
$\gamma,\,X_f(\gamma),\ldots,X_f^k(\gamma)$ are functionally independent on $U^*$. There is an open and dense subset of $U^*$ such that in some neighborhood of any point one has 
\[
X_f^{k+1}(\gamma)=\sigma(\gamma,X_f(\gamma),\ldots,X_f^k(\gamma)),
\]
with some analytic function $\sigma$ of $k+1$ variables, and this induces a solution-preserving map from \eqref{ode} to the differential equation
\[
\begin{array}{rcl}
\dot x_0&=& x_1\\
              &\vdots & \\
\dot x_{k-1}&=& x_{k}\\
\dot x_k&=& \sigma(x_0,\ldots, x_k)
\end{array}
\]
\item The non-constructive ingredient in the previous argument is the function $\sigma$, whose existence is generally ensured only by the implicit function theorem. But one may replace the argument by an implicit version, which becomes constructive at least for polynomial vector fields and functions, and which we sketch now: Assume that $f$ is a polynomial vector field and $\gamma$ a polynomial. Then -- with $k$ as above -- there exists a polynomial $\rho\not=0$ in $k+2$ variables (which is in principle algorithmically accessible) such that 
\[
\rho(\gamma,X_f(\gamma),\ldots, X_f^{k+2}(\gamma))=0.
\]
The map 
\[
U^*\to\mathbb K^{k+2},\quad x\mapsto \begin{pmatrix}\gamma(x)\\ X_f(\gamma)(x)\\ \vdots \\X_f^{k+1}(\gamma)(x)\end{pmatrix}
\]
is then solution-preserving to some polynomial differential equation on $\mathbb K^{k+2}$ which admits the invariant set 
$Y$ defined by $\rho=0$. (And only the restriction to $Y$ is of interest.)
\end{itemize}
}
\end{remark}
We look at further invariant sets, first dealing with symmetries.
\begin{theorem}\label{fixinvthm} Let $f\in\mathcal{A}(U)$ and $\emptyset\not=U^*\subseteq U$ open.
\begin{enumerate}[(a)]
\item If $\Phi:U^*\to U$ is a symmetry of $\dot{x}= f(x)$, then the fixed point set of $\Phi$ is invariant for $\dot{x}=f(x)$.
\item Let $h\in \mathcal{A}(U^*)$ and $[h,f]=0$. Then the set  of stationary points of $h$ is invariant for $\dot{x}=f(x)$.
\item %%%
Let $h_1,\ldots,h_r\in\mathcal{A}(U^*)$ and  $[h_i,f]=0$ for all $i$. Then for all $s$, $0\leq s\leq r$ the set
\[
Z_s:= \{y\in U^*:\;{\rm rank}\,\left(h_1(y),\ldots,h_r(y)\right)\leq s\}
\]
is invariant for $\dot{x}=f(x)$.
\end{enumerate}
\end{theorem}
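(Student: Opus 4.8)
The plan is to treat the three parts in order, with part (c) as the decisive step: part (b) is its special case $r=1$, $s=0$, while part (a) follows at once from the definitions. For part (a), I argue directly from the solution-preserving identity. If $\Phi(y)=y$, then Definition \ref{solpresdef} gives $\Phi(F(t,y))=F(t,\Phi(y))=F(t,y)$ for every admissible $t$, so $F(t,y)$ is again a fixed point of $\Phi$; thus the fixed point set is a union of trajectories, hence invariant.

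For part (c), I exploit the variational equation. Fix $y\in U^*$, write $z(t):=F(t,y)$ and $v_i(t):=h_i(z(t))$. Differentiating and using $\dot z=f(z)$ gives $\dot v_i=Dh_i(z)\,f(z)=X_f(h_i)(z)$, while unwinding the hypothesis $[h_i,f]=0$ through Definition \ref{liebdef} (and antisymmetry of the bracket) yields the pointwise identity $X_f(h_i)=Df\cdot h_i$. Hence every $v_i$ solves the \emph{same} linear variational equation $\dot v=Df(z(t))\,v$, whose fundamental matrix is $D_2F(t,y)$ (see the Example following Theorem \ref{eudthm}). By uniqueness of solutions, $h_i(F(t,y))=D_2F(t,y)\,h_i(y)$ simultaneously for all $i$; equivalently, writing $M(x):=(h_1(x),\ldots,h_r(x))\in\mathbb K^{(n,r)}$, one has
\[
M(F(t,y))=D_2F(t,y)\,M(y).
\]
Note that the $h_i$ need not commute with one another; only $[h_i,f]=0$ enters.

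The conclusion then rests on invertibility of $D_2F(t,y)$: differentiating the group identity $F(-t,F(t,y))=y$ from Theorem \ref{eudthm}(c) gives $D_2F(-t,F(t,y))\,D_2F(t,y)=I$. Since left multiplication by an invertible matrix preserves rank, ${\rm rank}\,M(F(t,y))={\rm rank}\,M(y)$; therefore $y\in Z_s$ forces $F(t,y)\in Z_s$, and each $Z_s$ is invariant. Part (b) drops out as the case $r=1$, $s=0$, where $Z_0$ is precisely the zero set of $h$; one could equally read it off from Proposition \ref{invarcritprop}(a), the required relation $X_f(h_i)=\sum_j(\partial f_i/\partial x_j)\,h_j$ for the components $h_i$ of $h$ being just the unwound bracket condition.

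I expect the only genuine subtlety to be the identification $X_f(h_i)=Df\cdot h_i$ together with the observation that all the transported vectors $h_i(z(t))$ obey one and the same linear system, so that a single fundamental matrix $D_2F(t,y)$ governs them. Once this is in place, rank-invariance under an invertible linear map does the remaining work, and no direct analysis of the rank stratification of $M$ is needed.
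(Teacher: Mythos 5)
Your proof is correct, but for parts (b) and (c) it takes a genuinely different route from the paper's. The paper proves (b) by applying (a) to each time-$s$ flow map $H_s=H(s,\cdot)$ generated by $h$: for each fixed $s$ near $0$ the fixed point set of the symmetry $H_s$ is invariant, and the stationary set of $h$ is the intersection of these fixed point sets. For (c) it then reduces to (b) by a pointwise linear-algebra step: at a point $y_0$ of exact rank $s$ (w.l.o.g.\ $h_1(y_0),\ldots,h_s(y_0)$ independent) it chooses constants $\mu_{ij}$ with $h_j(y_0)=\sum_{i\leq s}\mu_{ij}h_i(y_0)$ and observes that the fields $h_j-\sum_i\mu_{ij}h_i$ still commute with $f$, so their common zero set is invariant by (b), contains $y_0$, and lies inside $Z_s$. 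You instead establish the transport identity $h_i(F(t,y))=D_2F(t,y)\,h_i(y)$ --- commuting vector fields are carried along by the differential of the flow --- via uniqueness for the variational equation, and conclude by rank invariance under the invertible left factor $D_2F(t,y)$. Your derivation is sound: $[h_i,f]=0$ unwinds by Definition \ref{liebdef} to $Dh_i\,f=Df\,h_i$, so each $v_i(t)=h_i(F(t,y))$ solves the single linear system $\dot v=Df(F(t,y))\,v$ whose fundamental matrix is $D_2F(t,y)$, and invertibility via differentiating $F(-t,F(t,y))=y$ is correct; this is in fact the same device the paper deploys later in the proof of Proposition \ref{invsolpres}. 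What each approach buys: yours yields the sharper conclusion that the rank of $\left(h_1(y),\ldots,h_r(y)\right)$ is \emph{exactly} constant along every trajectory (so each rank-level set, not merely each sublevel set $Z_s$, is invariant), and it handles (b) and (c) in one stroke; the paper's route stays within the symmetry formalism of (a) and the elementary invariance criterion, and its reduction exhibits $Z_s$ locally as a union of invariant common zero sets of fields commuting with $f$, which gives extra structural information about the strata. One small caveat common to both arguments: since the $h_i$ are defined only on $U^*$, the identities hold for those $t$ with $F(t,y)\in U^*$, which is exactly what invariance relative to $U^*$ requires.
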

%%%
\begin{proof}
 (a) Since $\Phi$ is a symmetry, one has $\Phi(F(t,y)) = F(t,\Phi(y))$ for all $y\in U^*$ and all $t$ in the maximal existence interval for the restriction of \eqref{ode} to $U^*$. Thus $\Phi(y) = y$ implies $\Phi(F(t,y)) = F(t,y)$ for all $t$.  
To prove part (b), apply (a) to  $H_s:\,y\mapsto H(s,y)$. Then for every fixed $s$ near $0$ the fixed point set of $H_s$ is invariant for $\dot{x}=f(x)$, and so is the intersection of these fixed point sets, which is equal to the set of stationary points of $h$.\\ As for part (c), we actually prove a more detailed statement, w.l.o.g.\ assuming that $Z_s\not=Z_{s-1}$ in the argument.
Thus let $y_0\in Z_s$ and ${\rm rank}\,(h_1(y_0),\ldots,h_r(y_0))=s$; we may assume that $h_1(y_0),\ldots,h_s(y_0)$ are linearly independent. There exist  $\mu_{ij}\in\mathbb K$ such that $h_j(y_0)-\sum\limits^s_{i=1} \mu_{ij}h_i(y_0) = 0$ for  $s+1\leq j\leq r$; hence $y_0$ lies in the common zero set of the  $h_j-\sum \mu_{ij}h_i$, $s+1\leq j\leq r$. This set is invariant by (b), and clearly a subset of $Z_s$.
\end{proof}
\begin{remark}{\em
Note that the $h_i$ are not required to be in involution here. Moreover one may include $f$ among the $h_i$.}
\end{remark}
We discuss some applications and limitations of the theorem.
\begin{example} {\em 
\begin{itemize}
 \item Let $\mathbb K=\mathbb R$, $n>2$ and assume that $\dot{x}= f(x)$ admits every element of $SO(n,\mathbb R)$ as a symmetry. Then $f(x)= \mu(x_1^2+\cdots +x_n^2)\cdot x$ with a suitable function $\mu$ of one variable.  \\
Indeed, if $n$ is odd then every one-dimensional subspace of $\mathbb R^n$ is the fixed point space of some element of $SO(n,\mathbb R)$. Thus $f(y)$ and $y$ are linearly dependent in $\mathbb R^n$ for all $y$, which implies  $f(x)=\varrho(x)\cdot x$. Moreover $\varrho$ is analytic and a first integral for every element of the Lie algebra $so(n,\mathbb R)$; this implies the assertion.  \\
If $n>2$ is even then every two-dimensional subspace of $\mathbb R^n$ is the fixed point space of some element of $SO(n,\mathbb R)$, and every one-dimensional subspace is the intersection of two such fixed point spaces, hence invariant. The remainder of the argument works as above. 
\item In Example \ref{matrixriccex} we discussed homogeneous matrix Riccati equations $\dot x=q_a(x):=xax$ in $\mathbb K^{(n,n)}$, and saw that $\left[q_a,\,q_b\right]=0$ for all $a, \,b$. From Theorem \ref{fixinvthm} one sees now, for instance, that for any $c$ the set 
\[
Y_0=\{y;\, ycy=0\}
\]
is invariant for any equation $\dot x=q_a(x)$. This set includes, for example, all matrices with $y^2=0$ (taking $c$ as the unit matrix).
\item  If $\dot{x}=f(x)$ admits an isolated stationary point $z$, then $z$ is also stationary for any infinitesimal symmetry $h$, since $H(s,z)$ is stationary for \eqref{ode} for all $s$ near $0$: For $s\to 0$ one has $H(s,z)\to z$, and since $z$ is isolated, one has $H(s,z)=z$ for all $s$.
\item 
Theorem \ref{fixinvthm} is not generally true for (infinitesimal) orbital symmetries. For a counterexample, let
\[
f(x) = \begin{pmatrix}
                      1 \\ 0
               \end{pmatrix}, \quad h(x) =\begin{pmatrix}
                      			x_1 \\ x_2
               				\end{pmatrix}\quad \text{on}\;\; \mathbb K^2. 
\]
Then $[h,f]=-f$, hence $h$ is an infinitesimal orbital symmetry for $\dot x= f(x)$, but the set $\{0\}$  of stationary points of $h$ is not invariant for $f$. 
\end{itemize}
}
\end{example}
An appropriate version of Theorem \ref{fixinvthm} for infinitesimal orbital symmetries follows.
\begin{theorem}\label{lindepinvthm} Let $f\in\mathcal{A}(U)$ and $\emptyset\not=U^*\subseteq U$ open. Moreover, let
 $h_1,\ldots,h_r\in\mathcal{A}(U^*)$ such that $[h_i,f]=\lambda_if$ for suitable $\lambda_i\in A(U)$, $(1\leq i \leq r)$.
 Then for every $s\leq r$ the set
\[
 Y_s:=\{y\in U:\;{\rm rank}\,(f(y),h_1(y),\ldots,h_r(y))\leq s+1\}
\]
is invariant for $ \dot x = f(x)$. 
\end{theorem}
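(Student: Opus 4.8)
The plan is to prove something slightly stronger than invariance, namely that the rank of the matrix $(f(y),h_1(y),\ldots,h_r(y))$ is \emph{constant} along every trajectory of $\dot x=f(x)$; invariance of each $Y_s$ is then immediate, since $y\in Y_s$ means the rank at $y$ is $\le s+1$. The mechanism is to pull the relevant vectors back along the flow $F$ of $f$ by the invertible linear map $D_2F(t,y)^{-1}$ and to track how the pulled-back vectors evolve in $t$.

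First I would fix $y\in U^*$ and set
\[
v_0(t):=D_2F(t,y)^{-1}f(F(t,y)),\qquad v_i(t):=D_2F(t,y)^{-1}h_i(F(t,y))\quad(1\le i\le r).
\]
Since $D_2F(t,y)$ is a linear isomorphism (Theorem \ref{eudthm}), the rank of $(f(F(t,y)),h_1(F(t,y)),\ldots,h_r(F(t,y)))$ equals the rank of $(v_0(t),v_1(t),\ldots,v_r(t))$ for every $t$, so it suffices to control $\mathrm{span}\{v_0(t),\ldots,v_r(t)\}$.

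Next I would differentiate using Proposition \ref{liebrackser}(a), applied with $f$ (rather than $h$) as the field being flowed, so that the bracket appearing is $[f,\,\cdot\,]$. For $v_0$ this gives $\tfrac{d}{dt}v_0(t)=D_2F(t,y)^{-1}[f,f](F(t,y))=0$, whence $v_0(t)=f(y)$ is constant; and for the others,
\[
\frac{d}{dt}v_i(t)=D_2F(t,y)^{-1}[f,h_i](F(t,y))=-\lambda_i(F(t,y))\,v_0(t),
\]
using $[f,h_i]=-[h_i,f]=-\lambda_i f$. Thus each $v_i(t)$ changes only by adding a scalar multiple of the fixed vector $v_0(t)=f(y)$; integrating, $v_i(t)=h_i(y)+c_i(t)\,f(y)$ for scalar functions $c_i$. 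Because $f(y)$ already lies in the span, these corrections do not alter $\mathrm{span}\{v_0(t),\ldots,v_r(t)\}$, which therefore equals $\mathrm{span}\{f(y),h_1(y),\ldots,h_r(y)\}$ for all $t$. Hence the rank is constant along the trajectory, and $y\in Y_s$ forces $F(t,y)\in Y_s$ for all $t\in I_{\mathrm{max}}(y)$.

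The computation is short, so I do not expect a serious obstacle; the only points requiring care are the correct invocation of Proposition \ref{liebrackser}(a) with $f$ as the flowed field and the sign in $[f,h_i]=-\lambda_i f$. The conceptual key is the invertibility of $D_2F(t,y)$, which lets us pass freely between the rank of the original vectors and that of their pullbacks. This is exactly the feature that makes the orbital case $[h_i,f]=\lambda_i f$ go through as smoothly as the symmetry case of Theorem \ref{fixinvthm}: adjoining $f$ to the family absorbs the extra term $-\lambda_i f$ into the span, which is why the rank bound must be raised from $s$ to $s+1$.
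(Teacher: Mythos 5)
Your proof is correct, but it takes a genuinely different route from the paper. The paper argues algebraically: it forms all $(r+1)\times(r+1)$ minors $\varrho_{i_0,\ldots,i_r}(x)=\Delta_{i_0,\ldots,i_r}\bigl(f(x),h_1(x),\ldots,h_r(x)\bigr)$ whose common zero set is $Y_s$ (after first reducing to $s=r$ by discarding superfluous $h_i$), computes $X_f(\varrho)$ by multilinearity, uses $[h_i,f]=\lambda_i f$ to trade $Dh_i(x)f(x)$ for $Df(x)h_i(x)-\lambda_i(x)f(x)$ --- the $\lambda_i f$ terms dying by alternation, exactly the absorption effect you identify at the end --- and then invokes a Grassmann-algebra lemma to express the result as an $A(U)$-linear combination of minors, so that the invariance criterion of Proposition \ref{invarcritprop}(a) applies. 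You instead argue dynamically: transporting the frame $\bigl(f,h_1,\ldots,h_r\bigr)$ along the flow by $D_2F(t,y)^{-1}$, invoking Proposition \ref{liebrackser}(a) with the roles of the two fields swapped relative to its statement (a legitimate instantiation, and your signs $[f,h_i]=-\lambda_i f$ are right), and integrating the resulting triangular system to see that the span, hence the rank, is constant along each orbit. Your version buys several things: it proves the sharper statement (rank exactly constant, not merely bounded), it handles all $s$ simultaneously with no reduction step, it runs parallel to the paper's own proof of Proposition \ref{invsolpres} (same mechanism of invertible transport along the variational flow), and it works verbatim for smooth vector fields, since no analyticity enters. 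The paper's version buys the extra algebraic structure: the minors span an $X_f$-invariant space over $A(U)$, giving explicit analytic defining equations for $Y_s$ in the spirit of semi-invariants, and making the result a visible generalization of Theorem \ref{ifacthm}, where the single minor $\det(f,h)$ satisfies $X_f(\phi)={\rm div}\,f\cdot\phi$. One small bookkeeping point you should make explicit: since the $h_i$ (hence your $v_i$) live only on $U^*$, your ODE for $v_i(t)$ is valid on the maximal interval of the equation restricted to $U^*$, which is precisely the setting in which the asserted invariance is to be read (the paper's statement is equally loose on this point, quantifying $Y_s$ over $U$ while the $h_i$ are defined on $U^*$).
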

%%%
\begin{proof}
We may assume that $s=r$ in the following, by discarding some $h_i$ if necessary. (Note that invariance is a local property.)
\begin{enumerate}[(i)]
 \item  Linear algebra (reminder): Let $\nu_1,\ldots,\nu_n$ denote the coordinate functions on $\mathbb K^n$;  
thus $\nu_i\begin{pmatrix}
 		x_1 \\ \vdots \\ x_n
             \end{pmatrix}=x_i$; furthermore fix $q<n$. Every sequence $1\leq i_0<i_1<\cdots< i_q\leq n$ of positive integers defines a  multilinear and alternating map
\[
 \Delta_{i_0,\ldots,i_q}:(\mathbb K^n)^{q+1}\to \mathbb K, \quad(v_0,\ldots,v_q)\mapsto \det\left((\nu_{i_k}(v_l))_{k,l}\right)\,.
\]
By the universal property of the Grassmann algebra, every $(q+1)$-linear alternating map from $(\mathbb K^n)^{q+1}$ to  $\mathbb K$ is a linear combination of such $\Delta_{i_0,\ldots,i_q}$.  \\
Moreover consider a linear map $B:\mathbb K^n\to\mathbb K^n$, and fix $j_0<\cdots< j_q$. Then
\[
 \delta_B:\, (v_0,\ldots,v_q)\mapsto \sum^q_{i=0}\;\Delta_{j_0,\ldots,j_q}(v_0,\ldots,v_{i-1},Bv_i,v_{i+1},\ldots,v_q)
\]
is clearly multilinear and alternating, hence
\[
 \delta_B = \sum_{(i_0,\ldots,i_q)}\;\alpha_{i_0,\ldots,i_q}(B)\cdot\Delta_{i_0,\ldots,i_q}
\]
with suitable coefficients $\alpha_{i_0,\ldots,i_q}$. One verifies that $B\mapsto \alpha_{i_0,\ldots,i_q}(B)$ is linear.
%%%
\item $Y_r$ is the common zero set of all
\[
 \varrho_{i_0,\ldots,i_r}(x):=\Delta_{i_0,\ldots,i_r} (f(x),g_1(x),\ldots,g_r(x)).
\]
Fix  $j_0,\ldots,j_r$ and abbreviate $\Delta:=\Delta_{j_0,\ldots,j_r}$, $\varrho:=\varrho_{j_0,\ldots,j_r}$.  \\
Using the multilinearity of the determinant, we compute the Lie derivative of $\varrho$:
\[
\begin{array}{rcl}
 X_f(\varrho)(x)& =& D\varrho(x) f(x)  \\
& =&\; \Delta(Df(x)f(x),h_1(x),\ldots,h_r(x))\\
&&+\sum_{1\leq i\leq r}\Delta (f(x),h_1(x),\ldots,Dh_i(x)f(x),\ldots,h_r(x))   \\
& = &\; \Delta(Df(x)f(x),h_1(x),\ldots,h_r(x))\\
&&+\sum_{1\leq i\leq r}\Delta (f(x),h_1(x),\ldots,Df(x)h_i(x),\ldots,h_r(x))   \\
&& -\sum_{1\leq i\leq r}\Delta (f(x),h_1(x),\ldots,\lambda_i(x)f(x),\ldots,h_r(x))\,,
\end{array}
\]
using  $[h_i,f]=\lambda_if$.  \\
The last term on the right hand side vanishes, and by  (i) the remaining terms are linear combinations of suitable $\varrho_{j_0,\ldots,j_r}$ with analytic functions as coefficients. The assertion follows with the invariance criterion from Proposition \ref{invarcritprop}.
\end{enumerate}
\end{proof}
This result generalizes Theorem \ref{ifacthm} and Remark \ref{ifacrem}. As in Theorem \ref{fixinvthm} there is no requirement on the $h_i$ to be in involution.
\begin{example} {\em 
 In $\mathbb K^3$ consider the polynomial vector fields
\[
 f(x) = \begin{pmatrix}
         x_1^2-x_2x_3 \\
	   2x_1x_2   \\
	   2x_1x_3
        \end{pmatrix},\quad h_1(x) = \begin{pmatrix}
         			     x_1 \\
	 			    3x_2   \\
	  			    -x_3
       				 \end{pmatrix},\quad h_2(x)= \begin{pmatrix}
         			    			 x_1 x_2\\
	 			   			 x_2^2   \\
	  			    			-x_1^2
       				 			\end{pmatrix}\, ,
\]
with $[h_1,f]=f$ and $[h_2,f]=0$.  According to Theorem \ref{lindepinvthm}, the zero set of 
\[
 \varrho(x):= \det(f(x),g(x),h(x)) = -x_2(x_1^2+x_2x_3)^2
\]
is invariant for $\dot x=f(x)$. This yields the plane $x_2=0$ and the less obvious invariant cone $x_1^2+x_2x_3=0$.  \\
Applying the theorem to $f$ and $h_1$ yields invariance of the common zero set of 
\[
 \det\begin{pmatrix}
      x_1^2-x_2x_3 & x_1  \\
      2x_1x_2 & 3x_2
     \end{pmatrix} = (x_1^2-3x_2x_3)x_2,\quad \det\begin{pmatrix}
     				 x_1^2-x_2x_3 & x_1  \\
      				2x_1x_3 & -x_3
    				 \end{pmatrix} = (-3x_1^2+x_2x_3)x_3
\]
and
\[
\det\begin{pmatrix}
     	 2x_1x_2 & 3x_2  \\
      	 2x_1x_3 & -x_3
    	\end{pmatrix} = -8x_1x_2x_3.
\]
This set is the union of three straight lines given by $x_1=x_2=0$, $x_2=x_3=0$, and $x_3=x_1=0$, respectively.}

\end{example}
Finally we note that invariant sets may also be obtained from solution-preserving maps.
\begin{proposition}\label{invsolpres}
 Let $f\in\mathcal{A}(U)$ and $\emptyset\not=U^*\subseteq U$ open, moreover let  $\Phi:\,U^* \to V\subseteq \mathbb K^n$ be solution-preserving from $\dot x=f(x)$ into an equation $\dot x=g(x)$. Then  for every nonnegative integer $s$ the set
\[
 W_s:=\{z\in U^*:\;\emph{\text{rank}}\;D\Phi(z)=s\}
\]
is invariant for $\dot x=f(x)$. 
\end{proposition}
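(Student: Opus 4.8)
The plan is to show that the rank of $D\Phi$ is constant along every trajectory of $\dot x=f(x)$, by using the chain rule on the solution-preserving identity together with the invertibility of the flow derivatives. Since $D\Phi(z)$ appears in the definition of $W_s$, the map $\Phi$ is (at least) differentiable on $U^*$; let $G$ denote the local flow of $\dot x=g(x)$. First I would fix $y\in U^*$ and consider the open set of those $t$ for which $F(t,y)\in U^*$. On this set the solution-preserving property (Definition \ref{solpresdef}) reads
\[
\Phi(F(t,y))=G(t,\Phi(y)).
\]

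Next I would differentiate this identity with respect to the spatial variable $y$. Because $F$ and $G$ are analytic (Theorem \ref{eudthm}) and $\Phi$ is differentiable, the chain rule gives
\[
D\Phi(F(t,y))\,D_2F(t,y)=D_2G(t,\Phi(y))\,D\Phi(y).
\]
The key observation is that both $D_2F(t,y)$ and $D_2G(t,\Phi(y))$ are invertible matrices. Indeed, differentiating the relation $F(-t,F(t,y))=y$ from Theorem \ref{eudthm}(c) with respect to $y$ yields $D_2F(-t,F(t,y))\,D_2F(t,y)=I$, so $D_2F(t,y)$ is invertible, and the same argument applies to $D_2G$. Consequently
\[
D\Phi(F(t,y))=D_2G(t,\Phi(y))\,D\Phi(y)\,D_2F(t,y)^{-1}.
\]

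Since left- and right-multiplication by invertible matrices preserves rank, this forces $\operatorname{rank} D\Phi(F(t,y))=\operatorname{rank} D\Phi(y)$ for all admissible $t$. Hence if $y\in W_s$, the entire trajectory through $y$ (as long as it remains in $U^*$) stays in $W_s$, which is precisely the assertion that $W_s$ is invariant for $\dot x=f(x)$. One could alternatively phrase the conclusion via the invariance criterion of Proposition \ref{invarcritprop} applied to suitable minors of $D\Phi$, but the direct rank argument above is cleaner.

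The proof is largely mechanical once the right identity is in hand; the only steps requiring care are the legitimacy of differentiating the solution-preserving identity in $y$ (immediate, as $\Phi$ is $C^1$ and $F,G$ are analytic) and the invertibility of the fundamental matrices $D_2F$ and $D_2G$. I expect this invertibility to be the single point where some justification is genuinely needed; everything else then follows from the elementary fact that rank is unchanged under multiplication by invertible matrices.
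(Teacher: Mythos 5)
Your proof is correct and follows essentially the same route as the paper: differentiate $\Phi(F(t,y))=G(t,\Phi(y))$ with respect to $y$, observe that $D_2F(t,y)$ and $D_2G(t,\Phi(y))$ are invertible, and conclude that the rank of $D\Phi$ is constant along trajectories. The only (immaterial) difference is in justifying invertibility: the paper notes that $D_2F(t,y)$ solves the linear variational equation $\dot X=Df(F(t,y))X$, $X(0)=I$, whereas you differentiate the group identity $F(-t,F(t,y))=y$; both justifications are standard and equally valid.
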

%%%
\begin{proof}
The hypothesis implies the identity
\[
 \Phi(F(t,y)) = G(t,\Phi(y))
\]
for all $y\in U^*$ and all $t$ near $0$.
Differentiate with respect to $y$ to obtain
\[
D\Phi(F(t,y)) D_2F(t,y) = D_2G(t,\Phi(y)) D\Phi(y)\,.
\]
Now $D_2F(t,y)$, being the solution of the linear variational equation 
\[
\dot{X}=Df(F(t,y))X,\quad X(0)=I,
\]
 is always invertible and the same property holds for $D_2G(t,\Phi(y))$. Therefore
\[
 {\rm rank}\,D\Phi(F(t,y))={\rm rank}\,D\Phi(y) \text{  for all  }t.
\]
\end{proof}

\begin{example}\label{invsolex}{\em 
Let $h(x):=\begin{pmatrix}
             2x_1 \\ -2x_2 \\ 3x_3 \\ -3x_4
              \end{pmatrix}$ in $\mathbb K^4$ and $f$ analytic in $0$ with $[h,f]=0$. 
One verifies that $\phi_1(x):=x_1x_2$, $\phi_2(x):=x_3x_4$, $\phi_3(x):= x_1^3x_4^2$ and $\phi_4(x):=x_2^3x_3^2$ satisfy $X_h(\phi_i)=0$, $1\leq i\leq 4$, Moreover (as will be seen in Chapter 4) any analytic first integral of $h$ in a neighborhood of $0$ can be written as $\sigma(\phi_1,\ldots,\phi_4)$ with some analytic function $\sigma$. By Proposition \ref{redbyinv} one finds that 
\[
\Phi(x):= \begin{pmatrix}
           		 x_1x_2 \\ x_3x_4 \\ x_1^3x_4^2 \\ x_2^3x_3^2
          				 \end{pmatrix}
\]
is solution-preserving from $\dot x=f(x)$ to some analytic differential equation $\dot x=g(x)$. We apply Proposition \ref{invsolpres} with
\[
 D\Phi(x)=\begin{pmatrix}
           x_2 & x_1 & 0 & 0  \\
	   0 & 0 & x_4 & x_3  \\
	   3x_1^2x_4^2 & 0 & 0 & 2x_1^3x_4  \\
	   0 & 3x_2^2x_3^2 & 2x_2^3x_3 & 0
          \end{pmatrix}.
\]
Computing minors, one finds that always ${\rm rank}\,D\Phi(x)\leq 3$, and furthermore
\begin{align*}
\text{rank}\;D\Phi(x) & \leq 2\Longleftrightarrow\; x_1=x_3=0\;\;\text{or}\;\;x_2=x_4=0\;\;\text{or}\;\;x_1=x_2=0\;\;\text{or}\;\;x_3=x_4=0\,;  \\
\text{rank}\;D\Phi(x) & \leq 1\Longleftrightarrow\; x_1=x_2=0\;\;\text{or}\;\;x_3=x_4=0\,;\\
\text{rank}\;D\Phi(x) &=0\Longleftrightarrow x=0.
\end{align*}
This holds regardless of the special form of the symmetric differential equation $\dot x=f(x)$; any symmetric vector field  must admit these ``invariant sets forced by symmetries''.}
\end{example}
\section{Further remarks and notes}
\begin{itemize}
\item Section 3.1 is loosely based on \cite{WMul}; the classical results are due to Lie \cite{Li 1} and Bianchi \cite{Bi}. See also the standard references mentioned earlier. Theorem \ref{frobred}(a) is stated (in the given form) in Hermann \cite{Her}. Concerning invariant sets, one may consult Field \cite{Fi} as well as Abud and Sartori \cite{AS} for systems admitting compact symmetry groups (keyword stratification); a discussion for linear algebraic symmetry groups is contained in \cite{GSW}. Integrability, while related to symmetry properties, is a vast field in its own right; see e.g.\ the monograph \cite{Zh17} by Zhang.
\item As for an extension to the smooth case, practically everything in Section 3.1 should be handled with care (locally, near nonstationary, resp. maximal rank points, things are ok), while practically everything in Section 3.2 carries over.
\end{itemize}
%%%%%%%%%%%%%%%%%%%%%%%%%%%%%%%%%%%%%%%%%%%%%%%%%%%%%%%%%%%%%%%%%%%%
%%%%%%%%%%%%%%%%%%%%%%%%%%%%%%%%%%%%%%%%%%%%%%%%%%%%%%%%%%%%%%%%%%%%
\chapter{Linear symmetries and normal forms}
In this chapter we generalize Lie's ``reverse engineering'' by starting from a linear group $G\subseteq GL(n,\mathbb K)$ and discussing analytic differential equations that admit $G$ as a symmetry group. We remark that there are deeper reasons for considering linear group actions: For instance every compact group action on a manifold is -- loosely speaking -- equivalent to a linear group action on a submanifold of some $\mathbb R^n$. We discuss toral groups in some detail and then turn to differential equations in Poincar\'e-Dulac normal form, a class of equations which admit toral symmetry groups. Beyond symmetry, differential equations in normal form possess further properties which we discuss in the last part of the chapter.\\
In contrast to the previous chapters, the presentation in Chapter 4  is no longer essentially self-contained, and some of the results are not standard. Some notions are introduced in a rather cursory manner, and some proofs are omitted, in part because the technical build-up would be too expansive.
\section{Linear symmetry groups}
In the present section we consider analytic ordinary differential equations \eqref{ode} which admit every element of a given a subgroup $G\subseteq GL(n,\mathbb K)$ as a symmetry. We will focus on linear algebraic groups; these are subgroups of $GL(n,\mathbb K)$ which are also algebraic subvarieties of $\mathbb K^{(n,n)}$. (Most of the familiar matrix groups fall in this class.) We start with some elementary facts.
\begin{lemma} Let the analytic ordinary differential equation  $\dot x=f(x)$ be given on $U\subseteq \mathbb K^n$. Then the following hold.
\begin{enumerate}[(a)]
\item A given $T\in GL(n,\mathbb K)$ defines a symmetry of \eqref{ode} via $x\mapsto Tx$  if and only if $f(Tx)=Tf(x)$ on $U$.
\item The set
\[
\widehat G:=\left\{T\in GL(n,\mathbb K);\,f(Tx)=Tf(x)\text{  on  }U\right\}
\]
is an algebraic subgroup of $GL(n,\mathbb K)$.
\end{enumerate}
\end{lemma}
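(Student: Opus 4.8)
The plan is to prove both parts by direct computation, using the characterization of symmetries via the solution-preserving identity from Lemma~\ref{solprescrit}, together with the closure properties already established for analytic functions.

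For part (a), the key observation is that the map $\Phi(x):=Tx$ is linear, so $D\Phi(x)=T$ everywhere, and its derivative is invertible precisely because $T\in GL(n,\mathbb K)$. By Lemma~\ref{solprescrit}, $\Phi$ is solution-preserving from $\dot x=f(x)$ to itself if and only if $D\Phi(x)\,f(x)=f(\Phi(x))$, which in this case reads $T\,f(x)=f(Tx)$. Since invertibility of $D\Phi(x)$ is automatic, this single identity is equivalent to $\Phi$ being a symmetry in the sense of the definition. (One caveat worth addressing: to apply the definition of symmetry directly one should check that $\Phi$ maps $U$ into $U$; I would note that the identity $f(Tx)=Tf(x)$ is required to hold on $U$, and that the symmetry condition is understood on the open set where $Tx\in U$.)

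For part (b), I would verify the three group axioms together with algebraicity. First, $I\in\widehat G$ trivially since $f(Ix)=f(x)=If(x)$. For closure under products, suppose $S,T\in\widehat G$; then $f(STx)=Sf(Tx)=STf(x)$, using the defining relation first for $S$ (applied at the point $Tx$) and then for $T$, so $ST\in\widehat G$. For inverses, if $T\in\widehat G$ then substituting $x\mapsto T^{-1}x$ into $f(Tx)=Tf(x)$ gives $f(x)=Tf(T^{-1}x)$, hence $T^{-1}f(x)=f(T^{-1}x)$, so $T^{-1}\in\widehat G$. This shows $\widehat G$ is a subgroup of $GL(n,\mathbb K)$.

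To see that $\widehat G$ is \emph{algebraic}, I would expand each component of $f$ as a power series and compare coefficients in the identity $f(Tx)=Tf(x)$. For fixed $x$, the condition $f(Tx)-Tf(x)=0$ is, by the identity theorem, equivalent to the vanishing of all Taylor coefficients in $x$ of the analytic $\mathbb K^n$-valued function $T\mapsto f(Tx)-Tf(x)$; each such coefficient is a polynomial in the entries of $T$. Thus $\widehat G$ is the common zero set in $GL(n,\mathbb K)$ of a (possibly infinite) family of polynomials in the matrix entries, hence an algebraic subgroup. The main subtlety here is the passage from ``the analytic identity holds on the open set $U$'' to ``polynomial conditions on the entries of $T$''; I expect this to be the one step requiring care, and it is handled by invoking the identity theorem for analytic functions to reduce the functional identity to coefficient-wise polynomial conditions in $T$.
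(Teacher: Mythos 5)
Your proof is correct and takes essentially the same approach as the paper: part (a) via Lemma \ref{solprescrit} with $D\Phi(x)=T$, and part (b) by directly verifying the subgroup axioms and then obtaining algebraicity from the Taylor expansion of $f$, where the paper normalizes $0\in U$ and compares homogeneous components ($f_j(Tx)=Tf_j(x)$ for all $j$), which is precisely your coefficient-wise reduction via the identity theorem. Your caveat about $Tx$ remaining in $U$ is a domain issue the paper leaves implicit, and noting it does no harm.
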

\begin{proof}
Part (a) follows from Lemma \ref{solprescrit}. For part (b) the subgroup property is straightforward. To see that this subgroup is algebraic, assume $0\in U$ with no loss of generality and consider the Taylor expansion $f=\sum_{j\geq 0}f_j$, with each $f_j$ a homogeneous polynomial map of degree $j$. Then the defining condition is equivalent  to $Tf_j(x)=f_j(Tx)$ for all $x\in \mathbb K^n$, $0\leq j<\infty$. Writing out these conditions for the matrix entries of $T$ and the (given) coefficients of the $f_j$, one sees that the entries of $T$ are characterized by polynomial equations.
\end{proof}
So, from now on it suffices to restrict attention to linear algebraic symmetry groups of \eqref{ode}. To a linear algebraic group $G$ we associate  its {\em Lie algebra} 
\begin{equation}\label{liealgeq}
\mathcal{L}=\mathcal{L}(G):=\left\{B\in\mathbb K^{(n,n)};\,\exp(sB)\in G\text{  for all  } s\right\}.
\end{equation}
We adjust resp. introduce some notation.
\begin{definition} Let $G$ be a linear algebraic group. Then we denote by 
 $I(G)$ the set of all polynomial invariants of $G$, i.e. all polynomials $\varphi:\mathbb K^n\to \mathbb K$ such that $\varphi\circ T^{-1}=\varphi$ for all $T\in G$. Moreover we denote by $\mathcal{P}(G)$ the set of all $G$-symmetric polynomial vector fields on $\mathbb K^n$.  
\end{definition}
Next we note some structural properties.
\begin{lemma} Let $G$ be a linear algebraic group. Then  
 $I(G)$ is a subalgebra of $\mathbb K\left[x_1,\ldots, x_n\right]$, and  $\mathcal{P}(G)$ is a Lie subalgebra of the Lie algebra of all polynomial vector fields. \\
Moreover, if $G$ is a symmetry group for a polynomial ordinary differential equation $\dot x=f(x)$ then for any $\varphi\in I(G)$ one has $X_f(\varphi)\in I(G)$.
\end{lemma}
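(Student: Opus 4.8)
The plan is to obtain all three assertions from the compatibility of linear symmetry maps with Lie derivatives and Lie brackets recorded in Proposition \ref{liecompatible}, the unifying observation being that for each $T\in G$ the linear map $\Phi(x):=Tx$ has constant derivative $D\Phi(x)=T$, and that the $G$-symmetry condition $f(Tx)=Tf(x)$ from the preceding lemma is precisely the solution-preserving identity $D\Phi(x)\,f(x)=f(\Phi(x))$ of Lemma \ref{solprescrit}, with source and target equation both equal to $\dot x=f(x)$. Granting this translation, all three statements reduce to routine applications of already-proven facts.

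For the subalgebra claim, I would note that for fixed $T\in G$ the pullback $\varphi\mapsto \varphi\circ T^{-1}$ is a $\mathbb K$-algebra endomorphism of $\mathbb K[x_1,\ldots,x_n]$; hence if $\varphi\circ T^{-1}=\varphi$ and $\psi\circ T^{-1}=\psi$ then the same identity holds for $\varphi+\psi$, for $\varphi\,\psi$, and for any scalar multiple, while constants are trivially invariant. Intersecting over all $T\in G$ shows $I(G)$ is a subalgebra. For $\mathcal P(G)$, linearity is immediate because $f\mapsto f(T\,\cdot)-Tf$ is linear for each fixed $T$; closure under the bracket follows from Proposition \ref{liecompatible}(b) applied with $\Phi(x)=Tx$ and $g_i=f_i$: the symmetry identities $D\Phi(x)f_i(x)=f_i(\Phi(x))$ give $T\,[f_1,f_2](x)=[f_1,f_2](Tx)$, so $[f_1,f_2]\in\mathcal P(G)$. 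The bracket of two polynomial vector fields is again polynomial, being assembled from products and derivatives of polynomials via $[f_1,f_2](x)=Df_2(x)f_1(x)-Df_1(x)f_2(x)$, so $\mathcal P(G)$ is a Lie subalgebra.

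For the final statement I would apply Proposition \ref{liecompatible}(a) with $\Phi(x)=Tx$, target $g=f$ (legitimate since each $T\in G$ is a symmetry of $\dot x=f(x)$), and $\rho=\varphi$. Because $G$ is a group, membership $\varphi\in I(G)$ is equivalent to $\varphi\circ T=\varphi$ for every $T\in G$. The proposition then yields $X_f(\varphi)\circ T=X_f(\varphi\circ T)=X_f(\varphi)$, and replacing $T$ by $T^{-1}$ gives $X_f(\varphi)\circ T^{-1}=X_f(\varphi)$ for all $T\in G$. Since $X_f(\varphi)(x)=D\varphi(x)f(x)$ is a polynomial, this shows $X_f(\varphi)\in I(G)$.

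I expect no genuine obstacle: the single point needing care is recasting the matrix identity $f(Tx)=Tf(x)$ as the solution-preserving form of Lemma \ref{solprescrit} with coinciding source and target, after which Proposition \ref{liecompatible} applies verbatim and the three parts drop out.
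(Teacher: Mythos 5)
Your proposal is correct and matches the paper's argument in substance: the paper also treats the algebra and Lie subalgebra claims as routine and, for the final statement, performs exactly the computation that your appeal to Proposition \ref{liecompatible}(a) packages — differentiating $\varphi(Tx)=\varphi(x)$ and inserting $f(Tx)=Tf(x)$ via $D\varphi(Tx)\,T\,T^{-1}f(Tx)$. Citing Propositions \ref{liecompatible}(a) and (b) with $\Phi(x)=Tx$ instead of redoing the chain-rule calculation is a harmless repackaging of the same idea, and your handling of the $T$ versus $T^{-1}$ convention in the definition of $I(G)$ is sound.
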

\begin{proof}
The first two statements are straightforward. For the last,  differentiate $\varphi(Tx)=\varphi(x)$ to obtain $D\varphi(Tx)T = D\varphi(x)$, hence
\[
\begin{array}{rcl}
 X_f(\varphi)(Tx)&=&D\varphi(Tx)f(Tx)=D\varphi(Tx) T \,T^{-1}f(Tx)\\
&=&D\varphi(x)f(x)= X_f(\varphi)(x)
\end{array}
\]
\end{proof}
An immediate consequence is the following variant of Proposition \ref{redbyinv}.
\begin{proposition}\label{polyredbyinv}
Let $G$ be a linear algebraic group.
If the $\mathbb K$-Algebra $I(G)$ admits a finite system $\varphi_1,\ldots,\varphi_r$ of generators and $f\in \mathcal{P}(G)$, then 
\[
\Phi:=\begin{pmatrix}
						\varphi_1 \\ \vdots \\ \varphi_r 
                                                      \end{pmatrix}
\]
is solution-preserving from $\dot x=f(x)$ to some polynomial differential equation $\dot x=g(x)$ in $\mathbb K^r$. The Zariski closure $Y$ of $\Phi(\mathbb K^n)$ is invariant for $\dot x=g(x)$.
\end{proposition}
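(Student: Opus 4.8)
The plan is to reduce the statement to the solution-preserving criterion of Lemma \ref{solprescrit}, using as the essential input that $X_f$ carries $I(G)$ into itself. First I would invoke the preceding Lemma: since $f\in\mathcal{P}(G)$, the group $G$ is a symmetry group of $\dot x=f(x)$, and hence $X_f(\varphi)\in I(G)$ for every $\varphi\in I(G)$. Applying this to the generators, each $X_f(\varphi_i)$ lies in $I(G)$, and because $\varphi_1,\dots,\varphi_r$ generate $I(G)$ as a $\mathbb{K}$-algebra there exist polynomials $g_1,\dots,g_r$ in $r$ variables with
\[
X_f(\varphi_i)=g_i(\varphi_1,\dots,\varphi_r),\qquad 1\le i\le r.
\]

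Setting $g=(g_1,\dots,g_r)$, the chain of identities $D\varphi_i(x)f(x)=X_f(\varphi_i)(x)=g_i(\Phi(x))$ reads $D\Phi(x)f(x)=g(\Phi(x))$ for all $x$, so Lemma \ref{solprescrit} immediately yields that $\Phi$ is solution-preserving from $\dot x=f(x)$ to the polynomial equation $\dot x=g(x)$ on $\mathbb{K}^r$. This disposes of the first assertion.

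For the invariance of $Y$ I would argue algebraically rather than by tracking trajectories through $\Phi$. The vanishing ideal $J(Y)$ of the Zariski closure $Y$ of $\Phi(\mathbb{K}^n)$ consists precisely of those polynomials $\psi$ on $\mathbb{K}^r$ with $\psi\circ\Phi=0$, since a polynomial vanishes on $\Phi(\mathbb{K}^n)$ if and only if it vanishes on its Zariski closure. For such a $\psi$, Proposition \ref{liecompatible}(a) gives
\[
X_g(\psi)\circ\Phi=X_f(\psi\circ\Phi)=X_f(0)=0,
\]
so that $X_g(\psi)\in J(Y)$. Choosing finite generators $\psi_1,\dots,\psi_m$ of $J(Y)$ (available by the Hilbert basis theorem) and writing $X_g(\psi_i)=\sum_j\mu_{ij}\psi_j$, the polynomial version of the invariance criterion (the remark following Proposition \ref{invarcritprop}) shows that $Y$, the common zero set of the $\psi_j$, is invariant for $\dot x=g(x)$.

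The only genuinely delicate point is the well-definedness of $g$ as a polynomial vector field on $\mathbb{K}^r$: it rests entirely on the hypothesis that the $\varphi_i$ generate $I(G)$, which is what guarantees that each $X_f(\varphi_i)\in I(G)$ is expressible as a polynomial in the generators. This representation need not be unique when the $\varphi_i$ satisfy nontrivial relations, but any admissible choice of the $g_i$ serves, since the remaining arguments use only the global identity $D\Phi(x)f(x)=g(\Phi(x))$ together with Proposition \ref{liecompatible}(a), both of which hold verbatim for the chosen $g$. Apart from this, the proof is a direct assembly of results already established in the excerpt.
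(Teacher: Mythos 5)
Your proof is correct, and while the first assertion is handled exactly as in the paper (each $X_f(\varphi_j)\in I(G)$ is expressed, non-uniquely but harmlessly, as a polynomial $\gamma_j(\varphi_1,\ldots,\varphi_r)$, and Lemma \ref{solprescrit} applies), your treatment of the invariance of $Y$ takes a genuinely different route. The paper first reduces to $\mathbb K=\mathbb C$ and invokes the algebro-geometric fact that the image of a morphism contains a subset $\widetilde Y$ that is Zariski-open and dense in $Y$, hence also dense in the norm topology; $\widetilde Y$ is invariant as the image of solutions, and invariance then passes to the closure $Y$ by the general facts of Section 1.5. You instead argue ideal-theoretically: $\psi\circ\Phi=0$ implies $X_g(\psi)\circ\Phi=X_f(\psi\circ\Phi)=0$ by Proposition \ref{liecompatible}(a), so $X_g$ maps the vanishing ideal $J(Y)$ into itself; choosing finite generators $\psi_1,\ldots,\psi_m$ (Hilbert basis theorem) and applying the invariance criterion of Proposition \ref{invarcritprop}(a) in its polynomial form then gives invariance of $V(J(Y))=Y$. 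Your version buys uniformity: it works verbatim over $\mathbb R$ as well as $\mathbb C$, whereas the Chevalley-type image argument fails over $\mathbb R$ and forces the paper's (unelaborated) reduction to the complex case; it also sidesteps the small subtlety, left implicit in the paper, of why $\widetilde Y$ itself is invariant (matching maximal existence intervals under $\Phi$). What the paper's argument buys in exchange is the stronger geometric information that $\Phi(\mathbb K^n)$ fills a dense constructible subset of $Y$ -- information your ideal-theoretic argument does not produce, and which is used again later (e.g.\ in Proposition \ref{reducdimprop}).
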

%%%
\begin{proof}
As noted above, $X_f(\varphi_j)\in I(G)$ for all $j$; hence there exist $\gamma_j\in\mathbb K\left[x_1,\ldots,x_r\right]$ such that $X_f(\varphi_j)=\gamma_j(\varphi_1,\ldots,\varphi_r)$, $1\leq j \leq r$. Setting
\[
g:=\begin{pmatrix}
    \gamma_1 \\ \vdots \\ \gamma_r                                      \end{pmatrix},
\] 
the condition $D\Phi(x)f(x)=g(\Phi(x))$ is therefore satisfied.   \\
It suffices to prove the second assertion for the case $\mathbb K=\mathbb C$. The image of $\Phi$ contains a Zariski-open subset $\widetilde{Y}$ of $Y$, which is also open and dense with respect to the restriction of the norm topology. Clearly $\widetilde Y$ is invariant for $\dot x=g(x)$, and this also holds for the closure $Y$.
\end{proof}

\begin{example}\label{mpresex} {\em Let $m$ and $p$ be relatively prime positive integers, and
\[
 T_{\lambda}x= \begin{pmatrix}
                       \lambda^p x_1 \\ \lambda^{-p} x_2  \\
			\lambda^m x_3 \\ \lambda^{-m}x_4
                         \end{pmatrix},\quad \lambda\in\mathbb K^*, \text{  and  }G=\{T_{\lambda}:\; \lambda\in\mathbb K^*\}.
\]
The Lie algebra $\mathcal{L}(G)$ is spanned by $B$, with
\[
Bx=\begin{pmatrix}
						px_1 \\ -px_2 \\ mx_3 \\ -mx_4
                                                         \end{pmatrix}.
\]
For $\varphi(x)= x_1^{m_1}\cdot x_2^{m_2}\cdot x_3^{m_3}\cdot x_4^{m_4}$ one computes $\varphi(T_{\lambda}x)=\lambda^{p(m_1-m_2)+m(m_3-m_4)}\varphi(x)$. As a consequence, the elements of $I(G)$ are precisely the $\mathbb K$-linear combinations of monomials
\[
 x_1^{d_1}\cdots x_4^{d_4}\;\; \text{such that}\;\,p(d_1-d_2)+m(d_3-d_4)=0.
\]
From this one sees that
\[
 \varphi_1(x)=x_1x_2,\quad \varphi_2(x) = x_3x_4,\quad \varphi_3(x) = x_1^mx_4^p\;\;\text{und}\;\;\varphi_4(x)= x_2^mx_3^p
\]
generate the algebra $I(G)$ (and these form a smallest set of generators).
Hence, if $\dot x=f(x)$ is $G$-symmetric then $\Phi=\begin{pmatrix}
                                                 \varphi_1 \\ \vdots \\ \varphi_4
                                                                \end{pmatrix}$ 
maps solutions of a polynomial differential equation $\dot x=f(x)$ to solutions of some polynomial system $\dot x = g(x)$ in $\mathbb K^4$. (Compare also Example \ref{invsolex} for the case $p=2,\, m=3$.) \\
The image of $\Phi$ is contained in the algebraic variety $Y=\{y\in\mathbb K^4:\;y_1^my_2^p-y_3y_4=0\}$, as follows from the relation $\varphi_1^m\varphi_2^p-\varphi_3\varphi_4=0$. Since $\dim Y=3$ one obtains (as intuitively expected) a reduction of dimension by one for symmetric systems.}
\end{example}
The statement of Proposition \ref{polyredbyinv} raises a few questions, which we will address next.
\begin{remark}\label{fingenrem}{\em  Proposition \ref{polyredbyinv} requires that the invariant algebra of a given linear algebraic group is finitely generated. The following facts are known.
\begin{enumerate}[(a)]
\item There exist linear algebraic groups whose invariant algebra is not finitely generated. The first example goes back to Nagata.
\item On the other hand, any algebraic subgroup of the orthogonal group $O(n,\mathbb R)$ (thus, a compact linear algebraic group) always admits a finite generator system of the invariant algebra. Moreover, $\mathcal{P}(G)$ is a finitely generated module over $I(G)$. This result goes back to E.~Noether.
\item  More generally, one may consider reductive linear algebraic groups. We introduce this notion following {Schwarz} \cite{Sc 2}, which is the most appropriate version for our purposes.
\begin{itemize}
\item Let $H\subseteq GL(n,\mathbb R)$ be a linear algebraic group, with defining equations $\tau_1,\ldots,\tau_m$. Then the vanishing set $H_{\mathbb C}\subseteq GL(n,\mathbb C)$ of $\tau_1,\ldots,\tau_m$ (which are now considedered as elements of $\mathbb C[x_1,\ldots,x_n]$) is a complex algebraic group, and  $H$ is Zariski dense in $H_{\mathbb C}$.   
\item We call a complex algebraic group $G$ \emph{reductive} if there exists a compact subgroup $K$ of $GL(n,\mathbb R)$ such that $G\simeq K_{\mathbb C}$. A real algebraic group $H$ is called reductive if $H_{\mathbb C}$ is reductive. This ``un-algebraic'' notion of reductivity differs from the usual one (see e.g. Springer \cite{Sp}), but the notions can be shown to be equivalent. 
\item For example $\{0\}$ is the only compact subgroup of $(\mathbb C,+)$, hence $(\mathbb C,+)$ is not reductive. On the other hand, $(\mathbb C^*,\,\cdot\,)$ is reductive (take the compact subgroup $S^1$).
\end{itemize}
Using this definition and the finite generation property for compact groups it is now easy to prove that the invariant algebra of every reductive linear algebraic group is  finitely generated. Moreover, $\mathcal{P}(G)$ is a finitely generated module over $I(G)$.

\end{enumerate}
}
\end{remark}
Finite generation guarantees that Proposition \ref{polyredbyinv} is applicable, but Example \ref{mpresex} already shows that the number of generators may be larger than the dimension of the reduced system (restricted to an algebraic variety). We illustrate next that this dimension may be substantially larger.
\begin{example}{\em 
\begin{enumerate}[(a)]
\item Let $m\in\mathbb N$, $I_m$ the $m\times m$ identity matrix and
\[
G=\left\{\begin{pmatrix} a\cdot I_m& 0\\
                                     0&a^{-1}\cdot I_m\end{pmatrix}; a\in\mathbb K^*\right\}\subseteq GL(2m,\,\mathbb K).
\]
Denoting the diagonal elements by $x_1,\ldots,x_{2m}$, one can verify that the invariant algebra admits the (smallest) generator set
\[
\left\{ \gamma_{ij}:=x_ix_{m+j}; \,1\leq i,\,j\leq m\right\}
\]with $m^2$ elements. There exist many relations between these generators, viz.
\[
 \gamma_{ij}\cdot \gamma_{k\ell}= \gamma_{i\ell}\cdot  \gamma_{kj},
\]
and from these one may find that the image of the reduction map $\Phi$ is an algebraic variety of dimension $2m-1$. (See the following section for more details.)
But one cannot avoid the problem of dealing with a high dimensional embedding space, and with a rather complicated image of $\Phi$.
\item The irreducible $11$-dimensional representation of $SL(2,\mathbb C)$ admits a smallest generator system with 106 elements; see Brouwer and Popoviciu \cite{BrPo}. Thus the embedding space of the variety $Y$ from Proposition \ref{polyredbyinv} (which has dimension $8$) is 106-dimensional. No relations between the generators are given in \cite{BrPo}, hence the image of the reduction map would not be readily available.
\end{enumerate}
}
\end{example}
Examples like these explain why reduction by invariants is sometimes unfeasible, and not very popular with some practitioners. An algebraic approach to  escape this dilemma is proposed in \cite{SchWa}.\\
Finally, there is the question whether one can extend  Proposition \ref{polyredbyinv} from polynomial to analytic differential equations. While the extension from polynomials to formal power series is straightforward, convergence issues are a highly nontrivial problem. The following result is due to Luna \cite{Lu}, building on work by Schwarz \cite{Sc 1} and Poenaru \cite{Po} for smooth real functions and vector fields with compact symmetry groups.
\begin{theorem}\label{anaredbyinv}
 %%%
Let $G\subseteq GL(n,\mathbb K)$ be reductive, and $\varphi_1,\ldots,\varphi_r$ a generator system for $I(G)$.
\begin{enumerate}[(a)]
 \item For every $G$-invariant analytic function germ $\psi$ in $0$  (with values in $\mathbb K$) there exists a power series $\sigma$ in $r$ variables, with nonempty domain of convergence, such that $\psi=\sigma \circ \Phi$. 
\item Furthermore, if $p_1,\ldots,p_s$ is a generator system of the $I(G)$-module $\mathcal{P}(G)$, then for every germ of a $G$-invariant analytic vector field $f$ in $0$ there exist $G$-invariant analytic function germs $\psi_1,\ldots,\psi_s$ such that $f=\sum\limits^s_{j=1}\psi_jp_j$.
\end{enumerate}
\end{theorem}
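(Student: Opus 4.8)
The plan is to split the argument into a \emph{formal} part, which is elementary and algebraic, and a \emph{convergence} part, which carries all the genuine difficulty and is where Luna's work is indispensable. The single structural feature I would exploit throughout is the one coming from reductivity: since $G\simeq K_{\mathbb C}$ for a compact $K$, averaging over $K$ against Haar measure yields a Reynolds operator $R$, an $I(G)$-linear projection onto the $G$-invariants; equivalently, taking $G$-invariants is an exact functor on rational $G$-modules. Because $G\subseteq GL(n,\mathbb K)$ acts linearly, $R$ respects the grading by degree and hence passes to completions, so that the formal invariants are exactly the completion $\widehat{I(G)}$ of $I(G)$.

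The formal part of (a) is then immediate. Given a $G$-invariant analytic germ $\psi$ at $0$, write $\psi=\sum_k\psi_k$ in homogeneous components; since the action is linear each $\psi_k$ is again invariant, hence a polynomial in $\varphi_1,\ldots,\varphi_r$ because these generate $I(G)$. Assembling these expressions degree by degree produces a \emph{formal} power series $\sigma$ in $r$ variables with $\sigma\circ\Phi=\psi$ as formal series. For (b) I would run the same scheme on the finitely generated graded $I(G)$-module $\mathcal{P}(G)$ (finite generation being the second assertion of Remark~\ref{fingenrem}): the exactness of invariants identifies formal invariant vector fields with $\widehat{I(G)}$-combinations of the $p_j$, giving formal invariant coefficients $\widehat\psi_j$ with $f=\sum_j\widehat\psi_j\,p_j$.

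The hard part, and the main obstacle, is to upgrade the formal $\sigma$ (resp.\ $\widehat\psi_j$) to convergent objects. I would reformulate (a) geometrically: with $Y=\overline{\Phi(\mathbb K^n)}$ the categorical quotient and $\Phi:\mathbb K^n\to Y$ the quotient morphism, the claim is that pullback along $\Phi$ sends convergent germs on $Y$ at $0$ \emph{onto} the $G$-invariant convergent germs at $0$; equivalently, the image of this pullback is a closed subspace equal to all invariant germs. Two soft ingredients help compare the algebraic, analytic and formal pictures: the faithful flatness of the local analytic ring over the local algebraic ring, and the exactness of invariants just discussed. The genuinely deep ingredient is the local structure theory of the quotient near the fixed point $0$ --- Luna's \'etale slice theorem and the properties of the categorical quotient of a reductive group --- which supplies the analytic comparison forcing a convergent $\sigma$ to exist. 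This closedness/convergence is precisely the step that no formal manipulation can supply, and it is where estimates on $R$ and the slice theorem are essential; I expect this to resist any shortcut.

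Finally I would obtain (b) from (a) without re-running the slice-theorem machinery, using only the two soft ingredients. The polynomial generation of Remark~\ref{fingenrem} gives a surjection of $I(G)$-modules $I(G)^s\to\mathcal{P}(G)$, $(a_j)\mapsto\sum_j a_j p_j$. Passing to analytic germs via the flat analytification and applying the exact invariants functor keeps this surjective onto the module of $G$-invariant analytic vector field germs, so any such $f$ can be written as $\sum_j\psi_j p_j$ with $\psi_j$ invariant analytic germs; part (a) then identifies each $\psi_j$ as a convergent power series in $\Phi$. Thus once the convergence in (a) is established, (b) follows formally.
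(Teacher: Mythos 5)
The first thing to note is that the paper itself contains no proof of Theorem \ref{anaredbyinv}: it is quoted as a result of Luna \cite{Lu}, building on Schwarz \cite{Sc 1} and Poenaru \cite{Po}, so there is no in-text argument to compare against. Measured against that, your proposal is sound and matches the paper's treatment in substance: it supplies the elementary formal skeleton, reduces (b) to (a) correctly, and defers exactly the step that the paper also defers --- the convergence statement in (a) --- to Luna's machinery.

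Two remarks on the parts you do carry out. In the formal half of (a), the degree-by-degree assembly of $\sigma$ needs one small repair: to ensure the chosen polynomial expressions $\psi_k=P_k(\varphi_1,\ldots,\varphi_r)$ sum to a well-defined formal series, take the generators $\varphi_i$ homogeneous (possible, since the linear $G$-action preserves the grading of $I(G)$, so homogeneous parts of generators again generate); then $P_k$ can be chosen isobaric of weight $k$, its order in the new variables is at least $k/\max_i\deg\varphi_i\to\infty$, and the sum converges in the $\mathbf m$-adic topology. The non-uniqueness of the $P_k$ is harmless. Your route from (a) to (b) is correct and is the standard one; stated concretely: the formal part gives $f\in\sum_j \mathbb K[[x]]\,p_j$, faithful flatness of $\mathbb K[[x]]$ over the convergent series ring then yields convergent (not yet invariant) coefficients $a_j$ with $f=\sum_j a_jp_j$, and averaging over the compact group $K$ --- using equivariance $p_j(Tx)=Tp_j(x)$, so that $f(x)=\sum_j a_j(Tx)\,p_j(x)$ for all $T\in K$ --- produces invariant analytic coefficients $\psi_j$, to which (a) applies. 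That is a cleaner and fully rigorous version of your ``flat analytification plus exact invariants'' sentence, and it confirms that (b) follows from (a) by soft arguments alone.

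The one genuine gap is the one you flag yourself: the convergence step in (a) is asserted, located, and attributed, but not proved. You are right that it cannot be obtained by formal manipulation --- by Gabrielov-type examples, a convergent germ that factors formally through an analytic map need not factor convergently, so the reductive quotient structure (Luna's slice theorem and the properties of the categorical quotient) is genuinely needed. Since the paper itself cites \cite{Lu} for precisely this, your proposal should be read as an honest and correctly structured reduction to the literature rather than a self-contained proof; as such it is acceptable, and everything you add beyond the citation is correct modulo the homogeneity caveat above.
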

%%%%%%%%%%%%%%%%%%%%%%%%%%%%%%%%%%%%%%%%%%%%%%%%%%%%%%%%%%%%%
\section{Toral groups}
In this section we will discuss one rather special, but important class of linear algebraic groups, the toral groups. We first need some preparations concerning the action of linear vector fields on polynomials and polynomial vector fields.

 Let $B\in\mathbb K^{(n,n)}$ and consider the linear map $\mathbb K^n\to\mathbb K^n,\,x\mapsto Bx$. (In the following we will use terminology rather loosely and identify the map with the matrix; for instance we will write $X_B$.) Moreover let
\begin{equation}\label{Jordan}
B=B_s+B_n,\quad B_s\text{  semisimple},\quad B_n\text{  nilpotent and  } \left[B_s,\,B_n\right]=0
\end{equation} 
be the Jordan-Chevalley decomposition of $B$. (If $B$ is in Jordan canonical form then $B_s$ is just the diagonal part of $B$.)
We note some facts about the operations $X_B$ and ${\rm ad}\,B$.
\begin{lemma}\label{operationsb} Let $B$ be as above.
\begin{enumerate}[(a)]
\item $X_B$ maps each space $S_m$ of homogeneous polynomials of degree $m$ into itself, and ${\rm ad}\,B$ maps each space $\mathcal{P}_m$ of homogeneous polynomial vector fields of degree $m$ into itself.
\item Let  $e_1,\ldots, e_n$ be an eigenbasis of $B_s$ (possibly after complexification) and denote by $x_1,\ldots,x_n$ the corresponding coordinates. Then with $B_s e_i = \lambda_i e_i$, $1\leq i\leq n$, the following hold.
\begin{itemize}
\item For $\varphi(x):=x_1^{m_1}\cdots x_n^{m_n}$ one has $X_{B_s}(\varphi)=(m_1\lambda_1+\cdots +m_n\lambda_n)\varphi$;
\item For $ p(x):=x_1^{d_1}\cdots x_n^{d_n}e_j$ one has $[B_s,p]=(d_1\lambda_1+\cdots +d_n\lambda_n-\lambda_j)p$.
\end{itemize}
\item 
\begin{itemize}
 \item $X_{B_s}$ acts as a semisimple linear map on each $S_m$, $X_{B_n}$ acts s a nilpotent linear map on $S_m$, and  $X_B=X_{B_s}+X_{B_n}$ is the Jordan-Chevalley decomposition for $X_B$ on $S_m$. 
 \item ${\rm ad}\,B_s$ acts as a semisimple linear map on each $\mathcal{P}_m$,  ${\rm ad}\,B_n$ acts s a nilpotent linear map on $\mathcal{P}_m$ and ${\rm ad}\,B={\rm ad}\,B_s+{\rm ad}\,B_n$  is the Jordan-Chevalley decomposition for ${\rm ad}\,B$ on $\mathcal{P}_m$.
\end{itemize}
\end{enumerate}
\end{lemma}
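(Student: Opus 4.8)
The plan is to dispose of parts (a) and (b) by direct computation and then to deduce part (c) from (b) together with the linearity of the assignments $B\mapsto X_B$ and $B\mapsto\mathrm{ad}\,B$, the commutator identities of Propositions \ref{liederprop} and \ref{liebproperties}, and the uniqueness of the Jordan--Chevalley decomposition of a linear operator.

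For part (a) I would simply count degrees. If $\varphi\in S_m$ then each partial derivative $\partial\varphi/\partial x_i$ is homogeneous of degree $m-1$, while the entries of $Bx$ are homogeneous of degree $1$; hence $X_B(\varphi)(x)=D\varphi(x)\,Bx$ is homogeneous of degree $m$, so $X_B(S_m)\subseteq S_m$. Similarly, for $p\in\mathcal{P}_m$ the matrix $Dp(x)$ has entries of degree $m-1$ and $B\,p(x)$ has degree $m$, so both terms of $[B,p](x)=Dp(x)\,Bx-B\,p(x)$ lie in $\mathcal{P}_m$. For part (b) I work in the eigenbasis, where $(B_s x)_i=\lambda_i x_i$. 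For a monomial $\varphi=x_1^{m_1}\cdots x_n^{m_n}$ the chain rule gives $X_{B_s}(\varphi)=\sum_i\lambda_i x_i\,\partial\varphi/\partial x_i$, and $x_i\,\partial\varphi/\partial x_i=m_i\varphi$ turns this into $(\sum_i m_i\lambda_i)\varphi$. Writing $p=\psi\,e_j$ with $\psi=x_1^{d_1}\cdots x_n^{d_n}$, the identity $[B_s,p]=(X_{B_s}\psi)\,e_j-\psi\,B_s e_j$ together with $B_s e_j=\lambda_j e_j$ and the first formula yields the eigenvalue $\sum_i d_i\lambda_i-\lambda_j$.

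For part (c) I first record that $X_B=X_{B_s}+X_{B_n}$ and $\mathrm{ad}\,B=\mathrm{ad}\,B_s+\mathrm{ad}\,B_n$, since both assignments are linear in the matrix. That $X_{B_s}$ and $X_{B_n}$ commute follows from Proposition \ref{liebproperties}(a): the commutator $X_{B_s}X_{B_n}-X_{B_n}X_{B_s}$ equals $X_h$, where $h$ is the vector-field bracket of $x\mapsto B_s x$ and $x\mapsto B_n x$, namely $h(x)=(B_nB_s-B_sB_n)x=0$ because the matrices commute; the Jacobi identity (Proposition \ref{liebproperties}(b)) gives $[\mathrm{ad}\,B_s,\mathrm{ad}\,B_n]=0$ in the same way. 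Semisimplicity of $X_{B_s}$ and $\mathrm{ad}\,B_s$ is then immediate from part (b): after complexification the monomials, resp.\ the monomial vector fields, of degree $m$ form an eigenbasis of $S_m$, resp.\ $\mathcal{P}_m$, and semisimplicity of a real operator is detected on its complexification.

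The one step that needs an idea is nilpotency of $X_{B_n}$ and $\mathrm{ad}\,B_n$, and this is where I expect the main (mild) obstacle to lie, since a Lie-algebra map need not send abstractly nilpotent elements to nilpotent operators. I would obtain it from the Lie series identities. Since $B_n^k=0$ for some $k$, the flow $H(s,y)=\exp(sB_n)y$ is polynomial in $s$ of degree $\le k-1$ and linear in $y$; hence for $\varphi\in S_m$ the function $\varphi(\exp(sB_n)y)$ is a polynomial in $s$ of degree $\le m(k-1)$. Comparing it with the Lie series $\sum_j\frac{s^j}{j!}X_{B_n}^j(\varphi)(y)$ of Proposition \ref{liederprop}(c) forces $X_{B_n}^j(\varphi)=0$ for all $j>m(k-1)$, so $X_{B_n}$ is nilpotent on $S_m$. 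The analogue for $\mathcal{P}_m$ uses Proposition \ref{liebrackser}(b): $\exp(-sB_n)\,p(\exp(sB_n)y)$ is again polynomial in $s$ of bounded degree and equals $\sum_j\frac{s^j}{j!}(\mathrm{ad}\,B_n)^j(p)(y)$, whence $\mathrm{ad}\,B_n$ is nilpotent on $\mathcal{P}_m$. Having exhibited $X_{B_s},X_{B_n}$ (resp.\ $\mathrm{ad}\,B_s,\mathrm{ad}\,B_n$) as commuting semisimple and nilpotent summands of $X_B$ (resp.\ $\mathrm{ad}\,B$), uniqueness of the Jordan--Chevalley decomposition gives the claim; the real case descends from the complex one because the summands $B_s,B_n$ of a real matrix are themselves real.
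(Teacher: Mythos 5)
Your proposal is correct, and for parts (a), (b), the commutation of the semisimple and nilpotent summands, the semisimplicity via the eigenbasis from (b), and the appeal to uniqueness of the Jordan--Chevalley decomposition, it runs exactly as the paper does. The one step where you genuinely diverge is the crux, nilpotency of $X_{B_n}$ on $S_m$ and of ${\rm ad}\,B_n$ on $\mathcal{P}_m$. The paper argues purely algebraically by polarization: writing $\phi\in S_m$ as $\phi(x)=\widehat\phi(x,\ldots,x)$ with $\widehat\phi$ symmetric $m$-linear, it shows by induction that $X_{B_n}^k(\phi)(x)$ is a linear combination of terms $\widehat\phi(B_n^{i_1}x,\ldots,B_n^{i_r}x)$ with $\sum i_\ell=k$, so that $B_n^q=0$ kills everything once $k\geq mq$ by pigeonhole (and similarly for ${\rm ad}\,B_n$). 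You instead exploit the flow: $\exp(sB_n)$ is polynomial in $s$ of degree $\leq q-1$, hence $\varphi(\exp(sB_n)y)$ and $\exp(-sB_n)\,p(\exp(sB_n)y)$ are polynomials in $s$ of bounded degree, and comparison with the Lie series identities of Proposition \ref{liederprop}(c) and Proposition \ref{liebrackser}(b) truncates the series, forcing $X_{B_n}^j(\varphi)=0$ and $({\rm ad}\,B_n)^j(p)=0$ for large $j$. Both routes are sound and both yield explicit nilpotency indices of the same order. Yours is shorter and elegantly recycles machinery the paper has already established; note that it is not really tied to analyticity here, since for polynomial $\varphi$, $p$ and linear flows all the series involved are finite, so the argument survives even in a formal setting. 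The paper's polarization argument has the virtue of being entirely free of flows and exponentials, which matches the spirit of the later sections where Lie-bracket computations are carried out for formal power series vector fields; it also makes the combinatorial source of the bound $X_{B_n}^{mq}=0$ transparent. Your parenthetical caution that a Lie algebra homomorphism need not send nilpotent elements to nilpotent operators is well taken and is precisely why some such direct argument is required.
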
 
\begin{proof}
Parts (a) and (b) follow directly from differentiation rules and simple computations.  As for part (c), the semisimplicity of $X_{B_s}$ follows from (b), where a basis of ``eigenvectors'' is given, and $X_{B_s},\,X_{B_n}$ commute by Proposition \ref{liebproperties}. To verify that $X_{B_n}$ acts nilpotently, one first shows by induction: If $\phi\in S_m$ and $\widehat\phi$ is the symmetric $m$-linear map from $\mathbb K^m$ to $\mathbb K$ such that $\widehat\phi(x,\ldots,x)=\phi(x)$ for all $x$, then $X_{B_n}^k(\phi)(x)$ is a $\mathbb K$-linear combination of terms
\[
\widehat \phi(B_n^{i_1}x,\ldots,B_n^{i_r}x) \text{  with all  }i_\ell\geq 0,\,\sum i_\ell=k.
\]
This implies $X_{B_n}^{mq}(\phi)=0$ if $B_n^q=0$. The proof for ${\rm ad}\, B$ runs similarly.
\end{proof}
For further reference we specialize, resp. introduce, some definitions.
\begin{definition}\label{Bdefs} Let $B\in\mathbb K^{(n,n)}$.
\begin{enumerate}[(i)]
\item We call 
\[
I(B)=I_0(B):=\left\{\phi\in\mathbb K[x_1,\ldots,x_n];\,X_B(\phi)=0\right\}
\]
the set of  (polynomial) {\em invariants} of $B$.
\item More generally, for every $\chi\in\mathbb K$ we define
\[
I_\chi(B):=\left\{\phi\in\mathbb K[x_1,\ldots,x_n];\,X_B(\phi)=\chi\phi\right\}
\]
and call this the set of (polynomial) $\chi$-{\em semi-invariants} of $B$.
\item Finally, we call
\[
\mathcal{C}(B):=\left\{p\in\mathcal{P};\, \left[B,\,p\right]=0\right\}
\]
the (polynomial) {\em centralizer} of $B$.
\end{enumerate}
\end{definition}
With the product rule one easily verifies:
\begin{lemma}\label{modmultlem}
Let $B\in\mathbb K^{(n.n)}$. Then for all $\chi,\,\eta\in\mathbb K$ one has 
\[
I\chi(B)\cdot I_\eta(B)\subseteq I_{\chi+\eta}(B).
\]
In particular $I(B)$ is a commutative and associative algebra and $I_\chi(B)$ is a module over $I(B)$.
\end{lemma}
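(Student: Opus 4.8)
The plan is to reduce everything to the product rule for the Lie derivative $X_B$. Here $X_B$ is the Lie derivative associated with the linear vector field $x\mapsto Bx$ restricted to polynomials, so by Proposition \ref{liederprop}(a),(b) it is $\mathbb{K}$-linear and is a derivation of the algebra $\mathbb{K}[x_1,\ldots,x_n]$, satisfying $X_B(\phi\psi)=X_B(\phi)\,\psi+\phi\,X_B(\psi)$. (Note also that $X_B$ maps polynomials to polynomials, so products stay in the polynomial algebra; this is implicit in Lemma \ref{operationsb}(a).)

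First I would take $\phi\in I_\chi(B)$ and $\psi\in I_\eta(B)$, which by definition means $X_B(\phi)=\chi\phi$ and $X_B(\psi)=\eta\psi$. Applying the product rule and substituting these eigenrelations gives
\[
X_B(\phi\psi)=(\chi\phi)\,\psi+\phi\,(\eta\psi)=(\chi+\eta)\,\phi\psi,
\]
which is exactly the assertion $\phi\psi\in I_{\chi+\eta}(B)$. This single computation is the entire content of the inclusion $I_\chi(B)\cdot I_\eta(B)\subseteq I_{\chi+\eta}(B)$.

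The two ``in particular'' claims then follow by specialization. Setting $\chi=\eta=0$ shows that $I(B)=I_0(B)$ is closed under multiplication; since it is also a $\mathbb{K}$-linear subspace (by linearity of $X_B$) and contains the constants (as $X_B(c)=0$), it is a commutative, associative subalgebra of $\mathbb{K}[x_1,\ldots,x_n]$, inheriting commutativity and associativity from the ambient ring. Setting $\eta=0$ yields $I(B)\cdot I_\chi(B)\subseteq I_\chi(B)$, so each $I_\chi(B)$ is stable under multiplication by elements of $I(B)$; combined with its vector-space structure this is precisely the statement that $I_\chi(B)$ is a module over $I(B)$.

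Since every step is an immediate consequence of the derivation property already established in Proposition \ref{liederprop}, there is no genuine obstacle here; the only point worth confirming is that $X_B$ restricted to polynomials really is a derivation preserving the polynomial algebra, and this is covered by Proposition \ref{liederprop} together with Lemma \ref{operationsb}(a).
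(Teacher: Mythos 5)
Your proof is correct and follows exactly the route the paper intends: the paper dispenses with the lemma by the single remark ``with the product rule one easily verifies,'' and your computation $X_B(\phi\psi)=X_B(\phi)\,\psi+\phi\,X_B(\psi)=(\chi+\eta)\,\phi\psi$ together with the specializations $\chi=\eta=0$ and $\eta=0$ is precisely that verification, spelled out. Nothing is missing.
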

We now define the central object of the present section, in a somewhat unusual manner.
\begin{definition}\label{deftoral}
We call a connected linear algebraic group $G\subseteq GL(n,\mathbb K)$ {\em toral} if there exists a semisimple $B\in\mathbb K^{(n,n)}$ such that $\left\{\exp(tB);\,t\in\mathbb R\right\}$ is Zariski-dense in $G$.
\end{definition}
Up to conjugacy, toral groups have a rather simple structure.
\begin{proposition}\label{extoral}
 Let $1\leq s<n$ and $M=\left(m_{ij}\right)\in\mathbb Z^{(s,n)}$ with ${\rm rank}\,M=s$. Then
\[
G:=\left\{{\rm diag}\left(a_1^{m_{11}},\ldots,a_1^{m_{1n}}\right)\cdots {\rm diag}\left(a_s^{m_{s1}},\ldots,a_s^{m_{sn}}\right); \, a_i\in\mathbb C^*\right\}
\]
is a toral subgroup of $GL(n,\mathbb C)$. Defining relations for this group are given by $t_1^{d_1}\cdots t_n^{d_n}=1$ for all integer vectors $(d_1,\ldots,d_n)^{\rm tr}\in \ker M$,
 where the $t_i$ denote the matrix diagonal entries, and all off-diagonal entries zero.
\end{proposition}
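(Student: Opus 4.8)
The plan is to realize $G$ as the image of the algebraic torus homomorphism $\phi\colon (\mathbb{C}^*)^s \to GL(n,\mathbb{C})$ defined by $\phi(a_1,\ldots,a_s) = \mathrm{diag}(t_1,\ldots,t_n)$ with $t_j = \prod_{i=1}^s a_i^{m_{ij}}$, and then to exhibit a single semisimple $B$ whose real one-parameter group is Zariski-dense in $G$. First I would note that $\phi$ is a morphism of linear algebraic groups, so $G$ is a connected algebraic subgroup of the diagonal torus $T \cong (\mathbb{C}^*)^n \subseteq GL(n,\mathbb{C})$; connectedness is inherited from $(\mathbb{C}^*)^s$, which matters because Definition \ref{deftoral} requires it.

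For the semisimple generator I would choose $c_1,\ldots,c_s \in \mathbb{R}$ that are linearly independent over $\mathbb{Q}$ and set $B = \mathrm{diag}(\beta_1,\ldots,\beta_n)$ with $\beta_j = \sum_i c_i m_{ij}$. Then $B$ is semisimple (it is diagonal) and $\exp(tB) = \phi(e^{tc_1},\ldots,e^{tc_s}) \in G$, so the one-parameter group $\Gamma := \{\exp(tB): t \in \mathbb{R}\}$ lies in $G$. The point of the $\mathbb{Q}$-independence will be to control which monomial relations the $\beta_j$ satisfy.

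The crux is a lemma on the Zariski closure of a one-parameter subgroup of a torus: for $\beta \in \mathbb{C}^n$ and the sublattice $L = \{d \in \mathbb{Z}^n: \sum_j d_j\beta_j = 0\}$, the closure of $\Gamma$ equals $H := \{t \in T: \prod_j t_j^{d_j} = 1 \text{ for all } d \in L\}$. The inclusion $\Gamma \subseteq H$ is immediate and $H$ is closed, so the content is the reverse inclusion: I would show that the ideal of Laurent polynomials vanishing on $\Gamma$ inside $\mathbb{C}[t_1^{\pm 1},\ldots,t_n^{\pm 1}]$ is generated by $\{t^d - 1: d \in L\}$. This uses that a Laurent polynomial $\sum_d c_d\, t^d$ restricts on $\Gamma$ to $t \mapsto \sum_d c_d\, e^{t\langle d,\beta\rangle}$, that distinct cosets $d + L$ give distinct exponents $\langle d,\beta\rangle$ by definition of $L$, and that the functions $e^{\mu t}$ for distinct $\mu \in \mathbb{C}$ are linearly independent on $\mathbb{R}$. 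I then identify $L$ explicitly: since $\langle d,\beta\rangle = \sum_i c_i (Md)_i$ with $Md \in \mathbb{Z}^s$ and the $c_i$ $\mathbb{Q}$-independent, $\langle d,\beta\rangle = 0 \Leftrightarrow Md = 0$, so $L = \ker M \cap \mathbb{Z}^n$. This produces exactly the asserted monomial relations (the vanishing of off-diagonal entries being built into $T$) and shows $\overline{\Gamma} = H$.

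Finally I would close the loop. A direct computation gives $G \subseteq H$, since each $t_j = \prod_i a_i^{m_{ij}}$ yields $\prod_j t_j^{d_j} = \prod_i a_i^{(Md)_i} = 1$ for $d \in \ker M$. Thus $\Gamma \subseteq G \subseteq H$ with $\overline{\Gamma} = H$ and $H$ closed, forcing $\overline{G} = H$; and since the image of a torus under an algebraic homomorphism is Zariski closed (cf. \cite{Sp}), $G = H$. This simultaneously shows that $G$ is toral, witnessed by $B$, and that the monomial equations of $H$ are its defining relations. The step I expect to require the most care is the \emph{closure lemma} — pinning down the vanishing ideal of $\Gamma$ via linear independence of exponentials, together with the choice of $\mathbb{Q}$-independent $c_i$ that collapses $L$ onto $\ker M$. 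If one prefers to avoid invoking the closed-image theorem, the surjectivity $G = H$ can be made self-contained by reducing $M$ to Smith normal form $UMV = (D \mid 0)$ with $D = \mathrm{diag}(e_1,\ldots,e_s)$ and using divisibility of $\mathbb{C}^*$ (every element has $e_i$-th roots) in the resulting monomial coordinates.
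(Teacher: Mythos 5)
Your proof is correct, and it takes a route that overlaps with the paper's at the core but goes further at the end. The torality witness is identical: the paper also takes $B$ diagonal with entries $\lambda_k=\beta_1m_{1k}+\cdots+\beta_sm_{sk}$ for reals $\beta_i$ linearly independent over $\mathbb Q$, exactly your $\beta_j=\sum_i c_im_{ij}$. The density arguments are also essentially the same fact in different clothing: the paper decomposes a polynomial $\psi$ vanishing on $\{\exp(tB)\}$ into $X_B$-eigencomponents $\psi_\chi$ and kills each component by a Vandermonde argument, which is precisely the linear independence of the exponentials $e^{\chi t}$ that you invoke, and its observation that two monomials occurring in one $\psi_\chi$ differ by an element of $\ker M$ is your coset decomposition of a Laurent polynomial modulo $L$ (both hinging on the same lattice identification $L=\ker M\cap\mathbb Z^n$ via $\mathbb Q$-independence). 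Where you genuinely diverge is in what you prove about the relations: the paper shows only that the binomial relations hold on $G$ and that every polynomial vanishing on the curve vanishes on $G$; it does not address whether $G$ itself is Zariski closed and equal to the variety $H$ cut out by the relations, which is what is needed for $G$ to be a linear algebraic group in the paper's own sense and for the relations to be literally ``defining''. Your final step $\overline{\Gamma}=H$ together with $\Gamma\subseteq G\subseteq H$ and either the closed-image theorem for morphisms of algebraic groups or the self-contained Smith normal form argument (using divisibility of $\mathbb C^*$) settles exactly this point, so your proof is in that respect more complete than the paper's. Two minor remarks: the closed-image theorem is better cited from Borel \cite{Bo} than from \cite{Sp} in this paper's bibliography; and since your closure computation lives in the torus $T$ via the Laurent ring, you should note (one line) that density of $\Gamma$ in $G$ relative to $T$ gives density in the ambient Zariski topology, which is immediate because $T$ is locally closed in $\mathbb C^{(n,n)}$.
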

\begin{proof}{
\begin{enumerate}
\item To verify that $G$ is a toral group in the sense of Definition \ref{deftoral}, let $\beta_1,\ldots, \beta_s\in \mathbb R$ be linearly independent over the rationals $\mathbb Q$, and define
\[
B:=\beta_1{\rm diag}\left(m_{11},\ldots,m_{1n}\right)+\cdots+\beta_s{\rm diag}\left(m_{s1},\ldots,m_{sn}\right).
\]
Then with $\lambda_k:=\beta_1m_{1k}+\cdots +\beta_sm_{sk}$, the diagonal entries of $\exp(tB)$ are
\[
\exp\left(t\lambda_k\right) =\exp(t\beta_1)^{m_{1k}}\cdots\exp(t\beta_s)^{m_{sk}},
\]
which shows directly that $\exp(tB)\in G$.
\item We show next that the set of all $\exp(tB)$ is Zariski dense in $G$, applying Lemma \ref{operationsb} to the diagonal entries of the matrices. Thus let $\psi\in\mathbb C[x_1,\ldots,x_n]$ such that $\psi(\exp(tB))=0$ for all $t$. We will show that $\psi$ vanishes on all  ${\rm diag}\,(t_1, \ldots, t_n)\in G$. By Lemma \ref{operationsb} one may write $\psi=\sum_\chi\psi_\chi$, with $X_B(\psi_\chi)=\chi\cdot\psi_\chi$, and $\psi$ is a linear combination of monomials $t_1^{m_1}\cdots t_n^{m_n}$ with $\chi=\sum m_i\lambda_i$. Differentiating the relation $\psi(\exp(tB))=0$, one finds
\[
0=X_B^k(\psi)(\exp(tB))=\sum \chi^k\psi_\chi(\exp(tB)),\quad\text{all  } k\geq 0,
\]
and by a Vandermonde argument this implies 
\[
\psi_\chi(\exp(tB))=0 \text{  for all  }t.
\]
Now, if two monomials $t_1^{m_1}\cdots t_n^{m_n}$ and $t_1^{\ell_1}\cdots t_n^{\ell_n}$ appear in some $\psi_\chi$ with nonzero coefficients, then 
\[
(m_1-\ell_1)\lambda_1+\cdots+(m_n-\ell_n)\lambda_n=0
\]
from above, and thus $(m_1-\ell_1,\ldots,m_n-\ell_n)\in \ker M$. Therefore
\[
t_1^{m_1-\ell_1}\cdots t_n^{m_n-\ell_n}=1\text{  for all elements of }G.
\]
 This shows the assertion.
\end{enumerate}
}
\end{proof}
\begin{remark}{\em In the course of the proof we also have seen that $\phi\in I(G)$ if and only if $X_B(\phi)=0$.}
\end{remark}
By the first characterization in Proposition \ref{extoral}, $G$ contains compact subgroups $\{{\rm diag}\left(b_k^{m_{k1}},\ldots,b_k^{m_{kn}});\, |b_k|=1 \right)\}$, and we obtain:
\begin{lemma}\label{torredlem}
Every toral group is reductive.
\end{lemma}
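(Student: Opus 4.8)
The plan is to produce, for a given toral group, an explicit compact real form whose complexification recovers the group, thereby matching the notion of reductivity recorded in Remark \ref{fingenrem}. The whole argument is essentially a concrete instance of the fact that algebraic tori are reductive, with the density of the compact form being the only substantive point.

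First I would reduce to the complex case. If $H\subseteq GL(n,\mathbb R)$ is toral, then by Definition \ref{deftoral} there is a real semisimple $B$ with $\{\exp(tB);\,t\in\mathbb R\}$ Zariski-dense in $H$; since $H$ is Zariski-dense in $H_{\mathbb C}$ (Remark \ref{fingenrem}) and $B$ is diagonalizable over $\mathbb C$, transitivity of Zariski density shows that the same one-parameter group is Zariski-dense in $H_{\mathbb C}$, so $H_{\mathbb C}$ is again a (complex) toral group. As reductivity of $H$ means by definition reductivity of $H_{\mathbb C}$, it suffices to treat $G\subseteq GL(n,\mathbb C)$. Reductivity is invariant under algebraic-group isomorphism, and conjugation by a matrix diagonalizing the semisimple generator is such an isomorphism; hence I may assume $B$ is diagonal, so that every $\exp(tB)$ is diagonal and $G$ is a diagonal algebraic subtorus of $(\mathbb C^*)^n$, cut out by the monomial relations described in Proposition \ref{extoral}.

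Next I would single out the candidate compact group. Following the remark preceding the statement, set $K:=K_1\cdots K_s$, where $K_k=\{{\rm diag}(b_k^{m_{k1}},\ldots,b_k^{m_{kn}}):|b_k|=1\}$. Each $K_k$ is the continuous image of the circle $|b_k|=1$, so $K$ is a compact torus contained in $G$. Realizing each circle factor as $SO(2,\mathbb R)\subseteq GL(2,\mathbb R)$ — the realization underlying the observation in Remark \ref{fingenrem} that $(\mathbb C^*,\cdot)$ is reductive via $S^1$ — exhibits $K$ as a genuine compact subgroup of some $GL(N,\mathbb R)$, whose complexification is the algebraic torus $(\mathbb C^*)^s$. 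The crux is then to show that $K$ is Zariski-dense in $G$, so that $G=K_{\mathbb C}$. Here I would reuse the argument from the proof of Proposition \ref{extoral}: a polynomial $\psi$ on $GL(n,\mathbb C)$ restricts on $G$ to a Laurent polynomial in the parameters $a_1,\ldots,a_s$ via $t_j=\prod_k a_k^{m_{kj}}$, and vanishing of $\psi$ on $K$ forces this Laurent polynomial to vanish on the compact torus $|a_1|=\cdots=|a_s|=1$. Since distinct Laurent monomials are orthonormal for Haar measure on $(S^1)^s$ (equivalently, by induction on $s$, a nonzero one-variable Laurent polynomial has only finitely many zeros while $S^1$ is infinite), the Laurent polynomial is identically zero, so $\psi$ vanishes on all of $G$. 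Hence $\overline{K}^{\rm Zar}=G$, and since $G\cong(\mathbb C^*)^s\cong K_{\mathbb C}$, the group $G$ is reductive.

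I expect the main obstacle to be bookkeeping rather than depth. The analytic heart — Zariski-density of the compact form — is immediate from the orthogonality of characters, exactly as in Proposition \ref{extoral}. What needs care is matching Schwarz's literal requirement that the compact form lie inside some $GL(N,\mathbb R)$, rather than appearing as complex diagonal matrices of modulus one: this forces the circle-to-rotation realization and the identification of its complexification with the diagonal torus $G$. In particular one must check that the finite kernel of the surjection $(\mathbb C^*)^s\twoheadrightarrow G$ (present because ${\rm rank}\,M=s$) does not affect the isomorphism type, a torus modulo a finite subgroup again being a torus of the same dimension.
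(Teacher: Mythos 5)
Your proof is correct and follows essentially the same route the paper takes: the paper deduces the lemma in one line from the observation that, by Proposition \ref{extoral}, $G$ contains the compact subgroups $\left\{{\rm diag}\left(b_k^{m_{k1}},\ldots,b_k^{m_{kn}}\right):\ |b_k|=1\right\}$, so that $G$ is the complexification of a compact torus in the sense of the reductivity definition in Remark \ref{fingenrem}(c). Your write-up simply makes explicit the details the paper leaves implicit — Zariski density of the compact form via the character argument already used in the proof of Proposition \ref{extoral}, the $SO(2,\mathbb R)$ realization needed to place the compact group inside some $GL(N,\mathbb R)$, and the harmless finite kernel of the parameterizing map $(\mathbb C^*)^s\to G$.
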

To show finite generation properties for toral groups, one could appeal to this lemma and to Remark \ref{fingenrem}, but we will give direct (partly constructive) proofs. These proofs imitate arguments used e.g. for compact groups, but they are technically less involved.
\begin{proposition}\label{torfinprop} Let $G$ be toral and $B\in \mathbb K^{(n,n)}$ as in Definition \ref{deftoral}, in particular $B$ is semisimple. Then the following hold.
\begin{enumerate}[(a)]
\item $I(B)$ is a finitely generated algebra, and $I(B)=I(G)$.
\item For every $\chi\in\mathbb K$ the $I(B)$-module $I_\chi(B)$ is finitely generated.
\item The $I(B)$-module $\mathcal{C}(B)$ is finitely generated.
\end{enumerate}

\end{proposition}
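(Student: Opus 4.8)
The plan is to reduce every statement to the combinatorics of monomials, exploiting that $B$ is semisimple. After passing to an eigenbasis $e_1,\dots,e_n$ of $B$ with $Be_i=\lambda_i e_i$ (complexifying if $\mathbb K=\mathbb R$; finite generation then descends to the real case by taking real and imaginary parts of generators), Lemma~\ref{operationsb}(b) tells us that each monomial $x^m=x_1^{m_1}\cdots x_n^{m_n}$ is an eigenvector of $X_B$ with eigenvalue $\langle\lambda,m\rangle:=\sum_i m_i\lambda_i$, and each monomial vector field $x^d e_j$ is an eigenvector of $\operatorname{ad}B$ with eigenvalue $\langle\lambda,d\rangle-\lambda_j$. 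Since $X_B$ is semisimple on each $S_m$ (Lemma~\ref{operationsb}(c)), I would introduce the \emph{Reynolds projection} $R\colon \mathbb K[x]\to I(B)$ onto the zero-eigenspace of $X_B$ along the complementary $X_B$-invariant subspace. Because multiplication by an element of $I(B)$ commutes with the derivation $X_B$, this $R$ is degree-preserving, restricts to the identity on $I(B)$, and satisfies the Reynolds identity $R(\varphi\psi)=\varphi\,R(\psi)$ for $\varphi\in I(B)$ (equivalently, use Lemma~\ref{modmultlem}). This $R$ plays the role of averaging over a compact group.

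For part (a), I would first prove $I(B)=I(G)$: if $\varphi\in I(G)$ then $\varphi(\exp(tB)x)=\varphi(x)$, and differentiating at $t=0$ gives $X_B(\varphi)=0$; conversely if $X_B(\varphi)=0$ then $\tfrac{d}{dt}\varphi(\exp(tB)x)=X_B(\varphi)(\exp(tB)x)=0$, so $\varphi$ is constant along the flow, and for fixed $x$ the polynomial map $T\mapsto\varphi(Tx)-\varphi(x)$ vanishes on the Zariski-dense set $\{\exp(tB)\}$, hence on all of $G$ (Definition~\ref{deftoral}). Finite generation then follows by the classical Hilbert--Noether argument: $I(B)$ is spanned by invariant monomials, the ideal $\mathcal I\subseteq\mathbb K[x]$ they generate is a monomial ideal, hence finitely generated by invariant monomials $\varphi_1,\dots,\varphi_r$; for any invariant monomial $\varphi$ of positive degree one writes $\varphi=\sum_j g_j\varphi_j$ with homogeneous $g_j$ and applies $R$ to obtain $\varphi=\sum_j R(g_j)\varphi_j$ with each $R(g_j)\in I(B)$ of strictly smaller degree, so induction on degree gives $I(B)=\mathbb K[\varphi_1,\dots,\varphi_r]$.

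For part (b), the space $I_\chi(B)$ is spanned by the monomials $x^m$ whose exponents lie in $S_\chi=\{m\in\mathbb Z_{\ge 0}^n:\langle\lambda,m\rangle=\chi\}$. By Dickson's lemma, $S_\chi$ has only finitely many minimal elements $m^{(1)},\dots,m^{(p)}$ with respect to the componentwise partial order; any $m\in S_\chi$ dominates some $m^{(k)}$, and then $m-m^{(k)}\in\mathbb Z_{\ge 0}^n$ with $\langle\lambda,m-m^{(k)}\rangle=0$, so $x^{m-m^{(k)}}\in I(B)$ and $x^m=x^{m^{(k)}}x^{m-m^{(k)}}$. Hence the finitely many $x^{m^{(k)}}$ generate $I_\chi(B)$ as an $I(B)$-module. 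Part (c) is then immediate: writing $p=\sum_j q_j e_j$, the computation $[B,q_je_j]=(X_B(q_j)-\lambda_j q_j)e_j$ shows that $p\in\mathcal C(B)$ if and only if each $q_j\in I_{\lambda_j}(B)$, so $\mathcal C(B)=\bigoplus_{j=1}^n I_{\lambda_j}(B)\,e_j$ is a finite direct sum of finitely generated $I(B)$-modules by part (b), hence itself finitely generated.

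The main point to get right is the \emph{algebra} finiteness in part (a): Dickson's lemma alone yields finitely many minimal invariant exponents, but for a general submonoid these need not generate the exponent monoid, so one genuinely needs either Gordan's lemma (the monoid $S_0$ is saturated, since $\mathbb C$ is torsion-free, whence $S_0=C\cap\mathbb Z^n$ for a rational polyhedral cone $C$) or, as above, the Reynolds-plus-Hilbert argument. The only other bookkeeping is the real-versus-complex descent, which is routine.
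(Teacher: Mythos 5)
Your proof is correct, but it takes a genuinely different route in parts (b) and (c). The paper proves (b) first, without diagonalizing: it takes an arbitrary homogeneous generating set of $I_\chi(B)$, extracts a finite subset $\psi_1,\ldots,\psi_r$ by applying Hilbert's \emph{Basissatz} to the ideal these elements generate, writes a homogeneous $\rho\in I_\chi(B)$ as $\rho=\sum_i\mu_i\psi_i$, decomposes each coefficient $\mu_i=\sum_\eta\mu_{i,\eta}$ into $X_B$-eigencomponents, and uses the direct sum $S_d=\bigoplus_\beta I_\beta(B)$ together with Lemma \ref{modmultlem} to discard all terms with $\eta\neq 0$ --- that is, it applies your Reynolds projection by hand; part (c) is treated as ``a variant of this argument'', and part (a) continues the case $\chi=0$ by induction on degree, which is exactly your Reynolds-plus-Hilbert argument, so your (a) essentially coincides with the paper's up to your monomial choice of generators. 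The genuine differences are your replacement of the \emph{Basissatz} by Dickson's lemma on the exponent sets $S_\chi$, and your explicit splitting $\mathcal{C}(B)=\bigoplus_j I_{\lambda_j}(B)\,e_j$, which makes (c) an immediate corollary of (b) where the paper only gestures at a variant. What your route buys is constructiveness --- explicit monomial generators (compare Example \ref{mpresex}, where exactly such generators are listed) and a transparent module structure; what the paper's route buys is independence from a choice of eigenbasis, so no complexification/descent step is visible for $\mathbb K=\mathbb R$ (you rightly call that step routine; note only that the eigenspace decomposition of $S_d$ underlying your operator $R$, like the paper's coefficient decomposition, itself implicitly rests on complexification in the real case). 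Two further remarks: the identity $I(B)=I(G)$, which you prove by the flow-differentiation and Zariski-density argument, is not part of the paper's proof of this proposition but was established earlier, in the remark following Proposition \ref{extoral}, by the same argument, so your inclusion is consistent; and your closing caution is slightly overcautious --- since $S_0$ is full in $\mathbb Z_{\geq 0}^n$ (if $m,m'\in S_0$ and $m'\leq m$ componentwise, then $m-m'\in S_0$), the Dickson-minimal elements do generate $S_0$ as a monoid, so the direct combinatorial argument would also give the algebra finiteness in (a), though your Reynolds argument is perfectly sound and is in fact the closer match to the paper's own proof.
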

\begin{proof}
We first prove part (b). Let $\{\psi_i;\,i\in I\}$ be an arbitrary generator set of $I_\chi(B)$, and every $\psi_i$ homogeneous without loss of generality. By Hilbert's {\em Basissatz} the ideal generated by these $\psi_i$  admits a finite set of generators which we rename $\{\psi_1,\ldots,\psi_r\}$. Now let $\rho\in I_\chi(B)$; we may assume that $\rho$ is homogeneous of degree $d$ with no loss of generality. Then there exist $\mu_i=\sum_\eta\mu_{i,\eta}\in\mathbb K[x_1,\ldots,x_n]$ (with $\mu_{i.\eta}\in I_\eta(B)$  homogeneous with no loss of generality) such that 
\[
\rho=\sum_i\mu_i\psi_i=\sum_i\mu_{i,0}\psi_i +\sum_{\eta\not=0}\sum_i\mu_{i,\eta}\psi_i.
\]
Now $\rho$ and the first term on the right hand side are elements of $I_\chi(B)$, while the remaining terms lie in the sum of subspaces $I_{\chi+\eta}(B)$ by Lemma \ref{modmultlem}. Since $S_d$ is the direct sum of subspaces $I_\beta(B)$, one has necessarily that
\[
\rho=\sum_i\mu_{i,0}\psi_i,
\]
thus the $\psi_i$ generate $I_\chi(B)$ as an $I(B)$-module. Part (c) is proven by a variant of this argument, using a variant of Lemma \ref{modmultlem}.\\ To prove part (a), continue the proof of (b) with $\chi=0$, using induction on the degree $m$ of $\rho$ to show that each $\mu_{i,0}$ lies in $\mathbb K[\psi_1,\ldots,\psi_r]$.
\end{proof}
As a consequence, Proposition \ref{polyredbyinv} and Theorem \ref{anaredbyinv} are applicable to toral groups.
 We also note:
\begin{corollary}\label{tortrivinv}
Whenever $I(B)=\mathbb K$ then $\mathcal{C}(B)$ is a finite dimensional vector space over $\mathbb K$.
\end{corollary}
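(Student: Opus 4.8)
The plan is to deduce the statement directly from the finite generation established in Proposition \ref{torfinprop}(c). That proposition asserts that the centralizer $\mathcal{C}(B)$ is a finitely generated module over the invariant algebra $I(B)$. The corollary will then follow simply by specializing the coefficient algebra, using the hypothesis $I(B)=\mathbb{K}$.

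Concretely, I would first invoke Proposition \ref{torfinprop}(c) to obtain finitely many module generators $p_1,\ldots,p_s\in\mathcal{C}(B)$ with
\[
\mathcal{C}(B)=I(B)\,p_1+\cdots+I(B)\,p_s.
\]
Next I would feed in the assumption $I(B)=\mathbb{K}$: since the coefficient ring is just the ground field, the module expression collapses to
\[
\mathcal{C}(B)=\mathbb{K}\,p_1+\cdots+\mathbb{K}\,p_s,
\]
which exhibits $\mathcal{C}(B)$ as the $\mathbb{K}$-linear span of the finitely many elements $p_1,\ldots,p_s$. Hence $\mathcal{C}(B)$ is a finite-dimensional vector space over $\mathbb{K}$, of dimension at most $s$.

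There is no genuine obstacle here; the substantive content of the corollary resides entirely in Proposition \ref{torfinprop}(c), and the remaining step is the elementary observation that a finitely generated module over a field is a finite-dimensional vector space. The only point meriting a moment's care is that the $p_i$ are taken as module generators lying \emph{inside} $\mathcal{C}(B)$, so that their $\mathbb{K}$-span genuinely equals $\mathcal{C}(B)$ and not some larger ambient space.
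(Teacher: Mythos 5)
Your proposal is correct and is precisely the argument the paper intends: the corollary is stated without proof as an immediate consequence of Proposition \ref{torfinprop}(c), and specializing the coefficient algebra to $I(B)=\mathbb K$ turns the finitely generated $I(B)$-module $\mathcal{C}(B)$ into a finite-dimensional $\mathbb K$-vector space, exactly as you wrote. Your closing remark that the generators $p_1,\ldots,p_s$ lie inside $\mathcal{C}(B)$, so their $\mathbb K$-span equals $\mathcal{C}(B)$, is the right point of care and completes the argument.
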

There exist toral groups with trivial invariant algebra, such as the following:
\begin{example}{\em 
Let $d_1,\ldots,d_n$ be positive integers and 
\[
G:=\left\{{\rm diag}\,(a^{d_1},\ldots,a^{d_n});\,a\in \mathbb C^*\right\}, \quad H:=\left\{{\rm diag}\,(a^{d_1},\ldots,a^{d_n});\,a\in \mathbb R_{>0}^*\right\}.
\]
Then $G$ is the complexification of $H$ and $I(G)$ as well as $I(H)$ is trivial. To verify this, consider $B={\rm diag}\,(d_1,\ldots,d_n)$, and note $\sum m_id_i>0$ for all tuples of nonnegative integers $m_1,\ldots, m_n$ with positive sum.}
\end{example}
For toral groups with trivial invariant algebra, reduction by invariants is not applicable. But one may refine Corollary \ref{tortrivinv} to see that differential equations admitting symmetry groups of this type admit elementary solutions.
\begin{proposition}\label{torelemprop}
Let $G$ be a toral group with trivial invariant algebra, and let $f\in\mathbb{C}(G)$. Then there exist $r\geq 1$ and subspaces $W_1,\ldots,W_r$ with $\mathbb K^n=W_1\oplus\cdots\oplus W_r$, such that for $x=\begin{pmatrix}x^{(1)}\\ \vdots\\x^{(r)}\end{pmatrix}$ corresponding to this decomposition, $\dot x=f(x)$ takes the form
\[
\begin{array}{rcccl}
\dot x^{(1)}&=& A_1x^{(1)}& & \\
\dot x^{(2)}&=&A_2x^{(2)}&+& q_2(x^{(1)})\\
             &\vdots& & & \\
\dot x^{(r)}&=&A_rx^{(r)}&+&q_r(x^{(1)},\ldots,x^{(r-1)})
\end{array}
\]
with matrices $A_i$ (of appropriate size) and polynomial maps $q_i$. In particular, every solution of the differential equation is elementary.
\end{proposition}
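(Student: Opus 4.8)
The plan is to pass to an eigenbasis of the semisimple matrix $B$, translate both the hypothesis $I(B)=\mathbb K$ and the symmetry condition into arithmetic statements about the eigenvalues, and then extract from those a grading that forces the asserted block-triangular shape. First I would complexify if $\mathbb K=\mathbb R$ and pick an eigenbasis $e_1,\dots,e_n$ with $Be_i=\lambda_i e_i$. By Lemma \ref{operationsb}, in the associated coordinates $X_B(x^d)=(\textstyle\sum_i d_i\lambda_i)\,x^d$ and $[B,\,x^d e_j]=(\textstyle\sum_i d_i\lambda_i-\lambda_j)\,x^d e_j$. Since $B$ is semisimple, $X_B$ acts semisimply on each $S_m$ with the monomials as eigenvectors, so $I(B)=\mathbb K$ is exactly the condition that there is no nonzero tuple of nonnegative integers $(d_1,\dots,d_n)$ with $\sum_i d_i\lambda_i=0$; regarding the $\lambda_i$ as points of $\mathbb C\cong\mathbb R^2$, this says precisely that $0\notin\operatorname{conv}\{\lambda_1,\dots,\lambda_n\}$. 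Likewise $f\in\mathcal C(B)$ means $f$ is a finite $\mathbb K$-linear combination of \emph{resonant} monomial vector fields $x^d e_j$ with $\sum_i d_i\lambda_i=\lambda_j$.

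Next I would invoke the separating-hyperplane theorem: since $0$ lies outside the compact convex set $\operatorname{conv}\{\lambda_i\}$, there is a real-linear functional $\ell\colon\mathbb C\to\mathbb R$ with $\ell(\lambda_i)>0$ for all $i$. In the real case I would symmetrize under conjugation, replacing $\ell$ by $z\mapsto\ell(z)+\ell(\bar z)$, which remains strictly positive on the conjugation-symmetric multiset $\{\lambda_i\}$; this ensures conjugate eigenvalues receive equal weight and is what lets the resulting blocks descend to $\mathbb K$. Set $w_i:=\ell(\lambda_i)>0$, let $v_1<\dots<v_r$ be the distinct values among the $w_i$, and let $W_k$ be the span of those $e_i$ with $w_i=v_k$ (a genuine $\mathbb K$-subspace after descending from the complexification, by conjugation-invariance of the weights).

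The key computation is to apply $\ell$ to the resonance relation. For a resonant monomial $x^d e_j$ occurring in $f$ with $e_j\in W_k$ one obtains $\sum_i d_i w_i=w_j=v_k$ with all $w_i>0$ and $d_i\ge0$. If $\lvert d\rvert=1$, the unique variable present must have weight $v_k$, so the linear part of $f$ preserves each $W_k$ and supplies the diagonal blocks $A_k$. If $\lvert d\rvert\ge2$, then no variable of weight $\ge v_k$ can occur, since a single such variable already consumes the budget $v_k$ (forcing $\lvert d\rvert=1$) while a strictly larger weight overshoots; hence every nonlinear monomial in the $W_k$-components involves only coordinates of strictly smaller weight, i.e.\ from $W_1,\dots,W_{k-1}$. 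This is exactly the form $\dot x^{(k)}=A_kx^{(k)}+q_k(x^{(1)},\dots,x^{(k-1)})$.

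Elementarity then follows by solving the blocks in order: $\dot x^{(1)}=A_1x^{(1)}$ is solved by a matrix exponential, and each subsequent block is a linear inhomogeneous system whose inhomogeneity $q_k$, being a polynomial in functions already known to be elementary, is itself elementary; variation of constants expresses $x^{(k)}$ through integrals of products of exponentials and polynomials, which are again elementary. I expect the main obstacle to be the bookkeeping of the real case — namely arranging the weight functional to be conjugation-invariant so that the $W_k$ are defined over $\mathbb K$ — together with stating the separation step cleanly; once a strictly positive weight functional is in hand, the triangular structure and the elementarity of solutions are essentially formal.
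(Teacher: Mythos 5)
Your grading idea is sound in outline, but the step on which everything rests is wrong: the passage from ``there is no nonzero tuple of nonnegative integers with $\sum_i d_i\lambda_i=0$'' to ``$0\notin\operatorname{conv}\{\lambda_1,\dots,\lambda_n\}$'', and with it the appeal to the real separating-hyperplane theorem. Triviality of $I(B)$ is an \emph{arithmetic} condition (no nonnegative \emph{integer} relation), and it does not exclude a nonnegative \emph{real} relation. Concretely, take $B=\operatorname{diag}(1,-\sqrt2)$, which generates a Zariski-dense one-parameter subgroup of the full diagonal torus in $GL(2,\mathbb C)$ (choose $\beta_1=1$, $\beta_2=\sqrt2$ in Proposition \ref{extoral}); here $I(B)=\mathbb C$, since $d_1-d_2\sqrt2=0$ has no nonzero solution in nonnegative integers, yet $0=\tfrac{\sqrt2}{1+\sqrt2}\cdot 1+\tfrac{1}{1+\sqrt2}\cdot(-\sqrt2)$ lies in the convex hull of the eigenvalues, and no real-linear functional $\ell$ satisfies $\ell(1)>0$ and $\ell(-\sqrt2)>0$. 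So in perfectly legitimate instances of the hypothesis your positive weight functional does not exist, and the construction of the subspaces $W_k$ collapses. (The proposition is of course still true in that example -- the centralizer consists only of diagonal linear vector fields -- which shows the gap is in your argument, not in the statement.)

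The gap is repairable, but only by leaving the real-convexity framework: positivity is needed only on the finitely many $\mathbb Q$-linear relations among the $\lambda_i$. Expand $\lambda_1,\dots,\lambda_n$ in a $\mathbb Q$-basis of their $\mathbb Q$-linear span, so that $\sum_i d_i\lambda_i=0$ with $d\in\mathbb Q^n$ becomes a linear system with \emph{rational} coefficients, and apply Gordan's theorem (equivalently Stiemke's lemma) over $\mathbb Q$: either that system has a nonzero nonnegative integer solution (excluded by $I(B)=\mathbb K$), or there is a $\mathbb Q$-linear functional $\ell$ on the span with all $w_i:=\ell(\lambda_i)>0$; in the example above, $\ell(a+b\sqrt2):=a-b$ does the job. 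Since every resonance $\sum_i d_i\lambda_i=\lambda_j$ is a $\mathbb Q$-linear relation, $\ell$ transports it to $\sum_i d_iw_i=w_j$, and from there your budget argument, the conjugation-symmetrization in the real case, and the variation-of-constants elementarity argument all go through. Note that the resulting (repaired) proof is genuinely different from the paper's: the paper uses the finite-dimensionality of $\mathcal{C}(B)$ (Corollary \ref{tortrivinv}) to select the resonance relations of \emph{maximal length}, shows by a substitution argument that the corresponding variables occur at most linearly and only in their own block, splits that block off, and proceeds by induction; your weight functional, once it exists, produces the entire flag in a single step, at the price of a rational duality theorem the paper never needs.
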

\begin{proof}Let $B$ be as in Definition \ref{deftoral}, with eigenvalues $\lambda_1,\ldots,\lambda_n$. We may assume that $B$ is diagonal. Then $\mathcal{C}(B)$ is spanned by monomial vector fields
\[
x_1^{m_1}\cdots x_n^{m_n}e_j\quad\text{  with  } \sum_im_i\lambda_i=\lambda_j.
\]
For a given monomial, call $\sum m_i$ the {\em length} of the corresponding relation. Let $W$ be the subspace spanned by all $e_j$ admitting a relation of maximal length, say $W=\left<e_{s+1},\ldots,e_n\right>$ with no loss of generality. Then $x_{s+1},\ldots, x_n$ do not appear in any vector monomial $x_1^{m_1}\cdots x_n^{m_n}e_j\in \mathcal{C}(B)$ with $j\leq s$, and they appear only in relations of degree one (thus, linearly) when $j>s$. This holds true because in a relation
\[
\sum \ell_k\lambda_k=\lambda_p
\]
with $\ell_j\not=0$ one may substitute $ \sum_im_i\lambda_i=\lambda_j$ to obtain a relation of greater length than either when both have length $>1$.\\
Thus one has, with $y=\begin{pmatrix}x_1\\ \vdots\\x_s\end{pmatrix}$ and  $z=\begin{pmatrix}x_{s+1}\\ \vdots\\x_n\end{pmatrix}$, a decomposition
\[
\begin{array}{rcccl}
\dot y&=& h_1(y) & & \\
\dot z&=& Az&+& h_2(y)
\end{array}
\]
with a matrix $A$ and polynomials $h_i$. Proceed by induction. To verify the statement about elementary solutions, solve the system from the top down.
\end{proof}
\begin{remark}\label{splitrem}{\em 
For any toral $G$ there exists a maximal $B$-invariant subspace $V$ such that every element of $I(B)$ is constant on $V$, and with its $B$-invariant complementary subspace $U$ one obtains a decomposition $\mathbb K^n=U\oplus W$. If  both subspaces are nontrivial, there is a corresponding decomposition of any $G$-symmetric differential equation in the form
\[
\begin{array}{rcl}
\dot u&=& g(u)\\
\dot v&=&h(u,v).
\end{array}
\]
Here one has a system (of smaller dimension) for $u$ alone, and the equation for $v$ may be further split up in a manner similar to  Proposition \ref{torelemprop}, but with the coefficients of the $A_i$ and $q_i$ now depending (polynomially or analytically) on $u$. See \cite{Wa 3} for more details.}
\end{remark}
We look more closely at the case $V=\{0\}$.
\begin{proposition}\label{reducdimprop}
Let $M$, $G$ and $B$ be as in Proposition \ref{extoral}, and assume that $V=\{0\}$ is the only $B$-invariant subspace on which every element of $I(B)$ is constant. Then:
\begin{enumerate}[(a)]
\item There exists a monomial $x_1^{d_1}\cdots x_n^{d_n}\in I(B)$ with all $d_i>0$.
\item Given a set of generators $\{\phi_1,\ldots,\phi_r\}$ for $I(B)$,  the Jacobian of
\[
\Phi=\begin{pmatrix}\phi_1\\ \vdots\\ \phi_r\end{pmatrix}:\,\mathbb C^n\to\mathbb C^r
\]
has generically rank $n-s$ (which is equal to the rank of the matrix $M$), and the image of $\Phi$ contains a Zariski-open and dense subset of an $n-s$-dimensional algebraic variety $Y\subseteq \mathbb C^r$.
\end{enumerate}
\end{proposition}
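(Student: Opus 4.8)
The plan is to prove the two assertions separately, using throughout that (as observed after Proposition \ref{extoral}) $I(B)=I(G)$ is spanned by the monomials $x^d=x_1^{d_1}\cdots x_n^{d_n}$ whose exponent vector $d$ lies in the submonoid $\Lambda:=\ker M\cap\mathbb Z_{\ge 0}^{\,n}$, since $X_B(x^d)=\big(\sum_k\lambda_kd_k\big)x^d$ vanishes exactly when $Md=0$ (the $\beta_j$ being $\mathbb Q$-independent). For part (a) I would note that $\Lambda$ is closed under addition and that supports add without cancellation, so the union $P:=\bigcup_{d\in\Lambda}\operatorname{supp}(d)$ equals $\operatorname{supp}(d^{\ast})$ for a single $d^{\ast}\in\Lambda$ (add up finitely many exponent vectors covering $P$). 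Then the coordinate subspace $W:=\langle e_k:\,k\notin P\rangle$ is $B$-invariant, and every nonconstant invariant monomial, having support inside $P$, carries a factor $x_k$ with $k\in P$ and hence vanishes identically on $W$; thus every element of $I(B)$ is constant on $W$. The hypothesis $V=\{0\}$ now forces $W=\{0\}$, i.e. $P=\{1,\dots,n\}$, so $d^{\ast}$ has all entries positive, which is the claim.

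For part (b) I would first bound the generic rank of $D\Phi$ from above. Each $\phi_i\in I(G)$ satisfies $X_{B_j}(\phi_i)=0$ for the matrices $B_j=\operatorname{diag}(m_{j1},\dots,m_{jn})\in\mathcal L(G)$, i.e. $D\phi_i(x)\,B_jx=0$, so every row of $D\Phi(x)$ is orthogonal to $B_1x,\dots,B_sx$. On the torus $(\mathbb C^{\ast})^n$ these vectors are linearly independent, because $\sum_jc_jB_jx=0$ with $x$ having no zero coordinate gives $c^{\mathrm{tr}}M=0$ and hence $c=0$ as $\operatorname{rank}M=s$. Therefore $\operatorname{rank}D\Phi(x)\le n-s$ on the torus, and since the maximal rank of a polynomial matrix is attained on a dense open set that meets the torus, the generic rank is $\le n-s$.

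The matching lower bound is the step I expect to be the main obstacle. I would work on the torus and choose a $\mathbb Z$-basis $d^{(1)},\dots,d^{(n-s)}$ of the rank-$(n-s)$ lattice $\ker M\cap\mathbb Z^n$. In logarithmic coordinates $u_k=\log x_k$ the Laurent map $L:\,x\mapsto(x^{d^{(1)}},\dots,x^{d^{(n-s)}})$ is linear with coefficient matrix $(d^{(k)}_i)$ of rank $n-s$, so $DL$ has rank $n-s$ everywhere on the torus. These Laurent monomials are not regular, but for $N$ large the monomials $x^{d^{(k)}+Nd^{\ast}}$ and $x^{d^{\ast}}$ are genuine elements of $I(B)$ (here part (a) is exactly what makes the exponents nonnegative), hence polynomial expressions in $\phi_1,\dots,\phi_r$; on the torus one recovers $x^{d^{(k)}}=x^{d^{(k)}+Nd^{\ast}}/(x^{d^{\ast}})^N$. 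Thus $L=R\circ P\circ\Phi$ for a polynomial map $P$ and a rational map $R$, and the chain rule gives $\operatorname{rank}D\Phi\ge\operatorname{rank}DL=n-s$ on a dense subset of the torus. Combining the two bounds yields generic rank exactly $n-s=n-\operatorname{rank}M$.

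Finally, since $\mathbb C^n$ is irreducible, $Y=\overline{\Phi(\mathbb C^n)}$ is irreducible of dimension equal to the generic rank of $D\Phi$, namely $n-s$, and by Proposition \ref{polyredbyinv} (via Chevalley's theorem) the image of $\Phi$ contains a Zariski-open dense subset of $Y$. The genuinely delicate point throughout is the denominator-clearing in the lower bound: without the strictly positive invariant from part (a) one could not realize all the relevant Laurent monomials simultaneously as regular functions of $\Phi$, and it is precisely this that links the two parts of the proposition.
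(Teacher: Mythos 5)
Your proof is correct, and its overall architecture matches the paper's: reduce to the monomial description of $I(B)$ via the $\mathbb Q$-independence of the $\beta_j$, produce a strictly positive invariant exponent vector in part (a), and use it in part (b) to clear denominators so that the rank-$(n-s)$ lattice $\ker M\cap\mathbb Z^n$ becomes visible through $\Phi$. Part (a) is the paper's argument in expanded form (the paper notes that the hypothesis yields, for each $j$, an invariant that is a multiple of $x_j$, and takes a product; your support-union formulation makes the appeal to the hypothesis on $B$-invariant coordinate subspaces explicit). The differences are in part (b). For the upper bound the paper argues that every $\phi_i$ is a rational function of the $n-s$ Laurent monomials $x^{p_k}$ built from a lattice basis of $\ker M$, whereas you argue infinitesimally: the rows of $D\Phi(x)$ annihilate $B_1x,\ldots,B_sx$, which are linearly independent on the torus since $\mathrm{rank}\,M=s$. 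Yours is the dual, orbit-tangent-space argument; it is self-contained, while the paper's rational-dependence version is what simultaneously sets up its lower bound. For the lower bound the paper shifts the basis rows $p_k$ to $p_k+\ell d$, obtains invariant monomials $\phi_k^*$ polynomial in $\Phi$, and checks that $D\Phi^*$ evaluated at the all-ones point has full rank for a suitable $\ell>0$ (a genericity-in-$\ell$ step); you instead recover the unshifted Laurent map $L$ on the torus by dividing by $(x^{d^*})^N$ and apply the chain rule, using that $\mathrm{rank}\,DL=n-s$ everywhere on $(\mathbb C^*)^n$. This is a modest but genuine streamlining: since you divide exactly, $N$ need only be large enough to make the exponents nonnegative, and no minor-nonvanishing argument is required. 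Your concluding appeal to Chevalley's theorem and to the fact that $\dim Y$ equals the generic rank of $D\Phi$ is exactly what the paper compresses into ``familiar properties of morphisms of algebraic varieties.''
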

\begin{proof} By assumption, for every index $j$ there exists an element of $I(B)$ which is a multiple of $x_j$. Taking a suitable product shows part (a). For the proof of part (b), consider a basis of the kernel of $M$ with integer entries and take these basis elements as rows of a matrix $P\in\mathbb K^{(n-s,n)}$. With the entries of the matrix $P$ and a positive integer $\ell$, define
\[
\phi_k^*:=x_1^{p_{k1}+\ell d_1}\cdots x_n^{p_{kn}+\ell d_n}
\]
and 
\[
\Phi^*=\begin{pmatrix} \phi_1^*\\ \vdots \\  \phi_{n-s}^*\end{pmatrix},\text{  with } D\Phi^*\left(\begin{pmatrix} 1\\ \vdots\\ 1\end{pmatrix}\right)=\begin{pmatrix}p_{11}+\ell d_1 & \cdots & p_{1n}+\ell d_n\\
\vdots & & \vdots \\
p_{n-s,1}+\ell d_1 & \cdots & p_{n-s,n}+\ell d_n\\
\end{pmatrix}.
\]
For some $\ell>0$ the latter matrix has full rank. Since every $\phi_j^*$ is a polynomial in the $\phi_i$, the generic rank of $D\Phi(x)$ must be $\geq s$. On the other hand, every $\phi_i$ is a rational function of the $x_1^{p_{k1}}\cdots x_n^{p_{kn}}$, $1\leq k\leq n-s$, hence the rank of $D\Phi(x)$ is $\leq s$. The remaining assertion follows from familiar properties of morphisms of algebraic varieties.
\end{proof}
We close this section with a few small examples.
\begin{example}\label{toralsymex}{\em 
 %%%
Let $\mathbb K=\mathbb C$ (for the sake of simplicity), $B={\rm diag}\,(\lambda_1,\ldots,\lambda_n)$, and $G$ the toral group corresponding to $B$ as in Proposition \ref{extoral} above. The centralizer of $B$ can be determined according to Lemma \ref{operationsb}.
\begin{itemize}
 \item If the equation $m_1\lambda_1+\cdots +m_n\lambda_n-\lambda_j=0$ (with $\sum m_i=r$, $1\leq j\leq n$) has no solution in nonnegative integers for $r\geq 2$ (for instance, $\lambda_1,\ldots,\lambda_n$ are linearly independent over $\mathbb Q$), then the only differential equations admitting $G$ are linear equations $\dot x=Cx$. If the $\lambda_i$ are pairwise different then $C$ is necessarily diagonal.
 \item A simple example to illustrate Proposition \ref{torelemprop} is given by $Bx= \begin{pmatrix}
                            x_1  \\ 2x_2
                             \end{pmatrix}$: Every $G$-symmetric differential equation has the form
\[
\dot x=Cx+\alpha\cdot\begin{pmatrix}
                       0 \\ x_1^2
                        \end{pmatrix},\quad C\text{  linear}, \alpha\in \mathbb C,
\]
which is easily solved line by line.
\item Let $n\geq 2$ and $\lambda_1=0$, $\lambda_2=\cdots=\lambda_n=1$, thus we are in the situation of Remark \ref{splitrem}. For $2\leq i,j\leq n$ let $\gamma_{ij}$ be analytic, moreover $\psi$ analytic with $\psi(0)=0$. Then every differential equation
\[
\begin{array}{rcl}
  \dot x_1 &=&\psi(x_1)\hspace{2,8cm}   \\
\dot{\begin{pmatrix}
      x_2 \\ \vdots \\ x_n
     \end{pmatrix}} &=& \bigl(\gamma_{ij}(x_1)\bigr)_{2\leq i,j\leq n}
\cdot \begin{pmatrix}
      x_2 \\ \vdots \\ x_n
     \end{pmatrix}
\end{array}
\]
admits the symmetry group $G$, and conversely every $G$-symmetric differential equation with stationary point $0$ has the form above. One reads off directly the reduction to a one-dimensional equation, with a linear non-autonomous equation remaining.
\item Let 
\[
Bx=\begin{pmatrix}
                                  x_1  \\  -x_2
                                    \end{pmatrix},\quad \varphi(x):=x_1x_2.
\]
Then $\dot x=f(x)=B(x)+\cdots$ admits the symmetry group $G$ if and only if there exist diagonal matrices $C_i\in\mathbb C^{(2,2)}$ such that
\[
 f(x)= C_0\,x+\sum_{i\geq 1} \varphi(x)^i\,C_ix.
\]
 Proposition \ref{polyredbyinv} and Theorem \ref{anaredbyinv} show that 
 $\varphi$ is solution-preserving from $\dot x=f(x)$ to a one-dimensional equation $\dot x=g(x)$. Indeed, one finds $X_{C_i}(\phi)={\rm tr}\, C_i\cdot \phi$ and therefore $g(x)=\sum\limits_{i\geq 0}{\rm tr}\, C_i \cdot x^{i+1}$.
\end{itemize}
}
\end{example}
%%%%%%%%%%%%%%%%%%%%%%%%%%%%%%%%%%%%%%%%%%%%%%%%%%%%%%%%%%%%%%%%%%%%
\section{Poincar\'e-Dulac normal forms}
We first recall the normal form problem for an analytic ordinary differential equation \eqref{ode} near a stationary point (which we may assume to be $0$).
Thus we start from a Taylor expansion
\begin{equation}\label{taylorode}
f=A+\sum\limits_{j\geq 2}f_j\,;\quad  A=Df(0),\quad f_j \text{  homogeneous of degree  }j.
\end{equation}
The goal is to find a simpler (in a sense yet to be specified) equation $\dot x=f^*(x)$ and an invertible solution-preserving map $\Phi$ from $\dot x=f^*(x)$ to  $\dot x=f(x)$.  \\
Since $Df^*(0)$ and $A=Df(0)$ are conjugate via $D\Phi(0)$, we may assume that $Df^*(0)=A$, $D\Phi(0)=I$, and we get a Taylor expansion
\begin{equation}\label{taylonofo}
f^*=A+\sum\limits_{j\geq 2}f_j^*.
\end{equation}
One possibility to proceed is degree by degree, as follows: If in the expansion \eqref{taylorode} the terms $f_2,\ldots,f_{r-1}$ are deemed acceptable, then make the ansatz
\[
 f^*=A+f_2+\cdots +f_{r-1}+f_r^*+\cdots, \quad \Phi=I+h_r+\cdots,\, D\Phi(x)=I+Dh_r(x)+\cdots,
\]
and evaluate the condition $D\Phi(x)f^*(x)=f(\Phi(x))$ from Lemma \ref{solprescrit} for every degree: For degrees $1,\ldots,r-1$ equality holds automatically, while at degree $r$ equality holds if and only if
\begin{equation}\label{homoleq}
 f_r^*(x)+Dh_r(x)Ax= Ah_r(x)+f_r(x), \quad \text{i.e.}\quad [A, h_r]=f_r-f_r^*.
\end{equation}
Thus $f_r^*$ must be chosen so that $f_r-f_r^*$ lies in the image of ${\rm ad}\,A$, and for any such choice there exists $h_r$ such that \eqref{homoleq}  is satisfied.
In other words, $f_r^*$ may be chosen from any subspace $W_r\subseteq \mathcal{P}_r$ that satisfies ${\rm im}\,({\rm ad}\,A)+W_r=\mathcal{P}_r$. (Preferably the dimension of $W_r$ should be as small as possible.) From an algebraic perspective, a canonical choice is $W_r=\ker({\rm ad} \,A_s)$. In particular, $\ker({\rm ad} \,A_s)$ is a direct summand for the image whenever $A=A_s$ is semisimple.

We next establish the connection to the usual coordinate-dependent derivation of normal form transformations and normal forms.
\begin{example}\label{nofoevalex}{\em 
Assume that  $A=A_s={\rm diag}\,(\lambda_1,\ldots,\lambda_n)$ is diagonal. We then have
\[
 f_r = \sum \alpha_{m_1,\ldots,m_{n},j}\, x_1^{m_1}\cdots x_n^{m_n} e_j,
\]
summation extending over all $(m_1,\ldots,m_{n},j)$ with $m_1+\cdots +m_n=r$, and $1\leq j\leq n$.   
With the ansatz $h_r=\sum\beta_{m_1,\ldots,m_{n},j}\, x_1^{m_1}\cdots,x_n^{m_n} e_j$ and
\[
 [A,h_r]=\sum(m_1\lambda_1+\cdots+m_n\lambda_n-\lambda_j)\beta_{m_1,\ldots,m_{n},j}\,x_1^{m_1}\cdots x_n^{m_n} e_j \, ,
\]
one sees: Whenever $m_1\lambda_1+\cdots +m_n\lambda_n-\lambda_j\neq 0$, then one may eliminate the term $\alpha_{m_1,\ldots,m_{n},j}\,x_1^{m_1}\cdots x_n^{m_n} e_j$ in $f_r$. There remains
\[
 f_r^* = \sum_{\substack{(m_1,\ldots,m_{n},j): \\ m_1\lambda_1+\cdots+m_n\lambda_n-\lambda_j=0}} \alpha_{m_1,\ldots,m_{n},j} x_1^{m_1}\cdots x_n^{m_n} e_j\, ,
\]
which satisfies $[A_s,f_r^*]=0$. The more abstract approach taken above is helpful when $A$ is not semisimple (one avoids writing many indices), and indispensable whenever the matrix $A_s$ is not given in diagonal form.}
\end{example}

To summarize: In a step-by-step approach, for every degree $r\geq 2$ one may choose $f_r^* \in \ker({\rm ad}\,A_s)$, $\Phi=\Phi_r:=I+h_r+\cdots$, and iterating this procedure (with the sequence of compositions $\Phi_2$, $ \Phi_3\circ \Phi_2,\ldots$ being convergent in formal power series, since the orders of the changed terms tend to infinity) one obtains:
\begin{theorem}\label{nofothm}
Let  $\dot x=f(x)$ be given,  with (formal) Taylor expansion \eqref{taylorode}. Then there exist formal power series $\Phi(x)= x+\cdots$ and $f^*(x)=Ax+\sum\limits_{j\geq 2}f_j^*(x)$ such that:
\begin{itemize}
\item $\Phi$ is formally solution-preserving from $\dot x=f^*(x)$ to $\dot x=f(x)$ (i.e., the identity from Lemma \ref{solprescrit} holds in formal power series);
\item  $f^*$ is  in  {\em Poincar\'{e}-Dulac normal form}, thus $[A_s,f^*]=0$ (equivalently, all $[A_s,f_j^*]=0$).
\end{itemize}
\end{theorem}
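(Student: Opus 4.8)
The plan is to carry out the degree-by-degree construction sketched just before the theorem, iterate it over all degrees, and then verify convergence in the formal power series topology. The backbone is the homological equation \eqref{homoleq}: at degree $r$ the solution-preserving condition $D\Phi(x)f^*(x) = f(\Phi(x))$ of Lemma \ref{solprescrit} reduces to $[A, h_r] = f_r - f_r^*$, so at each degree the problem is purely one of linear algebra for the operator $\mathrm{ad}\,A$ acting on the finite-dimensional space $\mathcal{P}_r$. First I would isolate the decisive fact: for every $r$ one has $\mathcal{P}_r = \mathrm{im}(\mathrm{ad}\,A) \oplus \ker(\mathrm{ad}\,A_s)$, which guarantees that $f_r - f_r^*$ can be forced into $\mathrm{im}(\mathrm{ad}\,A)$ by choosing $f_r^* \in \ker(\mathrm{ad}\,A_s)$.

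To establish this decomposition I would invoke Lemma \ref{operationsb}(c), which furnishes the Jordan-Chevalley decomposition $\mathrm{ad}\,A = \mathrm{ad}\,A_s + \mathrm{ad}\,A_n$ on $\mathcal{P}_r$, with $\mathrm{ad}\,A_s$ semisimple, $\mathrm{ad}\,A_n$ nilpotent, and the two commuting. Semisimplicity yields $\mathcal{P}_r = \ker(\mathrm{ad}\,A_s) \oplus \mathrm{im}(\mathrm{ad}\,A_s)$, and both summands are $\mathrm{ad}\,A$-invariant because $\mathrm{ad}\,A$ commutes with $\mathrm{ad}\,A_s$. On the summand $\mathrm{im}(\mathrm{ad}\,A_s)$ the operator $\mathrm{ad}\,A_s$ is invertible, and writing $\mathrm{ad}\,A = \mathrm{ad}\,A_s\,(I + (\mathrm{ad}\,A_s)^{-1}\mathrm{ad}\,A_n)$ with the second factor unipotent shows that $\mathrm{ad}\,A$ is invertible there too; hence $\mathrm{im}(\mathrm{ad}\,A) \supseteq \mathrm{im}(\mathrm{ad}\,A_s)$. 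Combined with $\ker(\mathrm{ad}\,A_s) \oplus \mathrm{im}(\mathrm{ad}\,A_s) = \mathcal{P}_r$ this gives the required complementarity $\mathrm{im}(\mathrm{ad}\,A) + \ker(\mathrm{ad}\,A_s) = \mathcal{P}_r$.

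With this in hand the inductive step is immediate. Assuming $f_2,\ldots,f_{r-1}$ are already in normal form, I would split the degree-$r$ term produced by the transformations applied so far according to $f_r = (f_r - f_r^*) + f_r^*$ using the above decomposition, solve \eqref{homoleq} for $h_r$, and record $f_r^* \in \ker(\mathrm{ad}\,A_s)$, i.e.\ $[A_s, f_r^*] = 0$. Setting $\Phi_r := I + h_r + \cdots$ and forming the partial compositions $\Psi_r := \Phi_r \circ \cdots \circ \Phi_2$ produces a sequence of formal transformations each of which leaves unchanged all terms of degree below $r$.

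The one point requiring genuine care is the passage to the limit. Since $\Phi_r$ agrees with the identity through degree $r-1$, applying it leaves the already-normalized coefficients of degrees $2,\ldots,r-1$ untouched; consequently each Taylor coefficient of the sequences $\Psi_r$ and of the correspondingly transformed right-hand sides stabilizes after finitely many steps. This is precisely convergence in the formal power series topology, and the resulting limits $\Phi(x) = x + \cdots$ and $f^*$ satisfy the solution-preserving identity degree by degree, with $[A_s, f_j^*] = 0$ for every $j$. Thus the main obstacle is not a hard analytic estimate but the bookkeeping that confirms each normalization step perturbs only higher-order terms, so that the infinite composition is well defined as a formal object; no convergence of the series as an analytic function is claimed or needed here.
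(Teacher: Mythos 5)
Your proposal is correct and follows essentially the same route as the paper: the homological equation \eqref{homoleq} solved degree by degree with $f_r^*\in\ker(\mathrm{ad}\,A_s)$, followed by convergence of the compositions $\Phi_r\circ\cdots\circ\Phi_2$ in the formal (${\bf m}$-adic) topology since the orders of the changed terms tend to infinity. Your only addition is welcome detail: you actually verify, via the Jordan--Chevalley decomposition of $\mathrm{ad}\,A$ on $\mathcal{P}_r$ from Lemma \ref{operationsb}(c), that $\mathrm{im}(\mathrm{ad}\,A)+\ker(\mathrm{ad}\,A_s)=\mathcal{P}_r$, a complementarity the paper asserts without proof.
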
 
\begin{remark}{\em 
Theorem \ref{nofothm} is stated for formal power series. Regrettably, if one starts with convergent series, convergence may be lost in the transformation. Therefore many of the following results will be stated and proved for formal power series and formal vector fields. Since the basic notions of Lie derivative, Lie bracket etc., as well as their properties, carry over to the formal setting, this will not cause any technical problems.}
\end{remark}
We note that vector fields in Poincar\'e-Dulac normal form admit the infinitesimal symmetry $A_s$, and hence a toral symmetry group. In particular, Example \ref{toralsymex} provides systems in normal form when one sets the semisimple linear part equal to $B$.
\begin{remark} {\em Convergence problems for normal form transformations appear in many cases when the normal form is not trivial (i.e., not linear) and therefore would be interesting for the dynamics. But the step-by-step procedure underlying the proof of Theorem \ref{nofothm} shows that one may transform $f$ to normal form up to any prescribed degree $r\in\mathbb N$ (thus $[A_s,f_j^*]=0$ for all $j\leq r$) by an analytic transformation. We refer to Bruno  \cite{Br} (and the works cited therein) for an extensive discussion of convergence and divergence issues, and only give a brief sketch here:
\begin{enumerate}[(a)]
\item  Poincar\'{e} proved convergence for the case that all $\lambda_i$ are contained in an open half plane of $\mathbb C$ containing $0$ in its boundary. Much harder to prove is the following result by Siegel \cite{Si}: If there exist $\varepsilon,\nu > 0$ such that
\[
 |m_1\lambda_1+\cdots +m_n\lambda_n-\lambda_j|\geq \varepsilon (m_1+\cdots +m_n+1)^{-\nu} 
\]
for all integers $m_1,\ldots,m_n\geq 0$ and all $j$, then there exists a convergent transformation to normal form, which is necessarily of the form $\dot x=Ax$ by Example \ref{toralsymex}.
There exist examples (the first ones due to Bruno) for necessarily divergent transformations when this condition is violated.
\item Moreover there exist obstructions to convergence which are rooted in the structure of the formal normal form. For instance consider
\[ 
\dot x=f(x) = \begin{pmatrix}
				x_1 \\ -x_2
                                  \end{pmatrix}+\cdots.
\]
Any corresponding normal form has a representation
\[
f^*(x)=Ax+
\sum\varphi(x)^i(\alpha_i^* x+\beta_i^* Ax);
\]
compare Example \ref{toralsymex}. When all $\alpha_i^*=0$ then one has convergence; otherwise, there exist equations for which all normalizing transformations diverge.
\item Convergence questions and the question for existence of analytic infinitesimal symmetries are related:  If $\dot x=f(x)$ admits a convergent transformation to normal form then there exists a nontrivial infinitesimal symmetry $g\notin \mathbb K f$: If  $f^*\neq A_s$ this follows from the defining characteristic of normal forms, otherwise choose $g^*$ linear with $[g^*,A_s]=0$.  
In dimension $2$ the converse holds; see \cite{BW}: The existence of a nontrivial infinitesimal symmetry forces the existence of a convergent normalizing transformation. For partial generalizations to higher dimensions see \cite{Ci Wa}. A quite comprehensive characterization was found by Zung \cite{Zun} (see also Zung's subsequent work, especially on Hamiltonian systems).

\end{enumerate}
}
\end{remark}
Obviously, the built-in symmetries of (analytic) systems in normal form can be employed for symmetry reductions according to the previous sections. But, as we will show in the following, some remarkable properties of systems in normal form go beyond admitting symmetries.
The first result of this kind (which holds for the analytic as well as the formal case) is as follows:
%%%
\begin{theorem}\label{nofocent}
 %%%
Let $f=A+\sum\limits_{j\geq 2}f_j$  be in Poincar\'e-Dulac normal form, and $g=\sum g_k$ such that $[g,f]=0$. Then $[g,A_s]=0$.
\end{theorem}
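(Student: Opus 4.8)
The plan is to argue degree by degree and to exploit the eigenspace decomposition of the semisimple operator $S:={\rm ad}\,A_s$. By Lemma \ref{operationsb}, $S$ preserves each space $\mathcal{P}_m$ of homogeneous polynomial vector fields and acts semisimply there, so we may split $\mathcal{P}_m=\bigoplus_\chi\mathcal{P}_{m,\chi}$ into eigenspaces of $S$ (eigenvalue $\chi$). Since ${\rm ad}\,A_s$ respects degree, the desired conclusion $[g,A_s]=0$ is equivalent to $S(g_m)=0$ for every homogeneous component $g_m$ of $g$, and for this it suffices to show that the components of $g$ in all nonzero eigenspaces of $S$ vanish. Writing $N:={\rm ad}\,A_n$, Lemma \ref{operationsb}(c) tells us that ${\rm ad}\,A=S+N$ is the Jordan--Chevalley decomposition of ${\rm ad}\,A$ on each $\mathcal{P}_m$; in particular $N$ is nilpotent and $[S,N]=0$.

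The crucial observation I would record first is a commutation relation. By the Jacobi identity (Proposition \ref{liebproperties}(b)) one has ${\rm ad}\,a\,{\rm ad}\,b-{\rm ad}\,b\,{\rm ad}\,a={\rm ad}\,[a,b]$ for any vector fields $a,b$. Because $f$ is in Poincar\'e--Dulac normal form, $[A_s,f_j]=0$ for all $j\geq 2$, and hence $S$ commutes with every ${\rm ad}\,f_j$ (as well as with $N$). Consequently all of these operators map each eigenspace $\mathcal{P}_{m,\chi}$ of $S$ into an eigenspace of the same eigenvalue $\chi$.

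Next I would expand the hypothesis $[g,f]=0$ by homogeneous degree. Since $[f_j,g_k]$ is homogeneous of degree $j+k-1$, the degree-$m$ part of $[f,g]=0$ reads
\[
[A,g_m]+\sum_{j\geq 2}[f_j,g_{m-j+1}]=0.
\]
Projecting onto the $\chi$-eigenspace of $S$, using that $S$ acts there as the scalar $\chi$ while $N$ and the ${\rm ad}\,f_j$ preserve that eigenspace, one obtains
\[
(\chi\,{\rm Id}+N)(g_{m,\chi})=-\sum_{j\geq 2}{\rm ad}(f_j)(g_{m-j+1,\chi}),
\]
where $g_{m,\chi}$ is the $\chi$-component of $g_m$; note that, since $j\geq 2$, the right-hand side involves only components of strictly smaller degree. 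Now fix $\chi\neq 0$ and induct on $m$: at the lowest degree occurring in $g$ the sum is empty, and at each later step it vanishes by the inductive hypothesis, so in all cases $(\chi\,{\rm Id}+N)(g_{m,\chi})=0$. As $N$ is nilpotent and $\chi\neq 0$, the operator $\chi\,{\rm Id}+N$ is invertible, forcing $g_{m,\chi}=0$. Thus every $g_m$ lies in $\ker S$, so $[A_s,g_m]=0$, and summing yields $[g,A_s]=0$.

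I expect the main subtlety to lie in the correct treatment of the nilpotent part: it is essential that the eigenspace decomposition be taken with respect to $S={\rm ad}\,A_s$ rather than ${\rm ad}\,A$, and that one invoke invertibility of the full operator $\chi\,{\rm Id}+N$ (not merely $\chi\neq 0$) in order to cancel the $N$-contribution. The degree-raising nature of the ${\rm ad}\,f_j$ is exactly what makes the resulting recursion well-founded, so that the induction closes cleanly.
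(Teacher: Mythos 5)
Your proof is correct, and it shares the paper's basic skeleton: expand $[g,f]=0$ by homogeneous degree, use the normal form condition $[A_s,f_j]=0$ (via the Jacobi identity) to make ${\rm ad}\,A_s$ commute with ${\rm ad}\,A$, ${\rm ad}\,A_n$ and all ${\rm ad}\,f_j$, and induct on degree. Where you differ is the linear-algebraic finish. The paper applies ${\rm ad}\,A_s$ to the graded relation $[A,g_{r+j}]+[f_2,g_{r+j-1}]+\cdots+[f_{j+1},g_r]=0$; the induction hypothesis kills every term after the first, leaving $\bigl[A,[A_s,g_{r+j}]\bigr]=0$, and the conclusion follows from the inclusion $\ker({\rm ad}\,A)\subseteq\ker({\rm ad}\,A_s)$ together with $\ker\bigl(({\rm ad}\,A_s)^2\bigr)=\ker({\rm ad}\,A_s)$, both standard consequences of the Jordan--Chevalley decomposition from Lemma \ref{operationsb}(c). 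You instead project the graded relation onto the eigenspaces of $S={\rm ad}\,A_s$ and invert $\chi\,{\rm Id}+N$ for $\chi\neq 0$; this is in effect an inline proof of the very inclusion $\ker({\rm ad}\,A)\subseteq\ker({\rm ad}\,A_s)$ that the paper invokes as a known fact, so your version is more self-contained and more explicit about the role of the nilpotent part, at the price of heavier bookkeeping (double indexing by degree and eigenvalue, and verifying that the projections commute with $N$ and the ${\rm ad}\,f_j$). One small point you should make explicit: for $\mathbb K=\mathbb R$ the eigenspace decomposition of $S$ on $\mathcal{P}_m$ exists only after complexification (as anticipated in Lemma \ref{operationsb}(b)); since vanishing of all complexified components of $g_m$ with $\chi\neq 0$ yields $g_m\in\ker S$ over $\mathbb R$, this is harmless, but as written your splitting $\mathcal{P}_m=\bigoplus_\chi\mathcal{P}_{m,\chi}$ silently assumes it.
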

%%%
\begin{proof}
 Let  $g=g_r+\cdots$, with $g_r\neq 0$. Then $[g,f]=0$ is equivalent to
\begin{equation}\label{centcondeq}
 [A,g_{r+j}]+[f_2,g_{r+j-1}]+\cdots +[f_{j+1},g_r]=0 \qquad \text{for all}\;\,j\geq 0.
\end{equation}
We show by induction that $[A_s,g_{r+j}]=0$ for all $j\geq 0$. For $j=0$ the assertion follows from $[A,g_r]=0$ and $\ker({\rm ad}\,A)\subseteq \ker({\rm ad}\,A_s)$ on $\mathcal{P}_r$. For the induction step apply ${\rm ad}\, A_s$ to \eqref{centcondeq} and recall that $A_s$ and $f_j$ commute for all $j$. Therefore
\[
 \bigl[A,[A_s,g_{r+j}]\bigr]+\bigl[f_2,[A_s,g_{r+j-1}]\bigr]+\cdots +\bigl[f_{j+1},[A_s,g_r]\bigr]=0.
\]
By induction hypothesis all terms after the first one vanish, hence one sees that $g_{r+j}\in \ker({\rm ad}\,A_s)^2=\ker{\rm ad}\,A_s$.
\end{proof}
%%%
\begin{example}{\em 
 %%%
Assume that $\dot x=f(x)$ on $U\subseteq \mathbb K^n$ admits a stationary point $y_0$, with the eigenvalues of  $Df(y_0)$ (for instance) linearly independent over $\mathbb Q$. If  $g_1,\ldots,g_r$ are vector fields with $[g_i,f]=0$ $(1\leq i\leq r)$, then the $g_i$ span an abelian Lie algebra, and $r\leq n$. To verify this ($y_0=0$ with no loss of generality), note that with  $A:=Df(0)$ any transformation to normal form yields $f^*=A$, and the $g_i^*$ commute with $A$, hence are linear and commute pairwise (see Example \ref{toralsymex}).    
Thus from the local theory one obtains restrictions on the centralizer $\mathcal{C}_U(f)$.}
\end{example}
%%%
Considering normalizers, the roles of the vector field and the infinitesimal orbital symmetry are no longer interchangeable. Therefore one has to discuss two scenarios.    \\
In the first scenario we assume that an infinitesimal orbital symmetry is in normal form. The essential result is then that the semisimple part of its linearization is itself an infinitesimal orbital symmetry.
%%%
\begin{theorem}\label{nofonorm}
 %%%
Let $f=A+\cdots$ be in Poincar\'e-Dulac normal form, and furthermore let $g$ be a formal vector field, $\lambda=\lambda_0+\lambda_1+\cdots$ a formal power series such that $[f,g]= \lambda g$. Then there exists an invertible formal power series $\sigma=1+\cdots$ such that $g^*:=\sigma g$ satisfies the identity $[f,g^*]=\lambda^*g^*$, with a series $\lambda^*:=\lambda_0+\lambda_1^*+\cdots$ that in turn satisfies $X_{A_s}(\lambda^*)=0$.
Moreover  $[A_s,g^*]=\lambda_0 g^*$.  
\end{theorem}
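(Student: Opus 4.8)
The plan is to seek $\sigma$ in the form $\sigma = \exp(\tau)$ with $\tau$ a formal power series without constant term, and to build $\tau$ and $\lambda^*$ simultaneously, degree by degree. First I would record the effect of a scalar factor: by the derivation rule for the bracket (Proposition \ref{liebproperties}(c)), $[f,\sigma g] = X_f(\sigma)\,g + \sigma[f,g] = \bigl(X_f(\sigma) + \sigma\lambda\bigr)g$. Since $g\neq 0$ and the formal power series form an integral domain, the identity $[f,\sigma g] = \lambda^*(\sigma g)$ is equivalent to the single scalar equation $\sigma^{-1}X_f(\sigma) = \lambda^* - \lambda$, i.e. $X_f(\tau) = \lambda^* - \lambda$. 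Thus the whole problem reduces to solving this one linear equation for a scalar series $\tau$ (no constant term) and a series $\lambda^*$ subject to $X_{A_s}(\lambda^*)=0$; here $\lambda_0^* = \lambda_0$ is forced automatically, as $X_f(\tau)$ has no degree-$0$ part and $X_{A_s}(\lambda_0)=0$.

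Next I would solve $X_f(\tau) = \lambda^* - \lambda$ inductively in the degree $k\geq 1$ of the homogeneous parts. Writing $f = A + \sum_{j\geq 2}f_j$, the degree-$k$ component reads $X_A(\tau_k) = \lambda_k^* - \lambda_k - w_k$, where $w_k := \sum_{j=2}^{k} X_{f_j}(\tau_{k-j+1})$ depends only on the previously constructed $\tau_1,\dots,\tau_{k-1}$. The key linear-algebra fact I need on the space $S_k$ of degree-$k$ polynomials is $S_k = \ker(X_{A_s}) + {\rm im}(X_A)$. This follows from Lemma \ref{operationsb}: $X_{A_s}$ is semisimple on $S_k$, so $S_k = \ker(X_{A_s})\oplus {\rm im}(X_{A_s})$; moreover $X_A = X_{A_s}+X_{A_n}$ with $X_{A_n}$ nilpotent and commuting with $X_{A_s}$, so $X_A$ is invertible on ${\rm im}(X_{A_s})$, whence ${\rm im}(X_{A_s})\subseteq{\rm im}(X_A)$. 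Given this, I decompose $v := \lambda_k + w_k$ as $v_0 + v_1$ with $v_0\in\ker(X_{A_s})$ and $v_1\in{\rm im}(X_{A_s})$, set $\lambda_k^* := v_0$ (so $X_{A_s}(\lambda_k^*)=0$), and solve $X_A(\tau_k) = -v_1$, which is possible because $v_1\in{\rm im}(X_{A_s})\subseteq{\rm im}(X_A)$. Assembling over all $k$ produces $\tau$, hence $\sigma=\exp(\tau)=1+\cdots$, together with $\lambda^*$ satisfying $X_{A_s}(\lambda^*)=0$.

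For the final assertion $[A_s, g^*]=\lambda_0 g^*$ I would argue exactly as in the proof of Theorem \ref{nofocent}. Writing $g^* = g^*_r + g^*_{r+1}+\cdots$ and expanding $[f,g^*]=\lambda^* g^*$ by degree, the lowest-degree term gives $[A,g^*_r]=\lambda_0 g^*_r$; since ${\rm ad}\,A = {\rm ad}\,A_s + {\rm ad}\,A_n$ is the Jordan--Chevalley decomposition on $\mathcal{P}_r$, one has $\ker({\rm ad}\,A-\lambda_0)\subseteq\ker({\rm ad}\,A_s-\lambda_0)$, hence $[A_s,g^*_r]=\lambda_0 g^*_r$. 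For the induction step I apply ${\rm ad}\,A_s$ to the degree-$(r+j)$ identity and subtract $\lambda_0$ times that identity; using $[A_s,A]=0$, the normal-form conditions $[A_s,f_i]=0$, the property $X_{A_s}(\lambda_i^*)=0$ established above, the product rule, and the induction hypothesis, all $f_i$- and $\lambda_i^*$-contributions cancel, leaving $({\rm ad}\,A-\lambda_0)(u)=0$ for $u := [A_s,g^*_{r+j}]-\lambda_0 g^*_{r+j}$. The same kernel inclusion then gives $({\rm ad}\,A_s-\lambda_0)^2 g^*_{r+j}=0$, and semisimplicity of ${\rm ad}\,A_s$ on $\mathcal{P}_{r+j}$ forces $u=0$.

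The main obstacle is the degree-by-degree construction in the second paragraph: one must simultaneously absorb the ``unwanted'' ${\rm im}(X_{A_s})$-part of $\lambda_k + w_k$ into the multiplier (through $X_A(\tau_k)$) while retaining only an $A_s$-invariant remainder $\lambda_k^*$. The non-semisimplicity of $A$ is precisely what makes the inclusion ${\rm im}(X_{A_s})\subseteq{\rm im}(X_A)$ (rather than an equality of images) the correct statement to invoke; everything else is bookkeeping with the graded bracket and the Jacobi identity.
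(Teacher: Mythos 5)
Your proof is correct and follows essentially the same route as the paper: the identical key fact ($S_k=\ker X_{A_s}\oplus{\rm im}\,X_{A_s}$ together with invertibility of $X_A$ on ${\rm im}\,X_{A_s}$, from Lemma \ref{operationsb}) drives the choice of $\lambda_k^*\in\ker X_{A_s}$, and your induction for $[A_s,g^*]=\lambda_0 g^*$ matches the paper's argument step for step. The only difference is organizational: you collapse the first part into the single scalar equation $X_f(\tau)=\lambda^*-\lambda$ via $\sigma=\exp(\tau)$ and Proposition \ref{liebproperties}(c), whereas the paper builds $\sigma$ as an $\mathbf{m}$-adically convergent infinite product $(1+\sigma_1)(1+\sigma_2)\cdots$ of degreewise corrections with $\lambda_j^*=\lambda_j+X_A(\sigma_j)$ -- the degree-$k$ conditions being solved are the same.
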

%%%
\begin{proof} The proof is a (technically more involved) variant of the proof of Theorem \ref{nofocent}.
 Thus let  $g=g_r+\cdots$, $g_r\not=0$. By hypothesis one has in particular that
\[
 [A, g_r]=\lambda_0g_r
\]
and
\begin{equation}\label{normcondeq}
[A,g_{r+j}]+[f_2,g_{r+j-1}]+\cdots+[f_j,g_r]=\lambda_0g_{r+j}+\cdots+\lambda_j g_r.
\end{equation}
The first identity immediately shows $[A_s,g_r]=\lambda_0 g_r$.   We proceed by induction on $j$. Assuming that $X_{A_s}(\lambda_k)=0$ for all $k<j$, make the ansatz  $g^*=(1+\sigma_j)g$, with a homogeneous polynomial $\sigma_j$ of degree $j$, thus
\[
 g^*=g_r^*+g_{r+1}^*+ \cdots \,, \text{with}\;\;g_r^*=g_r,\ldots, g_{r+j-1}^* = g_{r+j-1}, g_{r+j}^*=g_{r+j}+\sigma_jg_r.
\] 
We then obtain from \eqref{normcondeq} that
\[
[A,g_{r+j}^*]+[f_2,g_{r+j-1}^*]+\cdots+[f_j,g_r^*]=\lambda_0g_{r+j}^*+\cdots+\lambda_{j-1} g_{r+1}^*+(\lambda_j+X_A(\sigma_j)g_r^*,
\]
thus we have $\lambda_k^*=\lambda_k$ for all $k<j$, and $\lambda_j^*=\lambda_j+X_A(\sigma_j)$.
The restriction of $X_A$ to the image of $X_{A_s}|_{S_j}$ is invertible, and $S_j$ is the direct sum of the kernel and image of $X_{A_s}$; hence $\sigma_j$ may be chosen such that $X_{A_s}(\lambda_j^*)=0$.\\
Since the infinite product $(1+\sigma_1)(1+\sigma_2)\cdots$ converges in formal power series, we have the first assertion.\\
Concerning the second assertion, we show by induction that all $[A_s,g_{r-j}]=\lambda_0g_{r+j}$, which is obvious for $j=0$. For the induction step, start from
\[
[A,g_{r+j}^*]+[f_2,g_r^*]+\cdots+[f_r,g_j^*]=\lambda_0g_{r+j}^* +\cdots +\lambda_j^* g_r^*
\]
and rearrange this as
\[
[A,g_{r+j}^*]-\lambda_0g_{r+j}^* =-[f_2,g_{r+j-1}^*]-\cdots-[f_j,g_r^*]+\lambda_1^* g_{r+j-1}^*+\cdots+\lambda_j^*g_r^*.
\]
With all $[A_s,f_k]=0$ and $X_{A_s}(\lambda_\ell^*)=0$, Proposition \eqref{liebproperties} and the induction hypothesis show that the right hand side lies in the  $\lambda_0$-eigenspace of ${\rm ad}\,A_s$. By semisimplicity of ${\rm ad}\,A_s$ the same holds for $g_{r+1}^*$.
\end{proof}
%%%
\begin{remark}In the scenario of Theorem \ref{nofonorm}, if  $g(0)=0$ and $\lambda_0\neq 0$, then $C:=D g(0)$ is necessarily nilpotent, by familiar facts from linear algebra (or by considering the relations in Example \ref{toralsymex}).
\end{remark}
We now reverse roles, but restrict the discussion to a special case, for which we show that the normalizer (relative to the centralizer) has a quite simple structure.
%%%
\begin{proposition}\label{nofonormprop}
 %%%
Let $f= A+\cdots$ as in \eqref{taylorode}, such that the eigenvalues $\lambda_1,\ldots,\lambda_n$ of $A$ satisfy no resonance equation $\sum m_i \lambda_i-\lambda_j=0$ with all $m_i\geq 0$, $\sum m_i\geq 2$. If $g$ is a formal vector field  such that  $[g,f]=\alpha f$ with some formal series $\alpha$, then $g=h+\beta f$, with some formal series $\beta$ and $[h,f]=0$. 
\end{proposition}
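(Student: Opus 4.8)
The plan is to reduce to the case of a linear vector field and then solve the resulting homological equations degree by degree, the only genuine subtlety being the degree-one obstruction.

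\emph{Reduction to the linear case.} Since the eigenvalues admit no resonance $\sum m_i\lambda_i-\lambda_j=0$ with $\sum m_i\geq 2$, the operator ${\rm ad}\,A_s$ acts invertibly on every space $\mathcal P_k$ with $k\geq 2$: by Lemma \ref{operationsb}(b) its eigenvalues on monomial vector fields are exactly the resonance expressions, and by part (c) it is semisimple. Hence in the normalization of Theorem \ref{nofothm} each term $f_j^*$ ($j\geq 2$) lies in $\ker({\rm ad}\,A_s)|_{\mathcal P_j}=0$, so the formal normal form is $f^*=A$. Let $\Phi=x+\cdots$ be the formal normalizing transformation, solution-preserving from $\dot x=Ax$ to $\dot x=f(x)$. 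Pulling the relation $[g,f]=\alpha f$ back along $\Phi$ (Proposition \ref{liecompatible}(b), which carries over to formal series) converts it into $[g^*,A]=\alpha^* A$, where $g^*:=(D\Phi)^{-1}(g\circ\Phi)$ and $\alpha^*:=\alpha\circ\Phi$; a decomposition $g^*=h^*+\beta^*A$ with $[h^*,A]=0$ then pushes forward to the desired $g=h+\beta f$. Thus it suffices to treat $f(x)=Ax$, and here $A$ is invertible, since a zero eigenvalue $\lambda_j=0$ would produce the resonance $2\lambda_j-\lambda_j=0$ with $\sum m_i=2$.

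\emph{Low-degree analysis.} Writing $g^*=\sum_{k\geq 0}g_k$ and $\alpha^*=\sum_{k\geq0}\alpha_k$ in homogeneous components and comparing degrees in $[g^*,A]=\alpha^*A$, the degree-$0$ part gives $Ag_0=0$, hence $g_0=0$. The degree-$1$ part, with $g_1(x)=Cx$, reads $AC-CA=\alpha_0 A$; left-multiplying by $A^{-1}$ yields $C-A^{-1}CA=\alpha_0 I$, and taking traces forces $\alpha_0 n=0$, so $\alpha_0=0$ and $[g_1,A]=0$. I set $h^*:=g_1$.

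\emph{High-degree analysis.} For each $k\geq 2$ the comparison gives ${\rm ad}\,A\,(g_k)=-\alpha_{k-1}A$. Because ${\rm ad}\,A={\rm ad}\,A_s+{\rm ad}\,A_n$ with ${\rm ad}\,A_s$ invertible and ${\rm ad}\,A_n$ a commuting nilpotent (Lemma \ref{operationsb}(c)), ${\rm ad}\,A$ is invertible on $\mathcal P_k$. The key point is that the subspace $S_{k-1}\cdot A$ of fields $\beta A$ is ${\rm ad}\,A$-invariant, since $[A,\beta A]=X_A(\beta)A$ by Proposition \ref{liebproperties}(c); an invertible operator preserving a finite-dimensional subspace restricts to an automorphism of it, so $({\rm ad}\,A)^{-1}$ preserves $S_{k-1}A$ as well. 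As $\alpha_{k-1}A\in S_{k-1}A$, I obtain $g_k=-({\rm ad}\,A)^{-1}(\alpha_{k-1}A)=\beta_{k-1}A$ for a unique $\beta_{k-1}\in S_{k-1}$ (uniqueness because $Ax\not\equiv 0$). Summing, $g^*=g_1+\sum_{k\geq 2}\beta_{k-1}A=h^*+\beta^*A$ with $\beta^*:=\sum_{k\geq 2}\beta_{k-1}$ and $[h^*,A]=0$, and pushing forward by $\Phi$ completes the proof.

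The main obstacle is the degree-one step: since subtracting a multiple of $A$ from $g_1$ never alters its commutator with $A$, one \emph{must} rule out a nonzero constant $\alpha_0$ rather than absorb it. The conjugation/trace identity $C-A^{-1}CA=\alpha_0 I$ settles this cleanly, and it is precisely here that the invertibility of $A$ — itself a consequence of the no-resonance hypothesis — is indispensable.
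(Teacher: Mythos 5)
Your proposal is correct and takes essentially the same route as the paper's own proof: reduce via the non-resonance hypothesis to the linear normal form $f^*=A$, then solve the bracket relation $[g^*,A]=\alpha^* A$ degree by degree, showing $\alpha_0=0$ and $g_k\in S_{k-1}\cdot A$ for $k\geq 2$. Your local variations are sound and, if anything, slightly more self-contained: the trace identity from $C-A^{-1}CA=\alpha_0 I$ replaces the paper's terse appeal to the nontriviality of $A_s$; restricting the invertible ${\rm ad}\,A$ to the invariant subspace $S_{k-1}\cdot A$ packages in one step what the paper does by separately invoking invertibility of $X_A|_{S_{k-1}}$, solving $X_A(\beta_{k-1})=-\alpha_{k-1}$ and subtracting $\beta_{k-1}A$; and you explicitly dispose of the constant term $g_0$, which the paper's ansatz $g=C+g_2+\cdots$ tacitly passes over.
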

%%%
\begin{proof} It is sufficient to prove the assertion when $f=f^*$ is in normal form, and this normal form is just $f^*=A$. With $g=C+g_2+\cdots$,  $[g,A]=\alpha A$ is equivalent to $[C,A]=0$ and $[g_m,A]=\alpha_{m-1}A$ for all $m\geq 2$. (Since the semisimple part of $A$ is nontrivial, $[C,A]=\alpha_0 A$ is only possible for $\alpha_0=0$.) For degree $m>1$ the assumption on the eigenvalues implies that ${\rm ad }\,A|_{\mathcal{P}_m}$ and $X_A|_{S_{m-1}}$ are invertible.
Thus for every $\alpha_{m-1}\in S_{m-1}$ there exists $\beta_{m-1}$ with $X_A(\beta_{m-1})=-\alpha_{m-1}$; we obtain $[\beta_{m-1} A,A]=\alpha_{m-1} A$, hence $[g_m-\beta_{m-1} A,A]=0$ and $g_m=\beta_{m-1} A$ by invertibility of ${\rm ad}\,A$.
\end{proof}
%%%
For a more general characterization see \cite{KWZ}, Thm.~2.6.

\medskip
After discussing the special properties of normal forms with respect to Lie brackets, there remains to discuss special properties of normal forms with respect to Lie derivatives. The following facts concerning first integrals and semi-invariants of normal forms (of formal vector fields) are proven in a very similar manner to Theorems \ref{nofocent} and \ref{nofonorm}. (A proof for part (b) was first given in \cite{WPoi}, Lemma 2.2.)

\begin{theorem}\label{nofosemi}Let $f=A+\cdots$ be in Poincar\'e-Dulac normal form, and $\phi=\sum_{k>0} \phi_k$ a formal power series with zero constant term.
\begin{enumerate}[(a)]
\item If $\phi$ is a first integral of $f$ then $\phi$ is also a first integral of $\dot x=A_sx$.
\item If $\phi$ is a semi-invariant of $f$, thus $X_f(\phi)=\beta\phi$ for some series $\beta$, then there exists an invertible series $\sigma=1+\cdots$ such that $\phi^*:=\sigma\cdot\phi$ is a semi-invariant of $f$, with $X_f(\phi^*)=\beta^*\phi^*$, and $\beta^*=\beta^*_0+\beta^*_1+\cdots$ satisfying $X_{A_s}(\beta^*)=0$.\\
Moreover $X_{A_s}(\phi^*)=\beta_0^*\phi^*$; in particular $\phi^*$ (as well as $\phi$) is a semi-invariant of $\dot x=A_sx$.
\end{enumerate}

\end{theorem}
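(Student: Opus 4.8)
The plan is to mimic the degree-by-degree inductions in the proofs of Theorems \ref{nofocent} and \ref{nofonorm}, replacing the operator ${\rm ad}\,A$ acting on vector fields by the Lie derivative $X_f$ acting on power series, and exploiting two structural facts: that $X_A = X_{A_s} + X_{A_n}$ is the Jordan--Chevalley decomposition of $X_A$ on each space $S_m$ of homogeneous polynomials of degree $m$, with $X_{A_s}$ semisimple (Lemma \ref{operationsb}); and that $X_{A_s}$ commutes with every $X_{f_k}$, since $[A_s,f_k]=0$ in normal form forces $X_{A_s}X_{f_k}-X_{f_k}X_{A_s}=X_{[A_s,f_k]}=0$ by Proposition \ref{liebproperties}(a). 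I will also use repeatedly that $\ker X_A \subseteq \ker X_{A_s}$ on each $S_m$ (the kernel of $X_A$ lies in the generalized $0$-eigenspace, on which the semisimple part vanishes) and, more generally, that a genuine $\mu$-eigenvector of $X_A$ lies in the $\mu$-eigenspace $E_\mu$ of $X_{A_s}$.

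For part (a), write $\phi = \phi_r + \phi_{r+1} + \cdots$ with $\phi_r \neq 0$. Collecting $X_f(\phi) = 0$ by degree gives $X_A(\phi_{r+j}) + X_{f_2}(\phi_{r+j-1}) + \cdots + X_{f_{j+1}}(\phi_r) = 0$ for all $j \geq 0$. I would prove $X_{A_s}(\phi_{r+j}) = 0$ by induction on $j$. For $j=0$ one has $X_A(\phi_r)=0$, and $\ker X_A \subseteq \ker X_{A_s}$ gives $X_{A_s}(\phi_r)=0$. For the step, apply $X_{A_s}$ to the degree-$(r+j)$ identity; since $X_{A_s}$ commutes with $X_A$ and with each $X_{f_k}$, and all lower terms vanish by the induction hypothesis, one obtains $X_A(X_{A_s}(\phi_{r+j})) = 0$. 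Hence $X_{A_s}(\phi_{r+j}) \in \ker X_A \subseteq \ker X_{A_s}$, so $X_{A_s}^2(\phi_{r+j}) = 0$, and semisimplicity of $X_{A_s}$ forces $X_{A_s}(\phi_{r+j}) = 0$.

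Part (b) follows the pattern of Theorem \ref{nofonorm}. Collecting $X_f(\phi)=\beta\phi$ by degree gives at degree $r$ the relation $X_A(\phi_r)=\beta_0\phi_r$ (with $\beta_0$ a scalar), whence $X_{A_s}(\phi_r)=\beta_0\phi_r$. Proceeding by a sequence of modifications and assuming $X_{A_s}(\beta_k^*)=0$ for $k<j$, I make the ansatz $\phi^* = (1+\sigma_j)\phi$ with $\sigma_j \in S_j$; this leaves $\phi_{r+k}^*$ unchanged for $k<j$ and replaces $\phi_{r+j}$ by $\phi_{r+j}+\sigma_j\phi_r$. Substituting into the degree-$(r+j)$ relation and using the derivation identity $X_A(\sigma_j\phi_r)=X_A(\sigma_j)\phi_r+\sigma_j X_A(\phi_r)$ with $X_A(\phi_r)=\beta_0\phi_r$, all lower-order contributions cancel and one is left with $\beta_j^* = \beta_j + X_A(\sigma_j)$, exactly as in Theorem \ref{nofonorm}. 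Since $S_j = \ker X_{A_s} \oplus {\rm im}\,X_{A_s}$ and $X_A$ restricts to an invertible map on ${\rm im}\,X_{A_s}$ (there $X_A = X_{A_s}+X_{A_n}$ acts with nonzero eigenvalues plus a nilpotent), one may choose $\sigma_j \in {\rm im}\,X_{A_s}$ solving $X_A(\sigma_j) = -\pi(\beta_j)$, where $\pi$ projects $S_j$ onto ${\rm im}\,X_{A_s}$ along $\ker X_{A_s}$; then $\beta_j^* = \beta_j - \pi(\beta_j) \in \ker X_{A_s}$. The infinite product $\sigma = (1+\sigma_1)(1+\sigma_2)\cdots$ converges in formal power series and yields $\phi^* = \sigma\phi$ with $X_f(\phi^*)=\beta^*\phi^*$ and $X_{A_s}(\beta^*)=0$.

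For the final assertion I would show $X_{A_s}(\phi_{r+j}^*) = \beta_0^*\phi_{r+j}^*$ by a second induction. Rearranging the degree-$(r+j)$ relation for $\phi^*$ as $(X_A-\beta_0^*)(\phi_{r+j}^*) = -X_{f_2}(\phi_{r+j-1}^*)-\cdots-X_{f_{j+1}}(\phi_r^*) + \beta_1^*\phi_{r+j-1}^*+\cdots+\beta_j^*\phi_r^*$, the right-hand side lies in the eigenspace $E_{\beta_0^*}$ of $X_{A_s}$: each $X_{f_k}$-term does because $X_{A_s}$ commutes with $X_{f_k}$ and the lower $\phi^*$ lie in $E_{\beta_0^*}$ by hypothesis, while each $\beta_k^*\phi_{r+j-k}^*$ does because $X_{A_s}(\beta_k^*)=0$ (product rule) together with the induction hypothesis. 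Since $X_A$ preserves every eigenspace of $X_{A_s}$ and $X_A-\beta_0^*$ is invertible on each $E_\mu$ with $\mu\neq\beta_0^*$, the inclusion $(X_A-\beta_0^*)(\phi_{r+j}^*)\in E_{\beta_0^*}$ forces $\phi_{r+j}^*\in E_{\beta_0^*}$, i.e.\ $X_{A_s}(\phi_{r+j}^*)=\beta_0^*\phi_{r+j}^*$. Summing over $j$ gives $X_{A_s}(\phi^*)=\beta_0^*\phi^*$, and writing $\phi=\sigma^{-1}\phi^*$ yields $X_{A_s}(\phi) = (X_{A_s}(\sigma^{-1})\sigma + \beta_0^*)\phi$, so $\phi$ too is a semi-invariant of $\dot x = A_s x$ (with a generally non-constant multiplier). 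The main obstacle will be the bookkeeping in part (b): verifying that the ansatz $\phi^* = (1+\sigma_j)\phi$ affects only degrees $\geq r+j$ and produces precisely $\beta_j^* = \beta_j + X_A(\sigma_j)$, and then correctly executing the eigenspace argument that upgrades $(X_A-\beta_0^*)(\phi_{r+j}^*)\in E_{\beta_0^*}$ to $\phi_{r+j}^*\in E_{\beta_0^*}$; both hinge on the Jordan--Chevalley structure of Lemma \ref{operationsb} and the commutation $[A_s,f_k]=0$.
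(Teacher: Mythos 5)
Your proof is correct and is essentially the argument the paper intends: the paper gives no separate proof of Theorem \ref{nofosemi}, stating only that it is proven ``in a very similar manner'' to Theorems \ref{nofocent} and \ref{nofonorm}, and your degree-by-degree inductions --- using $\ker X_A\subseteq \ker X_{A_s}$ on $S_m$ for part (a), and for part (b) the ansatz $\phi^*=(1+\sigma_j)\phi$ with $\beta_j^*=\beta_j+X_A(\sigma_j)$ together with $S_j=\ker X_{A_s}\oplus {\rm im}\,X_{A_s}$ and invertibility of $X_A$ on ${\rm im}\,X_{A_s}$ --- transcribe those proofs correctly from ${\rm ad}\,A$ acting on $\mathcal{P}_m$ to $X_A$ acting on $S_m$. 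Your explicit eigencomponent argument upgrading $(X_A-\beta_0^*)(\phi_{r+j}^*)\in E_{\beta_0^*}$ to $\phi_{r+j}^*\in E_{\beta_0^*}$ is exactly the precise form of the paper's terse appeal to semisimplicity in the closing step of Theorem \ref{nofonorm}.
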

Note that $\phi$ is a semi-invariant of $f$ if and only if the ideal $\left<\phi\right>$ of the formal power series algebra is invariant with respect to $X_f$. Moreover, if $\phi$ and $f$ are convergent then the zero set of $\phi$ is invariant for the diffeerntial equation. There exists a natural generalization of Theorem \ref{nofosemi}(b) to invariant ideals with an arbitrary set of generators. We only consider the complex setting here, for the sake of simplicity; for more background see also Chapter 5.

We denote by $ \mathbb C[[x_1,\ldots,x_n]]$ the commutative and associative algebra of formal power series, and by  $ \mathbb C[[x_1,\ldots,x_n]]_c$ the subalgebra of power series with a nonempty domain of convergence. A local analytic set near $0$ may be identified with the common zero set of certain elements of $ \mathbb C[[x_1,\ldots,x_n]]_c$ that vanish at $0$ (finitely many suffice due to the Noetherian property of this ring). By the Hilbert-R\"uckert Nullstellensatz there is a 1-1 correspondence between analytic sets and radical ideals of  $ \mathbb C[[x_1,\ldots,x_n]]_c$. Now, if $f$ is analytic in $0$ and $f(0)=0$ then an analytic set with vanishing ideal $J$ is invariant for $\dot x=f(x)$ if and only if $X_f(J)\subseteq J$, as was shown in Proposition \ref{invarcritprop}. While one cannot sensibly extend the notion of invariant set to formal power series and vector fields, the notion of invariant ideals carries over, and this yields partial information on analytic invariant sets. This is the setting we will consider now. 

The following are two main results from N.~Kruff's doctoral dissertation, published in \cite{NKr}. The proofs are too long (and a bit too technically involved) to be presented here.

\begin{theorem} \label{kruffthm1}Let  $f=A+\cdots$ be a complex formal power series vector field in $n$ variables, in Poincar\'e-Dulac normal form. Then every ideal $J\subseteq  \mathbb C[[x_1,\ldots,x_n]]$ that is invariant with respect to $X_f$ is also invariant with respect to $X_{A_s}$.
\end{theorem}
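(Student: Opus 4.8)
The plan is to imitate the structure of the proofs of Theorems~\ref{nofocent} and \ref{nofosemi}, extracting a degree-by-degree filtration argument and then invoking semisimplicity of $\mathrm{ad}\,A_s$ (resp.\ $X_{A_s}$) acting on the finite-dimensional spaces of homogeneous objects. The essential idea is that $X_f$ differs from $X_A$ only by terms of strictly positive degree, and $X_A = X_{A_s}+X_{A_n}$ where $X_{A_n}$ is nilpotent; invariance under $X_f$ will be propagated degree by degree into invariance under $X_{A_s}$.

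First I would reduce to a convenient form. By Lemma~\ref{operationsb}, the operator $X_{A_s}$ acts semisimply on each space $S_m$ of homogeneous polynomials of degree $m$, and $X_A = X_{A_s}+X_{A_n}$ is the Jordan-Chevalley decomposition on $S_m$. Decompose the ideal $J$ filtered by degree: writing any $\psi\in J$ as $\psi=\sum_{k\ge r}\psi_k$ with lowest-degree part $\psi_r\neq 0$, I would aim to show first that the ``leading term'' behaves well. The key observation is that since $X_f(J)\subseteq J$ and $X_f = X_A + (\text{higher-degree derivations})$, comparing lowest-degree parts gives $X_A(\psi_r)\in J$ up to the leading homogeneous component, and one extracts that the lowest-degree component of $X_A(\psi)$ equals $X_A(\psi_r)$ and lies in $J$. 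Then, because $X_{A_n}$ is nilpotent and commutes with $X_{A_s}$, the standard trick (writing $X_{A_s}$ as a polynomial in $X_A$ with no constant term via the semisimple-part projection, or equivalently applying powers of $X_A$ and taking a suitable linear combination by a Vandermonde/eigenvalue argument as in the proof of Proposition~\ref{extoral}) shows that $X_{A_s}(\psi)\in J$ as well, at least at the leading level.

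The main work is to make this propagate through \emph{all} degrees simultaneously rather than just the leading one, so as to conclude $X_{A_s}(J)\subseteq J$ for the whole ideal. Here I would set up an induction on degree: assume the homogeneous components of $X_{A_s}(\psi)$ up to some degree already lie in $J$, and use the relation between $X_f$ and its semisimple linear part together with the fact that all the $f_j$ (and hence the higher-degree pieces of $X_f$) commute with $A_s$ in the bracket sense, exactly as exploited in the induction steps of Theorems~\ref{nofocent} and \ref{nofonorm}. The crucial structural input is that $[A_s,f_j]=0$ for all $j$ (the defining property of normal form), which guarantees $X_{A_s}$ commutes with each $X_{f_j}$; this lets the semisimple operator be pushed through the invariance relation degree by degree.

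The hard part, and the reason the authors declare the full proof too long to include, will be the careful bookkeeping when $J$ is \emph{not} principal: for a general (possibly infinitely, but by Noetherianity finitely) generated ideal, one cannot simply track a single leading term, and must argue that membership $X_{A_s}(\psi)\in J$ holds for every $\psi$ by controlling how the eigenspace decomposition of $X_{A_s}$ interacts with a generating set of $J$. I expect one must pass to the associated graded object or to a homogeneous generating system adapted to the $X_{A_s}$-eigenspace grading, and then verify that the filtration-compatible projection onto each $X_{A_s}$-eigenvalue preserves $J$. Establishing that this projection (a limit of polynomials in $X_f$, or the formal spectral projector of $X_{A_s}$) maps $J$ into itself is precisely where the genuine difficulty lies, and is what separates this statement from the comparatively direct single-generator case of Theorem~\ref{nofosemi}(b).
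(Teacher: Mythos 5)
The first thing to note is that the paper contains no proof of Theorem \ref{kruffthm1}: it is quoted from Kruff's dissertation \cite{NKdiss} with the explicit remark that the proofs are too long and technically involved to be presented, so there is no in-paper argument to compare against and your proposal must stand on its own. As written, it is a strategy outline rather than a proof. You correctly identify the natural plan (imitate Theorems \ref{nofocent} and \ref{nofosemi} degree by degree, using the normal form property $[A_s,f_j]=0$, which by Proposition \ref{liebproperties}(a) makes $X_{A_s}$ commute with each $X_{f_j}$), but you explicitly leave open the decisive step: that the $X_{A_s}$-eigencomponent projections --- equivalently, the passage from leading-term information back into $J$ --- map $J$ into itself. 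Since $X_{A_s}(J)\subseteq J$ is essentially equivalent to that statement, what you defer is not bookkeeping surrounding the theorem; it \emph{is} the theorem, and nothing beyond the single-element cases already covered by Theorems \ref{nofocent} and \ref{nofosemi} has actually been established.

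Beyond the acknowledged gap, two steps in the sketch are unsupported or wrong as stated. First, ideals of $\mathbb C[[x_1,\ldots,x_n]]$ are not graded: homogeneous components of elements of $J$ need not lie in $J$ (for $J=\left<x_1+x_2^2\right>$, neither $x_1$ nor $x_2^2$ belongs to $J$). Hence your claim that the lowest-degree component $X_A(\psi_r)$ of $X_f(\psi)$ ``lies in $J$'' is false in general; it lies only in the ideal of leading forms, and transferring membership from this associated graded object back to $J$ is precisely the difficulty you defer. For the same reason, $X_A(J)\subseteq J$ does not follow from $X_f(J)\subseteq J$ by truncation, so the input to your Vandermonde step is not available. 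Second, the Vandermonde/semisimple-part trick --- recovering $X_{A_s}$ as a polynomial in $X_A$ without constant term --- is valid on each finite-dimensional space $S_m$ (Lemma \ref{operationsb}), but the polynomial depends on $m$, since the eigenvalue set $\bigl\{\textstyle\sum m_i\lambda_i\bigr\}$ grows with the degree; there is no single such identity on the whole ring. To globalize one would need to realize $X_{A_s}(\psi)$ as an $\mathbf m$-adic limit of elements of $J$ and then invoke the fact that ideals of a Noetherian local ring are $\mathbf m$-adically closed (Krull intersection/Artin--Rees) --- an ingredient your sketch never mentions, and which in any case only helps once the degreewise membership is secured, which is exactly what is missing. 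In short: your diagnosis of where the difficulty sits is accurate, but the proposal does not close it, and the concrete steps it does spell out would fail for non-principal (indeed even for principal, non-homogeneous) ideals.
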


This result (which corresponds to the first statement of Theorem \ref{nofosemi}(b)) is complemented by the following characterization of invariant ideals  (which may be said to correspond to the remaining statements of Theorem \ref{nofosemi}(b)).
\begin{theorem}\label{kruffthm2}
Let $B={\rm diag}\,\left(\mu_1,\ldots,\mu_n\right)$. Then the following hold.
\begin{enumerate}[(a)]
\item Every $X_B$-invariant ideal of $\mathbb C[[x_1,\ldots,x_n]]$ admits a generator system consisting of semi-invariants of $B$.
\item Up to multiplication by an invertible series, for every semi-invariant $\phi$  of $B$ there exists $\alpha\in\mathbb C$ such that $\phi$ is a series in monomials $x_1^{d_1}\cdots x_n^{d_n}$ with $\sum d_i\mu_i=\alpha$.
\end{enumerate}
\end{theorem}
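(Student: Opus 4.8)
The unifying tool is the weight decomposition induced by the diagonal action of $X_B$. Writing $w(d):=\sum_i d_i\mu_i$ for a multi-index $d$, Lemma \ref{operationsb}(b) gives $X_B(x^d)=w(d)\,x^d$, so on each space $S_m$ of homogeneous polynomials of degree $m$ the operator $X_B$ is semisimple with $S_m=\bigoplus_{\gamma}(S_m)_\gamma$, where $(S_m)_\gamma$ is spanned by the monomials of weight $\gamma$ and $X_B$ acts as $\gamma\cdot{\rm id}$ on it. In particular, for every $\alpha$ one has $S_m=\ker(X_B-\alpha)\oplus{\rm im}(X_B-\alpha)$ with $\ker(X_B-\alpha)=(S_m)_\alpha$ and ${\rm im}(X_B-\alpha)=\bigoplus_{\gamma\neq\alpha}(S_m)_\gamma$. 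Every formal series decomposes as $\phi=\sum_\gamma\phi_\gamma$ into its weight components $\phi_\gamma$ (a finite sum in each fixed degree), and $\phi$ equals a single $\phi_\alpha$ — i.e. is a series in monomials $x_1^{d_1}\cdots x_n^{d_n}$ with $\sum d_i\mu_i=\alpha$ — precisely when it is a semi-invariant with constant multiplier. This is the assertion of part (b), and it explains why both parts amount to producing weight-homogeneous data.

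For part (a) the plan is to show that every $X_B$-invariant ideal $J$ is closed under taking weight components; then any (finite, by Noetherianness) generating set, decomposed into its weight components, yields a generating set of pure elements, which are semi-invariants. To extract the component $\phi_\alpha$ of a given $\phi\in J$, I would fix $N$, note that the truncation of $\phi$ modulo $\mathfrak m^N$ involves only finitely many weights $\gamma_1=\alpha,\gamma_2,\ldots,\gamma_r$, and apply to $X_B$ the Lagrange interpolation polynomial $P$ with $P(\gamma_1)=1$ and $P(\gamma_k)=0$ for $k\geq2$. Since $X_B^k(\phi)\in J$ for all $k\geq0$ one has $P(X_B)\phi\in J$, while $P(X_B)\phi\equiv\phi_\alpha\pmod{\mathfrak m^N}$ because $\mathfrak m^N$ is $X_B$-invariant. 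Hence $\phi_\alpha\in J+\mathfrak m^N$ for every $N$, and the Krull intersection theorem (an ideal of the Noetherian complete local ring $\mathbb C[[x_1,\ldots,x_n]]$ is $\mathfrak m$-adically closed) gives $\phi_\alpha\in J$.

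For part (b) the plan follows the two-step template of Theorems \ref{nofocent} and \ref{nofonorm}. Let $X_B(\phi)=\beta\phi$ with $\phi=\phi_m+\phi_{m+1}+\cdots$ of order $m$ and $\beta=\beta_0+\beta_1+\cdots$. Comparing lowest-degree terms gives $X_B(\phi_m)=\beta_0\phi_m$, so $\phi_m$ is pure of weight $\alpha:=\beta_0$. In the first step I normalize the multiplier: assuming inductively that $\beta_1,\ldots,\beta_{j-1}$ are already weight-zero, I replace $\phi$ by $(1+\sigma_j)\phi$ with $\sigma_j\in S_j$, which alters $\beta$ only in degrees $\geq j$ and produces the new degree-$j$ coefficient $\beta_j+X_B(\sigma_j)$; choosing $\sigma_j$ so that $X_B(\sigma_j)$ cancels the nonzero-weight (hence image) part of $\beta_j$ — always possible since $S_j=\ker X_B\oplus{\rm im}\,X_B$ — leaves $\beta_j^\ast$ weight-zero. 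The convergent infinite product $\sigma=\prod_{j\geq1}(1+\sigma_j)$ then yields $\phi^\ast=\sigma\phi$ with $X_B(\beta^\ast)=0$. In the second step I show that $\phi^\ast$ is pure of weight $\alpha$: at degree $m+j$ the relation $X_B(\phi^\ast)=\beta^\ast\phi^\ast$ reads $(X_B-\alpha)(\phi^\ast_{m+j})=\sum_{i\geq1}\beta^\ast_i\phi^\ast_{m+j-i}$, and since the $\beta^\ast_i$ are weight-zero and, by induction, the $\phi^\ast_{m+j-i}$ are pure of weight $\alpha$, the right-hand side lies in $\ker(X_B-\alpha)$ while the left-hand side lies in ${\rm im}(X_B-\alpha)$; as these intersect trivially, both vanish and $\phi^\ast_{m+j}$ is pure of weight $\alpha$.

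The main obstacle in both parts is the resonant weight-zero part. In part (a) it forces the $\mathfrak m$-adic completeness argument, since a single element of $J$ may involve infinitely many weights and the bare Vandermonde projection only works degree by degree. In part (b) it is the reason one cannot in general solve $X_B(\sigma_j)=-\beta_j$ outright, a nonzero weight-zero part of $\beta_j$ lying outside ${\rm im}\,X_B$; the device that rescues the argument is to aim first only for a weight-zero multiplier $\beta^\ast$ — which is always attainable — and to deduce the purity of $\phi^\ast$ afterwards from the clean kernel/image dichotomy, exactly as in the second half of the proof of Theorem \ref{nofonorm}.
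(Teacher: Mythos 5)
Your proof is correct. A line-by-line comparison with the paper is not possible, because the paper does not prove Theorem \ref{kruffthm2} at all: it quotes both parts from Kruff's dissertation \cite{NKdiss} with the explicit remark that the proofs are too long and technical to be presented. Measured against what the paper does contain, your argument assembles itself from tools already on display: part (b) is in essence Theorem \ref{nofosemi}(b) specialized to $f=A_s=B$ diagonal, and your two-step scheme --- first flattening the cofactor to weight zero via multipliers $1+\sigma_j$ (possible since $S_j=\ker X_B\oplus\operatorname{im}X_B$ by semisimplicity), then deducing purity of $\phi^*$ from $\ker(X_B-\alpha)\cap\operatorname{im}(X_B-\alpha)=\{0\}$ degree by degree --- is exactly the template of Theorems \ref{nofocent} and \ref{nofonorm} that the paper itself points to; all the degree bookkeeping there checks out. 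Part (a) is where your proof adds something not sketched anywhere in the paper: the Lagrange-interpolation projection (a refinement of the Vandermonde argument used in the proof of Proposition \ref{extoral}) only isolates weight components modulo $\mathfrak m^N$, and your appeal to $\bigcap_N(J+\mathfrak m^N)=J$ in the complete Noetherian local ring is precisely what is needed to handle the genuine difficulty that a single series in $J$ may involve infinitely many weights. One step you should spell out rather than gloss: having shown $\phi_\alpha\in J$ for every $\phi\in J$ and every weight $\alpha$, the passage to a generating system of semi-invariants uses the closedness of ideals once more --- the ideal $J'$ generated by the (possibly infinitely many) pure components of a finite generating set satisfies $J'\subseteq J$, and each generator is an $\mathfrak m$-adically convergent sum of its components, so $J\subseteq J'$ because $J'$ is closed; Noetherianness then extracts a finite generating subset of pure elements. (A trivial caveat: if $J$ contains a unit, its weight-zero component is a unit, which the paper's definition of semi-invariant --- requiring nonconstancy --- formally excludes; this edge case should be allowed as a trivial semi-invariant or set aside.)
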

These properties facilitate the investigation of invariant ideals (and thus possible invariant analytic sets).
\begin{example} {\em Let $B={\rm diag}\,\left(\mu_1,\ldots,\mu_n\right)$.
\begin{itemize}
\item Whenever $\mu_1,\ldots,\mu_n$ are linearly independent over the rationals $\mathbb Q$ then $\phi_1=x_1,\ldots,\phi_n=x_n$ are the only irreducible semi-invariants for $B$ (up to multiplication with invertible series). To see this, note that  $(d_1,\ldots,d_n)$ are uniquely determined by $\alpha=\sum d_i\mu_i$, hence the linear combination of monomials in part (b) of the Theorem contains only one term and is reducible when $\sum d_i>1$. By Theorem \ref{kruffthm2}, the only $X_B$-invariant prime ideals in  $\mathbb C[[x_1,\ldots,x_n]]$ have the form $\left<x_{i_1},\ldots,x_{i_r}\right>$.
\item If $\mathbb Q\mu_1+\cdots +\mathbb Q\mu_n$ has dimension $n-1$ over $\mathbb Q$ and there exist positive, relatively prime integers $m_1,\ldots,m_n$ with $\sum m_i\mu_i=0$ then, again, $\phi_1=x_1,\ldots,\phi_n=x_n$ are the only irreducible semi-invariants for $B$ (up to multiplication with invertible series). And again the only $X_B$-invariant prime ideals in  $\mathbb C[[x_1,\ldots,x_n]]$ have the form $\left<x_{i_1},\ldots,x_{i_r}\right>$. To see this, first note that two monomials $x_1^{d_1}\cdots x_n^{d_n}$ and $x_1^{e_1}\cdots x_n^{e_n}$ contribute to a semi-invariant with the same cofactor $\alpha\in\mathbb C$ if and only if 
\[
(d_1-e_1,\ldots,d_n-e_n)\in \mathbb Z\,(m_1,\ldots, m_n).
\]
From this one obtains with induction that every semi-invariant is (up to multiplication by an invertible series) of the form
\[
\text{ monomial}\times \sum_i\gamma_i\left(x_1^{m_1}\cdots x_n^{m_n}\right)^i,
\]
which implies the assertion. By Theorem \ref{kruffthm1} this characterization carries over to $X_f$-invariant ideals, for any $f=B+\cdots$ in normal form. 
\item The two cases above may be seen as exceptions: In general a linear vector field will admit infinitely many (not associated) irreducible semi-invariants.
\end{itemize}
}
\end{example}
\section{Further remarks and notes}
\begin{itemize}
\item The literature on compact symmetry groups and their various applications is vast; we mention only Abud and Sartori \cite{AS}, Golubitsky and coauthors \cite{GS,GSS}, Marsden and Ratiu \cite{MR}. An important paper on dynamical systems with a compact symmetry group is due to Field \cite{Fi}. A standard reference to linear algebraic groups is Borel \cite{Bo}. Regarding normal forms, in particular convergence and divergence questions, Bruno's work (see \cite{Br} and the references therein) is fundamental. The coordinate-free approach using Lie brackets was detailed in \cite{Wa 3}.
\item Turning to the question what can be salvaged for the smooth case, the first answer is ``everything in 4.1'' provided one considers only compact (linear) symmetry groups. An essential ingredient is a theorem of Schwarz \cite{Sc 1} that corresponds to (and actually predates) Theorem \ref{anaredbyinv}, together with an extension by Poenaru \cite{Po} to vector fields. As for normal forms, power series represent smooth functions up to a flat remainder. By a result of Chen \cite{Che}, there is always a smooth normalizing transformation provided that the linearization of the vector field is hyperbolic, i.e., does not possess any eigenvalues with zero real parts.
\item For more recent results about centralizers, normalizers and Jacobi multipliers of local analytic and formal vector fields see \cite{KWZ}.
\end{itemize}
%%%%%%%%%%%%%%%%%%%%%%%%%%%%%%%%%%%%%%%%%%%%%%%%%%%%%%%%%%%%%%
%%%%%%%%%%%%%%%%%%%%%%%%%%%%%%%%%%%%%%%%%%%%%%%%%%%%%%%%%%%%%%%%%%%
\chapter{Appendix: Some background on power series}
We collect here some facts on power series and analytic functions. Concerning algebraic properties, more details can be found in Zariski and Samuel \cite{ZaSa60}, Ruiz \cite{Rui}  and Shafarevich \cite{Sha}; for convergence properties see Cartan \cite{Ca}. The presentation uses a rather broad brush; see the literature for details.
\subsection*{Formal and convergent power series}
\begin{enumerate}
\item A (formal) power series in $n$ variables about $b=(b_1,\ldots, b_n)^{\rm tr}\in \mathbb K^n$ has the form
\[
\psi(x)=\sum _{i_1,\ldots,i_n\geq 0}a_{i_1,\ldots,i_n}\,(x_1-b_1)^{i_1}\cdots (x_n-b_n)^{i_n}=\sum_{I\in\mathbb N_0^n}a_I(x-b)^I.
\]
By definition this is just the (multi-index) sequence of the partial sums. Usually we will take $b=0$. One may then also write this series in the form 
\[
\psi(x)=\sum_{k\geq 0}\psi_{k}(x)
\]
with each $\psi_k$ a homogeneous polynomial in
$x_1,\ldots,\,x_n$ of degree $k$. The formal power series (with the Cauchy product as multiplication) form a commutative $\mathbb K$-algebra which we denote by $\mathbb K[[x_1,\ldots,\,x_n]]$.
\item For series with multiple indices one has a good notion of absolute convergence: Take any bijection $\tau: \,\mathbb N_0\to\mathbb N_0^n$ and consider the series
\[
\sum_{k\geq 0}a_{\tau(k)}x^{\tau(k)}
\]
The notions of absolute convergence and limit may then be defined by the corresponding properties of the latter series. This is unambiguous since choosing a different bijection would amount to a rearrangement of terms, which does not affect convergence properties or limits.
\item Convergence domains of power series: If there is a $z=(z_1,\ldots,z_n)^{\rm tr}\in {\mathbb K^*}^n$ such that the series $\sum a_Iz^I$ converges absolutely then the power series converges absolutely and locally uniformly in the open set defined by $|x_1|<|z_1|,\ldots, |x_n|<|z_n|$, and thus defines a function on this set. We denote the power series which converge on some nonempty open set by $\mathbb K[[x_1,\ldots,\,x_n]]_c$; these form a subalgebra of  $\mathbb K[[x_1,\ldots,\,x_n]]$. (We will briefly speak of {\em convergent power series}.)
\item Rearrangement of summation: Given a power series with nonempty domain $\Delta: |x_i|<\rho_i, \,1\leq i\leq n$ of convergence, for every $r$ with $1\leq r<n$ one has the identity
\[
\sum a_I\,x^I=\sum_{i_{r+1},\ldots,i_n\geq 0}\left(\sum _{i_1,\ldots,i_r\geq 0}a_{i_1,\ldots,i_n}\,x_1^{i_1}\cdots x_r^{i_r}\right)x_{r+1}^{i_{r+1}}\cdots x_n^{i_n};
\]
thus each term in brackets converges absolutely and locally uniformly for $|x_1|<\rho_1,\ldots,\,|x_r|<\rho_r$, and for fixed $x_1,\ldots,x_r$ in this domain the remaining series in $x_{r+1},\ldots,x_n$ converges absolutely and locally uniformly in $|x_{r+1}|<\rho_{r+1},\ldots,|x_n|<\rho_n$.
\item Differentiation: Given a power series $\sum a_Ix^I$ with nonempty open domain of convergence (defining a function $\psi$), the series of partial derivatives (of any order) converges on the same domain and represents the corresponding partial derivative of $\psi$. In particular 
\[
a_{k_1,\ldots,k_n}=\frac{\partial^{k_1+\cdots k_n}\psi}{\partial x_1^{k_1}\cdots \partial x_n^{k_n}}(0).
\]
\item Analytic functions: Given an open subset $U$ of $\mathbb K^n$ and a function $\phi:\,U\to\mathbb K$, one calls $\phi$ analytic if at every $b\in U$ there exists a power series about $b$ which has nonempty domain of convergence and represents $\phi$ on this domain. (The definition extends to maps from $U$ to $\mathbb K^m$.) One has the identity theorem: If $U$ is connected then $\phi=0$ if and only if $\phi|_{U^*}=0$ for some open and nonempty $U^*\subseteq U$.
\item If one is interested in the behavior of analytic functions at a specific point (w.l.o.g. $0$) then the appropriate structure is the algebra of germs $(V,\,\phi)$, with $V$ a neighborhood of $0$ and $\phi:\,V\to \mathbb K$ analytic. One identifies two germs if representing functions agree in some neighborhood of $0$. This algebra is canonically isomorphic to $\mathbb K[[x_1,\ldots,\,x_n]]_c$.
\item There are also relevant notions of convergence for formal power series, related to the ${\bf m}$-adic topology: Given nonzero $\psi=\sum \psi_k$, the order of $\psi$ is defined as the smallest index $\ell$ with $\psi_\ell\not=0$, and denoted by $o(\psi)$. (The order of $0$ may be defined as $\infty$.) With respect to the ${\bf
m}$-adic topology, a sequence
$\sum \gamma_k$ of formal power series converges to a formal power series $\gamma$ if and only if the orders of $\gamma_k-\gamma $ tend
to infinity as $k\to\infty$. Note: Every sequence of formal power series with the property that $o(\gamma_k-\gamma_\ell)$ tends to infinity as $k$ and $\ell$ tend to infinity (a Cauchy sequence) is convergent. 
\end{enumerate}
\subsection*{Some algebraic properties}
\begin{enumerate}
\item 
 A formal power series
$\psi$ is multiplicatively invertible in $\mathbb K[[x_1,\ldots,
\,x_n]]$ if and only if  its order equals
$0$. There is a unique maximal ideal ${\bf m}:=\left\{\phi:\,
\phi_0 =0\right\}$. The same properties hold for $\mathbb K[[x_1,\ldots,
\,x_n]]_c$.
\item The ring of formal power series is Noetherian and
a unique factorization domain, thus every ideal is finitely generated and every nonzero non--invertible
series is a product of irreducible ones, the representation being
unique up to the ordering of factors and multiplication by
invertible series. Mutatis mutandis, these properties carry over to
$\mathbb K[[x_1,\ldots, \,x_n]]_c$.
\item Analytic subvarieties of $\mathbb K^n$: Locally an analytic subvariety $Y$ is given as the common zero set of finitely many analytic functions $\phi_1,\ldots,\phi_r$. If one is interested in a particular point (w.l.o.g. $0$, as usual) then one should (introduce an obvious equivalence relation and) consider germs of such varieties and germs of defining functions. Then it is natural to associate the ideal $\left<\phi_1,\ldots,\phi_r\right>\subseteq \mathbb K[[x_1,\ldots,\,x_n]]_c$ to $Y$ and to additionally consider the vanishing ideal
\[
\mathcal{J}(Y):=\left\{\psi\in \mathbb K[[x_1,\ldots, \,x_n]]_c; \,\psi|_Y=0\right\}.
\]
Note that this is a radical ideal, thus $\psi^m\in\mathcal{J}(Y)$ for some $m>0$ implies that  $\psi\in\mathcal{J}(Y)$.
\item Hilbert--R\"uckert Nullstellensatz: For $\mathbb K=\mathbb C$ there is a $1-1$ correspondence between local analytic subvarieties and radical ideals of $ \mathbb C[[x_1,\ldots, \,x_n]]_c$: Assigning to each radical ideal $J$ its vanishing set $\mathcal{V}(J)$, the equality $\mathcal{J}(\mathcal{V}(J))=J$ holds.

\end{enumerate}

%%%%%%%%%%%%%%%%%%%%%%%%%%%%%%%%%%%%%%%%%%%%%%%%%%%%%%%%%%%%%%%%%%%%%%%%
%%%%%%%%%%%%%%%%%%%%%%%%%%%%%%%%%%%%%%%%%%%%%%%%%%%%%%%%%%%%%%%%%%%%%%%%%

\end{document}